\theoremstyle{plain}
\newtheorem{lemma}{{\sc Lemma}}[section]
\newtheorem{corollary}[lemma]{{\sc Corollary}}
\newtheorem{proposition}[lemma]{{\sc Proposition}}
\newtheorem{theorem}[lemma]{{\sc Theorem}}
\theoremstyle{definition}
\newtheorem{remark}[lemma]{{\sc Remark}}
\newtheorem{definition}[lemma]{{\sc Definition}}
\newtheorem{example}[lemma]{{\sc Example}}
\numberwithin{equation}{section}
\def\Gb{{\mathfrak{b}}}
\def\Gg{{\mathfrak{g}}}
\def\Gh{{\mathfrak{h}}}
\def\Gn{{\mathfrak{n}}}
\def\Gsl{{\mathfrak{sl}}}
\def\BC{{\mathbb{C}}}
\def\BF{{\mathbb{F}}}
\def\BK{{\mathbb{K}}}
\def\BQ{{\mathbb{Q}}}
\def\BZ{{\mathbb{Z}}}
\def\CA{{\mathcal A}}
\def\CB{{\mathcal B}}
\def\CF{{\mathcal F}}
\def\CI{{\mathcal I}}
\def\CJ{{\mathcal J}}
\def\CM{{\mathcal M}}
\def\CS{{\mathcal S}}
\def\alg{\mathop{\rm alg}\nolimits}
\def\Hom{\mathop{\rm Hom}\nolimits}
\def\id{\mathop{\rm id}\nolimits}
\def\Image{\mathop{\rm Im}\nolimits}
\def\Ker{\mathop{\rm Ker\hskip.5pt}\nolimits}
\def\Mod{\mathop{\rm Mod}\nolimits}
\def\hT{{\hat{T}}}
\def\dT{{\dot{T}}}
\def\he{{\hat{e}}}
\def\de{{\dot{e}}}
\def\te{{\tilde{e}}}
\def\hf{{\hat{f}}}
\def\df{{\dot{f}}}
\def\tf{{\tilde{f}}}
\def\Bi{{\boldsymbol{i}}}
\def\Bj{{\boldsymbol{j}}}
\def\Bn{{\boldsymbol{n}}}
\begin{document}
\title[Modules over quantized coordinate algebras]
{Modules over quantized coordinate algebras and PBW-bases}
\author{Toshiyuki TANISAKI}
%\dedicatory{To ?? on his 60th birthday}
\thanks
{
The author was partially supported by Grants-in-Aid for Scientific Research (C) 24540026 
from Japan Society for the Promotion of Science.
}
\address{
Department of Mathematics, Osaka City University, 3-3-138, Sugimoto, Sumiyoshi-ku, Osaka, 558-8585 Japan}
\email{tanisaki@sci.osaka-cu.ac.jp}
\subjclass[2010]{20G05, 17B37}
\date{}
\begin{abstract}
Around 1990 Soibelman constructed certain irreducible modules over the quantized coordinate algebra.
A. Kuniba, M. Okado, Y. Yamada  \cite{KOY} recently found that the relation among natural bases of Soibelman's irreducible module can be described using the relation among the PBW-type bases of the positive part of the quantized enveloping algebra, and proved this fact using case-by-case analysis in rank two cases.
In this paper we will give a realization of Soibelman's module as an induced module, and give a unified proof of the above result of \cite{KOY}.
We also verify Conjecture 1 of \cite{KOY} about certain operators on Soibelman's module.
\end{abstract}
\maketitle

\section{Introduction}
\subsection{}
Let $G$ be a connected simply-connected simple algebraic group over the complex number field $\BC$ with Lie algebra $\Gg$.
The coordinate algebra $\BC[G]$ of $G$ is a Hopf algebra which is dual to the enveloping algebra $U(\Gg)$ of $\Gg$.
So we can naturally define a $q$-analogue $\BC_q[G]$ of $\BC[G]$ as the Hopf algebra dual to the quantized enveloping algebra $U_q(\Gg)$.
This paper is concerned with the representation theory of the quantized coordinate algebra $\BC_q[G]$.

Since the ordinary coordinate algebra $\BC[G]$ is commutative, its irreducible modules are all one-dimensional and are in one-to-one correspondence with the points of $G$; however, the quantized coordinate algebra $\BC_q[G]$ is non-commutative, and its  representation theory is much more complicated.
In fact, Soibelman \cite{S} already pointed out around 1990 that there are not so many one-dimensional $\BC_q[G]$-modules and that there really exist infinite dimensional irreducible $\BC_q[G]$-modules.

Let us recall Soibelman's result more precisely.
He considered the situation where the parameter $q$ is a positive real number with $q\ne1$.
In this case $\BC_q[G]$ is endowed with a structure of $*$-algebra, and we have the  notion of unitarizable $\BC_q[G]$-modules.
Soibelman showed that one-dimensional unitarizable $\BC_q[G]$-modules are in one-to-one correspondence with the points of the maximal compact subgroup 
$H_{\rm cpt}$ of the maximal torus $H$ of $G$.
Denote the one-dimensional $\BC_q[G]$-module corresponding to $h\in H_{\rm cpt}$ by $\BC_h$.
On the other hand infinite-dimensional irreducible unitarizable $\BC_q[G]$-modules are constructed as follows.
In the case $G=SL_2$ Vaksman and Soibelman \cite{VS} constructed an irreducible unitarizable $\BC_q[SL_2]$-modules $\CF$ with basis $\{m_n\}_{n\in\BZ, n\geqq0}$ using an explicit description of $\BC_q[SL_2]$.
For general $G$ denote by $I$ the index set of simple roots.
For each $i\in I$ we have a natural Hopf algebra homomorphism $\pi_i:\BC_q[G]\to\BC_{q_i}[SL_2]$, where $q_i$ is some power of $q$.
Via $\pi_i$ we can regard $\CF$ as a $\BC_q[G]$-module.
Denote this $\BC_q[G]$-module by $\CF_i$.
Let $W$ be the Weyl group of $G$.
For $w\in W$ we denote the length of $w$ by $\ell(w)$.
Take $w\in W$ 
and its reduced expression $w=s_{i_1}\cdots s_{i_{\ell(w)}}$\; $(i_r\in I)$  as a product of simple reflections.
Soibelman proved that the tensor product $\CF_{i_1}\otimes\cdots\otimes\CF_{i_{\ell(w)}}$ is a unitarizable irreducible $\BC_q[G]$-module.
Moreover, he showed that $\CF_{i_1}\otimes\cdots\otimes\CF_{i_{\ell(w)}}$ depends only on $w$.
So we can denote this $\BC_q[G]$-module by $\CF_w$.
It is also verified in \cite{S} that any irreducible unitarizable $\BC_q[G]$-module is isomorphic to the tensor product $\CF_w\otimes \BC_h$ for $w\in W$, $h\in H_{\rm cpt}$.

As for further development of the theory of $\BC_q[G]$-modules we refer to Joseph \cite{J}, Yakimov \cite{Y}.

Quite recently the above work of Soibelman has been taken up again by 
Kuniba, Okado, Yamada \cite {KOY}.
Let $w_0\in W$ be the longest element.
Note that for each reduced expression $w_0=s_{i_1}\cdots s_{i_{\ell(w_0)}}$ of $w_0$ we have a basis 
\[
\CB_{i_1,\dots, i_{\ell(w_0)}}=
\{m_{n_1}\otimes\cdots \otimes m_{n_{\ell(w_0)}}\mid n_1,\dots, n_{\ell(w_0)}\geqq0\}
\]
of $\CF_{w_0}=\CF_{i_1}\otimes\cdots\otimes\CF_{i_{\ell(w_0)}}$
parametrized by the set of $\ell(w_0)$-tuples $(n_1,\dots,n_{\ell(w_0)})$ of non-negative integers.
On the other hand, by Lusztig's result,
for each reduced expression $w_0=s_{i_1}\cdots s_{i_{\ell(w_0)}}$ of $w_0$ we have a PBW-type basis 
$\CB'_{i_1,\dots, i_{\ell(w_0)}}$
of the positive part $U_q(\Gn^+)$ of $U_q(\Gg)$ parametrized by the set of $\ell(w_0)$-tuples of non-negative integers.
Kuniba, Okado, Yamada observed in \cite {KOY} that for two reduced expressions 
$w_0=s_{i_1}\cdots s_{i_{\ell(w_0))}}=s_{j_1}\cdots s_{j_{\ell(w_0)}}$ of $w_0$
the transition matrix between $\CB_{i_1,\dots, i_{\ell(w_0)}}$ and 
$\CB_{j_1,\dots, j_{\ell(w_0)}}$
coincides with the
transition matrix between $\CB'_{i_1,\dots, i_{\ell(w_0)}}$ and 
$\CB'_{j_1,\dots, j_{\ell(w_0)}}$ up to  a normalization factor.
They proved this fact partly using a case-by-case argument in rank two cases.

In the present paper we give a new approach to the results of Soibelman \cite{S} and Kuniba, Okado, Yamada \cite{KOY}.
We work over the rational function field $\BF=\BQ(q)$; however, our arguments also hold in a more general situation (see Section \ref{sec:comment} below).
Let $\Gg=\Gn^+\oplus\Gh\oplus\Gn^-$ be the triangular decomposition of $\Gg$.
Let $N^\pm$ and $B^\pm$ be the subgroups of $G$ corresponding to $\Gn^\pm$ and $\Gh\oplus\Gn^{\pm}$ respectively.
For each $w\in W$ we define a $\BC_q[G]$-module $\overline{\CM}_w$ as the induced module from a one-dimensional representation of a certain subalgebra $\BC_q[(N^-\cap wN^+w^{-1})\backslash G]$ of $\BC_q[G]$.
We will show that $\overline{\CM}_w$ is an irreducible $\BC_q[G]$-module and that for each reduced expression $w=s_{i_1}\cdots s_{i_{\ell(w)}}$ we have a decomposition 
$
\overline{\CM}_w\cong
{\CF}_{i_1}\otimes\cdots\otimes {\CF}_{i_{\ell(w)}}
$
into tensor product.
This gives a new proof of Soibelman's result.
We will also show that there exists a natural linear isomorphism 
\begin{equation}\label{eq:ISOM}
\overline{\CM}_w \cong
U_q(\Gn^+\cap w\Gn^-),
\end{equation}
where $U_q(\Gn^+\cap w\Gn^-)$ is a certain subalgebra of $U_q(\Gg)$ defined in terms of Lusztig's braid group action (see De Concini, Kac, Procesi \cite{DKP}, Lusztig \cite{Lbook}).
From this we obtain (in the case $w=w_0$) the result of Kuniba, Okado, Yamada described above.
As in \cite{KOY} a certain localization of $\BC_q[G]$ plays a crucial role in the proof.
More precisely, for each $w\in W$ we consider the localization $\BC_q[wN^+B^-]$ of $\BC_q[G]$, which is a  $q$-analogue of $\BC[wN^+B^-]$.
In addition to it, we use the Drinfeld pairing between the positive and negative parts of the quantized enveloping algebra in constructing the isomorphism \eqref{eq:ISOM}.
A crucial difference between Soibelman's approach and our approach is that, 
instead of the decomposition
\[
\BC_q[G]=\BC_q[G/N^+]\BC_q[G/N^-]
\]
used by Soibelman, 
we utilize the $q$-analogue of the decomposition
\[
\BC[B^-w_0B^-]
\cong
\BC[B^-w_0B^-/B^-]
\otimes
\BC[N^-\backslash B^-w_0B^-]
\]
in the case $w=w_0$, and 
\begin{align*}
\BC[wN^+B^-]
\cong&
\BC[(wN^+w^{-1}\cap N^-)]
\otimes
\BC[(wN^+w^{-1}\cap N^+)wB^-]
\\
\cong&
\BC[(wN^+w^{-1}\cap N^-)]
\otimes
\BC[(wN^+w^{-1}\cap N^-)\backslash wN^+B^-],
\end{align*}
for general $w$, 
which is more natural from geometric point of view.
As a consequence of our approach, we can also show  easily 
a conjecture of Kuniba, Okado, Yamada \cite[Conjecture 1]{KOY} concerning the action of a certain element of $\BC_q[wN^+B^-]$ on $\overline{\CM}_w$.

We finally note that our results hold true for any symmetrizable Kac-Moody algebra (see Section \ref{sec:comment} below).
We hope this fact will be useful in the investigation of 3-dimensional integrable systems, which was the original motivation of \cite{KOY}.
After writing up the first draft of this paper Yoshiyuki Kimura pointed out to me that Proposition \ref{prop:decomposition} below in the Kac-Moody case is not an obvious fact which is stated as a conjecture in Berenstein and Greenstein \cite[Conjecture 5.5]{BG}.
In the present manuscript we have included a proof of Proposition \ref{prop:decomposition} which works for the Kac-Moody case.
We heard that Kimura also proved it by a different method (see Kimura \cite{K}).

After finishing this work we heard that Yoshihisa Saito \cite{Sa} has obtained similar results by a different method.

\subsection{}
We use the following notation for Hopf algebras throughout the paper.
For a Hopf algebra $H$ over a field $\BK$ we denote its multiplication, comultiplication, counit, antipode by
$m_H:H\otimes_\BK H\to H$, $\Delta_H:H\to H\otimes_\BK H$, $\varepsilon_H:H\to\BK$, $S_H:H\to H$ respectively.
The subscript $H$ is often omitted.
For left $H$-modules $V_0,\cdots, V_m$ we regard
$V_0\otimes_\BK\cdots\otimes_\BK V_m$ as a left $H$-module via the iterated comultiplication $\Delta_m:H\to H^{\otimes m+1}$.
We will occasionally use Sweedler's notation for the comultiplication
\[
\Delta(h)=\sum_{(h)}h_{(0)}\otimes h_{(1)}\qquad(h\in H),
\]
and the iterated comultiplication
\[
\Delta_{m}(h)=\sum_{(h)_{m}}h_{(0)}\otimes \cdots \otimes h_{(m)}\qquad(h\in H).
\]

\subsection{}
I would like to thank Masato Okado and Yoshiyuki Kimura for some useful discussion.

\section{Quantized enveloping algebras}
\subsection{}
Let $G$ be a connected simply-connected simple algebraic group over the complex number field $\BC$.
We take Borel subgroups $B^+$ and $B^-$ such that $H=B^+\cap B^-$ is a maximal torus of $G$, and set $N^\pm=[B^\pm,B^\pm]$.
The Lie algebras of $G$, $B^\pm$, $H$, $N^\pm$ are denoted by
$\Gg$, $\Gb^\pm$, $\Gh$, $\Gn^\pm$ respectively.
We denote by $P$ the  character group of $H$.
Let $\Delta^+$ and $\Delta^-$ be the subsets of $P$ consisting of weights of $\Gn^+$ and $\Gn^-$ respectively, and set
$\Delta=\Delta^+\cup\Delta^-$.
Then $\Delta$ is the set of roots of $\Gg$ with respect to $\Gh$.
We denote by $\Pi=\{\alpha_i\mid i\in I\}$ the set of simple roots of $\Delta$ such that $\Delta^+$ is the set of positive roots.
Let $P^+$ be the set of dominant weights in $P$ with respect to $\Pi$, and set $P^-=-P^+$.
We set
\[
Q=\sum_{i\in I}\BZ\alpha_i,\qquad
Q^+=\sum_{i\in I}\BZ_{\geqq0}\alpha_i,
\]
where $\BZ_{\geqq0}$ denotes the set of non-negative integers.
The Weyl group $W=N_G(H)/H$ naturally acts on $P$ and $Q$.
By differentiation we will regard $P$ as a $\BZ$-lattice of $\Gh^*$ in the following.
We denote by 
\[
(\;,\;):\Gh^*\times\Gh^*\to\BC
\]
the $W$-invariant non-degenerate symmetric bilinear form such that 
$(\alpha,\alpha)=2$ for short roots $\alpha$.
For $\alpha\in\Delta$ we set
$\alpha^\vee=2\alpha/(\alpha,\alpha)$.
As a subgroup of $GL(\Gh^*)$ the Weyl group $W$ is generated by the simple reflections $s_i\;(i\in I)$ given by
$s_i(\lambda)=\lambda-(\lambda,\alpha_i^\vee)\alpha_i$\; $(\lambda\in\Gh^*)$.
We denote by $\ell:W\to\BZ_{\geqq0}$ the length function with respect to the generating set $\{s_i\mid i\in I\}$ of $W$.
The longest element of $W$ is denoted by $w_0$.
For $w\in W$ we set 
\[
\CI_w=\{(i_1,\dots, i_{\ell(w)})\in I^{\ell(w)}\mid w=
s_{i_1}\cdots s_{i_{\ell(w)}}\}.
\]
\subsection{}
For $n\in\BZ$ we set
\[
[n]_q=\frac{q^n-q^{-n}}{q-q^{-1}}\in\BZ[q,q^{-1}].
\]
For $m\in\BZ_{\geqq0}$ we set
\[
[m]_q!=[m]_q[m-1]_q\cdots[1]_q.
\]
For $m, n\in\BZ$ with $m\geqq0$
we set
\[
\begin{bmatrix}
n
\\
m
\end{bmatrix}_q
=\frac{[n]_q[n-1]_q\cdots[n-m+1]_q}{[m]_q[m-1]_q\cdots[1]_q}
\in\BZ[q,q^{-1}].
\]
For $i\in I$ we set $q_i=q^{(\alpha_i,\alpha_i)/2}$, and for $i, j\in I$ we further set $a_{ij}=(\alpha_i^\vee,\alpha_j)$.

We denote by $U=U_q(\Gg)$ the quantized enveloping algebra of $\Gg$.
Namely, it is an associative algebra over $\BF=\BQ(q)$ generated by the elements
$k_i^{\pm1}$, $e_i$, $f_i$\;$(i\in I)$
satisfying the defining relations
\begin{align*}
&k_ik_i^{-1}=k_i^{-1}k_i=1\qquad&(i\in I),
\\
&k_ik_j=k_jk_i&(i, j\in I),
\\
&k_ie_jk_i^{-1}=q_i^{a_{ij}}e_j
&(i, j\in I),
\\
&k_if_jk_i^{-1}=q_i^{-a_{ij}}e_j
&(i, j\in I),
\\
&e_if_j-f_je_i=\delta_{ij}
\frac{k_i-k_i^{-1}}{q_i-q_i^{-1}}
&(i,j\in I),
\\
&\sum_{m=0}^{1-a_{ij}}
(-1)^m
e_i^{(1-a_{ij}-m)}e_je_i^{(m)}
=0
\quad&(i, j\in I, i\ne j),
\\
&\sum_{m=0}^{1-a_{ij}}
(-1)^m
f_i^{(1-a_{ij}-m)}f_jf_i^{(m)}
=0
\quad&(i, j\in I, i\ne j),
\end{align*}
where
\[
e_i^{(m)}=\frac1{[m]_{q_i}!}e_i^m,
\qquad
f_i^{(m)}=\frac1{[m]_{q_i}!}f_i^m
\qquad(m\in\BZ_{\geqq0}).
\]
We endow $U$ with the Hopf algebra structure given by
\[
\Delta(k_i^{\pm1})=k_i^{\pm1}\otimes k_i^{\pm1},\quad
\Delta(e_i)=e_i\otimes1+k_i\otimes e_i,\quad
\Delta(f_i)=f_i\otimes k_i^{-1}+1\otimes f_i,
\]
\[
\varepsilon(k_i^{\pm1})=1,\quad\varepsilon(e_i)=\varepsilon(f_i)=0,
\]
\[
S(k_i^{\pm1})=k_i^{\mp1},\quad
S(e_i)=-k_i^{-1}e_i,\quad
S(f_i)=-f_ik_i.
\]
We define subalgebras $U^0=U_q(\Gh)$, $U^+=U_q(\Gn^+)$, $U^-=U_q(\Gn^-)$, $U^{\geqq0}=U_q(\Gb^+)$, $U^{\leqq0}=U_q(\Gb^-)$ by
\[
U^0=\langle k_i^{\pm1}\mid i\in I\rangle,\quad
U^{+}=\langle e_i\mid i\in I\rangle,
\qquad
U^{-}=\langle f_i\mid i\in I\rangle,
\]
\[
U^{\geqq0}=\langle k_i^{\pm1}, e_i\mid i\in I\rangle,
\quad
U^{\leqq0}=\langle k_i^{\pm1}, f_i\mid i\in I\rangle,
\]
respectively.
Then $U^0$, $U^{\geqq0}$, $U^{\leqq0}$ are Hopf subalgebras.
The multiplication of $U$ induces isomorphisms
\[
U\cong U^+\otimes U^0\otimes U^-\cong U^-\otimes U^0\otimes U^+,
\]
\[
U^{\geqq0}\cong U^0\otimes U^+\cong U^+\otimes U^0,
\qquad
U^{\leqq0}\cong U^0\otimes U^-\cong U^-\otimes U^0.
\]
\begin{remark}
In this paper $\otimes_\BF$ is often written as $\otimes$.
\end{remark}
For $\gamma=\sum_{i\in I}m_i\alpha_i\in Q$ we set
\[
k_\gamma=\prod_{i\in I}k_i^{m_i}\in U^0.
\]
Then we have
$U^0=\bigoplus_{\gamma\in Q}\BF k_\gamma$, and hence $U^0$ is isomorphic to the group algebra of $Q$.
For $\gamma\in Q^+$ we define 
$U^\pm_{\pm\gamma}$ by
\[
U^\pm_{\pm\gamma}=
\{u\in U^{\pm}\mid
k_iuk_i^{-1}=q_i^{\pm(\alpha_i^\vee,\gamma)}u\;(i\in I)\}.
\]
Then we have $U^\pm=\bigoplus_{\gamma\in Q^+}U^{\pm}_{\pm\gamma}$.
\subsection{}
There exists a unique bilinear map
\begin{equation}
\tau:U^{\geqq0}\times U^{\leqq0}\to\BF
\end{equation}
characterized by the properties:
\begin{align}
\label{eq:Dr1}
&(\tau\otimes\tau)(\Delta(x),y_2\otimes y_1)=\tau(x,y_1y_2)
\quad(x\in U^{\geqq0},\;y_1, y_2\in U^{\leqq0}),
\\
\label{eq:Dr2}
&(\tau\otimes\tau)(x_1\otimes x_2,\Delta(y))=\tau(x_1x_2,y)
\;\;\;(x_1, x_2\in U^{\geqq0},\;y\in U^{\leqq0}),
\\
\label{eq:Dr3}
&\tau(e_i,k_\lambda)=\tau(k_\lambda,f_i)=0
\qquad(i\in I,\; \lambda\in Q),
\\
\label{eq:Dr4}
&\tau(k_\lambda,k_\mu)=q^{(\lambda,\mu)}
\qquad(\lambda, \mu\in Q),
\\
\label{eq:Dr5}
&\tau(e_i,f_j)=\delta_{ij}\frac1{q_i-q_i^{-1}}
\qquad(i, j\in I).
\end{align}
We call it the Drinfeld pairing.
It also satisfies the following properties:
\begin{align}
\label{eq:Dr6}
&\tau(Sx,Sy)=\tau(x,y)
\qquad(x\in U^{\geqq0},y\in U^{\leqq0}),
\\
\label{eq:Dr7}
&\tau(k_\lambda x,k_\mu y)=\tau(x,y)q^{(\lambda,\mu)}
\qquad(x\in U^+, y\in U^-),
\\
\label{eq:Dr8}
&\gamma, \delta\in Q^+,\;\gamma\ne\delta\;\Longrightarrow\;
\tau|_{U^+_{\gamma}\times U^-_{-\delta}}=0,
\\
\label{eq:Dr9}
&\gamma\in Q^+\;\Longrightarrow\;
\text{
$\tau|_{U^+_{\gamma}\times U^-_{-\gamma}}$ is non-degenerate}.
\end{align}

\subsection{}
For a $U^0$-module $M$ and $\lambda\in P$ we set
\[
M_\lambda=\{m\in M\mid k_im=q_i^{(\lambda,\alpha_i^\vee)}m\;(i\in I)\}.
\]
We say that a $U^0$-module $M$ is a weight module if $M=\bigoplus_{\lambda\in P}M_\lambda$.

For a $U$-module $V$ we regard $V^*=\Hom_\BF(V,\BF)$ as a right $U$-module by
\[
\langle v^*u,v\rangle=\langle v^*,uv\rangle
\qquad
(v\in V, v^*\in V^*, u\in U).
\]
Denote by $\Mod_0(U)$ (resp.\ $\Mod^r_0(U)$) the category of finite-dimensional left (resp.\ right) $U$-modules which is a weight module as a $U^0$-module.
Here, a right $U^0$-module $M$ is regarded as a left $U^0$-module by 
\[
tm:=mt\qquad(m\in M, t\in U^0).
\]
If $V\in\Mod_0(U)$, then we have $V^*\in\Mod^r_0(U)$.
This gives an anti-equivalence $\Mod_0(U)\ni V\mapsto V^\star\in\Mod^r_0(U)$ of categories.

For $\lambda\in P^-$ we denote by $V(\lambda)$ the finite-dimensional irreducible (left) $U$-module with lowest weight $\lambda$.
Namely, $V(\lambda)$ is a finite-dimensional $U$-module generated by a non-zero element $v_\lambda\in V(\lambda)_\lambda$ satisfying $f_iv_\lambda=0\;(i\in I)$.
Then $\Mod_0(U)$ is a semisimple category with simple objects $V(\lambda)$ for $\lambda\in P^-$ (see Lusztig \cite{Lbook}).
For $\lambda\in P^-$ we set $V^*(\lambda)=(V(\lambda))^*$, and define $v^*_\lambda\in V^*(\lambda)_\lambda$ by $\langle v^*_\lambda, v_\lambda\rangle=1$.

The following well known fact will be used occasionally in this paper
(see e.g. \cite[Lemma 2.1]{T}).

\begin{proposition}
\label{prop:UV}
Let $\gamma\in Q^+$.
\begin{itemize}
\item[(i)] 
For sufficiently small $\lambda\in P^-$
the linear map
$U_{\gamma}^+\ni x\mapsto xv_\lambda\in V(\lambda)_{\lambda+\gamma}$ is bijective.
\item[(ii)] 
For sufficiently small $\lambda\in P^-$
the linear map
$U_{-\gamma}^-\ni y\mapsto v^*_\lambda y\in V^*(\lambda)_{\lambda+\gamma}$ is bijective.
\end{itemize}
\end{proposition}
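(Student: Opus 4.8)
The plan is to show that each map is surjective for every $\lambda\in P^-$, to reduce bijectivity to a dimension count, to prove (i) by comparing $V(\lambda)$ with a Verma module, and then to deduce (ii) from (i) by a duality argument involving the Drinfeld pairing. Well-definedness is immediate: for $x\in U^+_\gamma$ one has $k_i(xv_\lambda)=q_i^{(\lambda+\gamma,\alpha_i^\vee)}xv_\lambda$, so $xv_\lambda\in V(\lambda)_{\lambda+\gamma}$, and similarly $v^*_\lambda y\in V^*(\lambda)_{\lambda+\gamma}$ for $y\in U^-_{-\gamma}$.

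\emph{Surjectivity.} Since $V(\lambda)$ is irreducible it is generated by $v_\lambda$; using the triangular decomposition $U\cong U^+\otimes U^0\otimes U^-$ together with $f_iv_\lambda=0$ and $U^0v_\lambda=\BF v_\lambda$ we get $V(\lambda)=Uv_\lambda=U^+v_\lambda$, hence $V(\lambda)_{\lambda+\gamma}=U^+_\gamma v_\lambda$. Dually, $v^*_\lambda e_i=0$ because $e_iV(\lambda)_{\lambda-\alpha_i}=0$; the same reasoning gives $V^*(\lambda)=v^*_\lambda U^-$ and $V^*(\lambda)_{\lambda+\gamma}=v^*_\lambda U^-_{-\gamma}$. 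Thus both maps are onto for all $\lambda$, and each is bijective if and only if its source and target have the same (finite) dimension.

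\emph{Proof of (i).} Let $M(\lambda)$ denote the Verma module with lowest weight $\lambda$; by PBW it is free of rank one over $U^+$, so $\dim_\BF M(\lambda)_{\lambda+\gamma}=\dim_\BF U^+_\gamma$, and $V(\lambda)=M(\lambda)/N(\lambda)$ with $N(\lambda)$ the maximal proper submodule. A standard integrability argument (the vectors annihilated by each $f_i$ and by a high power of each $e_i$ form a submodule, the corresponding quotient of $M(\lambda)$ has weights bounded inside $\lambda+Q^+$ and is therefore finite-dimensional, hence semisimple, hence isomorphic to $V(\lambda)$) identifies $N(\lambda)=\sum_{i\in I}U\,e_i^{(1-(\lambda,\alpha_i^\vee))}v_\lambda$. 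Writing $\gamma=\sum_i m_i\alpha_i$, the weight-$(\lambda+\gamma)$ part of $N(\lambda)$ equals $\sum_i U^+_{\gamma-(1-(\lambda,\alpha_i^\vee))\alpha_i}\,e_i^{(1-(\lambda,\alpha_i^\vee))}v_\lambda$, which vanishes as soon as $(\lambda,\alpha_i^\vee)<1-m_i$ for all $i\in I$; for such $\lambda$ one gets $\dim_\BF V(\lambda)_{\lambda+\gamma}=\dim_\BF M(\lambda)_{\lambda+\gamma}=\dim_\BF U^+_\gamma$, proving (i). (Alternatively, one may invoke Lusztig's quantum Weyl--Kac character formula and observe that all of its non-leading terms contribute $0$ to the multiplicity of $\lambda+\gamma$ once $\lambda$ is deep enough in $P^-$.)

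\emph{Proof of (ii) and main obstacle.} The canonical pairing $V^*(\lambda)_{\lambda+\gamma}\times V(\lambda)_{\lambda+\gamma}\to\BF$ is perfect. Pulling it back along the two surjections above gives a bilinear form $B_\lambda(x,y)=\langle v^*_\lambda y,\,xv_\lambda\rangle$ on $U^+_\gamma\times U^-_{-\gamma}$ whose left and right radicals are exactly the kernels of those two surjections; thus the map in (i) is bijective iff $B_\lambda$ is left non-degenerate, and the map in (ii) is bijective iff $B_\lambda$ is right non-degenerate. By \eqref{eq:Dr9} the pairing $\tau|_{U^+_\gamma\times U^-_{-\gamma}}$ is non-degenerate, so in particular $\dim_\BF U^+_\gamma=\dim_\BF U^-_{-\gamma}$; hence once (i) provides left non-degeneracy of $B_\lambda$, right non-degeneracy follows automatically, which is (ii). (One can also check directly that $B_\lambda$ is a nonzero scalar multiple of $(x,y)\mapsto\tau(x,Sy)$, so that (ii) becomes a literal consequence of \eqref{eq:Dr9}; but the dimension argument suffices.) Everything here is formal except the identification of $N(\lambda)$ — equivalently the vanishing of the non-leading Weyl--Kac terms — together with the extraction of the explicit threshold ``$\lambda$ sufficiently small depending on $\gamma$''; this is where all the genuine content lies, the remainder being bookkeeping with the triangular decomposition and the Drinfeld pairing.
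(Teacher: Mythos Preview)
The paper does not actually prove this proposition: it records it as a ``well known fact'' and refers to \cite[Lemma~2.1]{T}. Your argument is the standard one and is essentially correct; there is nothing to compare it against in the paper itself.

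Two minor points are worth tightening. First, when you pass from $N(\lambda)=\sum_i U\,e_i^{(n_i)}v_\lambda$ (with $n_i=1-(\lambda,\alpha_i^\vee)$) to the statement that its $(\lambda+\gamma)$-weight space is $\sum_i U^+_{\gamma-n_i\alpha_i}\,e_i^{(n_i)}v_\lambda$, you are implicitly using that each $e_i^{(n_i)}v_\lambda$ is a singular vector, i.e.\ $f_j\,e_i^{(n_i)}v_\lambda=0$ for all $j$; this is the standard $U_{q_i}(\Gsl_2)$ calculation, but it is exactly what forces $U\,e_i^{(n_i)}v_\lambda=U^+\,e_i^{(n_i)}v_\lambda$, so it deserves a sentence. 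Second, the parenthetical claim that $B_\lambda$ is a nonzero scalar multiple of $(x,y)\mapsto\tau(x,Sy)$ is not literally true: the form $B_\lambda(x,y)=\langle v^*_\lambda,yx\,v_\lambda\rangle$ genuinely depends on $\lambda$ through the $U^0$-component of $yx$, and only agrees with a Drinfeld-type pairing in a limiting sense. Since you correctly note that the dimension argument via \eqref{eq:Dr9} (or simply via the involution $e_i\leftrightarrow f_i$) already suffices, you should drop that aside.
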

\begin{remark}
In this paper the expression 
``for sufficiently small $\lambda\in P^- ...$''
means that
``
there exists some $\mu\in P^-$ such that for any $\lambda\in\mu+P^-$ ...''.
\end{remark}

\subsection{}
For $i\in I$ and $M\in\Mod_0(U)$ we denote by $\dT_i, \hT_i\in GL(M)$ the operators denoted by $T''_{i,1}$ and $T''_{i,-1}$ respectively in 
\cite{Lbook}.
We have also algebra automorphisms $\dT_i$,  $\hT_i$ of $U$ satisfying
\[
\dT_i(um)=\dT_i(u)\dT_i(m), \qquad
\hT_i(um)=\hT_i(u)\hT_i(m)
\]
for $u\in U$, $m\in M\in\Mod_0(U)$.
They are given by
\begin{align*}
\dT_i(e_j)
=&
\begin{cases}
-f_ik_i\quad&(j=i)\\
\displaystyle{\sum_{r=0}^{-a_{ij}}}(-1)^{r}q_i^{-r}e_i^{(-a_{ij}-r)}e_je_i^{(r)}
\quad&(j\ne i),
\end{cases}
\\
\dT_i(f_j)
=&
\begin{cases}
-k_i^{-1}e_i\quad&(j=i)\\
\displaystyle{\sum_{r=0}^{-a_{ij}}}(-1)^{-a_{ij}-r}q_i^{-a_{ij}-r}f_i^{(-a_{ij}-r)}f_jf_i^{(r)}
\quad&(j\ne i),
\end{cases}
\\
\hT_i(e_j)
=&
\begin{cases}
-f_ik_i^{-1}\quad&(j=i)\\
\displaystyle{\sum_{r=0}^{-a_{ij}}}(-1)^{r}q_i^{r}e_i^{(-a_{ij}-r)}e_je_i^{(r)}
\quad&(j\ne i),
\end{cases}
\\
\hT_i(f_j)
=&
\begin{cases}
-k_ie_i\quad&(j=i)\\
\displaystyle{\sum_{r=0}^{-a_{ij}}}(-1)^{-a_{ij}-r}q_i^{-(-a_{ij}-r)}f_i^{(-a_{ij}-r)}f_jf_i^{(r)}
\quad&(j\ne i),
\end{cases}
\\
\dT_i(k_\gamma)
=&
\hT_i(k_\gamma)
=
k_{s_i\gamma}.
\end{align*}
By \cite{Lbook} both
$\{\dT_i\}_{i\in I}$  and $\{\hT_i\}_{i\in I}$ satisfy the braid relation, and hence we obtain the operators
$\{\dT_w\}_{w\in W}$,  $\{\hT_w\}_{w\in W}$ given by
\[
\dT_w=\dT_{i_1}\cdots\dT_{i_{\ell(w)}},
\quad
\hT_w=\hT_{i_1}\cdots\hT_{i_{\ell(w)}}
\qquad(i_1,\cdots, i_{\ell(w)})\in\CI_w).
\]
By the description of $\dT_i$, $\hT_i$ as automorphisms of $U$ we have
\begin{equation}
\label{eq:epsT}
\varepsilon(\dT_w(u))=
\varepsilon(\hT_w(u))=
\varepsilon(u)
\qquad(w\in W, u\in U).
\end{equation}

For $w\in W$ and $M\in\Mod^r_0(U)$ we define a right action of $\dT_w$ (resp.\ $\hT_w$) on $M$ by
\[
\langle m\dT_w,m^*\rangle=\langle m,\dT_wm^*\rangle
\qquad
(\text{resp.}\;\;
\langle m\hT_w,m^*\rangle=\langle m,\hT_wm^*\rangle)
\]
for $m\in M$, $m^*\in M^*$.
We can easily check the following fact.
\begin{lemma}
\label{lem:dThT}
Let $w\in W$.
Then as algebra automorphisms of $U$ we have
$\hT_w=S^{-1}\dT_wS$.
\end{lemma}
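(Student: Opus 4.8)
The statement to prove is Lemma \ref{lem:dThT}: as algebra automorphisms of $U$, one has $\hT_w = S^{-1}\dT_w S$. Since both $\hT_w$ and $S^{-1}\dT_w S$ are algebra automorphisms, and $U$ is generated by $k_i^{\pm1}, e_i, f_i$ $(i\in I)$, it suffices to check the identity on these generators. Moreover, since both sides are compatible with taking products $\dT_w = \dT_{i_1}\cdots\dT_{i_{\ell(w)}}$ and $\hT_w=\hT_{i_1}\cdots\hT_{i_{\ell(w)}}$, and $S^{-1}(\dT_{i_1}\cdots\dT_{i_{\ell(w)}})S = (S^{-1}\dT_{i_1}S)\cdots(S^{-1}\dT_{i_{\ell(w)}}S)$, it is enough to treat the rank-one case: I would reduce to showing $\hT_i = S^{-1}\dT_i S$ for each $i\in I$, and then the general case follows by composing along any reduced expression $(i_1,\dots,i_{\ell(w)})\in\CI_w$.

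For the rank-one check, I would simply evaluate both $S^{-1}\dT_i S$ and $\hT_i$ on $k_\gamma$, $e_j$, $f_j$ using the explicit formulas for $\dT_i$ and the antipode $S$ recalled in the excerpt. On $k_\gamma$: $S(k_\gamma)=k_{-\gamma}$, so $S^{-1}\dT_i S(k_\gamma) = S^{-1}(k_{-s_i\gamma}) = k_{s_i\gamma} = \hT_i(k_\gamma)$, matching. On $e_i$: $S^{-1}\dT_i S(e_i) = S^{-1}\dT_i(-k_i^{-1}e_i) = S^{-1}\bigl(-k_{s_i\alpha_i}^{-1}(-f_ik_i)\bigr) = S^{-1}(k_i^{-1} f_i k_i \cdot k_{\alpha_i}^{\pm})$ — here I would just push the scalars/group-like factors through carefully using $k_i e_i k_i^{-1}=q_i^2 e_i$ etc., landing on $-f_i k_i^{-1}$, which is exactly $\hT_i(e_i)$. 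The $j\ne i$ cases, and the $f_j$ cases, are the same kind of bookkeeping: apply $S$ to the divided-power expression $\dT_i(e_j)=\sum_r (-1)^r q_i^{-r} e_i^{(-a_{ij}-r)} e_j e_i^{(r)}$, use $S(e_i^{(m)}) = $ (a known sign/power times a reversed divided power of $k_i^{-1}e_i$), and reindex the sum; the resulting change $q_i^{-r}\mapsto q_i^{r}$ in the coefficients is precisely what turns the $\dT_i$-formula into the $\hT_i$-formula.

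The only mildly delicate point — the \emph{main obstacle}, such as it is — is keeping track of the $k_i$-powers and $q_i$-powers when moving $S$ (or $S^{-1}$) past products of $e_i$'s, $e_j$, and $k_i^{\pm1}$, since $S$ is an anti-automorphism and the commutation relations introduce powers of $q_i$ that must reproduce the sign/power pattern of $\hT_i$ exactly. In practice this is a finite, purely mechanical verification, so I would present the $k_\gamma$ and $e_i$ (resp.\ $f_i$) cases in a line or two each, indicate that the $j\ne i$ cases follow from applying $S$ to the divided-power formulas together with the identity $S(e_i^{(m)})$ in terms of $\hf_i$-type monomials, and remark that alternatively one can avoid all computation by using the characterization of $\dT_i,\hT_i$ via their action on $\Mod_0(U)$ and $\Mod_0^r(U)$ together with the compatibility of $S$ with duality $V\mapsto V^\star$. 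That module-theoretic route is in fact the cleanest: one knows the right action of $\dT_w$ on $M\in\Mod_0^r(U)$ is defined by $\langle m\dT_w, m^*\rangle = \langle m, \dT_w m^*\rangle$, and unwinding this against the $U$-module structure on $V^\star$ (where the $U$-action is twisted by $S$) directly yields $\hT_w = S^{-1}\dT_w S$ on generators without any $q$-power bookkeeping.
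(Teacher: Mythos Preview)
Your approach is correct and is exactly what the paper intends: it says only ``We can easily check the following fact'' and gives no further argument, so the reduction to simple reflections via $\hT_w=\hT_{i_1}\cdots\hT_{i_m}$ and $S^{-1}\dT_wS=(S^{-1}\dT_{i_1}S)\cdots(S^{-1}\dT_{i_m}S)$, followed by a direct check of $\hT_i=S^{-1}\dT_iS$ on the generators $k_\gamma,e_j,f_j$ using the explicit formulas listed in the paper, is precisely the intended verification. Your $k_\gamma$ and $e_i$ computations are right (the latter cleans up to $S^{-1}(k_if_ik_i)=-f_ik_i^{-1}=\hT_i(e_i)$), and the $j\ne i$ cases are, as you say, the same routine reindexing after applying the anti-automorphism $S$ to the divided-power sums.
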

Let 
$w\in W$ and 
$\Bi=(i_1,\cdots, i_m)\in\CI_w$.
For $r=1,\dots, m$ set
\begin{align*}
&
k_{\Bi,r}=k_{s_{i_1}\cdots s_{i_{r-1}}\alpha_{i_r}},
\\
&\de_{\Bi,r}=\dT_{i_1}\cdots\dT_{i_{r-1}}(e_{i_r}),\qquad
\df_{\Bi,r}=\dT_{i_1}\cdots\dT_{i_{r-1}}(f_{i_r}),
\\
&\te_{\Bi,r}=\dT_{i_m}^{-1}\cdots\dT_{i_{r+1}}^{-1}(e_{i_r}),\qquad
\tf_{\Bi,r}=\dT_{i_m}^{-1}\cdots\dT_{i_{r+1}}^{-1}(f_{i_r}),
\\
&\he_{\Bi,r}=\hT_{i_1}\cdots\hT_{i_{r-1}}(e_{i_r}),\qquad
\hf_{\Bi,r}=\hT_{i_1}\cdots\hT_{i_{r-1}}(f_{i_r}).
\end{align*}
By \cite{Lbook} we have
$\de_{\Bi,r}, \te_{\Bi,r}, \he_{\Bi,r}\in U^+$, 
$\df_{\Bi,r}, \tf_{\Bi,r}, \hf_{\Bi,r}\in U^-$.
For $n\in\BZ_{\geqq0}$ set
\begin{align*}
&\de_{\Bi,r}^{(n)}=\dT_{i_1}\cdots\dT_{i_{r-1}}(e_{i_r}^{(n)}),\qquad
\tf_{\Bi,r}^{(n)}=\dT_{i_m}^{-1}\cdots\dT_{i_{r+1}}^{-1}(f_{i_r}^{(n)}),\\&
\he_{\Bi,r}^{(n)}=\hT_{i_1}\cdots\hT_{i_{r-1}}(e_{i_r}^{(n)}),
\end{align*}
and for 
$\Bn=(n_1,\dots, n_m)\in(\BZ_{\geqq0})^m$ set
\begin{align*}
&\de_{\Bi}^{(\Bn)}=
\de_{\Bi,m}^{(n_m)}\cdots\de_{\Bi,1}^{(n_1)}
,\qquad
\df_{\Bi}^\Bn=\df_{\Bi,m}^{n_m}\cdots\df_{\Bi,1}^{n_1},
\\
&\te_{\Bi}^{\Bn}=
\te_{\Bi,1}^{n_1}\cdots\te_{\Bi,m}^{n_m}
,\qquad
\tf_{\Bi}^{(\Bn)}=\tf_{\Bi,1}^{(n_1)}\cdots\tf_{\Bi,m}^{(n_m)},
\\
&\he_{\Bi}^{(\Bn)}=
\he_{\Bi,m}^{(n_m)}\cdots\he_{\Bi,1}^{(n_1)}
,\qquad
\hf_{\Bi}^\Bn=\hf_{\Bi,m}^{n_m}\cdots\hf_{\Bi,1}^{n_1}.
\end{align*}
\begin{proposition}[\cite{LS}]
\label{prop:base2a}
Let $w\in W$ and $\Bi\in\CI_w$.
Then we have
\[
\tau(\he_\Bi^{(\Bn)},\hf_\Bi^{\Bn'})
=\delta_{\Bn,\Bn'}
\prod_{r=1}^{\ell(w)}
c_{q_{i_r}}(n_r),
\]
where
\[
c_q(n)=[n]!q^{-n(n-1)/2}(q-q^{-1})^{-n}.
\]
\end{proposition}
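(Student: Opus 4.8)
The plan is to induct on $m=\ell(w)$, the statement being trivial for $m=0$. Fix $\Bi=(i_1,\dots,i_m)\in\CI_w$, write $i=i_1$, $w'=s_iw$ and $\Bi'=(i_2,\dots,i_m)\in\CI_{w'}$, so that $\ell(w')=m-1$. From the definitions one has $\he_{\Bi,r}=\hT_i(\he_{\Bi',r-1})$ and $\hf_{\Bi,r}=\hT_i(\hf_{\Bi',r-1})$ for $r\geq2$, and hence
\[
\he_\Bi^{(\Bn)}=\hT_i\!\bigl(\he_{\Bi'}^{(n_2,\dots,n_m)}\bigr)\,e_i^{(n_1)},
\qquad
\hf_\Bi^{\Bn'}=\hT_i\!\bigl(\hf_{\Bi'}^{(n'_2,\dots,n'_m)}\bigr)\,f_i^{\,n'_1}.
\]
Since $\ell(s_iw')=\ell(w')+1$, all roots occurring in $U^+[w']:=\langle\he_{\Bi',1},\dots,\he_{\Bi',m-1}\rangle$ are positive and distinct from $\alpha_i$; by \cite{Lbook} this forces $\hT_i$ to carry $U^+[w']$ into $U^+$ and $U^-[w']$ into $U^-$, so that $\he_\Bi^{(\Bn)}\in U^+$ and $\hf_\Bi^{\Bn'}\in U^-$ are genuine products of elements of $U^\pm$.

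I would carry out the inductive step using three facts. (i) The rank-one identity $\tau\bigl(e_i^{(a)},f_i^{\,b}\bigr)=\delta_{ab}\,c_{q_i}(a)$: the off-diagonal case is \eqref{eq:Dr8}, and the diagonal value follows by induction on $a$ from \eqref{eq:Dr1}, \eqref{eq:Dr5} and the coproduct formula for $e_i^{(a)}$ (equivalently, restrict $\tau$ to the Hopf subalgebra $\cong U_{q_i}(\Gsl_2)$ generated by $e_i,f_i,k_i$). (ii) The factorization $\tau\bigl(\hT_i(u)\,e_i^{(a)},\,\hT_i(v)\,f_i^{\,b}\bigr)=\tau\bigl(\hT_i(u),\hT_i(v)\bigr)\,\tau\bigl(e_i^{(a)},f_i^{\,b}\bigr)$ for $u\in U^+[w']$, $v\in U^-[w']$: expanding the left side by \eqref{eq:Dr1}, the cross terms are those splitting off from $\hT_i(u)$ (resp.\ $\hT_i(v)$) a factor whose weight is a non-zero multiple of $\alpha_i$, and these are annihilated by the weight orthogonality \eqref{eq:Dr8} once one knows, from \cite{Lbook}, that $\hT_i(U^+[w'])$ is not left-divisible by $e_i$ and $\hT_i(U^-[w'])$ not right-divisible by $f_i$. (iii) The braid compatibility $\tau\bigl(\hT_i(u),\hT_i(v)\bigr)=\tau(u,v)$ on $U^+[w']\times U^-[w']$ — I expect this to be the main obstacle.

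Granting (i)--(iii), the induction closes: applying (ii), then (iii), then the inductive hypothesis for $\Bi'$, and finally (i),
\[
\tau\bigl(\he_\Bi^{(\Bn)},\hf_\Bi^{\Bn'}\bigr)
=\tau\bigl(\hT_i\he_{\Bi'}^{(n_2,\dots,n_m)},\hT_i\hf_{\Bi'}^{(n'_2,\dots,n'_m)}\bigr)\,\tau\bigl(e_i^{(n_1)},f_i^{\,n'_1}\bigr)
=\delta_{\Bn,\Bn'}\prod_{r=1}^{m}c_{q_{i_r}}(n_r).
\]
For (iii) itself, the approach I would take is to realize $\tau$ on finite-dimensional modules. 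By Proposition \ref{prop:UV} and \eqref{eq:Dr9}, for $\lambda\in P^-$ sufficiently small the bilinear form $(x,y)\mapsto\langle v_\lambda^*\,y,\ x\,v_\lambda\rangle$ on $U^+_\gamma\times U^-_{-\gamma}$ equals $d_{\lambda,\gamma}\,\tau$ for a non-zero scalar $d_{\lambda,\gamma}$ depending only on $\lambda$ and $\gamma$. Choosing in addition $\lambda$ with $(\lambda,\alpha_i^\vee)=0$, the vector $v_\lambda$ spans a trivial $U_{q_i}(\Gsl_2)$-submodule, so $\hT_i$ fixes $v_\lambda$ and, dually, fixes $v_\lambda^*$; combined with $\hT_i(xm)=\hT_i(x)\hT_i(m)$ this yields $\langle v_\lambda^*\,\hT_i(v),\ \hT_i(u)\,v_\lambda\rangle=\langle v_\lambda^*\,v,\ u\,v_\lambda\rangle$, hence $d_{\lambda,s_i\gamma}\,\tau(\hT_i u,\hT_i v)=d_{\lambda,\gamma}\,\tau(u,v)$ whenever $s_i\gamma\in Q^+$. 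The remaining, delicate point is to see that $d_{\lambda,\gamma}=d_{\lambda,s_i\gamma}$, so that no spurious scalar survives; this should follow by computing $d_{\lambda,\cdot}$ out of the rank-one case. Alternatively one can transport (iii) between the $\hT$- and $\dT$-versions via Lemma \ref{lem:dThT} and \eqref{eq:Dr6} (the latter giving $\tau(S^{-1}a,S^{-1}b)=\tau(a,b)$), whichever proves more convenient to establish directly.
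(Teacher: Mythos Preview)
The paper does not give its own proof of this proposition; it is quoted from \cite{LS}. Your inductive scheme via (i)--(iii) is the standard one and is correct in outline: (i) is a direct rank-one computation, and (ii) follows from the weight orthogonality \eqref{eq:Dr8} together with Lusztig's description of $U^+\cap\hT_i(U^+)$.

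The gap is in your proposed argument for (iii). You assert that for $\lambda\in P^-$ sufficiently small the form $(x,y)\mapsto\langle v_\lambda^*\,y,\,x\,v_\lambda\rangle$ on $U^+_\gamma\times U^-_{-\gamma}$ equals $d_{\lambda,\gamma}\,\tau$ for some scalar $d_{\lambda,\gamma}$. Proposition~\ref{prop:UV} and \eqref{eq:Dr9} only give that both forms are non-degenerate, which does \emph{not} force proportionality once $\dim U^+_\gamma>1$. In type $A_2$ with $\gamma=\alpha_1+\alpha_2$ one computes
\[
\frac{\tau(e_1e_2,\,f_2f_1)}{\tau(e_1e_2,\,f_1f_2)}=q^{-(\alpha_1,\alpha_2)},
\qquad
\frac{\langle v_\lambda^*,\,f_2f_1e_1e_2\,v_\lambda\rangle}{\langle v_\lambda^*,\,f_1f_2e_1e_2\,v_\lambda\rangle}
=\frac{[(\lambda,\alpha_1^\vee)-1]_{q_1}}{[(\lambda,\alpha_1^\vee)]_{q_1}},
\]
and these ratios disagree for generic $\lambda$; so the two forms are not proportional and your module-theoretic reduction collapses. (The further restriction $(\lambda,\alpha_i^\vee)=0$ you impose is also incompatible with ``$\lambda$ sufficiently small'' in the sense needed for Proposition~\ref{prop:UV}.)

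The braid-invariance $\tau(\hT_i u,\hT_i v)=\tau(u,v)$ on $U^+[\dT_i^{-1}]\times U^-[\dT_i^{-1}]$ is a genuine theorem. One proof is in Lusztig's book \cite[Chap.~37--38]{Lbook}, using the axiomatic characterization of the form on $\mathbf f$ and the maps $r_i$, ${}_ir$. The original approach of \cite{LS}, \cite{KR} goes through the quasi-$R$-matrix: the formula of Proposition~\ref{prop:dT} for $\Delta(\dT_i)$, combined with the defining properties \eqref{eq:Dr1}--\eqref{eq:Dr5}, yields the invariance directly. Either of these replaces your step (iii); once (iii) is supplied, your induction closes as written.
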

The following result will be used frequently in this paper.
\begin{proposition}
[\cite{KR}, \cite{LS}, \cite{Lbook}]
\label{prop:dT}
We have
\begin{align*}
\Delta(\dT_i)=&(\dT_i\otimes \dT_i)\exp_{q_i}((q_i-q_i^{-1})f_i\otimes e_i)\\
=&\exp_{q_i}((q_i-q_i^{-1})k_i^{-1}e_i\otimes f_ik_i)(\dT_i\otimes \dT_i),
\end{align*}
where
\[
\exp_q(x)=\sum_{n=0}^\infty\frac{q^{n(n-1)/2}}{[n]!}x^n.
\]
\end{proposition}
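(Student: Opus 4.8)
The plan is to reduce the identity to the rank-one case $\Gg=\Gsl_2$ and then verify it on finite-dimensional irreducibles. Both sides are to be read as operators on $M_1\otimes M_2$ for $M_1,M_2\in\Mod_0(U)$; since $e_i$ and $f_i$ act locally nilpotently on such modules, each occurrence of $\exp_{q_i}$ is a finite sum, so the assertion is meaningful. The operator $\dT_i$ on a module depends only on the action of the subalgebra $\langle e_i,f_i,k_i^{\pm1}\rangle$, which is a copy of $U_{q_i}(\Gsl_2)$, and $\Delta$ restricted to $e_i,f_i,k_i^{\pm1}$ coincides with the coproduct of $U_{q_i}(\Gsl_2)$; so one reduces at once to $\Gg=\Gsl_2$, and from now on I drop the index, writing $q=q_i$ and $\dT=\dT_i$.

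The second equality follows formally from the first. As operators on any module, $\dT x\dT^{-1}$ equals the automorphism $\dT$ evaluated at $x\in U$; hence conjugating $f\otimes e$ by $\dT\otimes\dT$ yields $\dT(f)\otimes\dT(e)=(-k^{-1}e)\otimes(-fk)=k^{-1}e\otimes fk$, and since $\exp_q$ is a power series this conjugation passes through it:
\[
(\dT\otimes\dT)\exp_{q}\big((q-q^{-1})f\otimes e\big)(\dT\otimes\dT)^{-1}=\exp_{q}\big((q-q^{-1})k^{-1}e\otimes fk\big).
\]
Combined with the first equality this gives the second.

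For the first equality, put $\Phi=\dT^{M_1\otimes M_2}$, $\Psi=\dT^{M_1}\otimes\dT^{M_2}$ and $\Xi=\exp_{q}((q-q^{-1})f\otimes e)$; the task is to show $\Phi=\Psi\Xi$. Since $\Mod_0(U)$ is semisimple with simple objects $V(\lambda)$ $(\lambda\in P^-)$, fix decompositions of $M_1$ and $M_2$ into simples. Then $\Phi$ (naturality of $\dT$: Lusztig's formula for it is built from the $e,f$-action and weights, hence commutes with all $U$-module maps, in particular with the inclusions $V(\lambda_j)\otimes V(\mu_k)\hookrightarrow M_1\otimes M_2$), $\Psi$, and $\Xi$ (an expression in $e,f$, hence preserving $U$-submodules) are all block-diagonal for the induced decomposition $M_1\otimes M_2=\bigoplus_{j,k}V(\lambda_j)\otimes V(\mu_k)$, so it suffices to take $M_1=V(\lambda)$ and $M_2=V(\mu)$. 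Recall Lusztig's explicit description of $\dT=T''_{i,1}$ (\cite{Lbook}): on a weight vector $m$ it is a finite sum of terms $(-1)^{b}q^{\bullet}f^{(a)}e^{(b)}f^{(c)}m$, the exponent $\bullet$ a fixed quadratic expression in $a,b,c\ge0$ and these subject to one linear relation fixed by the weight of $m$; on $V(\lambda)$ with lowest weight vector $v_\lambda$ and $N_\lambda=-\langle\lambda,\alpha^\vee\rangle$ this specializes to $\dT(e^{(n)}v_\lambda)=\epsilon_{\lambda,n}\,e^{(N_\lambda-n)}v_\lambda$ with explicit $\epsilon_{\lambda,n}\in\BF^{\times}$. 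Substituting the $q$-binomial formulas for $\Delta(e^{(b)}),\Delta(f^{(a)}),\Delta(f^{(c)})$ into Lusztig's formula computes $\Phi$ on the basis $\{e^{(j)}v_\lambda\otimes e^{(k)}v_\mu\}$ of $V(\lambda)\otimes V(\mu)$; the expansion of $\Xi$ together with the flip formula for $\dT$ on each factor computes $\Psi\Xi$ on the same basis; equating the two turns the claim into an identity among $q$-integers and $q$-binomial coefficients.

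\emph{The main obstacle} is precisely this last identity, where the quadratic powers $q^{n(n-1)/2}$ produced by $\exp_q$, the powers of $q$ carried by the $k$-factors inside the comultiplications, and the normalizing scalars $\epsilon_{\lambda,n}$ must be reconciled. A convenient entry point is the lowest weight vector $v_\lambda\otimes v_\mu$, where $\Xi$ acts as the identity (as $fv_\lambda=0$) and the claim reads $\dT(v_\lambda\otimes v_\mu)=\dT v_\lambda\otimes\dT v_\mu$; this holds because $(V(\lambda)\otimes V(\mu))_{-\lambda-\mu}$ is one-dimensional, spanned by $e^{(N_\lambda)}v_\lambda\otimes e^{(N_\mu)}v_\mu$, so only a two-scalar comparison is needed, via $q$-Vandermonde for the relevant coefficient of $\Delta(e^{(N_\lambda+N_\mu)})$. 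The remaining basis vectors — equivalently the non-top Clebsch--Gordan summands of $V(\lambda)\otimes V(\mu)$ — require the analogous but heavier computation, best organised as a first-order recursion in $j$ with the lowest weight vector as initial datum. A more conceptual alternative is to recognize $\exp_{q}((q-q^{-1})f\otimes e)$ as the rank-one factor of Lusztig's quasi-$R$-matrix $\Theta$ (its coefficients are governed by the reciprocals of the rank-one $\tau$-pairings of Proposition \ref{prop:base2a}) and to deduce the identity from the defining intertwining property of $\Theta$ together with the compatibility of $\dT$ with the bar involution; that route, however, uses machinery not introduced in the present text.
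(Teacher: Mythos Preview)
The paper does not supply its own proof of this proposition; it is stated with attribution to \cite{KR}, \cite{LS}, \cite{Lbook} and used as a black box throughout. So there is no in-text argument to compare against, and your proposal should be judged on its own merits.

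Your strategy is the standard one and the reductions are sound: the restriction to the rank-one subalgebra is legitimate because both $\dT_i$ and the relevant piece of $\Delta$ only see $e_i,f_i,k_i^{\pm1}$; the derivation of the second equality from the first by conjugation is correct (using $\dT_i(f_i)=-k_i^{-1}e_i$, $\dT_i(e_i)=-f_ik_i$ from the paper's explicit formulas); and the block-diagonal reduction to $V(\lambda)\otimes V(\mu)$ is valid since Lusztig's $T''_{i,1}$ is built functorially from the module action and hence respects direct-sum decompositions.

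What remains incomplete is exactly what you flag as the main obstacle: the actual verification of $\Phi=\Psi\Xi$ on $V(\lambda)\otimes V(\mu)$. You sketch the lowest-weight-vector case and assert the rest can be handled by a recursion in $j$ or by recognising $\exp_q((q-q^{-1})f\otimes e)$ as the rank-one quasi-$R$-matrix, but neither route is carried out. The quasi-$R$-matrix route is in fact the cleanest — this is essentially how Lusztig proceeds in \cite[Chapter~5 and \S37--39]{Lbook}, where the identity is a consequence of the characterisation of $\Theta$ by $\Theta\,\overline{\Delta}(u)=\Delta(u)\,\Theta$ together with the compatibility of $\dT_i$ with the bar involution on modules — and since the paper already invokes \cite{Lbook} for the proposition, appealing to that machinery is entirely in keeping with its intent. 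If you want a self-contained proof, the recursion you allude to does work but requires genuine bookkeeping that you have not supplied; as written, your argument is a correct outline rather than a complete proof.
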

\begin{corollary}
\label{cor:dT}
For $w\in W$ and $\Bi=(i_1,\dots, i_m)\in\CI_w$ we have

\begin{align*}
\Delta(\dT_w)
=&
(\dT_w\otimes \dT_w)
\exp_{q_{i_1}}(X_1)
\cdots
\exp_{q_{i_m}}(X_m)
\\
=&\exp_{q_{i_1}}(Y_1)
\cdots
\exp_{q_{i_m}}(Y_m)
(\dT_w\otimes \dT_w),
\\
\Delta(\dT_w^{-1})
=&
\exp_{q_{i_m}^{-1}}(-X_m)
\cdots
\exp_{q_{i_1}^{-1}}(-X_1)
(\dT_w^{-1}\otimes \dT_w^{-1})
\\
=&
(\dT_w^{-1}\otimes \dT_w^{-1})
\exp_{q_{i_m}^{-1}}(-Y_m)
\cdots
\exp_{q_{i_1}^{-1}}(-Y_1),
\end{align*}
where
\begin{align*}
X_r=(q_{i_r}-q_{i_r}^{-1})
\tf_{\Bi,r}\otimes \te_{\Bi,r},\qquad
Y_r=(q_{i_r}-q_{i_r}^{-1})
k_{\Bi,r}^{-1}\de_{\Bi,r}\otimes \df_{\Bi,r}k_{\Bi,r}.
\end{align*}
\end{corollary}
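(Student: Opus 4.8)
The plan is to argue by induction on $m=\ell(w)$, the case $m=1$ being exactly Proposition \ref{prop:dT} (and $m=0$ trivial). Throughout, all operators are regarded as acting on $M\otimes M$ for an arbitrary $M\in\Mod_0(U)$; since $M\otimes M\in\Mod_0(U)$ as well and $\dT_w=\dT_{i_1}\cdots\dT_{i_m}$ holds as operators, we have $\Delta(\dT_w)=\Delta(\dT_{i_1})\cdots\Delta(\dT_{i_m})$ as a composition of operators on $M\otimes M$, and likewise for $\dT_w^{-1}$. Each element of $U\otimes U$ occurring below acts locally nilpotently on $M\otimes M$, being the image of some $f_j\otimes e_j$ under an automorphism $\phi\otimes\phi$ with $\phi$ an algebra automorphism of $U$ preserving $\Mod_0(U)$; hence all the $q$-exponentials are well defined, and we may use $(\phi\otimes\phi)\exp_q(x)(\phi\otimes\phi)^{-1}=\exp_q((\phi\otimes\phi)x(\phi\otimes\phi)^{-1})$ and $\exp_q(x)^{-1}=\exp_{q^{-1}}(-x)$.

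For the first identity I would write $w=s_{i_1}w'$ with $w'=s_{i_2}\cdots s_{i_m}$, so $\Bi'=(i_2,\dots,i_m)\in\CI_{w'}$, and compute $\Delta(\dT_w)=\Delta(\dT_{i_1})\Delta(\dT_{w'})$ by substituting the first formula of Proposition \ref{prop:dT} for $\Delta(\dT_{i_1})$ and the inductive hypothesis for $\Delta(\dT_{w'})$, observing that $\tf_{\Bi',r}=\tf_{\Bi,r}$ and $\te_{\Bi',r}=\te_{\Bi,r}$ for $r\geqq2$ (since $\dT_{i_m}^{-1}\cdots\dT_{i_{r+1}}^{-1}$ is unchanged). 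It then remains to move the factor $\dT_{w'}\otimes\dT_{w'}$ to the left past $\exp_{q_{i_1}}((q_{i_1}-q_{i_1}^{-1})f_{i_1}\otimes e_{i_1})$: conjugating by $(\dT_{w'}\otimes\dT_{w'})^{-1}$ turns this into $\exp_{q_{i_1}}((q_{i_1}-q_{i_1}^{-1})\dT_{w'}^{-1}(f_{i_1})\otimes\dT_{w'}^{-1}(e_{i_1}))$, and since $\dT_{w'}^{-1}=\dT_{i_m}^{-1}\cdots\dT_{i_2}^{-1}$ the two arguments are precisely $\tf_{\Bi,1}$ and $\te_{\Bi,1}$, so this factor equals $\exp_{q_{i_1}}(X_1)$. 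Together with $(\dT_{i_1}\otimes\dT_{i_1})(\dT_{w'}\otimes\dT_{w'})=\dT_w\otimes\dT_w$, this gives the first formula.

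The second identity is handled in the mirror-image way, peeling off the last letter: write $w=w''s_{i_m}$ with $w''=s_{i_1}\cdots s_{i_{m-1}}$, so $\Bi''=(i_1,\dots,i_{m-1})\in\CI_{w''}$, use $\Delta(\dT_w)=\Delta(\dT_{w''})\Delta(\dT_{i_m})$ with the inductive hypothesis (second formula) for $\Delta(\dT_{w''})$ and the second formula of Proposition \ref{prop:dT} for $\Delta(\dT_{i_m})$, and move the factor $\dT_{w''}\otimes\dT_{w''}$ to the \emph{right} past $\exp_{q_{i_m}}((q_{i_m}-q_{i_m}^{-1})k_{i_m}^{-1}e_{i_m}\otimes f_{i_m}k_{i_m})$. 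Conjugating by $\dT_{w''}\otimes\dT_{w''}$ replaces $k_{i_m}^{-1}e_{i_m}$ and $f_{i_m}k_{i_m}$ by $\dT_{w''}(k_{i_m}^{-1})\dT_{w''}(e_{i_m})=k_{\Bi,m}^{-1}\de_{\Bi,m}$ and $\dT_{w''}(f_{i_m})\dT_{w''}(k_{i_m})=\df_{\Bi,m}k_{\Bi,m}$, using $\dT_{w''}(k_\gamma)=k_{w''\gamma}$, $\dT_{w''}(e_{i_m})=\de_{\Bi,m}$, $\dT_{w''}(f_{i_m})=\df_{\Bi,m}$; this factor is thus $\exp_{q_{i_m}}(Y_m)$, and $(\dT_{w''}\otimes\dT_{w''})(\dT_{i_m}\otimes\dT_{i_m})=\dT_w\otimes\dT_w$ closes the induction.

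Finally, the two formulas for $\Delta(\dT_w^{-1})$ are obtained from those for $\Delta(\dT_w)$ by taking inverses, using $\exp_q(x)^{-1}=\exp_{q^{-1}}(-x)$ and $(\dT_w\otimes\dT_w)^{-1}=\dT_w^{-1}\otimes\dT_w^{-1}$. The one point that requires care is the index bookkeeping in the inductive step — checking that the elements produced by conjugation by $\dT_{w'}^{-1}$ (resp.\ by $\dT_{w''}$) are exactly the root vectors $\tf_{\Bi,1},\te_{\Bi,1}$ (resp.\ $\de_{\Bi,m},\df_{\Bi,m},k_{\Bi,m}$) appearing in $X_1$ (resp.\ $Y_m$) — but this is immediate from the definitions once one recalls $\dT_{w'}^{-1}=\dT_{i_m}^{-1}\cdots\dT_{i_2}^{-1}$ and $\dT_{w''}=\dT_{i_1}\cdots\dT_{i_{m-1}}$.
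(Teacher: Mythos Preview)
Your proof is correct and is exactly the induction argument implied by the paper's labeling of the statement as a Corollary of Proposition~\ref{prop:dT} (the paper gives no explicit proof). The bookkeeping --- in particular that conjugation by $\dT_{w'}^{-1}\otimes\dT_{w'}^{-1}$ (respectively $\dT_{w''}\otimes\dT_{w''}$) produces precisely $X_1$ (respectively $Y_m$), and that $\exp_q(x)^{-1}=\exp_{q^{-1}}(-x)$ gives the formulas for $\Delta(\dT_w^{-1})$ --- is handled correctly.
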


\begin{lemma}
\label{lem:delta-U}
For $w\in W$ we have 
\begin{align*}
&\Delta(\dT_w(U^+))\subset U\otimes(\dT_w(U^+))U^0,
\quad
\Delta(\dT_w^{-1}(U^+))\subset (\dT_w^{-1}(U^+))U^0\otimes U,
\\
&\Delta(\dT_w(U^-))\subset (\dT_w(U^-))U^0\otimes U,
\quad
\Delta(\dT_w^{-1}(U^+))\subset U\otimes(\dT_w^{-1}(U^+))U^0.
\end{align*}

%\begin{align*}
%&\Delta(U^-[\dT_w])\subset U\otimes U^-[\dT_w]U^0,
%\\
%&\Delta( U^+[\dT_w^{-1}])
%\subset
%U\otimes
% U^+[\dT_w^{-1}],
% \\
% &\Delta(U^+\cap \dT_w^{-1}(U^{+}))
%\subset
%(U^+\cap \dT_w^{-1}(U^{+}))U^0\otimes U.
%\end{align*}
\end{lemma}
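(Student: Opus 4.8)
The plan is to reduce everything to Hopf subalgebras and then read the four inclusions off Corollary \ref{cor:dT}. Since $\dT_w$ and $\dT_w^{-1}$ carry $U^0$ onto $U^0$ and $U^{\geqq0}=U^+U^0$, $U^{\leqq0}=U^-U^0$ are Hopf subalgebras, one has $(\dT_w(U^+))U^0=\dT_w(U^{\geqq0})$, $(\dT_w^{-1}(U^+))U^0=\dT_w^{-1}(U^{\geqq0})$, $(\dT_w(U^-))U^0=\dT_w(U^{\leqq0})$, $(\dT_w^{-1}(U^-))U^0=\dT_w^{-1}(U^{\leqq0})$, and each of these is a subalgebra of $U$. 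Using $U^+\subset U^{\geqq0}$ and $U^-\subset U^{\leqq0}$, it then suffices to prove
\[
\Delta(\dT_w(U^{\geqq0}))\subset U\otimes\dT_w(U^{\geqq0}),\qquad
\Delta(\dT_w^{-1}(U^{\geqq0}))\subset\dT_w^{-1}(U^{\geqq0})\otimes U,
\]
\[
\Delta(\dT_w(U^{\leqq0}))\subset\dT_w(U^{\leqq0})\otimes U,\qquad
\Delta(\dT_w^{-1}(U^{\leqq0}))\subset U\otimes\dT_w^{-1}(U^{\leqq0})
\]
(the fourth line of the lemma, as printed, should read $U^-$ for $U^+$).

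Fix $\Bi=(i_1,\dots,i_m)\in\CI_w$ and put $\mathbf{E}=\prod_{r=1}^{m}\exp_{q_{i_r}}((q_{i_r}-q_{i_r}^{-1})\tf_{\Bi,r}\otimes\te_{\Bi,r})$ and $\mathbf{F}=\prod_{r=1}^{m}\exp_{q_{i_r}}((q_{i_r}-q_{i_r}^{-1})k_{\Bi,r}^{-1}\de_{\Bi,r}\otimes\df_{\Bi,r}k_{\Bi,r})$. Combining $\dT_w(um)=\dT_w(u)\dT_w(m)$ for $u\in U$, $m\in M\otimes M'$ with $M,M'\in\Mod_0(U)$, and Corollary \ref{cor:dT}, one obtains (conjugations interpreted as explained below) that for $x\in U$
\[
\Delta(\dT_w(x))=(\dT_w\otimes\dT_w)(\mathbf{E}\Delta(x)\mathbf{E}^{-1})=\mathbf{F}((\dT_w\otimes\dT_w)\Delta(x))\mathbf{F}^{-1},
\]
\[
\Delta(\dT_w^{-1}(x))=\mathbf{E}^{-1}((\dT_w^{-1}\otimes\dT_w^{-1})\Delta(x))\mathbf{E}=(\dT_w^{-1}\otimes\dT_w^{-1})(\mathbf{F}^{-1}\Delta(x)\mathbf{F}).
\]
Two ingredients enter: (i) $\te_{\Bi,r}\in U^+\subset U^{\geqq0}$ and $\df_{\Bi,r}\in U^-\subset U^{\leqq0}$ (Lusztig, \cite{Lbook}); and (ii) the identity $\dT_{i_1}\cdots\dT_{i_r}(f_{i_r})=-k_{\Bi,r}^{-1}\de_{\Bi,r}$, which follows at once from $\dT_{i_r}(f_{i_r})=-k_{i_r}^{-1}e_{i_r}$ and $\dT_v(k_\mu)=k_{v\mu}$. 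Since $\dT_{i_1}\cdots\dT_{i_r}=\dT_w\circ\dT_{i_m}^{-1}\circ\cdots\circ\dT_{i_{r+1}}^{-1}$, (ii) reads $k_{\Bi,r}^{-1}\de_{\Bi,r}=-\dT_w(\tf_{\Bi,r})$; reading the left side via $\tf_{\Bi,r}\in U^-$ gives $k_{\Bi,r}^{-1}\de_{\Bi,r}\in\dT_w(U^{\leqq0})$, and reading the right side via $k_{\Bi,r}^{-1}\in U^0$, $\de_{\Bi,r}\in U^+$ gives $\tf_{\Bi,r}\in\dT_w^{-1}(U^{\geqq0})$.

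Granting the displayed formulas, each inclusion is then immediate by inspection. For the first, take $x\in U^{\geqq0}$, so $\Delta(x)\in U^{\geqq0}\otimes U^{\geqq0}$; conjugation by $\mathbf{E}$ affects only the right-hand tensor factor, multiplying it on both sides by powers of the $\te_{\Bi,r}\in U^{\geqq0}$, so $\mathbf{E}\Delta(x)\mathbf{E}^{-1}\in U\otimes U^{\geqq0}$ ($U^{\geqq0}$ being a subalgebra), and $\dT_w\otimes\dT_w$ yields $\Delta(\dT_w(x))\in U\otimes\dT_w(U^{\geqq0})$. For the third, $x\in U^{\leqq0}$ gives $(\dT_w\otimes\dT_w)\Delta(x)\in\dT_w(U^{\leqq0})\otimes\dT_w(U^{\leqq0})$, and conjugation by $\mathbf{F}$ multiplies the left factor by powers of $k_{\Bi,r}^{-1}\de_{\Bi,r}\in\dT_w(U^{\leqq0})$, keeping it in that subalgebra. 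The second and fourth inclusions are verbatim analogues, using $\tf_{\Bi,r}\in\dT_w^{-1}(U^{\geqq0})$ and $\df_{\Bi,r}k_{\Bi,r}\in U^{\leqq0}$ respectively.

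The only genuine difficulty, and the step I would expect to absorb all the care, is making sense of the conjugations, since $\mathbf{E},\mathbf{F}\notin U\otimes U$: a priori $\mathbf{E}\Delta(x)\mathbf{E}^{-1}$ is defined only as an operator on each $M\otimes M'$, where it is a finite sum. The plan here is to take $x$ homogeneous for the adjoint $U^0$-grading $U=\bigoplus_{\beta\in Q}U_\beta$ and to work in the completion $\prod_{\beta\in Q}(U\otimes U_\beta)$ — or $\prod_{\beta\in Q}(U_\beta\otimes U)$ for the inclusions constraining the left factor. Each elementary factor of $\mathbf{E}$ (resp.\ $\mathbf{F}$) has bidegree $(-\beta,\beta)$ (resp.\ $(\beta,-\beta)$) with $\beta\in\Delta^+$, so the conjugation series converges there; moreover any element of this completion still acts on each $M\otimes M'$ as a finite sum and $\bigoplus_{M,M'}\rho_M\otimes\rho_{M'}$ remains injective on it, so the completion element $(\dT_w\otimes\dT_w)(\mathbf{E}\Delta(x)\mathbf{E}^{-1})$ must coincide with the honest element $\Delta(\dT_w(x))\in U\otimes U$. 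Since every homogeneous summand of the series lies in $U\otimes\dT_w(U^{\geqq0})$, so does the limit, and intersecting its closure $\prod_\beta(U\otimes(\dT_w(U^{\geqq0})\cap U_\beta))$ with $U\otimes U$ gives $\Delta(\dT_w(x))\in U\otimes\dT_w(U^{\geqq0})$; the other three inclusions are the same with the roles of the two tensor factors adjusted. I expect this bookkeeping, not the algebra, to be the main obstacle.
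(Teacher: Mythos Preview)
Your argument is correct and shares the same core idea as the paper's: both rewrite $\Delta(\dT_w(x))$ via Corollary~\ref{cor:dT} as a conjugation $(\dT_w\otimes\dT_w)\mathbf{E}\Delta(x)\mathbf{E}^{-1}$ and observe that the relevant tensor factor stays in $U^{\geqq0}$ because the tensorands of $\mathbf{E}$, $\mathbf{E}^{-1}$ on that side lie in $U^+$. (Your phrase ``affects only the right-hand tensor factor'' is a little loose---conjugation touches both factors---but your intended meaning, that only the right factor matters for the inclusion and it gets multiplied on both sides by elements of $U^+$, is clear and correct.)

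The difference lies in how the infinite sum $\mathbf{E}$ is handled. You work in the graded completion $\prod_\beta(U\otimes U_\beta)$, argue the conjugation converges there with every homogeneous piece in $U\otimes U^{\geqq0}$, and then intersect with $U\otimes U$. The paper instead avoids completions by passing to operators on tensor products of integrable modules, where all sums are finite, and then invokes a module-theoretic characterization of $U^{\geqq0}$ (namely that $u\in U^{\geqq0}$ iff $u$ preserves $M\otimes v_{w_0\lambda}$ for all $M\in\Mod_0(U)$ and $\lambda\in P^-$, cf.\ \cite[Proposition~5.11]{Jan}) to conclude the second tensor component lies in $U^{\geqq0}$. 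Your route is more self-contained---no external characterization needed---at the cost of the completion bookkeeping you anticipated; the paper's route is shorter but imports a nontrivial fact. Both are sound, and the algebraic content is the same.
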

\begin{proof}
We only show the first formula since the proof of other formulas are similar.
To show the first formula we need to show that for
$y\in \dT_wU^+$ we have $(\dT_w^{-1}\otimes\dT_w^{-1})(\Delta(y))\in U\otimes U^{\geqq0}$.

For each $\lambda\in P^-$ take $v_{w_0\lambda}\in V(\lambda)_{w_0\lambda}\setminus\{0\}$.
Then for $u\in U$ we have $u\in U^{\geqq0}$ if and only if 
$u(M\otimes v_{w_0\lambda})\subset M\otimes v_{w_0\lambda}$ for any $\lambda\in P^-$ and $M\in\Mod_0(U)$ (see the proof of \cite[Proposition 5.11]{Jan}).
By this fact together with \cite[Proposition 5.11]{Jan} it is sufficient to show 
that 
for $M_1, M_2\in\Mod_0(U)$ and $\lambda\in P^-$ 
the element
\[
(\id\otimes\Delta)\{(\dT_w^{-1}\otimes\dT_w^{-1})(\Delta(y))\}
\in U\otimes U\otimes U
\]
sends $M_1\otimes M_2\otimes v_{w_0\lambda}$ to itself.
As an operator on the tensor product of two integrable modules we have
\[
(\dT_w^{-1}\otimes\dT_w^{-1})(\Delta(y))
=
(\dT_w^{-1}\otimes\dT_w^{-1})\circ(\Delta(y))\circ(\dT_w\otimes\dT_w).
\]
Take $\Bi=(i_1,\dots, i_m)\in\CI_w$.
By Corollary \ref{cor:dT} we have
\[
\dT_w\otimes \dT_w=\Delta(\dT_w)\circ Z^{-1},\quad
Z=
\exp_{q_{i_1}}(X_1)
\cdots
\exp_{q_{i_m}}(X_m),
\]
where $X_1,\dots, X_m$ are as in Corollary \ref{cor:dT}.
Hence we have
\[
(\dT_w^{-1}\otimes\dT_w^{-1})\circ(\Delta(y))\circ(\dT_w\otimes\dT_w)
=Z\circ\Delta(\dT_w^{-1}(y))\circ Z^{-1}.
\]
Therefore, our assertion is a consequence of  $\dT_w^{-1}(y)\in U^{+}$, and $X_r\in U^-\otimes U^+$.
\end{proof}

For $w\in W$ set
\begin{align}
&U^-[\dT_w]=U^-\cap \dT_w(U^{\geqq0}),\qquad
U^+[\dT_w]=U^+\cap \dT_w(U^{\leqq0}),
\\
&U^-[\dT_w^{-1}]=U^-\cap \dT_w^{-1}(U^{\geqq0}),\qquad
U^+[\dT_w^{-1}]=U^+\cap \dT_w^{-1}(U^{\leqq0}),
\\
&
U^-[\hT_w]=U^-\cap \hT_w(U^{\geqq0}),\qquad
U^+[\hT_w]=U^+\cap \hT_w(U^{\leqq0}).
\end{align}

We can easily show the following using Lemma \ref{lem:dThT}.
\begin{lemma}
\label{lem:transfer}
For 
$w\in W$, $\Bi=(i_1,\cdots, i_m)\in\CI_w$ we have
\[
S^{-1}\dT_w(\te_{\Bi}^{\Bn})
=\hf_{\Bi}^{\Bn}.
\]
Hence we have
\[
\dT_w^{-1}S(U^-[\hT_w])=U^+[\dT_w^{-1}].
\]
\end{lemma}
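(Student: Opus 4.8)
The plan is to reduce both assertions to the relation in Lemma \ref{lem:dThT}, namely $\hT_w = S^{-1}\dT_w S$ as algebra automorphisms of $U$, combined with the known monomial presentations of the PBW vectors. For the first assertion, I would argue generator by generator in the monomial $\te_{\Bi}^{\Bn} = \te_{\Bi,1}^{n_1}\cdots\te_{\Bi,m}^{n_m}$. Since $S^{-1}\dT_w$ is an algebra anti-homomorphism composed with nothing—wait, more carefully: $S^{-1}$ is an algebra anti-automorphism and $\dT_w$ is an algebra automorphism, so $S^{-1}\dT_w$ is an algebra anti-automorphism. Hence $S^{-1}\dT_w(\te_{\Bi,1}^{n_1}\cdots\te_{\Bi,m}^{n_m}) = S^{-1}\dT_w(\te_{\Bi,m})^{n_m}\cdots S^{-1}\dT_w(\te_{\Bi,1})^{n_1}$, and it suffices to identify $S^{-1}\dT_w(\te_{\Bi,r})$ with $\hf_{\Bi,r}$ for each $r$, after which the ordering works out because $\hf_{\Bi}^{\Bn} = \hf_{\Bi,m}^{n_m}\cdots\hf_{\Bi,1}^{n_1}$.

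For the single-factor identity, recall $\te_{\Bi,r} = \dT_{i_m}^{-1}\cdots\dT_{i_{r+1}}^{-1}(e_{i_r})$. Applying $\dT_w = \dT_{i_1}\cdots\dT_{i_m}$ gives $\dT_w(\te_{\Bi,r}) = \dT_{i_1}\cdots\dT_{i_r}(e_{i_r})$, since the $\dT_{i_m}^{-1}\cdots\dT_{i_{r+1}}^{-1}$ cancels the tail $\dT_{i_{r+1}}\cdots\dT_{i_m}$. Now apply $S^{-1}$: using the commutation $\hT_w = S^{-1}\dT_w S$ in the form $S^{-1}\dT_j = \hT_j S^{-1}$ for each $j$ (which follows from Lemma \ref{lem:dThT} applied to $w=s_j$), one slides $S^{-1}$ past the product $\dT_{i_1}\cdots\dT_{i_{r-1}}\dT_{i_r}$ to get $S^{-1}\dT_{i_1}\cdots\dT_{i_r}(e_{i_r}) = \hT_{i_1}\cdots\hT_{i_r}(S^{-1}e_{i_r})$. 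Finally $S^{-1}(e_{i_r}) = -e_{i_r}k_{i_r}$; one computes $\hT_{i_r}(S^{-1}e_{i_r}) = \hT_{i_r}(-e_{i_r}k_{i_r}) = -(-f_{i_r}k_{i_r}^{-1})k_{s_{i_r}\alpha_{i_r}} = -(-f_{i_r}k_{i_r}^{-1})k_{i_r}^{-1}$—here using $\hT_{i_r}(e_{i_r}) = -f_{i_r}k_{i_r}^{-1}$ and $\hT_{i_r}(k_{i_r}) = k_{s_{i_r}\alpha_{i_r}} = k_{-\alpha_{i_r}} = k_{i_r}^{-1}$—and checks this equals $\hT_{i_r}(f_{i_r})$ times a unit in $U^0$; in fact $\hT_{i_r}(f_{i_r}) = -k_{i_r}e_{i_r}$, so one must be slightly more careful and track the $U^0$-factor. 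The cleanest route is to note that $S^{-1}\dT_{i_r}(e_{i_r}) = S^{-1}(-f_{i_r}k_{i_r}) = -k_{i_r}^{-1}f_{i_r} = \dT_{i_r}(f_{i_r})$ directly (since $\dT_{i_r}(f_{i_r}) = -k_{i_r}^{-1}e_{i_r}$—no: here again I must verify $-k_{i_r}^{-1}f_{i_r}$ equals the stated $\hT$-value; this bookkeeping of $k$-twists between $\dT$ and $\hT$ on the $i_r$-th generator is exactly the technical heart). Granting this base case, induction on $r-1$ using $S^{-1}\dT_j = \hT_j S^{-1}$ finishes the identity $S^{-1}\dT_w(\te_{\Bi,r}) = \hf_{\Bi,r}$.

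For the second assertion, $\dT_w^{-1}S(U^-[\hT_w]) = U^+[\dT_w^{-1}]$: from the first part (extended to all of $U^-[\hT_w]$, not just the monomial basis vectors) together with Lemma \ref{lem:dThT} one gets $S^{-1}\dT_w(\dT_w^{-1}(U^{\geqq0}) \cap U^+) \subseteq$ something; more directly, apply the anti-automorphism $\dT_w^{-1}S$ to $U^-[\hT_w] = U^- \cap \hT_w(U^{\geqq0})$. Since $\dT_w^{-1}S = (S^{-1}\dT_w)^{-1}$ as anti-automorphisms and $S(U^-) = U^-$ (well, $S(U^-) \subseteq U^-U^0$, so use $S^{-1}\dT_w$ which by the first part sends the PBW basis of $U^-[\hT_w]$ to that of $U^+[\dT_w^{-1}]$—invoking that these PBW monomials span the respective spaces, a fact from \cite{Lbook}/\cite{LS}). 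Then $\dT_w^{-1}S$, being the inverse anti-automorphism, carries $U^+[\dT_w^{-1}]$ back to $U^-[\hT_w]$, which rearranges to the claim. The main obstacle throughout is purely the sign-and-$k$-twist bookkeeping in relating $\dT_i$ and $\hT_i$ on the distinguished generator via $S^{\pm 1}$; once the one-generator identity $S^{-1}\dT_i(e_i) = \hf_{(i),1}$-type statement is pinned down (using the explicit formulas for $\dT_i, \hT_i, S$ in the excerpt and $\hT_i = S^{-1}\dT_i S$), everything else is formal manipulation of anti-automorphisms and the braid-group action.
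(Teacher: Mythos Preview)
Your approach is exactly what the paper intends: reduce everything to Lemma~\ref{lem:dThT} and the explicit formulas for $\dT_i$, $\hT_i$, $S$. The paper itself just says the lemma follows easily from Lemma~\ref{lem:dThT} and gives no further details, so your outline is in fact \emph{more} detailed than the paper's own proof.

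Your confusion in the single-generator step is entirely a sign/$k$-bookkeeping slip. The correct computation is
\[
S^{-1}(e_i)=-e_ik_i^{-1}
\]
(not $-e_ik_i$; check: $S(-e_ik_i^{-1})=-S(k_i^{-1})S(e_i)=-k_i(-k_i^{-1}e_i)=e_i$). Then
\[
\hT_{i_r}\bigl(S^{-1}(e_{i_r})\bigr)
=\hT_{i_r}(-e_{i_r}k_{i_r}^{-1})
=-\hT_{i_r}(e_{i_r})\,\hT_{i_r}(k_{i_r}^{-1})
=-(-f_{i_r}k_{i_r}^{-1})\,k_{i_r}
=f_{i_r},
\]
which is precisely what you need: combined with $S^{-1}\dT_j=\hT_jS^{-1}$ this gives
\[
S^{-1}\dT_w(\te_{\Bi,r})
=\hT_{i_1}\cdots\hT_{i_{r-1}}\bigl(\hT_{i_r}(S^{-1}e_{i_r})\bigr)
=\hT_{i_1}\cdots\hT_{i_{r-1}}(f_{i_r})
=\hf_{\Bi,r}.
\]
No stray $U^0$-factor survives. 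With this in hand your anti-automorphism argument for the monomials goes through verbatim, and the second assertion follows since $\dT_w^{-1}S=(S^{-1}\dT_w)^{-1}$ carries the PBW basis $\{\hf_\Bi^{\Bn}\}$ of $U^-[\hT_w]$ to the PBW basis $\{\te_\Bi^{\Bn}\}$ of $U^+[\dT_w^{-1}]$.
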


The following well-known result is an easy consequence of the existence of the PBW-type base of $U^\pm$.
Here, we give another proof which works for the quantized enveloping algebra of  any symmetrizable Kac-Moody Lie algebra.
\begin{proposition}
\label{prop:decomposition}
The multiplication of $U$ induces the isomorphisms
\begin{align}
\label{eq:Ubunkai-d}
U^{\pm}\cong&
U^{\pm}[\dT_w]\otimes(U^\pm\cap\dT_w(U^\pm))
\\
\nonumber
\cong&
(U^\pm\cap\dT_w(U^\pm))\otimes U^{\pm}[\dT_w],
\\
\label{eq:Ubunkai-di}
U^{\pm}\cong&
U^{\pm}[\dT_w^{-1}]\otimes(U^\pm\cap\dT_w^{-1}(U^\pm))
\\
\nonumber
\cong&
(U^\pm\cap\dT_w^{-1}(U^\pm))\otimes U^{\pm}[\dT_w^{-1}],
\\
\label{eq:Ubunkai-h}
U^{\pm}\cong&
U^{\pm}[\hT_w]\otimes
(U^\pm\cap\hT_w(U^\pm))
\\
\nonumber
\cong&
(U^\pm\cap\hT_w(U^\pm))\otimes
U^{\pm}[\hT_w].
%\\
%\label{eq:Ubunkai-hi}
%U^{\pm}\cong&
%U^{\pm}[\hT_w^{-1}]\otimes
%(U^\pm\cap\hT_w^{-1}(U^\pm))
%\\
%\nonumber
%\cong&
%(U^\pm\cap\hT_w^{-1}(U^\pm))\otimes
%U^{\pm}[\hT_w^{-1}].
\end{align}
\end{proposition}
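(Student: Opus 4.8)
The plan is to treat the case of $U^+$ and the operator $\dT_w$; the other five assertions follow by entirely parallel arguments (using Lemma \ref{lem:dThT} to pass between $\dT_w$ and $\hT_w$, and replacing $w$ by $w^{-1}$ together with a reduced-word reversal for the inverse statements). Fix $\Bi=(i_1,\dots,i_m)\in\CI_w$. I would first recall the PBW description: the ordered monomials $\df_{\Bi}^{\Bn}$ for $\Bn\in(\BZ_{\geqq0})^m$ in the root vectors $\df_{\Bi,1},\dots,\df_{\Bi,m}$ form an $\BF$-basis of $U^-$ when $\Gg$ is of finite type (Lusztig \cite{Lbook}), and more generally the ordered monomials $\df_{\Bi,m}^{n_m}\cdots\df_{\Bi,1}^{n_1}$ span a subalgebra $U^-[\dT_w]$ freely over $\BF$. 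Dually on the $U^+$ side one uses the monomials $\he_{\Bi}^{(\Bn)}$ or $\de_{\Bi}^{(\Bn)}$. The cleanest route avoiding a PBW-basis argument (so that it also covers the Kac--Moody case, as promised before the statement) is the following weight-space/pairing argument.

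First I would establish the two inclusions $U^+[\dT_w]\cdot(U^+\cap\dT_w(U^+))\subseteq U^+$ and, in the other direction, that every element of $U^+$ lies in this product. The first inclusion is immediate. For the second, the key geometric input is that $U^+\cap\dT_w(U^+)=\dT_w(U^+[\dT_{w^{-1}}^{\,-1}])$-type identities hold, but more usefully: by Lemma \ref{lem:delta-U}, $\Delta(\dT_w(U^+))\subseteq U\otimes (\dT_w(U^+))U^0$, and dually $\Delta(\dT_w(U^-))\subseteq (\dT_w(U^-))U^0\otimes U$; these let one run a triangular-decomposition argument. Concretely, I would show that $U^+$ is spanned by products $x'x''$ with $x'\in U^+\cap\dT_w(U^{\leqq 0})=U^+[\dT_w]$ and $x''\in U^+\cap\dT_w(U^{\geqq 0})$ — wait, one must be careful: $U^+\cap\dT_w(U^+)$ is the relevant second factor, and $U^+\cap\dT_w(U^{\geqq0})=U^+\cap\dT_w(U^+)$ since $U^+\subseteq U^{\geqq0}$ forces the $U^0$-part to be trivial by weight considerations. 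So I would argue: given $x\in U^+_\gamma$, apply $\dT_w^{-1}$ to get $\dT_w^{-1}(x)\in\dT_w^{-1}(U^+)\subseteq U$; decompose $\dT_w^{-1}(x)=\sum y_k t_k z_k$ along $U\cong U^-\otimes U^0\otimes U^+$; applying $\dT_w$ back and sorting out which terms land in $U^+$ gives the desired factorization.

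For the freeness of the multiplication map (i.e. that the product decomposition is a genuine tensor-product isomorphism, not merely a surjection), I would use Proposition \ref{prop:base2a} together with the non-degeneracy \eqref{eq:Dr9} of the Drinfeld pairing $\tau$ on $U^+_\gamma\times U^-_{-\gamma}$. Namely, suppose $\sum_k a_k b_k=0$ with $a_k\in U^+[\dT_w]$, $b_k\in U^+\cap\dT_w(U^+)$, linearly independent families. Pair against suitable elements of $U^-$: using \eqref{eq:Dr1} to convert a product $a_kb_k$ into a convolution, and Lemma \ref{lem:transfer} (relating $S^{-1}\dT_w(\te_\Bi^{\Bn})$ to $\hf_\Bi^{\Bn}$) to produce, inside $U^-$, elements pairing nondegenerately with $U^+[\dT_w]$ on one side and with $U^+\cap\dT_w(U^+)$ on the other, one forces all coefficients to vanish. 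Equivalently, and perhaps more transparently, I would count graded dimensions: in each $Q^+$-degree $\gamma$ the multiplication map $U^+[\dT_w]\otimes(U^+\cap\dT_w(U^+))\to U^+$ is a surjection between finite-dimensional spaces (in finite type), and the PBW-basis shows $\dim U^+_\gamma=\sum_{\gamma'+\gamma''=\gamma}\dim U^+[\dT_w]_{\gamma'}\cdot\dim(U^+\cap\dT_w(U^+))_{\gamma''}$, so surjectivity forces bijectivity. For the Kac--Moody case one replaces the dimension count by the pairing argument, which is degree-wise finite-dimensional and hence still valid.

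The main obstacle I anticipate is the Kac--Moody case: there no finite PBW-basis is available, so one cannot simply cite Lusztig for both spanning and linear independence, and this is precisely why (as the paragraph before the statement notes, citing Berenstein--Greenstein \cite[Conjecture 5.5]{BG}) the result is nontrivial there. My resolution would lean entirely on the Drinfeld pairing: one must (a) produce a well-defined subalgebra $U^+[\dT_w]$ with a spanning set of ordered monomials — this uses the $\Delta$-stability in Lemma \ref{lem:delta-U} plus the $q$-commutation relations among root vectors coming from Corollary \ref{cor:dT}; (b) show the product map is onto via the triangular-decomposition manipulation above; and (c) show it is injective by the degree-wise non-degeneracy of $\tau$ transported through Lemma \ref{lem:transfer} and Proposition \ref{prop:base2a}. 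Getting the bookkeeping of which $\dT_{i_j}$ and which $\hT_{i_j}$ appear, and in which order the root vectors must be multiplied, is the routine-but-delicate part; the conceptual content is entirely in the non-degeneracy of $\tau$.
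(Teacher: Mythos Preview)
Your proposal has a genuine gap in the surjectivity argument, and this is exactly where the work lies.

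You suggest: take $x\in U^+_\gamma$, write $\dT_w^{-1}(x)=\sum y_k t_k z_k$ along the triangular decomposition $U\cong U^-\otimes U^0\otimes U^+$, then apply $\dT_w$ and ``sort out which terms land in $U^+$''. This does not work. The triangular decomposition of $\dT_w^{-1}(x)$ has no reason to respect the subspace $\dT_w^{-1}(U^+)$: the individual factors $y_k t_k$ need not lie in $\dT_w^{-1}(U^+)\cap U^{\leqq0}$ and the $z_k$ need not lie in $\dT_w^{-1}(U^+)\cap U^+$. After applying $\dT_w$ you get $x=\sum\dT_w(y_k t_k)\dT_w(z_k)$ with no control over where the factors live; nothing forces $\dT_w(y_k t_k)\in U^+[\dT_w]$ or $\dT_w(z_k)\in U^+\cap\dT_w(U^+)$. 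The phrase ``sorting out which terms land in $U^+$'' hides the entire difficulty.

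The paper's proof proceeds quite differently. Injectivity is in fact trivial: $U^+[\dT_w]\otimes(U^+\cap\dT_w(U^+))\subset\dT_w(U^{\leqq0})\otimes\dT_w(U^+)$, and multiplication $U^{\leqq0}\otimes U^+\to U$ is injective; no pairing or dimension count is needed there. All the content is in surjectivity, which the paper handles by \emph{induction on $\ell(w)$}, using the known rank-one case $U^+\cong\BF[e_i]\otimes(U^+\cap\dT_i(U^+))$ from \cite[Chapter~38]{Lbook} as the base. The inductive step, passing from $x$ to $xs_i$, reduces to the identity
\[
U^+\cap\dT_x^{-1}(U^+)=\BF[e_i]\bigl(U^+\cap\dT_i(U^+)\cap\dT_x^{-1}(U^+)\bigr),
\]
and this is where the Drinfeld pairing enters: one first establishes the characterization
\[
U^+\cap\dT_x^{-1}(U^+)=\bigl\{u\in U^+\;\big|\;\tau\bigl(u,\,U^-(U^-[\dT_x^{-1}]\cap\Ker\varepsilon)\bigr)=0\bigr\}
\]
(using Lemma~\ref{lem:delta-U} and a dimension comparison), and then a direct computation with $\tau$ shows that decomposing $u\in U^+\cap\dT_x^{-1}(U^+)$ along $U^+\cong\BF[e_i]\otimes(U^+\cap\dT_i(U^+))$ keeps each component inside $\dT_x^{-1}(U^+)$. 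Your sketch invokes $\tau$ only for injectivity, missing that its real role is to make the induction for surjectivity go through.
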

\begin{proof}
We first note that \eqref{eq:Ubunkai-h} follows easily from \eqref{eq:Ubunkai-d} and Lemma \ref{lem:dThT}.
Consider the ring involution
\[
a:U\to U\quad
(q\mapsto q^{-1},\;k_\lambda\mapsto k_\lambda^{-1},\;
e_i\mapsto -k_i^{-1}e_i,\;f_i\mapsto -f_ik_i),
\]
and the ring anti-involution
\[
b:U\to U\quad
(q\mapsto q^{-1},\;k_\lambda\mapsto k_\lambda^{-1},\;
e_i\mapsto f_i,\;f_i\mapsto e_i).
\]
By $b\dT_w=\dT_wb$ and $b(U^+)=U^-$
the statements for $U^-$ are consequences of those for $U^+$.
By $a\dT_wa=\dT_{w^{-1}}^{-1}$ and $a(U^+_\gamma)=k_\gamma^{-1}U^+_\gamma$ ($\gamma\in Q^+$) 
the statements for $\dT_{w}^{-1}$ are consequences of those for $\dT_{w^{-1}}$.
Hence we have only to show
\begin{align}
\label{eq:re1}
U^{+}\cong&
U^{+}[\dT_w]\otimes(U^+\cap\dT_w(U^+)),
\\
\label{eq:re2}
U^{+}\cong&
(U^+\cap\dT_w(U^+))\otimes U^{+}[\dT_w].
\end{align}

We first show \eqref{eq:re1}.
Note that the injectivity of 
\begin{equation}
\label{eq:map}
U^{+}[\dT_w]\otimes(U^+\cap\dT_w(U^+))\to U^+
\end{equation}
is clear from
\[
U^{+}[\dT_w]\otimes(U^+\cap\dT_w(U^+))
\subset T_w(U^{\leqq0})\otimes T_w(U^+)
\]
and $U^{\leqq0}\otimes U^+\cong U$.
Hence we have only to show the surjectivity of \eqref{eq:map}.

For $\Bi=(i_1,\dots, i_m)\in\CI_w$ denote by $U^+[\dT_w;\Bi]$ the subalgebra of $U^+$ generated by $\de_{\Bi,1}$,\dots, $\de_{\Bi,m}$.
By a standard property of $\dT_i$ we have $U^+[\dT_w;\Bi]\subset U^+[\dT_w]$.
Hence we have only to show 
\begin{equation}
\label{eq:surj}
U^{+}[\dT_w;\Bi](U^+\cap\dT_w(U^+))=U^+.
\end{equation}

We note that our assertion is already known for $w=s_i$.
Namely, we have 
\begin{align}
\label{eq:A1}
&U^+\cong U^+[\dT_i]\otimes(U^+\cap \dT_i(U^+)),\qquad
U^+[\dT_i]=\BF[e_i],
\\
\label{eq:A2}
&U^-\cong U^-[\dT_i]\otimes(U^-\cap \dT_i(U^-)),\qquad
U^-[\dT_i]=\BF[f_i]
\end{align}
(see \cite[Chapter 38]{Lbook}).

Now we are going to show \eqref{eq:surj} by induction on $\ell(w)$.
Assume that we have $xs_i>x$ for $x\in W$ and $i\in I$.
By the above argument we need to show \eqref{eq:surj} for $w=xs_i$ assuming \eqref{eq:Ubunkai-d}, \eqref{eq:Ubunkai-di}, \eqref{eq:surj} for $w\in W$ with $\ell(w)\leqq\ell(x)$.
Take $\Bi=(i_1,\dots, i_m)\in\CI_x$, and set
$\Bi'=(i_1,\cdots, i_m,i)\in\CI_{xs_i}$.
To show our assertion $U^{+}[\dT_{xs_i};\Bi'](U^+\cap\dT_{xs_i}(U^+))=U^+$, it is sufficient to show 
\begin{equation}
\label{eq:int}
U^+\cap \dT_x^{-1}(U^+)
=\BF[e_i](U^+\cap \dT_i(U^+)\cap \dT_x^{-1}(U^+)).
\end{equation}
Indeed assuming \eqref{eq:int} we have
\begin{align*}
U^+\cap \dT_x(U^+)
=&
\dT_x(U^+\cap \dT_x^{-1}(U^+))
=\BF[\dT_x(e_i)](U^+\cap \dT_x(U^+)\cap \dT_{xs_i}(U^+))
\\
\subset&
\BF[\dT_x(e_i)]
(U^+\cap \dT_{xs_i}(U^+)),
\end{align*}
and hence
\begin{align*}
U^+=&U^+[\dT_x;\Bi](U^+\cap \dT_x(U^+))
\subset
U^+[\dT_x;\Bi]
\BF[\dT_x(e_i)](U^+\cap \dT_{xs_i}(U^+))
\\
\subset&
U^+[\dT_{xs_i};\Bi'](U^+\cap \dT_{xs_i}(U^+)).
\end{align*}

To verify \eqref{eq:int} we first show the following.
\begin{align}
\label{eq:c1}
&U^+\cap \dT_x^{-1}(U^+)
\\
\nonumber
=&\{u\in U^+\mid
\tau(u, U^-(U^-[\dT_x^{-1}]\cap\Ker(\varepsilon:U^-\to\BF)))=\{0\}\},
\\
\label{eq:c2}
&U^-\cap \dT_x^{-1}(U^-)
\\
\nonumber
=&\{u\in U^-\mid
\tau(U^+(U^+[\dT_x^{-1}]\cap\Ker(\varepsilon:U^+\to\BF)),u)=\{0\}\}.
\end{align}
For simplicity set
\begin{align*}
V^+=&\{u\in U^+\mid
\tau(u, U^-(U^-[\dT_x^{-1}]\cap\Ker(\varepsilon)))=\{0\}\},
\\
V^-=&\{u\in U^-\mid
\tau(U^+(U^+[\dT_x^{-1}]\cap\Ker(\varepsilon)),u)=\{0\}\}.
\end{align*}
By \eqref{eq:Dr8} we have
\[
\tau(U^+\cap \dT_x^{-1}(U^+),U^-[\dT_x^{-1}]\cap\Ker(\varepsilon))=\{0\}.
\]
Hence by Lemma \ref{lem:delta-U} and \eqref{eq:Dr1}
We have
$U^+\cap \dT_x^{-1}(U^+)\subset V^+$.
By a similar argument we have also $U^-\cap \dT_x^{-1}(U^-)\subset V^-$.
On the other hand by the hypothesis of induction we have
$U^\pm\cong U^\pm[\dT_x^{-1}]\otimes(U^\pm\cap \dT_x^{-1}(U^\pm))$, and hence 
$U^\pm=(U^\pm\cap \dT_x^{-1}(U^\pm))\oplus U^\pm(U^\pm[\dT_x^{-1}]\cap\Ker(\varepsilon))$.
Since $\tau$ is non-degenerate, we obtain injective linear maps
$V^\pm\to (U^\mp\cap \dT_x^{-1}(U^\mp))^*$.
Comparing the dimensions of weight spaces we obtain
\begin{align*}
\dim(U_{\pm\gamma}^\pm\cap \dT_x^{-1}(U^\pm))\leqq\dim V_{\pm\gamma}^\pm\leqq\dim (U_{\mp\gamma}^\mp\cap \dT_x^{-1}(U^\mp)).
\end{align*}
for each $\gamma\in Q^+$.
This gives \eqref{eq:c1} and \eqref{eq:c2}.

By a similar argument we have also
\begin{align}
\label{eq:c3}
&U^+\cap \dT_x(U^+)
\\
\nonumber
=&\{u\in U^+\mid
\tau(u, (U^-[\dT_x]\cap\Ker(\varepsilon:U^-\to\BF))U^-)=\{0\}\},
\\
\label{eq:c4}
&U^-\cap \dT_x(U^-)
\\
\nonumber
=&\{u\in U^-\mid
\tau((U^+[\dT_x]\cap\Ker(\varepsilon:U^+\to\BF))U^+,u)=\{0\}\}.
\end{align}

Now let us show \eqref{eq:int}.
Take $u\in U^+\cap \dT_x^{-1}(U^+)$, and decompose it as
\[
u=\sum_ne_i^nu_n\qquad(u_n\in U^+\cap \dT_i(U^+))
\]
(see \eqref{eq:A1}).
Then it is sufficient to show $u_n\in \dT_x^{-1}(U^+)$, which is equivalent to
\begin{equation}
\label{eq:conc}
\tau(u_n,vz)=0
\qquad(v\in U^-,\;z\in U^-[\dT_x^{-1}]\cap\Ker(\varepsilon))
\end{equation}
by \eqref{eq:c1}.
Let $v\in U^-$, $z\in U^-[\dT_x^{-1}]\cap\Ker(\varepsilon)$.
By our assumption we have 
$\tau(u, vz)=0$.
On the other hand we have
\begin{align*}
\tau(u,vz)=&
\sum_n\tau(e_i^nu_n,vz)
=\sum_n\sum_{(z), (v)}
\tau(e_i^n,v_{(0)}z_{(0)})\tau(u_n,v_{(1)}z_{(1)})
\\
=&
\sum_n\sum_{(v)}
\tau(e_i^n,v_{(0)})\tau(u_n,v_{(1)}z)
\end{align*}
by Lemma \ref{lem:delta-U} and \eqref{eq:Dr8}.
Consider the case
\[
v=f_i^{(r)}v'\qquad(v'\in U^-\cap \dT_i(U^-)).
\]
Then we have
\begin{align*}
\tau(u,vz)
=&
\sum_n
\sum_{s=0}^r
\sum_{(v')}
q_i^{-s(r-s)}
\tau(e_i^n,f_i^{(r-s)}v'_{(0)})\tau(u_n,k_i^{-(r-s)}f_i^{(s)}v'_{(1)}z)
\\
=&
\sum_n
\sum_{s=0}^r
q_i^{-s(r-s)}
\tau(e_i^n,f_i^{(r-s)})\tau(u_n,f_i^{(s)}v'z)
\end{align*}
by Lemma \ref{lem:delta-U}, \eqref{eq:Dr7}, \eqref{eq:Dr8}.
By $u_n\in U^+\cap \dT_i(U^+)$ and \eqref{eq:c3} we have
\begin{equation}
\label{eq:bbb}
\tau(u_n,f_iU^-)=\{0\}.
\end{equation}
Hence
\begin{align*}
\tau(u,vz)
=
\sum_n
\tau(e_i^n,f_i^{(r)})\tau(u_n,v'z)
=
\tau(e_i^r,f_i^{(r)})\tau(u_r,v'z).
\end{align*}
Hence by $\tau(e_i^r,f_i^{(r)})\ne0$ we obtain
\begin{equation}
\label{eq:ccc}
\tau(u_n,v'z)=0\qquad(z\in U^-[\dT_x^{-1}]\cap\Ker(\varepsilon), v'\in U^-\cap \dT_i(U^-))
\end{equation}
for any $n$.
By \eqref{eq:bbb}, \eqref{eq:ccc}
\[
\tau(u_n,f_i^rv'z)=0\qquad
(z\in U^-[\dT_x^{-1}]\cap\Ker(\varepsilon), v'\in U^-\cap \dT_i(U^-), r\geqq0).
\]
The proof of  \eqref{eq:re1} is complete.

It remains to show \eqref{eq:re2}.
Similarly to the above  proof of \eqref{eq:re1} it is sufficient to show
\begin{equation}
\label{eq:int2}
U^+\cap \dT_x^{-1}(U^+)
=(U^+\cap \dT_i(U^+)\cap \dT_x^{-1}(U^+))\BF[e_i].
\end{equation}
This follows from \eqref{eq:int} as follows.
We can easily show 
\begin{equation}
\label{eq:xx}
\gamma\in Q^+,\; u\in U^+\cap \dT_i(U^+_\gamma)\;
\Longrightarrow\;
ue_i-q_i^{(\gamma,\alpha_i^\vee)}e_iu\in U^+\cap \dT_i(U^+)
\end{equation}
using \cite[Proposition 38.1.6]{Lbook}.
Hence in view of \eqref{eq:int} it is sufficient to show  that 
for $u\in U^+\cap \dT_i(U_\gamma^+)\cap \dT_x^{-1}(U^+)$ we have
$ue_i-q_i^{(\gamma,\alpha_i^\vee)}e_iu\in \dT_x^{-1}(U^+)$.
This is obvious since $\dT_x(e_i)\in U^+$.
\end{proof}
\begin{remark}
Proposition \ref{prop:decomposition} holds true for various $\BZ[q,q^{-1}]$-forms of $U^\pm$.
To show this it is sufficient to verify \eqref{eq:A1} over $\BZ[q,q^{-1}]$.
In the case of the De Concini-Kac form $U_{\BZ[q,q^{-1}]}^{DK,\pm}$, this follows if we can show that
$(U_{\BZ[q,q^{-1}]}^{DK,+}\cap U^+[\dT_i])(U_{\BZ[q,q^{-1}]}^{DK,+}\cap \dT_i(U^+))$ is stable under the right multiplication of $e_j$ for any $j\in I$.
If $j\ne i$, this is obvious.
If $j=i$, this follows from \eqref{eq:xx}.
The argument for the case of the Lusztig form $U_{\BZ[q,q^{-1}]}^{L,\pm}$ is similar.
Finally, \eqref{eq:A1} over $\BZ[q,q^{-1}]$ for the De Concini-Procesi form
defined by
\begin{align*}
U_{\BZ[q,q^{-1}]}^{DP,+}=&
\{u\in U^+\mid
\tau(u, U_{\BZ[q,q^{-1}]}^{L,-})\in \BZ[q,q^{-1}]\},
\\
U_{\BZ[q,q^{-1}]}^{DP,-}=&
\{u\in U^-\mid
\tau(U_{\BZ[q,q^{-1}]}^{L,+},u)\in \BZ[q,q^{-1}]\},
\end{align*}
is a consequence of that for the Lusztig form by duality.
\end{remark}

By our proof of Proposition \ref{prop:decomposition} we also obtain the following.
\begin{proposition}
\label{prop:base2}
Let $w\in W$ and $\Bi\in\CI_w$.
\begin{itemize}
\item[(i)]
The set 
$
\{\df_\Bi^{\Bn}\mid \Bn\in(\BZ_{\geqq0})^m\}
$
$($resp.\
$\{\de_\Bi^{(\Bn)}\mid \Bn\in(\BZ_{\geqq0})^m\})$
forms an $\BF$-basis of $ U^-[\dT_w]$ $($resp.\ $U^+[\dT_w])$.
\item[(ii)]
The set 
$
\{\tf_\Bi^{(\Bn)}\mid \Bn\in(\BZ_{\geqq0})^m\}
$
$($resp.\
$\{\te_\Bi^{\Bn}\mid \Bn\in(\BZ_{\geqq0})^m\})$
forms an $\BF$-basis of $ U^-[\dT_w^{-1}]$ $($resp.\ $U^+[\dT_w^{-1}])$.
\item[(iii)]
The set 
$
\{\hf_\Bi^{\Bn}\mid \Bn\in(\BZ_{\geqq0})^m\}
$
$($resp.\
$\{\he_\Bi^{(\Bn)}\mid \Bn\in(\BZ_{\geqq0})^m\})$
forms an $\BF$-basis of $ U^-[\hT_w]$ $($resp.\ $U^+[\hT_w])$.
\end{itemize}
\end{proposition}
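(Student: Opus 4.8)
The plan is to read everything off the inductive proof of Proposition~\ref{prop:decomposition}. I would first reduce to proving the statement for $U^+[\dT_w]$ in~(i). The case of $U^-[\dT_w]$ then follows by the ring anti-involution $b$ used in that proof: it commutes with $\dT_w$, interchanges $U^+$ and $U^-$, and sends each $\de_{\Bi,r}$ to $\df_{\Bi,r}$, so it carries the family $\{\de_\Bi^{(\Bn)}\}$ onto $\{\df_\Bi^{\Bn}\}$ up to reversal of the factors and the harmless divided-power rescaling. The statements in~(ii) for $\dT_w^{-1}$ follow from those in~(i) for $w^{-1}$ (with $\Bi$ replaced by its reverse word) via the involution $a$ with $a\dT_wa=\dT_{w^{-1}}^{-1}$, exactly as in the proof of Proposition~\ref{prop:decomposition}; here the twist $a(U^+_\gamma)=k_\gamma^{-1}U^+_\gamma$ only multiplies each PBW monomial by an element of $U^0$, which does not affect the basis property. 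Finally~(iii) is obtained in the same way as~(i) with $\hT_w$ in place of $\dT_w$ (using~\eqref{eq:Ubunkai-h}); alternatively the linear independence of $\{\he_\Bi^{(\Bn)}\}$ and of $\{\hf_\Bi^{\Bn}\}$ is immediate from Proposition~\ref{prop:base2a}, since $c_q(n)\ne0$.

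So fix $\Bi=(i_1,\dots,i_m)\in\CI_w$ and, as in the proof of Proposition~\ref{prop:decomposition}, let $U^+[\dT_w;\Bi]$ be the subalgebra of $U^+$ generated by $\de_{\Bi,1},\dots,\de_{\Bi,m}$. That proof already gives $U^+[\dT_w;\Bi]\subset U^+[\dT_w]$ and the equality $U^+[\dT_w;\Bi]\,(U^+\cap\dT_w(U^+))=U^+$; combining this with the injectivity of $U^+[\dT_w;\Bi]\otimes(U^+\cap\dT_w(U^+))\to U^+$ (which holds because the source lies in $\dT_w(U^{\leqq0})\otimes\dT_w(U^+)$ and $U^{\leqq0}\otimes U^+\cong U$) and with the isomorphism $U^+\cong U^+[\dT_w]\otimes(U^+\cap\dT_w(U^+))$ of Proposition~\ref{prop:decomposition}, a comparison of the dimensions of weight spaces forces $U^+[\dT_w;\Bi]=U^+[\dT_w]$. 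Thus it remains only to show that the ordered monomials $\de_\Bi^{(\Bn)}$, $\Bn\in(\BZ_{\geqq0})^m$, form a basis of $U^+[\dT_w;\Bi]$.

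For spanning I would induct on $\ell(w)$: if $w=xs_i>x$ and $\Bi'=(i_1,\dots,i_m,i)\in\CI_{xs_i}$, then $\de_{\Bi'}^{(\Bn')}=\dT_x(e_i)^{(n_{m+1})}\,\de_\Bi^{(\Bn)}$, so by the inductive hypothesis these monomials span $\BF[\dT_x(e_i)]\,U^+[\dT_x;\Bi]$, and one identifies this with $U^+[\dT_{xs_i};\Bi']$ by the same dimension comparison, after checking --- from~\eqref{eq:int} applied after $\dT_x$, together with $\dT_x(e_i)\in U^+[\dT_{xs_i}]$ and $U^+[\dT_x;\Bi]\subset U^+[\dT_{xs_i};\Bi']$ --- that $\BF[\dT_x(e_i)]\,U^+[\dT_x;\Bi]$ and $U^+[\dT_x;\Bi]\,\BF[\dT_x(e_i)]$ have equal weight-space dimensions and both lie in $U^+[\dT_{xs_i};\Bi']$. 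For linear independence, in the finite case one extends $\Bi$ to a reduced word $\tilde\Bi\in\CI_{w_0}$ and notes that $\{\de_\Bi^{(\Bn)}\}$ is exactly the subfamily of Lusztig's PBW basis $\{\de_{\tilde\Bi}^{(\tilde\Bn)}\}$ of $U^+$ whose trailing exponents vanish, hence linearly independent; in the symmetrizable Kac--Moody setting of Section~\ref{sec:comment}, where $w_0$ is unavailable, one argues instead by dimensions, using that $\dim U^+[\dT_w]_\gamma$ is pinned down by the two factorisations of $U^+$ in Proposition~\ref{prop:decomposition} and, via Proposition~\ref{prop:base2a}, equals the number of $\Bn$ with $\sum_r n_r\,s_{i_1}\cdots s_{i_{r-1}}\alpha_{i_r}=\gamma$. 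The step demanding the most care is this last dimension bookkeeping, and, in the spanning argument, keeping track of the order of the PBW factors; both are of the same character as the computations already performed in the proof of Proposition~\ref{prop:decomposition}.
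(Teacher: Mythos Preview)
Your approach is correct and is essentially what the paper intends: Proposition~\ref{prop:base2} is stated as a direct byproduct of the proof of Proposition~\ref{prop:decomposition}, and your outline extracts the needed ingredients (the equality $U^+[\dT_w;\Bi]=U^+[\dT_w]$ from comparing the two tensor factorisations, and the inductive passage from $x$ to $xs_i$ via \eqref{eq:int}, \eqref{eq:int2}).

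One simplification worth noting: your separate treatment of linear independence is redundant. In the inductive step you already have $\BF[\dT_x(e_i)]\subset\dT_x(U^+)$ and $U^+[\dT_x]\subset\dT_x(U^{\leqq0})$, so multiplication
\[
\BF[\dT_x(e_i)]\otimes U^+[\dT_x]\longrightarrow U
\]
is injective (apply $\dT_x^{-1}$ and use the triangular decomposition $U^+\otimes U^{\leqq0}\cong U$). Combined with your surjectivity onto $U^+[\dT_{xs_i}]$ (obtained from \eqref{eq:int2} after applying $\dT_x$, together with \eqref{eq:re2} for $x$, exactly as you indicate), this shows at once that $\{\de_{\Bi'}^{(\Bn')}\}$ is a basis of $U^+[\dT_{xs_i}]$, with no need to invoke $w_0$ in the finite case or to run a separate dimension count via Proposition~\ref{prop:base2a} in the Kac--Moody case.

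A small caveat on your reductions: since $b$ is an anti-automorphism, $b(\de_\Bi^{(\Bn)})=\df_{\Bi,1}^{(n_1)}\cdots\df_{\Bi,m}^{(n_m)}$, which is the reversed product, not $\df_\Bi^{\Bn}$; similarly $a$ introduces an order reversal together with the reversal of $\Bi$. So while the involutions do transfer the \emph{subspace} statements (that $U^\pm[\dT_w]$, $U^\pm[\dT_w^{-1}]$ have the expected graded dimensions), it is cleaner simply to run the identical inductive argument for each of the six families directly, as the paper's phrasing implicitly suggests.
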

For $w\in W$ and $\gamma\in Q^+$ we have 
\begin{align}
\label{eq:twist1}
&U^\pm[\dT_w]\cap U^{\pm}_{\pm\gamma}
=U^\pm\cap \dT_w(k_{\pm w^{-1}\gamma}U^\mp_{\pm w^{-1}\gamma}),
\\
\label{eq:twist2}
&U^\pm[\dT_w^{-1}]\cap U^{\pm}_{\pm\gamma}
=U^\pm\cap \dT_w^{-1}(k_{\mp w\gamma}U^\mp_{\pm w\gamma})
\end{align}
by Proposition \ref{prop:base2} and the explicit description of $\dT_i$.

\section{Quantized coordinate algebras}
\subsection{}
We denote by $\BC_q[G]$ the quantized coordinate algebra of $U$ (see, for example, \cite{J}, \cite{Kas}, \cite{T} for the basic facts concerning $\BC_q[G]$).
It is the $\BF$-subspace of $U^*$ spanned by the matrix coefficients of $U$-modules belonging to $\Mod_0(U)$.
Namely, for $V\in\Mod_0(U)$ define a linear map
\begin{equation}
\Phi:
V^*\otimes V\to U^*\quad(v^*\otimes v\mapsto \Phi_{v^*\otimes v})
\end{equation}
by
\[
\langle
\Phi_{v^*\otimes v},u\rangle
=\langle v^*,uv\rangle
\qquad(u\in U).
\]
Then we have 
\begin{equation}
\label{eq:defCq}
\BC_q[G]=\sum_{V\in\Mod_0(U)}\Image(V^*\otimes V\to U^*)\subset U^*.
\end{equation}
It is endowed with a Hopf algebra structure by
$m_{\BC_q[G]}={}^t\Delta_U$,
$\Delta_{\BC_q[G]}={}^tm_U$.
Moreover, \eqref{eq:defCq} induces 
\begin{equation}
\label{eq:decompCq}
\BC_q[G]\cong\bigoplus_{\lambda\in P^-}V^*(\lambda)\otimes V(\lambda).\end{equation}
We set
\[
\BC_q[B^+]=\Image(\BC_q[G]\to (U^{\geqq0})^*),\quad
\BC_q[B^-]=\Image(\BC_q[G]\to (U^{\leqq0})^*),
\]
\[
\BC_q[H]=\Image(\BC_q[G]\to (U^{0})^*).
\]
For $\lambda\in P$ we denote by 
$
\chi_\lambda:U^0\to\BF
$
the algebra homomorphism given by $\chi_\lambda(k_\gamma)=q^{(\lambda,\gamma)}$ for $\gamma\in Q$.
Then we have 
\begin{equation}
\BC_q[H]=\bigoplus_{\lambda\in P}\BF\chi_\lambda.
\end{equation}

Note that $\BC_q[G]$ is a $U$-bimodule by
\[
\langle u_1\varphi u_2,u\rangle
=\langle \varphi,u_2uu_1\rangle
\qquad
(\varphi\in\BC_q[G], u_1, u_1, u\in U).
\]
For $\varphi, \psi\in \BC_q[G]$ and $u\in U$ we have
\begin{align}
\label{eq:mult-left}
u(\varphi\psi)=&
\sum_{(u)}(u_{(0)}\varphi)(u_{(1)}\psi),
\\
\label{eq:mult-right}
(\varphi\psi)u=&
\sum_{(u)}(\varphi u_{(0)})(\psi u_{(1)}).
\end{align}
\begin{example}
\label{ex:sl2}
Consider the case where $\Gg=\Gsl_2$ and $G=SL_2$.
In this case $U=U_q(\Gsl_2)$ is the $\BF$-algebra generated by the elements $k^{\pm1}$, $e$, $f$ satisfying
\[
kk^{-1}=k^{-1}k=1,\quad
kek^{-1}=q^2e,\quad
kfk^{-1}=q^{-2}f,\quad
ef-fe=\frac{k-k^{-1}}{q-q^{-1}}.
\]
Let $V=\BF v_0\oplus\BF v_1$ be the two-dimensional $U$-module 
given by
\[
kv_0=qv_0,\quad
kv_1=q^{-1}v_1,\quad
ev_0=0,\quad
ev_1=v_0,\quad
fv_0=v_1,\quad
fv_1=0,
\]
and define $a, b, c, d\in \BC_q[SL_2]$ by
\[
uv_0=\langle a,u\rangle v_0+\langle c,u\rangle v_1,\qquad
uv_1=\langle b,u\rangle v_0+\langle d,u\rangle v_1
\qquad(u\in U).
\]
Then $\{a, b, c, d\}$ forms a generator system of the $\BF$-algebra $\BC_q[SL_2]$  satisfying the fundamental relations
\begin{align*}
&ab=qba,\quad 
cd=qdc,\quad
ac=qca, \quad
bd=qdb, \quad
bc=cb,\\1
&
ad-da=(q-q^{-1})bc,\qquad
ad-qbc=1.
\end{align*}
Its Hopf algebra structure is given by
\begin{align*}
&\Delta(a)=a\otimes a+b\otimes c,
\qquad
\Delta(b)=a\otimes b+b\otimes d,\\
&\Delta(c)=c\otimes a+d\otimes c,
\qquad
\Delta(d)=c\otimes b+d\otimes d,\\
&\varepsilon(a)=\varepsilon(d)=1,\qquad
\varepsilon(b)=\varepsilon(c)=0,\\
&
S(a)=d,\qquad S(b)=-q^{-1}b,\qquad S(c)=-qc,\qquad S(d)=a.
\end{align*}
\end{example}

\subsection{}
For $w\in W$ and $\lambda\in P^-$ set
\[
v^*_{w\lambda}=v^*_\lambda\dT_{w}^{-1}\in V^*(\lambda)_{w\lambda},
\]
and define 
$\sigma^w_\lambda\in\BC_q[G]$ by
\[
\langle\sigma^w_\lambda, u\rangle=\langle v_{w\lambda}^*,uv_{\lambda}\rangle
\qquad(u\in U).
\]
Then we have
\begin{equation}
\sigma^w_0=1,\quad\sigma^w_\lambda\sigma^w_\mu=\sigma^w_\mu\sigma^w_\lambda=\sigma^w_{\lambda+\mu}
\qquad(\lambda, \mu\in P^-).
\end{equation}
Set
\[
\CS_w=\{\sigma^w_\lambda\mid\lambda\in P^-\}\subset\BC_q[G].
\]

\begin{proposition}[\cite{J}]
\label{prop:Joseph}
The multiplicative subset $\CS_w$ of $\BC_q[G]$ satisfies the left and right Ore conditions.
\end{proposition}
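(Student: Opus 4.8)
The final statement to prove is Proposition~\ref{prop:Joseph}: the multiplicative set $\CS_w = \{\sigma^w_\lambda \mid \lambda \in P^-\}$ satisfies the left and right Ore conditions in $\BC_q[G]$.

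\medskip

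The plan is to exploit the $U$-bimodule structure on $\BC_q[G]$ together with the fact that $\sigma^w_\lambda$ is a highest/lowest weight type matrix coefficient, so that left and right multiplication by $U$ act on it in a controlled way. First I would record the weight-covariance of the vectors $v_\lambda \in V(\lambda)_\lambda$ and $v^*_{w\lambda} = v^*_\lambda \dT_w^{-1} \in V^*(\lambda)_{w\lambda}$: since $v_\lambda$ is a lowest weight vector ($f_i v_\lambda = 0$) and $v^*_{w\lambda}$ spans the $w\lambda$-weight space of the irreducible $V^*(\lambda)$, the element $\sigma^w_\lambda$ transforms under the left and right $U^0$-actions by the characters $\chi_{w\lambda}$ and $\chi_\lambda$ respectively, and moreover $\sigma^w_\lambda e_i = 0$-type and $f_i \sigma^w_\lambda = 0$-type annihilation properties hold on the appropriate side (coming from $e_i$ killing the extremal weight space reached by $\dT_w^{-1}$, and $f_i$ killing $v_\lambda$). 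Concretely, for suitable $i$ one gets relations of the shape $\sigma^w_\lambda \cdot x = (\text{scalar depending on weights}) \cdot x' \cdot \sigma^w_\lambda$ where $x \in \BC_q[G]$ and $x'$ is a ``twisted'' version of $x$.

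\medskip

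The key structural input is the decomposition \eqref{eq:decompCq}, $\BC_q[G] \cong \bigoplus_{\mu \in P^-} V^*(\mu) \otimes V(\mu)$, which is compatible with the $U$-bimodule structure. For a fixed matrix coefficient $\varphi = \Phi_{v^* \otimes v}$ with $v \in V(\mu)$, $v^* \in V^*(\mu)$, I would compute the product $\sigma^w_\lambda \varphi$ and $\varphi \sigma^w_\lambda$ inside $V^*(\lambda + \mu) \otimes V(\lambda + \mu)$ (using that $V(\lambda) \otimes V(\mu)$ contains $V(\lambda+\mu)$ as the component generated by $v_\lambda \otimes v_\mu$, which is again a lowest weight vector of weight $\lambda+\mu$). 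The point is that multiplying by $\sigma^w_\lambda$ just translates the weight grading by $\lambda$ and, because $v_\lambda$ is lowest weight and $v^*_{w\lambda}$ is extremal, the multiplication map $\varphi \mapsto \sigma^w_\lambda \varphi$ (resp.\ $\varphi \mapsto \varphi \sigma^w_\lambda$) is \emph{injective} on each isotypic block and has predictable image. From this one deduces that for any $\varphi \in \BC_q[G]$ and any $\sigma \in \CS_w$ there exist $\sigma' \in \CS_w$ and $\psi \in \BC_q[G]$ with $\sigma' \varphi = \psi \sigma$, and symmetrically on the other side; this is precisely the Ore condition. The existence of $\sigma'$ large enough is guaranteed by taking $\lambda$ sufficiently small in $P^-$ (as in Proposition~\ref{prop:UV}), which is what makes the relevant weight spaces behave uniformly.

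\medskip

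I expect the main obstacle to be making the ``twisting'' identity precise and verifying it is an equality in $\BC_q[G]$ rather than merely up to lower-order terms: one must show that left multiplication by $\sigma^w_\lambda$, followed by an appropriate automorphism of $\BC_q[G]$ (a combination of the left and right $U^0$-twists, i.e.\ composition with $\chi_{w\lambda}$ and $\chi_\lambda$), lands inside $\BC_q[G] \cdot \sigma^w_\lambda$. The cleanest route is probably to first establish, for each generator $a,b,c,d$-type matrix coefficient (or more invariantly, for each $\Phi_{v^* \otimes v}$ with $v$, $v^*$ of definite weight), a commutation relation $\sigma^w_\lambda \Phi_{v^* \otimes v} = q^{(\text{pairing of weights})}\,\Phi_{v^* \otimes v}\, \sigma^w_\lambda$ up to terms of strictly higher weight on one tensor factor, then argue by induction on the weight filtration that these higher terms can be absorbed. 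Since $\BC_q[G]$ is generated by such coefficients and $\sigma^w_\lambda \sigma^w_\mu = \sigma^w_{\lambda+\mu}$, once the Ore condition is verified for a generating set and a cofinal subset of $\CS_w$ it propagates to all of $\BC_q[G]$ and all of $\CS_w$. This is essentially Joseph's original argument, so I would cite \cite{J} for the detailed verification and only indicate the weight-bookkeeping here.
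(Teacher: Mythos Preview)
Your proposal is not so much a proof as a sketch of Joseph's original argument, ending with a citation of \cite{J}. That is a legitimate way to handle a result attributed to \cite{J}, but it is not what the paper does: the paper explicitly announces (just after the statement) that it will give a \emph{new} proof of Proposition~\ref{prop:Joseph}, and the mechanism is quite different from the $q$-commutation/weight-filtration strategy you outline.

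The paper's route is in two steps. First, for $w=1$, it embeds $\BC_q[G]$ into the subalgebra $(U^+)^\bigstar\,\BC_q[H]\,(U^-)^\bigstar$ of $U^*$ (Lemma~\ref{lem:2a}, Lemma~\ref{lem:2b}), where $\sigma^1_\lambda=\chi_\lambda$ is genuinely invertible with inverse $\chi_{-\lambda}$. Zero-divisor freeness is then immediate (Proposition~\ref{prop:2ga}), and the Ore condition reduces to the statement that for any $f$ in the ambient algebra one has $\chi_\lambda f\in\BC_q[G]$ for $\lambda\in P^-$ sufficiently small (Corollary~\ref{cor:x}, Proposition~\ref{prop:2g}). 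Second, for general $w$, the paper transports the $w=1$ result via the right braid group action: the key identity $(f\sigma^1_\lambda)\dT_w^{-1}=(f\dT_w^{-1})\sigma^w_\lambda$ (Lemma~\ref{lem:wa}) together with the decomposition $\BC_q[G]=\BC_q[G/N^-]\cdot(\BC_q[N^+\backslash G]\dT_w^{-1})$ up to $\CS_w$-torsion (Lemma~\ref{lem:loc}) reduces everything to the $w=1$ case (Propositions~\ref{prop:2gw1}--\ref{prop:2gw}, Corollary~\ref{cor:Ore-w}).

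What each approach buys: your (Joseph's) route is self-contained representation theory and works directly with commutation relations in $\BC_q[G]$. The paper's route avoids any filtration induction by passing to an ambient algebra where the Ore set is already invertible; more importantly, the machinery it sets up---the identification $\CS_1^{-1}\BC_q[G]\cong (U^+)^\bigstar\,\BC_q[H]\,(U^-)^\bigstar$ (Proposition~\ref{prop:localization1}) and the twist $F_w$ (Proposition~\ref{prop:def-fw})---is exactly what drives the rest of the paper, so the Ore condition appears as a byproduct rather than a standalone computation.
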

It follows that we have the localization
$\CS_w^{-1}\BC_q[G]=\BC_q[G]\CS_w^{-1}$.
In the rest of this section we investigate the structure of the algebra $\CS_w^{-1}\BC_q[G]$.
In the course of the arguments we give a new proof of Proposition \ref{prop:Joseph}.

\subsection{}
In this subsection we consider the case $w=1$.

Set 
\[
(U^\pm)^\bigstar=\bigoplus_{\gamma\in Q^+}(U^\pm_{\pm\gamma})^*\subset U^*.
\]
\begin{lemma}
\label{lem:2a}
We have
\[
\BC_q[G]\subset(U^+)^\bigstar\otimes\BC_q[H]\otimes(U^-)^\bigstar
\subset
(U^+)^*\otimes(U^0)^*\otimes(U^-)^*
\subset U^*,
\]
where the embedding 
$(U^+)^*\otimes(U^0)^*\otimes(U^-)^*\subset U^*$
is given by 
\begin{multline*}
\langle\psi\otimes\chi\otimes\varphi,xty\rangle
=
\langle\psi,x\rangle
\langle\chi,t\rangle
\langle\varphi,y\rangle
\\
(\psi\in(U^+)^*,\;\chi\in(U^0)^*,\;\varphi\in(U^-)^*,\;x\in U^+,\;t\in U^0,\;y\in U^-).
\end{multline*}
\end{lemma}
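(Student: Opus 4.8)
Here is my plan for proving Lemma \ref{lem:2a}.

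\medskip

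The plan is to use the decomposition $U \cong U^+ \otimes U^0 \otimes U^-$ (from the multiplication isomorphism stated in the section on quantized enveloping algebras) to view any $\varphi\in U^*$ as an element of $(U^+)^*\otimes(U^0)^*\otimes(U^-)^*$. The embedding displayed in the statement is exactly the one dual to this decomposition, so there is really nothing to check for the outer two inclusions beyond noting that $U^+$, $U^0$, $U^-$ are $\BF$-spaces and the triple tensor product of dual spaces embeds (not surjectively, but that is irrelevant) into the dual of the triple tensor product. The substance of the lemma is the first inclusion: that a matrix coefficient $\Phi_{v^*\otimes v}$ for $V\in\Mod_0(U)$ actually lands in $(U^+)^\bigstar\otimes\BC_q[H]\otimes(U^-)^\bigstar$.

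\medskip

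First I would reduce to the case of an irreducible module, since by \eqref{eq:decompCq} (equivalently semisimplicity of $\Mod_0(U)$) every element of $\BC_q[G]$ is a sum of matrix coefficients of the $V(\lambda)$, $\lambda\in P^-$; and in fact it suffices to treat $\Phi_{v^*\otimes v}$ with $v\in V(\lambda)_\mu$ and $v^*\in V^*(\lambda)_\nu$ weight vectors, since $\Mod_0(U)$ consists of weight modules. The key weight-grading observation is the following: for such $v, v^*$ and for $x\in U^+_{\gamma}$, $t\in U^0$, $y\in U^-_{-\delta}$ (with $\gamma,\delta\in Q^+$), the element $xty$ acts on $v$ by first moving $v$ into weight $\mu-\delta$, then scaling, then moving into weight $\mu-\delta+\gamma$; pairing with $v^*\in V^*(\lambda)_\nu$ forces this to vanish unless $\mu-\delta+\gamma = -\nu$ — wait, more carefully, unless the resulting weight matches, i.e. $\nu + (\mu - \delta + \gamma) = 0$ in the pairing $\langle v^*, xty\, v\rangle$; in any case it is nonzero only for finitely many $(\gamma,\delta)$ (indeed $\gamma-\delta$ is pinned down). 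This shows that when we write $\Phi_{v^*\otimes v}$ as an element of $\prod_{\gamma,\delta\in Q^+}(U^+_\gamma)^*\otimes (U^0)^*\otimes (U^-_{-\delta})^*$, only finitely many components are nonzero, so it lies in the algebraic direct sum $\bigoplus_{\gamma}(U^+_\gamma)^*\otimes(U^0)^*\otimes\bigoplus_\delta(U^-_{-\delta})^*$, which is $(U^+)^\bigstar\otimes(U^0)^*\otimes(U^-)^\bigstar$ — and similarly the $U^0$-component is a single $\chi_\mu$ (or a finite sum of such), hence lies in $\BC_q[H]=\bigoplus_\lambda\BF\chi_\lambda$.

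\medskip

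So the steps, in order, are: (1) identify the claimed embedding $(U^+)^*\otimes(U^0)^*\otimes(U^-)^*\hookrightarrow U^*$ with the transpose of $U^+\otimes U^0\otimes U^-\xrightarrow{\sim}U$, and observe it is well-defined and injective; (2) reduce $\BC_q[G]$ to matrix coefficients $\Phi_{v^*\otimes v}$ with $v, v^*$ weight vectors in some $V(\lambda)$, using semisimplicity and the weight-module property; (3) carry out the weight-counting argument above to see that each such $\Phi_{v^*\otimes v}$, regarded in the (non-algebraic) product of dual weight spaces, has only finitely many nonzero components, with the $U^0$-part being a multiple of a single $\chi_\mu$; (4) conclude $\Phi_{v^*\otimes v}\in(U^+)^\bigstar\otimes\BC_q[H]\otimes(U^-)^\bigstar$ and hence, taking sums, $\BC_q[G]\subset(U^+)^\bigstar\otimes\BC_q[H]\otimes(U^-)^\bigstar$. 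I expect the main (though still minor) obstacle to be bookkeeping in step (3): one must be careful that membership in the \emph{algebraic} tensor product $(U^+)^\bigstar\otimes\BC_q[H]\otimes(U^-)^\bigstar$ — as opposed to some completion — really does follow, i.e. that a single matrix coefficient is supported on finitely many weight-components on both the $U^+$ and $U^-$ sides simultaneously; this is where finite-dimensionality of $V(\lambda)$ is used, since it bounds the range of weights $\mu-\delta$ and $\mu-\delta+\gamma$ that can occur.
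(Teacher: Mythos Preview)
Your approach is correct but takes a different route from the paper's. You reduce to matrix coefficients $\Phi_{v^*\otimes v}$ with $v,v^*$ weight vectors in a finite-dimensional irreducible $V(\lambda)$, and then use a direct weight-space count together with finite-dimensionality of $V(\lambda)$ and of each $U^{\pm}_{\pm\gamma}$ to see that only finitely many $(\gamma,\delta)$-components survive and that the $U^0$-part is a single character $\chi_{\mu-\delta}$. This works; the only place where you should be slightly more explicit is in noting that on each surviving block $U^+_\gamma\otimes U^0\otimes U^-_{-\delta}$ the functional really is a finite sum of simple tensors (which follows since $U^+_\gamma$ and $U^-_{-\delta}$ are finite-dimensional, so the dual of the triple tensor product there \emph{equals} the tensor product of duals).

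The paper's proof is shorter and structurally different: it does not decompose into irreducibles or do any weight bookkeeping. Instead it uses the coalgebra structure of $\BC_q[G]$. For any $\varphi\in\BC_q[G]$ one has the finite Sweedler expansion $\Delta_2(\varphi)=\sum_{(\varphi)_2}\varphi_{(0)}\otimes\varphi_{(1)}\otimes\varphi_{(2)}$ in $\BC_q[G]^{\otimes 3}$, and then
\[
\langle\varphi,xty\rangle=\sum_{(\varphi)_2}\langle\varphi_{(0)},x\rangle\langle\varphi_{(1)},t\rangle\langle\varphi_{(2)},y\rangle,
\]
so $\varphi$ is the finite sum $\sum(\varphi_{(0)}|_{U^+})\otimes(\varphi_{(1)}|_{U^0})\otimes(\varphi_{(2)}|_{U^-})$. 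One then only needs the easy observation that the restriction of any element of $\BC_q[G]$ to $U^0$ lies in $\BC_q[H]$ and to $U^{\pm}$ lies in $(U^{\pm})^\bigstar$. In short, the paper lets the comultiplication carry the finiteness, whereas you extract it by hand from the weight decomposition of $V(\lambda)$. Your argument is more elementary in that it avoids the Hopf structure of $\BC_q[G]$, but the paper's is cleaner and applies uniformly without first reducing to weight vectors in irreducibles.
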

\begin{proof}
It is easily seen that for any $\varphi\in\BC_q[G]$ we have
\[
\varphi|_{U^0}\in\BC_q[H],\qquad
\varphi|_{U^\pm}\in (U^\pm)^\bigstar.
\]
Hence the assertion is a consequence of 
\[
\langle\varphi,xty\rangle
=
\sum_{(\varphi)_2}
\langle\varphi_{(0)},x\rangle
\langle\varphi_{(1)},t\rangle
\langle\varphi_{(2)},y\rangle
\qquad(
x\in U^+, t\in U^0, y\in U^-)
\]
for $\varphi\in\BC_q[G]$.
\end{proof}
Note that $U^*$ is an $\BF$-algebra whose multiplication is given by the composite of 
$U^*\otimes U^*\subset(U\otimes U)^*\xrightarrow{{}^t\Delta}U^*$ and that $\BC_q[G]$ is a subalgebra of $U^*$.
We will identify 
$(U^0)^*$, $(U^{\pm})^*$ with subspaces of $U^*$ by
\begin{align*}
(U^+)^*\to U^*
\qquad
(\psi\mapsto[xty\mapsto\langle\psi,x\rangle\varepsilon(t)\varepsilon(y)]),
\\
(U^0)^*\to U^*
\qquad
(\chi\mapsto[xty\mapsto
\varepsilon(x)\langle\chi,t\rangle\varepsilon(y)]),
\\
(U^-)^*\to U^*
\qquad
(\varphi\mapsto[xty\mapsto
\varepsilon(x)\varepsilon(t)\langle\varphi,y\rangle]),
\end{align*}
where
$x\in U^+$, $t\in U^0$, $y\in U^-$.
Under this identification we have
\begin{equation}
\chi_\lambda=\sigma^1_\lambda\qquad(\lambda\in P^-).
\end{equation}
Hence
\begin{equation}
\CS_1=\{\chi_\lambda\mid\lambda\in P^-\}\subset\BC_q[H]\subset (U^0)^*\subset U^*.
\end{equation}

\begin{lemma}
\label{lem:2b}
For 
$\psi\in(U^+)^*,\;\chi\in(U^0)^*,\;\varphi\in(U^-)^*$
we have
\[
\langle\psi\chi\varphi, xty\rangle=
\langle\psi,x\rangle
\langle\chi,t\rangle
\langle\varphi,y\rangle
\qquad
(y\in U^-,\;t\in U^0,\;x\in U^+).
\]
\end{lemma}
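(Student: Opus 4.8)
The statement to prove is Lemma 3.8 (labelled \texttt{lem:2b}): for $\psi\in(U^+)^*$, $\chi\in(U^0)^*$, $\varphi\in(U^-)^*$, regarded as elements of $U^*$ via the embeddings introduced just above, the product $\psi\chi\varphi$ taken in the algebra $U^*$ satisfies $\langle\psi\chi\varphi,\,xty\rangle=\langle\psi,x\rangle\langle\chi,t\rangle\langle\varphi,y\rangle$ for $x\in U^+$, $t\in U^0$, $y\in U^-$. The approach is a direct computation unwinding the definition of the multiplication on $U^*$, which is the transpose of the comultiplication $\Delta$ of $U$, together with the explicit formulas for $\Delta$ on generators and the triangular decomposition $U\cong U^+\otimes U^0\otimes U^-$.

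First I would record what each of the three embeddings does: $\psi$ viewed in $U^*$ sends $x't'y'\mapsto\langle\psi,x'\rangle\varepsilon(t')\varepsilon(y')$, and similarly for $\chi$ and $\varphi$. Then, by definition of the product in $U^*$, for any $u\in U$,
\[
\langle\psi\chi\varphi,u\rangle=\sum_{(u)_2}\langle\psi,u_{(0)}\rangle_{U^*}\langle\chi,u_{(1)}\rangle_{U^*}\langle\varphi,u_{(2)}\rangle_{U^*},
\]
where each pairing on the right is the one coming from the embedding into $U^*$. The key structural input is that $\varepsilon$ kills $U^+_\gamma$ and $U^-_{-\gamma}$ for $\gamma\neq 0$ and is the identity on $U^0$ in the sense that $\varepsilon(k_\lambda)=1$; combined with the fact that $\Delta$ is filtered/graded appropriately — concretely, for $x\in U^+$ one has $\Delta(x)\in\bigoplus(U^+ U^0)\otimes U^+$ type containments (as in Lemma 2.9 for $w=1$), for $y\in U^-$ one has $\Delta(y)\in U^-\otimes(U^0 U^-)$, and $\Delta(t)=t\otimes t$ for $t=k_\lambda$ — the triple coproduct $\Delta_2(xty)$ has all its surviving terms (after applying $\psi\otimes\chi\otimes\varphi$ in the embedded sense) of the shape (element of $U^+$) $\otimes$ (element of $U^0$) $\otimes$ (element of $U^-$), with the first tensor slot forced into $U^+$ by $\varepsilon$-vanishing in the other two slots, and dually for the third.

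The cleanest way to organize the bookkeeping is to first treat $x$, $t$, $y$ separately: compute $\langle\psi\chi\varphi, x\rangle$ for $x\in U^+$ (showing it equals $\langle\psi,x\rangle$), then $\langle\psi\chi\varphi, t\rangle=\langle\chi,t\rangle$ for $t\in U^0$, then $\langle\psi\chi\varphi,y\rangle=\langle\varphi,y\rangle$ for $y\in U^-$, each reducing to the behaviour of $\Delta_2$ on generators and an induction on the $Q^+$-degree using \eqref{eq:mult-left}-type Leibniz rules; and then assemble the general monomial $xty$ by one more application of the definition of the product in $U^*$ together with $\Delta_2(xty)=\Delta_2(x)\Delta_2(t)\Delta_2(y)$ and the multiplicativity of $\psi\chi\varphi$ as a functional is \emph{not} available — rather one uses that $\Delta_2$ is an algebra map $U\to U^{\otimes 3}$ and then applies $\psi\otimes\chi\otimes\varphi$ slotwise, checking that cross terms vanish by weight/counit reasons. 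I would also invoke Lemma 3.6 (\texttt{lem:2a}), which guarantees that this slotwise description is consistent with $\BC_q[G]\subset(U^+)^\bigstar\otimes\BC_q[H]\otimes(U^-)^\bigstar$.

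**Main obstacle.** The only real subtlety is controlling the ``mixed'' terms in $\Delta_2(xty)$: a priori $\Delta(e_i)=e_i\otimes 1+k_i\otimes e_i$ produces $k_i$ in the left slot, so iterating one gets terms like $k_\gamma\otimes(\cdots)$ in the first tensor factor, and one must be sure these are annihilated when we pair with $\psi\in(U^+)^*$ embedded via $x't'y'\mapsto\langle\psi,x'\rangle\varepsilon(t')\varepsilon(y')$ — i.e. that the residual $U^0$ and $U^-$ parts carry $\varepsilon$-value forcing the term to survive only when the $U^0$-part sits in the \emph{middle} slot and the $U^-$-part in the \emph{right} slot. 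This is exactly handled by Lemma 2.9 (the $w=1$ case: $\Delta(U^+)\subset U\otimes U^{\geqq 0}$ and $\Delta(U^+)\subset U^{\geqq0}\otimes U$, and similarly for $U^-$), so the proof is genuinely short: expand, apply Lemma 2.9 to locate where each tensor factor lives, kill the unwanted terms by $\varepsilon$ on the non-matching slots, and read off the product of the three pairings. I expect no computation longer than a few lines once the $w=1$ instance of Lemma 2.9 is cited.
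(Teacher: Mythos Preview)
Your approach is correct and is essentially the same as the paper's: expand $\langle\psi\chi\varphi,xty\rangle$ via $\Delta_2(xty)=\Delta_2(x)\Delta_2(t)\Delta_2(y)$ and kill the cross-terms using the triangular containments together with the counit. The paper does this in two lines, invoking only the elementary decompositions $\Delta(x)=x\otimes 1+x'$ with $(\id\otimes\varepsilon)(x')=0$ for $x\in U^+$ and $\Delta(y)=1\otimes y+y'$ with $(\varepsilon\otimes\id)(y')=0$ for $y\in U^-$, which are immediate from the formulas for $\Delta(e_i)$, $\Delta(f_i)$; no appeal to Lemma~\ref{lem:delta-U} or Lemma~\ref{lem:2a} and no induction on $Q^+$-degree is needed. (Your side claim that $\langle\psi\chi\varphi,x\rangle=\langle\psi,x\rangle$ is off by a factor $\chi(1)\varphi(1)$ --- set $t=y=1$ in the lemma itself --- but you rightly do not build on it.)
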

\begin{proof}
By the definition of the comultiplication of $U$, for $x\in U^+$, $y\in U^-$ we have 
\begin{gather*}
\Delta(y)=1\otimes y+y',\qquad(\varepsilon\otimes\id)(y')=0,
\\
\Delta(x)=x\otimes 1+x',\qquad(\id\otimes\varepsilon)(x')=0.
\end{gather*}
Hence
\begin{align*}
\langle\psi\chi\varphi, xty\rangle
=&
\sum_{(x)_2,(t)_2,(y)_2}
\langle\psi,x_{(0)}t_{(0)}y_{(0)}\rangle
\langle\chi,x_{(1)}t_{(1)}y_{(1)}\rangle
\langle\varphi,x_{(2)}t_{(2)}y_{(2)}\rangle
\\
=&
\sum_{(t)_2}
\langle\psi,xt_{(0)}\rangle
\langle\chi,t_{(1)}\rangle
\langle\varphi,t_{(2)}y\rangle
=
\langle\psi,x\rangle
\langle\chi,t\rangle
\langle\varphi,y\rangle.
\end{align*}
\end{proof}
\begin{lemma}
\label{lem:2c}
For $\lambda, \mu\in P$
we have
$
\chi_\lambda\chi_\mu
=
\chi_{\lambda+\mu}
$.
\end{lemma}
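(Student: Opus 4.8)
The plan is to read the claimed identity as an equality of two elements of $U^*$ and verify it on the spanning set of $U$ provided by the triangular decomposition $U\cong U^+\otimes U^0\otimes U^-$, so that everything reduces to the explicit coproducts of the generators of $U$.

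First I would record the shape of $\chi_\nu$ as an element of $U^*$: under the embedding $(U^0)^*\hookrightarrow U^*$ fixed just before Lemma \ref{lem:2b}, the element $\chi_\nu$ (for any $\nu\in P$) is the functional with $\langle\chi_\nu,\,xk_\gamma y\rangle=\varepsilon(x)\,q^{(\nu,\gamma)}\,\varepsilon(y)$ for $x\in U^+$, $\gamma\in Q$, $y\in U^-$. Since $U^*$-elements are determined by their values on a spanning set of $U$, and since the products $xk_\gamma y$ with $x\in U^+_\beta$, $y\in U^-_{-\delta}$ homogeneous and $\gamma\in Q$ span $U$, it suffices to check $\langle\chi_\lambda\chi_\mu,\,xk_\gamma y\rangle=\langle\chi_{\lambda+\mu},\,xk_\gamma y\rangle$ for such elements. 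The right-hand side is $\varepsilon(x)\varepsilon(y)\,q^{(\lambda+\mu,\gamma)}$, which vanishes unless $\beta=\delta=0$.

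For the left-hand side I would use that the multiplication of $U^*\supset\BC_q[G]$ is ${}^t\Delta_U$, so $\langle\chi_\lambda\chi_\mu,\,xk_\gamma y\rangle=(\chi_\lambda\otimes\chi_\mu)\bigl(\Delta(x)\,(k_\gamma\otimes k_\gamma)\,\Delta(y)\bigr)$. From $\Delta(e_i)=e_i\otimes1+k_i\otimes e_i$ and $\Delta(f_i)=f_i\otimes k_i^{-1}+1\otimes f_i$ one gets $\Delta(x)\in\sum_{\beta'+\beta''=\beta}U^+_{\beta'}k_{\beta''}\otimes U^+_{\beta''}$ and $\Delta(y)\in\sum_{\delta'+\delta''=\delta}U^-_{-\delta'}\otimes U^-_{-\delta''}k_{-\delta'}$. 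After expanding $\Delta(x)(k_\gamma\otimes k_\gamma)\Delta(y)$, each term has first tensor factor of the form $a\,k_{\beta''+\gamma}\,c$ with $a\in U^+_{\beta'}$, $c\in U^-_{-\delta'}$, already in $U^+U^0U^-$-order; applying $\chi_\lambda$ there yields $\varepsilon(a)\,q^{(\lambda,\beta''+\gamma)}\,\varepsilon(c)$, which kills every term except the one with $\beta'=\delta'=0$, namely the contribution of $(k_\beta\otimes x)(k_\gamma\otimes k_\gamma)(1\otimes y)=k_{\beta+\gamma}\otimes xk_\gamma y$. Hence the left-hand side equals $q^{(\lambda,\beta+\gamma)}\,\varepsilon(x)\,q^{(\mu,\gamma)}\,\varepsilon(y)$. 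When $\beta=\delta=0$ this is $q^{(\lambda+\mu,\gamma)}\varepsilon(x)\varepsilon(y)$, matching the right-hand side; otherwise both sides are $0$.

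The argument is pure bookkeeping and I do not expect a genuine obstacle; the one point to be careful about is tracking the $k$-factors generated when a term of $\Delta(x)(k_\gamma\otimes k_\gamma)\Delta(y)$ is put into $U^+U^0U^-$-order before the pairing $\chi_\lambda\otimes\chi_\mu$ is applied. Equivalently, one may present the proof structurally: the projection $p\colon U\cong U^+\otimes U^0\otimes U^-\to U^0$, $x\otimes t\otimes y\mapsto\varepsilon(x)\varepsilon(y)\,t$, is a morphism of coalgebras — the verification $(p\otimes p)\circ\Delta_U=\Delta_{U^0}\circ p$ being exactly the calculation above — so its transpose is precisely the embedding $(U^0)^*\hookrightarrow U^*$ and is a ring homomorphism; the lemma then follows from the trivial identity $\chi_\lambda\chi_\mu=\chi_{\lambda+\mu}$ inside $(U^0)^*$, which is immediate from $\Delta(k_\gamma)=k_\gamma\otimes k_\gamma$ and $q^{(\lambda,\gamma)}q^{(\mu,\gamma)}=q^{(\lambda+\mu,\gamma)}$. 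This structural route also makes transparent that the statement holds for all $\lambda,\mu\in P$, not merely for $\lambda,\mu\in P^-$ where it would follow from $\chi_\nu=\sigma^1_\nu$.
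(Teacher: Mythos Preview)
Your proposal is correct and follows essentially the same approach as the paper: both evaluate $\chi_\lambda\chi_\mu$ on elements $xty$ of the triangular decomposition and use the shape of $\Delta(x)$, $\Delta(y)$ to see that only the $U^0$-part contributes, reducing to $q^{(\lambda,\gamma)}q^{(\mu,\gamma)}=q^{(\lambda+\mu,\gamma)}$. The paper's proof is simply the Sweedler-notation version of your explicit bookkeeping, and your alternative ``structural'' observation that $U\to U^0$ is a coalgebra map is a nice repackaging of the same computation.
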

\begin{proof}
We have
\begin{align*}
\langle\chi_\lambda\chi_\mu, xty\rangle
=&
\sum_{(x),(t),(y)}
\langle\chi_\lambda,x_{(0)}t_{(0)}y_{(0)}\rangle
\langle\chi_\mu,x_{(1)}t_{(1)}y_{(1)}\rangle
\\
=&
\varepsilon(x)\varepsilon(y)
\sum_{(t)}
\langle\chi_\lambda, t_{(0)}\rangle
\langle\chi_\mu,t_{(1)}\rangle
\\
=&
\varepsilon(x)\varepsilon(y)
\langle\chi_{\lambda+\mu},t\rangle
=\langle\chi_{\lambda+\mu}, xty\rangle.
\end{align*}
\end{proof}
\begin{lemma}
\label{lem:2d}
The subspaces $(U^+)^\bigstar$, $(U^-)^\bigstar$ of $U^*$ are subalgebras of $U^*$.
\end{lemma}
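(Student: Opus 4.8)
The plan is to show directly that the two embeddings
$(U^+)^*\hookrightarrow U^*$ and $(U^-)^*\hookrightarrow U^*$ described just before the lemma are compatible with the convolution product on $U^*$, so that the images of the graded duals $(U^\pm)^\bigstar$ are closed under multiplication. This is in the same spirit as the proofs of Lemmas \ref{lem:2b} and \ref{lem:2c}, the only extra ingredient being a bit of $Q$-grading bookkeeping.

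\textbf{Step 1: coalgebra facts.} Since $\Delta$ is an algebra homomorphism and $U^{\geqq0}=U^+U^0$, $U^{\leqq0}=U^0U^-$ are subalgebras, the defining formulas for $\Delta$ on $e_i,f_i,k_\gamma$ give
\[
\Delta(U^+)\subset U^{\geqq0}\otimes U^+,\qquad
\Delta(U^0)\subset U^0\otimes U^0,\qquad
\Delta(U^-)\subset U^-\otimes U^{\leqq0}.
\]
Tracking the $Q$-grading of $U$, for $x\in U^+_\gamma$ every term of $\Delta(x)$ lies in $U^+_\alpha k_\beta\otimes U^+_\beta$ with $\alpha+\beta=\gamma$, and for $y\in U^-_{-\delta}$ every term of $\Delta(y)$ lies in $U^-_{-\delta'}\otimes U^-_{-(\delta-\delta')}k_{-\delta'}$. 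These are proved by induction on the length of a word in the generators, using that the relevant sets of tensors are subalgebras of $U\otimes U$.

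\textbf{Step 2: the $U^+$-case.} By $\BF$-linearity it suffices to prove that for $\psi_1\in(U^+_{\gamma_1})^*$ and $\psi_2\in(U^+_{\gamma_2})^*$, viewed inside $U^*$, the product $\psi_1\psi_2$ lies in $(U^+_{\gamma_1+\gamma_2})^*\subset(U^+)^\bigstar$. Writing a general element of $U$ as $xty$ with $x\in U^+$, $t\in U^0$, $y\in U^-$ (triangular decomposition), I would expand $\Delta(xty)=\Delta(x)\Delta(t)\Delta(y)$ using Step~1 and rearrange each tensor leg into the form $(U^+)(U^0)(U^-)$; the only reordering required is moving a factor $k_{-\delta'}$ past an element of $U^-$, which only introduces a power of $q$. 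Applying the embedding formulas, each $\psi_i$ reads off the $U^+$-component and applies $\varepsilon$ to the $U^0$- and $U^-$-components; the $U^0$-summations and the $U^-$-summation from $\Delta(y)$ collapse by the counit identity $(\varepsilon\otimes\varepsilon)\Delta=\varepsilon$. This leaves $\langle\psi_1\psi_2,xty\rangle=\langle\psi_1*\psi_2,x\rangle\,\varepsilon(t)\,\varepsilon(y)$, where $\psi_1*\psi_2$ is obtained by pairing $\psi_1\otimes\psi_2$ against the ``$U^+\otimes U^+$-part'' of $\Delta(x)$. The weight constraint $\alpha+\beta=\gamma$ from Step~1 forces this functional to be supported on $U^+_{\gamma_1+\gamma_2}$, so $\psi_1\psi_2\in(U^+)^\bigstar$. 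Since $\varepsilon_U\in(U^+_0)^*$ is the unit of $U^*$, $(U^+)^\bigstar$ is a unital subalgebra. The case of $(U^-)^\bigstar$ is parallel, with $\psi_i$ now reading off the $U^-$-component; the only new feature is the $q$-power produced by the $k_{-\delta'}$-reordering, which does not affect membership in $(U^-)^\bigstar$.

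\textbf{Main obstacle.} The computation itself is essentially routine bookkeeping; the point that needs care is keeping track of the triangular-decomposition reorderings and verifying that the $Q$-grading is preserved under the product (i.e.\ that one stays in the graded dual, not merely the full dual). This is precisely where the weight constraints on the terms of $\Delta(x)$ and $\Delta(y)$ from Step~1 are used, and it is what makes the statement nontrivial. (Alternatively, one could invoke the fact that $U^+$ carries Lusztig's bialgebra-type comultiplication on $\mathbf f$, whose graded dual is an algebra, and then identify that algebra with the image of $(U^+)^\bigstar$ in $U^*$; but this essentially amounts to the same computation.)
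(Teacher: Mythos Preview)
Your proposal is correct and follows essentially the same approach as the paper: expand $\Delta(xty)$ using the coalgebra inclusions $\Delta(U^+)\subset U^{\geqq0}\otimes U^+$, $\Delta(U^0)\subset U^0\otimes U^0$, $\Delta(U^-)\subset U^-\otimes U^{\leqq0}$, then collapse the unwanted legs via the counit to obtain $\langle\psi_1\psi_2,xty\rangle=\varepsilon(t)\varepsilon(y)\langle\psi_1\psi_2,x\rangle$ (resp.\ the $U^-$-variant). The paper's proof is terser---it carries out the Sweedler computation in three lines for $(U^-)^\bigstar$ and leaves the graded-dual check implicit---whereas you make the $Q$-grading bookkeeping explicit, which is a welcome addition since that is what distinguishes $(U^\pm)^\bigstar$ from the full dual $(U^\pm)^*$.
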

\begin{proof}
For $\varphi, \varphi'\in(U^-)^\bigstar$ we have
\begin{align*}
\langle\varphi\varphi',xty\rangle
=&
\sum_{(x),(t),(y)}
\langle\varphi,x_{(0)}t_{(0)}y_{(0)}\rangle
\langle\varphi',x_{(1)}t_{(1)}y_{(1)}\rangle
\\
=&
\varepsilon(x)
\sum_{(t),(y)}
\langle\varphi,t_{(0)}y_{(0)}\rangle
\langle\varphi',t_{(1)}y_{(1)}\rangle
\\
=&
\varepsilon(x)\varepsilon(t)
\sum_{(y)}
\langle\varphi,y_{(0)}\rangle
\langle\varphi',y_{(1)}\rangle
=
\varepsilon(x)\varepsilon(t)
\langle\varphi\varphi',y\rangle.
\end{align*}
The statement for $(U^+)^\bigstar$ is proved similarly.
\end{proof}

\begin{lemma}
\label{lem:2e}
\begin{itemize}
\item[(i)]
For $\psi\in(U_{\gamma}^+)^*$, $\lambda\in P$
we have
$
\chi_\lambda\psi=q^{(\lambda,\gamma)}\psi\chi_\lambda
$.
\item[(ii)]
For $\varphi\in(U_{-\gamma}^-)^*$, $\lambda\in P$
we have
$
\chi_\lambda\varphi=q^{(\lambda,\gamma)}\varphi\chi_\lambda
$.
\end{itemize}
\end{lemma}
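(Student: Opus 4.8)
The plan is to prove Lemma \ref{lem:2e} by a direct computation pairing both sides against an arbitrary element $xty$ with $x\in U^+_\delta$, $t\in U^0$, $y\in U^-$, exactly in the style of the preceding Lemmas \ref{lem:2c} and \ref{lem:2d}. For part (i), I would expand $\langle\chi_\lambda\psi, xty\rangle$ using $m_{U^*}={}^t\Delta_U$ together with the triangular form of the comultiplication recalled in the proof of Lemma \ref{lem:2b}: since $\Delta(x)=x\otimes1+x'$ with $(\id\otimes\varepsilon)(x')=0$ and $\Delta(y)=1\otimes y+y'$ with $(\varepsilon\otimes\id)(y')=0$, and since $\chi_\lambda$ is supported on $U^0$ while $\psi$ is supported on $U^+_\delta$, most terms in the iterated coproduct die. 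The surviving contribution forces the $U^+$-part to split as (something in $U^+_\delta$) against (a group-like $k$-factor), and one is left with a single term of the form $\chi_\lambda(k_\delta)\,\langle\psi,x\rangle\langle\chi_\lambda\text{-remainder}\rangle$ — more precisely, using $\Delta(x)=\sum x_{(0)}\otimes x_{(1)}$ with $x_{(0)}$ carrying part of the weight and a $k$-factor appearing on one tensor leg.

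Concretely, the key mechanism is that for $x\in U^+_\delta$ one has, by the coproduct formulas on the generators and induction on weight, $\Delta(x)\in x\otimes1 + \sum (\text{lower in }U^+)\otimes(k_{\bullet}U^+_{>0}) + (U^+_{>0})\otimes(U^{\geqq0})$, and when we pair the left leg against $\chi_\lambda$ (supported on $U^0$) the only term that is $U^0$-valued on the left leg is forced to be $x$ itself replaced by... — here I must be careful: $\chi_\lambda$ kills $U^+_{>0}$, so the term $x_{(0)}\otimes x_{(1)}$ contributes only when $x_{(0)}\in U^0$, i.e. $x_{(0)}=k_\delta$ and $x_{(1)}=x$ up to the standard normalization. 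This yields $\langle\chi_\lambda\psi,xty\rangle = \chi_\lambda(k_\delta)\langle\psi,x\rangle\langle\chi_\lambda,t\rangle\varepsilon(y) = q^{(\lambda,\gamma)}\langle\psi,x\rangle\langle\chi_\lambda,t\rangle\varepsilon(y)$ provided $\delta=\gamma$, and $0$ otherwise; comparing with $\langle\psi\chi_\lambda,xty\rangle=\langle\psi,x\rangle\langle\chi_\lambda,t\rangle\varepsilon(y)$ via Lemma \ref{lem:2b} (and noting $\chi_\lambda=\sigma^1_\lambda$ lies in $(U^0)^*\subset U^*$) gives the relation $\chi_\lambda\psi=q^{(\lambda,\gamma)}\psi\chi_\lambda$. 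Part (ii) is the mirror computation: for $y\in U^-_{-\gamma}$ the relevant coproduct is $\Delta(y)\in 1\otimes y + \dots$, and the $k^{-1}$-factor attached to $f_i$ in $\Delta(f_i)=f_i\otimes k_i^{-1}+1\otimes f_i$ produces $\chi_\lambda(k_\gamma^{-1})^{-1}$... — one must track the sign of the exponent carefully, but the antecedent computation in Lemma \ref{lem:2d} shows the same bookkeeping, and the outcome is again the factor $q^{(\lambda,\gamma)}$.

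I expect the only real obstacle to be the sign/normalization bookkeeping: pinning down exactly which $k$-factor (a positive or negative power of $k_\gamma$) appears on the relevant tensor leg when one iterates the coproduct on a weight element of $U^+_\gamma$ resp. $U^-_{-\gamma}$, so that the scalar comes out as $q^{+(\lambda,\gamma)}$ in both (i) and (ii) rather than $q^{\pm(\lambda,\gamma)}$ with the wrong sign. This is cleanly handled by the observation that $U^0=\bigoplus_{\gamma\in Q}\BF k_\gamma$ is the group algebra of $Q$, that $k_ie_jk_i^{-1}=q_i^{a_{ij}}e_j$ forces the weight-$\gamma$ component of $\Delta(x)$ on the left leg to pick up precisely $k_\gamma$ (not $k_\gamma^{-1}$), while on $U^-$ the right leg picks up $k_{-\gamma}=k_\gamma^{-1}$; feeding $k_\gamma$ into $\chi_\lambda$ gives $q^{(\lambda,\gamma)}$, and for $U^-$ the $k_\gamma^{-1}$ sits on the opposite side of the product so that after commuting $\chi_\lambda$ past $\varphi$ the exponent again comes out positive. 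Everything else is the same routine unwinding of Sweedler notation already displayed in Lemmas \ref{lem:2b}--\ref{lem:2d}, so the proof is short.
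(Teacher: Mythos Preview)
Your proposal is correct and follows essentially the same approach as the paper: both compute $\langle\chi_\lambda\psi, xty\rangle$ directly via the coproduct, isolate the unique term $k_{\gamma'}\otimes x$ in $\Delta(x)$ whose left leg lies in $U^0$ (so that $\chi_\lambda$ does not kill it), and read off the factor $\langle\chi_\lambda,k_\gamma\rangle=q^{(\lambda,\gamma)}$, then compare with $\langle\psi\chi_\lambda,xty\rangle=\langle\psi,x\rangle\langle\chi_\lambda,t\rangle\varepsilon(y)$. The only cosmetic difference is that you invoke Lemma~\ref{lem:2b} (with $\varphi=\varepsilon|_{U^-}$) for the latter evaluation, whereas the paper redoes the short computation; both are fine, and part (ii) is handled symmetrically in each case.
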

\begin{proof}
For $x\in U^+_{\gamma'}$, $y\in U^-$, $t\in U^0$ we have
\begin{align*}
&\langle\chi_\lambda\psi, xty\rangle
=
\sum_{(x),(t),(y)}
\langle\chi_\lambda,x_{(0)}t_{(0)}y_{(0)}\rangle
\langle\psi,x_{(1)}t_{(1)}y_{(1)}\rangle
\\=&
\sum_{(t)}
\langle\chi_\lambda,k_{\gamma'} t_{(0)}\rangle
\langle\psi,xt_{(1)}y\rangle
=\varepsilon(t_{(1)})\varepsilon(y)\delta_{\gamma,\gamma'}
\sum_{(t)}
\langle\chi_\lambda,k_{\gamma'} t_{(0)}\rangle
\langle\psi,x\rangle
\\
=&
\varepsilon(y)\delta_{\gamma,\gamma'}
\langle\chi_\lambda,k_{\gamma} t\rangle
\langle\psi,x\rangle
=q^{(\lambda,\gamma)}\varepsilon(y)
\langle\psi,x\rangle
\langle\chi_\lambda,t\rangle.
\end{align*}
By a similar calculation we have
\[
\langle\psi\chi_\lambda, xty\rangle
=
\varepsilon(y)
\langle\psi,x\rangle
\langle\chi_\lambda,t\rangle.
\]
The statement (i) is proved.
The proof of (ii) is similar.
\end{proof}

\begin{lemma}
\label{lem:2f}
\begin{itemize}
\item[(i)]
Let $\varphi\in(U^-)^\bigstar$.
For sufficiently small $\lambda\in P^-$ we have
$\chi_\lambda\varphi, \varphi\chi_\lambda\in\BC_q[G]$.
\item[(ii)]
Let $\psi\in(U^+)^\bigstar$.
For sufficiently small  $\lambda\in P^-$ we have
$\chi_\lambda\psi, \psi\chi_\lambda\in\BC_q[G]$.
\end{itemize}
\end{lemma}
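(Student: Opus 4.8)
The plan is to reduce the statement to Proposition \ref{prop:UV}, which already controls how $U^\pm_{\pm\gamma}$ acts on lowest-weight modules $V(\lambda)$ for sufficiently small $\lambda\in P^-$. It suffices to treat (i); the argument for (ii) is the mirror image (using $V(\lambda)$ in place of $V^*(\lambda)$, or applying the anti-equivalence $V\mapsto V^\star$). Moreover, since $(U^-)^\bigstar=\bigoplus_{\gamma\in Q^+}(U^-_{-\gamma})^*$ and each $\chi_\lambda$ is homogeneous, it is enough to prove the claim for $\varphi\in(U^-_{-\gamma})^*$ with $\gamma\in Q^+$ fixed, and then the required $\mu\in P^-$ (in the sense of the Remark following Proposition \ref{prop:UV}) can be taken to be the one furnished by Proposition \ref{prop:UV}(ii) for that $\gamma$.

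First I would fix $\gamma\in Q^+$ and $\varphi\in(U^-_{-\gamma})^*$, and choose $\lambda\in P^-$ small enough that the map $U^-_{-\gamma}\ni y\mapsto v^*_\lambda y\in V^*(\lambda)_{\lambda+\gamma}$ of Proposition \ref{prop:UV}(ii) is bijective. Then there is a unique $\xi\in V^*(\lambda)_{\lambda+\gamma}$ with $\langle\varphi,y\rangle=\langle\xi,\ ?\ \rangle$ expressing $\varphi$ via this identification; concretely, writing $V^*(\lambda)=\bigoplus_\nu V^*(\lambda)_\nu$, define an element $\widetilde\varphi\in V^*(\lambda)$ supported in weight $\lambda+\gamma$ by transporting $\varphi$ along the isomorphism. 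The candidate matrix coefficient is $\Phi_{\widetilde\varphi\otimes v_\lambda}\in\BC_q[G]$, i.e. the function $u\mapsto\langle\widetilde\varphi,uv_\lambda\rangle$. The key computation is then to check that, under the identifications of Lemma \ref{lem:2a} and Lemma \ref{lem:2b}, this matrix coefficient equals $\chi_\lambda\varphi$ (and similarly a matrix coefficient of the form $\Phi_{v^*_\lambda\otimes(\cdot)}$ equals $\varphi\chi_\lambda$). To verify this one evaluates both sides on a general monomial $xty$ with $x\in U^+$, $t\in U^0$, $y\in U^-$: the left side is $\langle\widetilde\varphi,xtyv_\lambda\rangle$, and since $v_\lambda$ is a lowest weight vector, $yv_\lambda=\varepsilon(y)v_\lambda$ unless $y$ lowers the weight — more precisely $\Delta$-compatibility forces only the weight-$(-\gamma)$ part of $y$ and the constant part of $x$ to survive when we pair against $\widetilde\varphi$, which lives in weight $\lambda+\gamma$; the $t$-dependence produces exactly the scalar $\chi_\lambda(t)=q^{(\lambda,\gamma)}\cdot q^{(\lambda,\cdot)}$-type factor recorded in Lemma \ref{lem:2e}. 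Matching this against $\langle\chi_\lambda\varphi,xty\rangle=\langle\chi_\lambda,x_{(0)}t_{(0)}y_{(0)}\rangle\langle\varphi,x_{(1)}t_{(1)}y_{(1)}\rangle$ computed as in the proof of Lemma \ref{lem:2e} finishes the identification.

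The main obstacle is bookkeeping rather than conceptual: one must pin down precisely which component of $\Delta_2(xty)$ contributes when paired against $v^*_\lambda$ on one tensor slot and $\widetilde\varphi$ on the other, and confirm that the weight constraint "$\widetilde\varphi$ has weight $\lambda+\gamma$, $\varphi$ has weight $-\gamma$" is exactly what makes the two sides agree for \emph{all} $x\in U^+$, not just $x\in U^+_\gamma$. This is where smallness of $\lambda$ is used twice — once to realize $\varphi$ as $\widetilde\varphi$ via Proposition \ref{prop:UV}(ii), and implicitly to ensure $\lambda+\gamma$ is still a weight of $V^*(\lambda)$ so that $\widetilde\varphi\ne0$ when $\varphi\ne0$ (the case $\varphi=0$ being trivial). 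Once the matrix-coefficient identity is established, membership in $\BC_q[G]$ is immediate from the definition \eqref{eq:defCq}, and the analogous statements for $\varphi\chi_\lambda$ and for part (ii) follow by the symmetric constructions indicated above.
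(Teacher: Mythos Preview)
Your overall strategy---realize $\varphi$ as a matrix coefficient via Proposition~\ref{prop:UV}---is exactly the paper's, and once the matrix coefficient is correctly identified the verification is a one-line computation using Lemma~\ref{lem:2b}. But you have placed the vectors in the wrong slots of $\Phi$, and this makes the computation fail.

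The isomorphism of Proposition~\ref{prop:UV}(ii) is $U^-_{-\gamma}\xrightarrow{\sim}V^*(\lambda)_{\lambda+\gamma}$, $y\mapsto v^*_\lambda y$. Transporting $\varphi\in(U^-_{-\gamma})^*$ along it gives an element of the \emph{dual} $(V^*(\lambda)_{\lambda+\gamma})^*\cong V(\lambda)_{\lambda+\gamma}$, not of $V^*(\lambda)_{\lambda+\gamma}$. Concretely, there is $v\in V(\lambda)_{\lambda+\gamma}$ with $\langle\varphi,y\rangle=\langle v^*_\lambda y,\,v\rangle$, and the correct candidate is $\Phi_{v^*_\lambda\otimes v}$: then
\[
\langle\Phi_{v^*_\lambda\otimes v},xty\rangle
=\langle v^*_\lambda x,\,tyv\rangle
=\varepsilon(x)\,\chi_\lambda(t)\,\langle v^*_\lambda y,v\rangle
=\varepsilon(x)\,\chi_\lambda(t)\,\langle\varphi,y\rangle
=\langle\chi_\lambda\varphi,xty\rangle,
\]
using that $v^*_\lambda$ is annihilated on the right by $U^+_\gamma$ for $\gamma\ne0$. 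Your candidate $\Phi_{\widetilde\varphi\otimes v_\lambda}$ instead gives $\langle\widetilde\varphi,xtyv_\lambda\rangle=\varepsilon(y)\chi_\lambda(t)\langle\widetilde\varphi,xv_\lambda\rangle$, since $v_\lambda$ is the lowest weight vector and hence $yv_\lambda=\varepsilon(y)v_\lambda$ for \emph{all} $y\in U^-$, not just for the weight-zero part; this lies in $(U^+)^\bigstar\BC_q[H]$ and cannot equal $\chi_\lambda\varphi$. (Your construction is in fact the one appropriate for part~(ii).) Once the slot is fixed, the ``bookkeeping obstacle'' you anticipate disappears: no $\Delta_2$ analysis is needed, and $\varphi\chi_\lambda$ follows immediately from Lemma~\ref{lem:2e}.
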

\begin{proof}
(i) We may assume $\varphi\in(U^-_{-\gamma})^*$.
By Proposition \ref{prop:UV} there exists $v\in V(\lambda)_{\lambda+\gamma}$ such that 
\[
\langle\varphi,y\rangle=\langle v^*_\lambda y,v\rangle\qquad(y\in U^-).
\]
Then 
\[
\langle\Phi_{v_\lambda^*\otimes v},xty\rangle=
\langle v_\lambda^*xty,v\rangle
=\varepsilon(x)
\langle\chi_\lambda,t\rangle
\langle\varphi,y\rangle
=\langle\chi_\lambda\varphi,xty\rangle.
\]
Hence
$\chi_\lambda\varphi=q^{(\lambda,\gamma)}\varphi\chi_\lambda
=
\Phi_{v_\lambda^*\otimes v}\in\BC_q[G]$.
The proof of (ii) is similar.
\end{proof}

\begin{corollary}
\label{cor:x}
Let 
$f\in(U^+)^\bigstar\BC_q[H](U^-)^\bigstar$.
Then we have
$\chi_\lambda f,\; f\chi_\lambda\in\BC_q[G]$
for sufficiently small 
$\lambda\in P^-$ .
\end{corollary}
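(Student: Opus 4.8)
The plan is to reduce the statement about a general element $f \in (U^+)^\bigstar \BC_q[H](U^-)^\bigstar$ to the special cases already handled in Lemma \ref{lem:2f}. First I would observe that, by bilinearity of multiplication in $U^*$ and the fact that $(U^\pm)^\bigstar = \bigoplus_{\gamma \in Q^+}(U^\pm_{\pm\gamma})^*$ while $\BC_q[H] = \bigoplus_{\mu \in P}\BF\chi_\mu$, it suffices to treat a single product $f = \psi\,\chi_\mu\,\varphi$ with $\psi \in (U^+_\gamma)^*$, $\mu \in P$, $\varphi \in (U^-_{-\delta})^*$ for fixed $\gamma,\delta \in Q^+$; the general $f$ is a finite sum of such terms, and since ``sufficiently small $\lambda$'' means $\lambda \in \nu + P^-$ for some $\nu$, a finite intersection of such cones is again such a cone, so the smallness condition is preserved under finite sums.

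Next, for such a monomial I would use the commutation relations of Lemma \ref{lem:2e}: for any $\lambda \in P$ we have $\chi_\lambda \psi = q^{(\lambda,\gamma)}\psi\chi_\lambda$ and $\chi_\lambda\varphi = q^{(\lambda,\delta)}\varphi\chi_\lambda$, and by Lemma \ref{lem:2c} the $\chi$'s are grouplike under multiplication. Hence
\[
\chi_\lambda f = \chi_\lambda \psi \chi_\mu \varphi
= q^{(\lambda,\gamma+\delta)}\,\psi\,\chi_{\lambda+\mu}\,\varphi,
\]
and similarly $f\chi_\lambda = \psi\,\chi_{\mu+\lambda}\,\varphi$ up to a scalar. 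So it is enough to show $\psi\,\chi_{\lambda'}\,\varphi \in \BC_q[G]$ for all sufficiently small $\lambda' \in P^-$ (absorbing the fixed shift by $\mu$ into the ``sufficiently small'' quantifier, which is legitimate since $\mu + (\nu + P^-) = (\mu+\nu) + P^-$).

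For the remaining claim I would invoke Lemma \ref{lem:2f}: applying part (i) to $\varphi$ gives, for sufficiently small $\lambda'$, an element $\Phi_{v^*_{\lambda'}\otimes v} \in \BC_q[G]$ with $\chi_{\lambda'}\varphi = \Phi_{v^*_{\lambda'}\otimes v}$, where $v \in V(\lambda')_{\lambda'+\delta}$ is provided by Proposition \ref{prop:UV}(ii). Now I want to left-multiply by $\psi$; the natural move is to realize $\psi$ as a matrix coefficient as well. By Proposition \ref{prop:UV}(i), for sufficiently small $\lambda'$ there is $v' \in V^*(\lambda')_{\lambda'+\gamma}$ — more precisely, writing $\psi \in (U^+_\gamma)^*$, choose $v'^* \in V(\lambda')$... actually the cleanest route is: take the element $g := \Phi_{v^*_{\lambda'}\otimes v}$ and compute $\psi\, g$ directly on $xty$ using Lemma \ref{lem:2b} together with the coproduct structure, checking that $\psi\,\chi_{\lambda'}\,\varphi$ evaluated on $xty$ equals $\langle\psi,x\rangle\langle\chi_{\lambda'},t\rangle\langle\varphi,y\rangle = \langle w^*,x t y\, v_{\lambda'}\rangle$ for a suitable $w^* \in V^*(\lambda')_{\lambda'}$, namely $w^* = v^*_{\lambda'}$ twisted so that its action against $U^+_\gamma$ reproduces $\psi$; this again uses Proposition \ref{prop:UV}, applied on the dual side, and semisimplicity of $\Mod_0(U)$.

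The main obstacle I anticipate is this last step: combining the $(U^+)^\bigstar$-factor and the $(U^-)^\bigstar$-factor into a single matrix coefficient of one irreducible $V(\lambda')$, rather than needing a tensor product $V(\lambda_1)\otimes V(\lambda_2)$. The resolution is that $\BC_q[G]$ is closed under multiplication (it is a subalgebra of $U^*$), so in fact one does not need a single irreducible: once $\chi_{\lambda'}\varphi \in \BC_q[G]$ by Lemma \ref{lem:2f}(i) and, separately, $\psi\chi_{\lambda''} \in \BC_q[G]$ by Lemma \ref{lem:2f}(ii) for sufficiently small $\lambda''$, one writes
\[
\psi\,\chi_{\lambda'+\lambda''}\,\varphi
= (\psi\,\chi_{\lambda''})(\chi_{\lambda'}\,\varphi) \in \BC_q[G]\cdot\BC_q[G] \subset \BC_q[G],
\]
using Lemma \ref{lem:2c} to split $\chi_{\lambda'+\lambda''} = \chi_{\lambda''}\chi_{\lambda'}$ and Lemma \ref{lem:2e} to move one $\chi$ past $\psi$ (at the cost of a scalar). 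Taking $\lambda = \lambda' + \lambda''$ sufficiently small handles both smallness requirements simultaneously, and the earlier reduction (commuting $\chi_\lambda$ past the $\psi$ and $\varphi$ factors and grouplikeness) finishes the proof for monomials, hence by finite sums for general $f$.
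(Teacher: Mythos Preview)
Your proposal is correct and follows essentially the same route as the paper: reduce to a monomial $\psi\chi_\nu\varphi$, split $\chi_\lambda$ into pieces using Lemma~\ref{lem:2c}, and apply Lemma~\ref{lem:2f} to each outer factor so that the product lies in $\BC_q[G]\cdot\BC_q[G]\subset\BC_q[G]$. Two small remarks: the scalar in your displayed computation should be $q^{(\lambda,\gamma)}$, not $q^{(\lambda,\gamma+\delta)}$ (you only commute $\chi_\lambda$ past $\psi$ to reach the form $\psi\chi_{\lambda+\mu}\varphi$), and the final splitting $(\psi\chi_{\lambda''})(\chi_{\lambda'}\varphi)$ is pure associativity---no commutation past $\psi$ is needed there; the digression about realizing everything as a single matrix coefficient can simply be dropped.
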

\begin{proof}
We may assume 
$f=\psi\chi_\nu\varphi$\;($\psi\in (U^+_{\gamma})^*, \nu\in P, \varphi\in (U^-_{-\delta})^*$).
By Lemma \ref{lem:2f} we have 
$\chi_{\lambda_1}\psi, \chi_{\lambda_3}\varphi\in\BC_q[G]$
when $\lambda_1,\lambda_3\in P^-$ are sufficiently small.
Take $\lambda_2\in P^-$ such that $\lambda_2+\nu\in P^-$ and set
$\lambda=\lambda_1+\lambda_2+\lambda_3$.
Then we have
\[
\chi_\lambda f
=
q^{(\lambda_2+\lambda_3,\gamma)}
(\chi_{\lambda_1}\psi)\chi_{\lambda_2+\mu}(\chi_{\lambda_3}\varphi)
\in\BC_q[G].
\]
The proof for $f\chi_\lambda$ is similar.
\end{proof}
\begin{proposition}
\label{prop:2ga}
Let $f\in\BC_q[G]$, $\lambda\in P^-$.
\begin{itemize}
\item[(i)]
If 
$\sigma^1_\lambda f=0$, then $f=0$.
\item[(ii)]
If
$f\sigma^1_\lambda=0$, then $f=0$.
\end{itemize}
\end{proposition}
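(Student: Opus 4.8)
The plan is to view $\BC_q[G]$ inside the ambient space $(U^+)^\bigstar\otimes\BC_q[H]\otimes(U^-)^\bigstar\subset U^*$ via Lemma \ref{lem:2a}, and to show that left multiplication (resp.\ right multiplication) by $\sigma^1_\lambda=\chi_\lambda$ acts on this ambient space as an explicit \emph{bijective} linear operator. Since $\BC_q[G]$ is a subspace of that ambient space, injectivity of $f\mapsto\chi_\lambda f$ (resp.\ $f\mapsto f\chi_\lambda$) on $\BC_q[G]$ is then immediate, and that is exactly the assertion.

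For (i) I would define a linear endomorphism $L_\lambda$ of $(U^+)^\bigstar\otimes\BC_q[H]\otimes(U^-)^\bigstar$ by $L_\lambda(\psi\otimes\chi_\nu\otimes\varphi)=q^{(\lambda,\gamma)}\,\psi\otimes\chi_{\lambda+\nu}\otimes\varphi$ for homogeneous $\psi\in(U^+_\gamma)^*$ ($\gamma\in Q^+$), $\nu\in P$, $\varphi\in(U^-)^\bigstar$; this is well defined because $(U^+)^\bigstar=\bigoplus_{\gamma}(U^+_\gamma)^*$ and $\BC_q[H]=\bigoplus_{\nu}\BF\chi_\nu$, and it is bijective with inverse $L_{-\lambda}$, since $\nu\mapsto\lambda+\nu$ is a bijection of $P$ and $q^{(\lambda,\gamma)}\neq0$. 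The key step is the identity $\chi_\lambda g=L_\lambda(g)$ for $g$ in the subalgebra $(U^+)^\bigstar\,\BC_q[H]\,(U^-)^\bigstar$ of $U^*$: by bilinearity it suffices to take $g=\psi\chi_\nu\varphi$ with $\psi$ homogeneous of degree $\gamma$, and then Lemma \ref{lem:2e}(i) gives $\chi_\lambda\psi=q^{(\lambda,\gamma)}\psi\chi_\lambda$, Lemma \ref{lem:2c} gives $\chi_\lambda\chi_\nu=\chi_{\lambda+\nu}$, and Lemma \ref{lem:2b} identifies the resulting product $q^{(\lambda,\gamma)}\psi\chi_{\lambda+\nu}\varphi$ with the image of the tensor $q^{(\lambda,\gamma)}\psi\otimes\chi_{\lambda+\nu}\otimes\varphi$. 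Applying this to $f\in\BC_q[G]$ and using $\sigma^1_\lambda=\chi_\lambda$, the hypothesis $\sigma^1_\lambda f=0$ yields $L_\lambda(f)=0$, hence $f=0$.

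Part (ii) is entirely symmetric: one defines $R_\lambda(\psi\otimes\chi_\nu\otimes\varphi)=q^{-(\lambda,\delta)}\,\psi\otimes\chi_{\nu+\lambda}\otimes\varphi$ for homogeneous $\varphi\in(U^-_{-\delta})^*$, again a bijection, and checks $g\chi_\lambda=R_\lambda(g)$ by rewriting $\varphi\chi_\lambda=q^{-(\lambda,\delta)}\chi_\lambda\varphi$ via Lemma \ref{lem:2e}(ii) and then applying Lemmas \ref{lem:2c} and \ref{lem:2b}. Then $f\sigma^1_\lambda=0$ forces $R_\lambda(f)=0$, so $f=0$.

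I do not expect a genuine obstacle here; the only things to watch are that $L_\lambda$ and $R_\lambda$ are declared as bijections of the subspace $(U^+)^\bigstar\otimes\BC_q[H]\otimes(U^-)^\bigstar$ (not of all of $U^*$), which is all that is needed since $\BC_q[G]$ lies in it, and that one invokes the injectivity of the canonical map $(U^+)^*\otimes(U^0)^*\otimes(U^-)^*\to U^*$ (induced by $U\cong U^+\otimes U^0\otimes U^-$), so that the tensor-product formula in Lemma \ref{lem:2b} genuinely pins down products in $U^*$.
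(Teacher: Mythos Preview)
Your argument is correct, but the paper's proof is a one-liner that you have essentially buried inside your construction. The paper simply observes that $\sigma^1_\lambda=\chi_\lambda$ is invertible \emph{in the algebra $U^*$}, with inverse $\chi_{-\lambda}$: by Lemma~\ref{lem:2c} one has $\chi_\lambda\chi_{-\lambda}=\chi_0=\varepsilon$, and $\varepsilon$ is the unit of $U^*$. Since $\BC_q[G]\subset U^*$, left or right multiplication by an invertible element of $U^*$ is injective on $\BC_q[G]$, and both (i) and (ii) follow immediately.

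Your route reaches the same conclusion by explicitly writing down the bijections $L_\lambda$, $R_\lambda$ on $(U^+)^\bigstar\otimes\BC_q[H]\otimes(U^-)^\bigstar$ and checking they agree with multiplication by $\chi_\lambda$. This is fine, and in fact your formula for $L_\lambda$ is exactly what multiplication by $\chi_\lambda$ does (your inverse $L_{-\lambda}$ is multiplication by $\chi_{-\lambda}$); but once you notice that, the explicit tensor description and the appeals to Lemmas~\ref{lem:2b} and~\ref{lem:2e} become unnecessary. The advantage of your presentation is that it makes the action of $\chi_\lambda$ on the triangular decomposition completely transparent; the advantage of the paper's is brevity.
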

\begin{proof}
In the algebra $U^*$ the element $\sigma^1_\lambda=\chi_\lambda$ is invertible, and its inverse is given by $\chi_{-\lambda}$.
\end{proof}

We set
\begin{align}
\BC_q[G/N^-]=&
\{f\in\BC_q[G]\mid yf=\varepsilon(y)f\;(y\in U^-)\}
\\
\nonumber
=&\BC_q[G]\cap(U^+)^\bigstar\BC_q[H],
\\
\BC_q[N^+\backslash G]=&
\{f\in\BC_q[G]\mid fx=\varepsilon(x)f\;(x\in U^+)\}
\\
\nonumber
=&\BC_q[G]\cap\BC_q[H](U^-)^\bigstar.
\end{align}
They are subalgebras of $\BC_q[G]$.
\begin{proposition}
\label{prop:2g}
Assume that $\lambda\in P^-$.
\begin{itemize}
\item[(i)]
$\forall\psi\in\BC_q[G/N^-]$
$\exists\mu\in P^-$ s.t. 
$\sigma^1_\mu \psi\in\BC_q[G/N^-]\sigma^1_\lambda$, 
$\psi\sigma^1_\mu\in\sigma^1_\lambda\BC_q[G/N^-]$.
\item[(ii)]
$\forall\varphi\in\BC_q[N^+\backslash G]$
$\exists\mu\in P^-$ s.t. $\sigma^1_\mu \varphi\in\BC_q[N^+\backslash G]\sigma^1_\lambda$,
$\varphi\sigma^1_\mu\in\sigma^1_\lambda\BC_q[G/N^-]$.
\item[(iii)]
$\forall f\in\BC_q[G]$
$\exists\mu\in P^-$ s.t. $\sigma^1_\mu f\in\BC_q[G]\sigma^1_\lambda$, 
$f\sigma^1_\mu\in\sigma^1_\lambda\BC_q[G]$.
\end{itemize}
\end{proposition}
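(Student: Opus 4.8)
The plan is to reduce all three assertions to the single input Corollary~\ref{cor:x}, exploiting two facts already established: that $\sigma^1_\lambda=\chi_\lambda$ is invertible in the ambient algebra $U^*$ with inverse $\chi_{-\lambda}$ (this is precisely the content of the proof of Proposition~\ref{prop:2ga}), and that the elements $\chi_\nu$ are ``almost central'' --- by Lemma~\ref{lem:2c} they commute with $\BC_q[H]$, and by Lemma~\ref{lem:2e} they commute with any homogeneous element of $(U^+)^\bigstar$ or $(U^-)^\bigstar$ up to an explicit power of $q$.

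First I would dispose of (iii). Given $f\in\BC_q[G]$, write it by Lemma~\ref{lem:2a} and Lemma~\ref{lem:2b} as a finite sum $f=\sum_i\psi_i\chi_{\nu_i}\varphi_i$ with $\psi_i\in(U^+_{\gamma_i})^*$, $\varphi_i\in(U^-_{-\delta_i})^*$ and $\nu_i\in P$. Commuting $\chi_{\mp\lambda}$ through $f$ by means of Lemma~\ref{lem:2c} and Lemma~\ref{lem:2e} one obtains
\[
\chi_\mu f\chi_{-\lambda}=\chi_{\mu-\lambda}\tilde f,\qquad
\chi_{-\lambda}f\chi_\mu=\bar f\chi_{\mu-\lambda},
\]
where $\tilde f=\sum_i q^{(\lambda,\gamma_i+\delta_i)}\psi_i\chi_{\nu_i}\varphi_i$ and $\bar f=\sum_i q^{-(\lambda,\gamma_i+\delta_i)}\psi_i\chi_{\nu_i}\varphi_i$ again lie in $(U^+)^\bigstar\BC_q[H](U^-)^\bigstar$. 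By Corollary~\ref{cor:x} there is $\mu_0\in P^-$ such that $\chi_\nu\tilde f\in\BC_q[G]$ and $\bar f\chi_\nu\in\BC_q[G]$ for every $\nu\in\mu_0+P^-$ (if the two thresholds differ, replace $\mu_0$ by their sum, which is still in $P^-$). Since $\lambda\in P^-$, the weight $\mu:=\lambda+\mu_0$ lies in $P^-$ and $\mu-\lambda=\mu_0\in\mu_0+P^-$, so $\chi_\mu f\chi_{-\lambda}$ and $\chi_{-\lambda}f\chi_\mu$ both lie in $\BC_q[G]$; multiplying by $\chi_\lambda=\sigma^1_\lambda$ on the appropriate side yields $\sigma^1_\mu f=(\chi_\mu f\chi_{-\lambda})\sigma^1_\lambda\in\BC_q[G]\sigma^1_\lambda$ and $f\sigma^1_\mu=\sigma^1_\lambda(\chi_{-\lambda}f\chi_\mu)\in\sigma^1_\lambda\BC_q[G]$.

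For (i) and (ii) I would run exactly the same computation, with one extra remark: if $f\in\BC_q[G/N^-]=\BC_q[G]\cap(U^+)^\bigstar\BC_q[H]$ then all $\delta_i$ may be taken to be $0$, so $\tilde f,\bar f\in(U^+)^\bigstar\BC_q[H]$, and by Lemma~\ref{lem:2e} the products $\chi_\nu\tilde f$, $\bar f\chi_\nu$ stay in $(U^+)^\bigstar\BC_q[H]$; intersecting with $\BC_q[G]$ they lie in $\BC_q[G/N^-]$. The argument for $\BC_q[N^+\backslash G]=\BC_q[G]\cap\BC_q[H](U^-)^\bigstar$ is symmetric, taking $\gamma_i=0$.

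The one point that needs care --- and which I expect to be the only genuine subtlety --- is the matching of the two ``sufficiently small'' quantifiers: one must produce, for the \emph{prescribed} $\lambda\in P^-$, a weight $\mu\in P^-$ with $\mu-\lambda$ below the threshold coming from Corollary~\ref{cor:x}. This works precisely because $\lambda$ is antidominant, so that $\lambda+P^-\subseteq P^-$ and the choice $\mu=\lambda+\mu_0$ is admissible; for a general $\lambda\in P$ the statement would fail. Everything else is routine bookkeeping with the commutation relations of Lemma~\ref{lem:2c} and Lemma~\ref{lem:2e} together with the invertibility of $\chi_\lambda$ in $U^*$.
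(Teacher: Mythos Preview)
Your proof is correct and follows essentially the same route as the paper's: commute $\chi_\lambda$ (equivalently, its inverse $\chi_{-\lambda}$) past $f$ using the $q$-commutation relations of Lemma~\ref{lem:2e} and Lemma~\ref{lem:2c}, then invoke Corollary~\ref{cor:x} to land back in $\BC_q[G]$ after multiplying by a sufficiently antidominant $\chi_\nu$. The paper proves (i) first and says (ii), (iii) are similar, whereas you do (iii) first and specialize; the underlying mechanism is identical.
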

\begin{proof}
(i) By Lemma \ref{lem:2e} we have
\[
\sigma^1_\lambda \psi
\in\sigma^1_\lambda(U^+)^\bigstar\BC_q[H]
\subset(U^+)^\bigstar\BC_q[H]\sigma^1_\lambda.
\]
By Corollary \ref{cor:x} we have 
$\sigma^1_{\lambda+\nu}\psi\in\BC_q[G/N^-]\sigma^1_\lambda$
for some $\nu\in P^-$.
Similarly, we have 
$\psi\sigma^1_{\lambda+\nu'}\in\sigma^1_\lambda\BC_q[G/N^-]$
for some $\nu'\in P^-$.

The statements (ii), (iii) are proved similarly.
\end{proof}
By Proposition \ref{prop:2ga} and Proposition \ref{prop:2g} we have the following.
\begin{corollary}
\label{cor:Ore-1}
The multiplicative set $\CS_1$ satisfies the left and right Ore conditions in all of the three rings $\BC_q[G/N^-]$, $\BC_q[N^+\backslash G]$, $\BC_q[G]$.
\end{corollary}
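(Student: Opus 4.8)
The plan is to verify directly, in each of the three rings $R\in\{\BC_q[G/N^-],\,\BC_q[N^+\backslash G],\,\BC_q[G]\}$, the two conditions that make $\CS_1$ a two-sided denominator set: the left and right Ore conditions, together with the left and right reversibility (regularity) conditions. Proposition \ref{prop:2g} will supply the former and Proposition \ref{prop:2ga} the latter. First I would record that $\CS_1=\{\sigma^1_\lambda\mid\lambda\in P^-\}$ is a multiplicative subset of each $R$: one has $\sigma^1_0=1$ and $\sigma^1_\lambda\sigma^1_\mu=\sigma^1_{\lambda+\mu}$, and since $\sigma^1_\lambda=\chi_\lambda\in\BC_q[H]$ while $\BC_q[H]$ is contained in each of $\BC_q[G/N^-]=\BC_q[G]\cap(U^+)^\bigstar\BC_q[H]$, $\BC_q[N^+\backslash G]=\BC_q[G]\cap\BC_q[H](U^-)^\bigstar$, and $\BC_q[G]$, we have $\CS_1\subset R$ in all three cases.

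Next, the Ore conditions. Fix $f\in R$ and $\sigma^1_\lambda\in\CS_1$, so $\lambda\in P^-$. By the relevant part of Proposition \ref{prop:2g} (part (i) for $\BC_q[G/N^-]$, part (ii) for $\BC_q[N^+\backslash G]$, part (iii) for $\BC_q[G]$) there is $\mu\in P^-$ with $\sigma^1_\mu f\in R\,\sigma^1_\lambda$ and with $f\,\sigma^1_\mu\in\sigma^1_\lambda R$; the first membership is precisely the statement that $\CS_1 f\cap R\sigma^1_\lambda\ne\emptyset$ (the left Ore condition), and the second that $f\CS_1\cap\sigma^1_\lambda R\ne\emptyset$ (the right Ore condition).

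Finally, the reversibility conditions. Suppose $f\in R$ and $\sigma^1_\lambda\in\CS_1$ satisfy $f\,\sigma^1_\lambda=0$. As $R\subset\BC_q[G]$, this identity holds in $\BC_q[G]$, so Proposition \ref{prop:2ga}(ii) forces $f=0$; in particular $\sigma^1_0 f=0$ with $\sigma^1_0\in\CS_1$. The symmetric implication, $\sigma^1_\lambda f=0\Rightarrow f=0$, follows from Proposition \ref{prop:2ga}(i). Thus each $\sigma^1_\lambda$ is a regular element of $R$ and the reversibility conditions hold trivially. Combining the above, $\CS_1$ is a two-sided Ore set in each of $\BC_q[G/N^-]$, $\BC_q[N^+\backslash G]$, $\BC_q[G]$.

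Since all of the substantive content has already been packaged into Propositions \ref{prop:2g} and \ref{prop:2ga}, I do not expect a genuine obstacle here; the only point deserving a word of care is that Proposition \ref{prop:2ga} is phrased for $\BC_q[G]$, so one must observe that the regularity of $\sigma^1_\lambda$ automatically descends to the subalgebras $\BC_q[G/N^-]$ and $\BC_q[N^+\backslash G]$ because they are embedded in $\BC_q[G]$.
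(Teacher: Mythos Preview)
Your argument is correct and is essentially the same as the paper's: the corollary is stated immediately after Proposition~\ref{prop:2g} with the one-line justification ``By Proposition~\ref{prop:2ga} and Proposition~\ref{prop:2g},'' and you have simply spelled out what those two propositions supply (Ore conditions and regularity, respectively). Your extra observation that $\CS_1\subset\BC_q[H]\subset R$ for each $R$ is a helpful sanity check but not in dispute.
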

It follows that we have the localizations
\begin{align}
\CS_1^{-1}\BC_q[G/N^-]=&\BC_q[G/N^-]\CS_1^{-1}
,
\\
\CS_1^{-1}\BC_q[N^+\backslash G]
=&
\BC_q[N^+\backslash G]\CS_1^{-1}
,\\
\CS_1^{-1}\BC_q[G]=&\BC_q[G]\CS_1^{-1}.
\end{align}

The following result is a special case of \cite[Theorem 2.6]{Y}.
\begin{proposition}
\label{prop:localization1}
\begin{itemize}
\item[(i)]
The subset 
$(U^+)^\bigstar\BC_q[H](U^-)^\bigstar$ of $U^*$
is a subalgebra of $U^*$, which is isomorphic to $\CS_1^{-1}\BC_q[G]$.
\item[(ii)]
The subset 
$(U^+)^\bigstar\BC_q[H]$ of $U^*$
is a subalgebra of $U^*$, which is isomorphic to $\CS_1^{-1}\BC_q[G/N^-]$.
\item[(iii)]
The subset 
$\BC_q[H](U^-)^\bigstar$ of $U^*$
is a subalgebra of $U^*$, which is isomorphic to $\CS_1^{-1}\BC_q[N^+\backslash G]$.
\end{itemize}
\end{proposition}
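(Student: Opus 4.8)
The plan is to establish all three statements uniformly, treating (i) as the main case and deducing (ii) and (iii) by essentially the same argument applied to the subalgebras $\BC_q[G/N^-]$ and $\BC_q[N^+\backslash G]$ in place of $\BC_q[G]$. First I would check that $(U^+)^\bigstar\BC_q[H](U^-)^\bigstar$ is closed under multiplication in $U^*$: by Lemma \ref{lem:2d} both $(U^\pm)^\bigstar$ are subalgebras, by Lemma \ref{lem:2c} so is $\BC_q[H]$, and by Lemma \ref{lem:2e} the $\chi_\lambda$'s normalize $(U^\pm)^\bigstar$ up to scalars, so one can always pull the $\BC_q[H]$-factors past an adjacent $(U^+)^\bigstar$- or $(U^-)^\bigstar$-factor; combined with Lemma \ref{lem:2b} (which identifies the product $\psi\chi\varphi$ with the obvious tensor) this shows the triple product set $(U^+)^\bigstar\BC_q[H](U^-)^\bigstar$ is exactly the span of such ordered products and is a subalgebra. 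Call this algebra $\CA$.

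Next I would construct the isomorphism $\CS_1^{-1}\BC_q[G]\cong\CA$. By Lemma \ref{lem:2a} we have $\BC_q[G]\subset\CA$, and by Proposition \ref{prop:2ga} every $\chi_\lambda=\sigma^1_\lambda$ ($\lambda\in P^-$) is already invertible inside $\CA$ (its inverse is $\chi_{-\lambda}\in\BC_q[H]$, using Lemma \ref{lem:2c}). Hence the inclusion $\BC_q[G]\hookrightarrow\CA$ extends uniquely to an algebra homomorphism $\CS_1^{-1}\BC_q[G]\to\CA$, which is automatically injective because localization at an Ore set consisting of non-zero-divisors (Corollary \ref{cor:Ore-1} together with Proposition \ref{prop:2ga}) is injective. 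For surjectivity, take a generator $f=\psi\chi_\nu\varphi$ of $\CA$ with $\psi\in(U^+_\gamma)^*$, $\varphi\in(U^-_{-\delta})^*$, $\nu\in P$; by Corollary \ref{cor:x} there is a sufficiently small $\lambda\in P^-$ with $\chi_\lambda f\in\BC_q[G]$, so $f=\chi_{-\lambda}(\chi_\lambda f)=(\sigma^1_\lambda)^{-1}(\chi_\lambda f)$ lies in the image. This proves (i).

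For (ii) and (iii) I would run the identical argument, replacing $\BC_q[G]$ by $\BC_q[G/N^-]$, resp.\ $\BC_q[N^+\backslash G]$, and $\CA$ by $(U^+)^\bigstar\BC_q[H]$, resp.\ $\BC_q[H](U^-)^\bigstar$. The closure of these smaller sets under multiplication again follows from Lemmas \ref{lem:2d}, \ref{lem:2e}, \ref{lem:2c}; the inclusion $\BC_q[G/N^-]\subset(U^+)^\bigstar\BC_q[H]$ is the displayed identity in the definition of $\BC_q[G/N^-]$; the invertibility of $\CS_1$ and the Ore conditions are Corollary \ref{cor:Ore-1}; and surjectivity of the localization map uses the part of Corollary \ref{cor:x} / Lemma \ref{lem:2f} that already lands a scalar multiple $\chi_\lambda\psi$ of a pure $(U^+)^\bigstar$-element (resp.\ $\chi_\lambda\varphi$) back in $\BC_q[G]$, hence in $\BC_q[G/N^-]$ (resp.\ $\BC_q[N^+\backslash G]$) by the weight characterization. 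The one point requiring a little care—and the only place I expect any friction—is making sure surjectivity is genuinely onto the full triple product and not merely onto a dense subset: this is handled because the clearing-denominators step (Corollary \ref{cor:x}) produces a \emph{single} $\chi_\lambda$ that works for a given $f$, so every element of $\CA$, not just every ``monomial,'' is hit after multiplying by $(\sigma^1_\lambda)^{-1}$. Everything else is bookkeeping with Sweedler notation already carried out in Lemmas \ref{lem:2b}–\ref{lem:2f}.
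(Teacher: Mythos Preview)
Your argument for the isomorphism (injectivity via Proposition~\ref{prop:2ga}, surjectivity via Corollary~\ref{cor:x}) matches the paper's proof. However, your preliminary step of showing directly that $\CA=(U^+)^\bigstar\BC_q[H](U^-)^\bigstar$ is closed under multiplication has a gap: to rewrite $(\psi_1\chi_1\varphi_1)(\psi_2\chi_2\varphi_2)$ in the required form you must reorder the middle factor $\varphi_1\psi_2$ with $\varphi_1\in(U^-)^\bigstar$ and $\psi_2\in(U^+)^\bigstar$, and none of Lemmas~\ref{lem:2b}--\ref{lem:2e} supply a commutation rule between $(U^-)^\bigstar$ and $(U^+)^\bigstar$. (Such a rule exists, but it requires a genuine computation with the comultiplication of $U^\pm$, not just the bookkeeping you cite.)

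The paper sidesteps this entirely: it builds the localization map $\Psi:\CS_1^{-1}\BC_q[G]\to U^*$ with target the ambient algebra $U^*$, in which every $\chi_\lambda$ is already invertible, and then identifies $\Image(\Psi)$ with $(U^+)^\bigstar\BC_q[H](U^-)^\bigstar$ as a \emph{set} using Lemma~\ref{lem:2a}, Lemma~\ref{lem:2e} and Corollary~\ref{cor:x}. The subalgebra property then comes for free, since the image of an algebra homomorphism is a subalgebra. If you reorganize your argument this way---map into $U^*$ first, then determine the image---it becomes correct and coincides with the paper's proof.
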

\begin{proof}
(i) Since
$\CS_1$ consists of invertible elements of $U^*$,
we have a canonical homomorphism $\Psi:\CS_1^{-1}\BC_q[G]\to U^*$ of $\BF$-algebras.
Since $\BC_q[G]\to U^*$ is injective, $\Psi$ is injective by Proposition \ref{prop:2ga}.
Hence it is sufficient to show that the image of $\Psi$ coincides with $(U^+)^\bigstar\BC_q[H](U^-)^\bigstar$.
For any $\lambda\in P$ we have
\[
\chi_\lambda\BC_q[G]\subset
\chi_\lambda
(U^+)^\bigstar\BC_q[H](U^+)^\bigstar
=
(U^+)^\bigstar\BC_q[H](U^-)^\bigstar,
\]
and hence $\Image(\Psi)\subset(U^+)^\bigstar\BC_q[H](U^-)^\bigstar$.
Another inclusion
$\Image(\Psi)\supset(U^+)^\bigstar\BC_q[H](U^-)^\bigstar$
is a consequence of Corollary \ref{cor:x}.

The proofs of (ii) and (iii) are similar.
\end{proof}
By Proposition \ref{prop:localization1} we obtain the following results.

\begin{proposition}
The multiplication of $\CS_1^{-1}\BC_q[G]$ induces the isomorphism
\[
\CS_1^{-1}\BC_q[G/N^-]
\otimes_{\BC_q[H]}
\CS_1^{-1}
\BC_q[N^+\backslash G]
\cong
\CS_1^{-1}\BC_q[G].
\]
\end{proposition}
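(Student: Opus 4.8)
The plan is to transport everything into $U^*$ by means of Proposition \ref{prop:localization1} and then to reduce the assertion to a formal computation of a tensor product over the group algebra $\BC_q[H]$. By Proposition \ref{prop:localization1} we identify $\CS_1^{-1}\BC_q[G]$, $\CS_1^{-1}\BC_q[G/N^-]$, $\CS_1^{-1}\BC_q[N^+\backslash G]$ with the subalgebras $(U^+)^\bigstar\BC_q[H](U^-)^\bigstar$, $(U^+)^\bigstar\BC_q[H]$, $\BC_q[H](U^-)^\bigstar$ of $U^*$ respectively. Under these identifications the latter two contain the common subalgebra $\BC_q[H]=\bigoplus_{\lambda\in P}\BF\chi_\lambda$, and the map in the statement is induced by the multiplication of $U^*$. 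Since $(ah)b=a(hb)$ in $U^*$ for $h\in\BC_q[H]$, this multiplication is balanced over $\BC_q[H]$, so it descends to a linear map
\[
(U^+)^\bigstar\BC_q[H]\otimes_{\BC_q[H]}\BC_q[H](U^-)^\bigstar\longrightarrow (U^+)^\bigstar\BC_q[H](U^-)^\bigstar ,
\]
which is visibly surjective; what remains is to prove it is injective.

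The key auxiliary fact I would establish is that the multiplication map $(U^+)^\bigstar\otimes_\BF\BC_q[H]\to (U^+)^\bigstar\BC_q[H]$ is bijective and transports the right regular action of $\BC_q[H]$ on the second factor onto the right $\BC_q[H]$-module structure of $(U^+)^\bigstar\BC_q[H]$, and symmetrically on the left for $\BC_q[H]\otimes_\BF(U^-)^\bigstar\to \BC_q[H](U^-)^\bigstar$. Bijectivity follows from Lemma \ref{lem:2b}: that lemma shows that the map $(U^+)^\bigstar\otimes_\BF\BC_q[H]\otimes_\BF(U^-)^\bigstar\to U^*$, $\psi\otimes\chi\otimes\varphi\mapsto\psi\chi\varphi$, is, via the triangular decomposition $U\cong U^+\otimes U^0\otimes U^-$, exactly the restriction of the canonical injection $(U^+)^*\otimes(U^0)^*\otimes(U^-)^*\hookrightarrow (U^+\otimes U^0\otimes U^-)^*=U^*$; since $V^*\otimes W^*\to (V\otimes W)^*$ is always injective, all three multiplication maps $(U^+)^\bigstar\otimes\BC_q[H]\to(U^+)^\bigstar\BC_q[H]$, $\BC_q[H]\otimes(U^-)^\bigstar\to\BC_q[H](U^-)^\bigstar$, $(U^+)^\bigstar\otimes\BC_q[H]\otimes(U^-)^\bigstar\to(U^+)^\bigstar\BC_q[H](U^-)^\bigstar$ are injective, hence bijective onto their obvious images. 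That the right action of $\BC_q[H]$ is carried to the regular action on the second factor is then immediate from associativity together with Lemma \ref{lem:2c} ($\BC_q[H]$ is closed under multiplication, with $\chi_\lambda\chi_\mu=\chi_{\lambda+\mu}$): one has $(\psi h)h'=\psi(hh')$ with $hh'\in\BC_q[H]$. In particular $(U^+)^\bigstar\BC_q[H]$ is free as a right $\BC_q[H]$-module.

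Granting this, the conclusion is formal. Writing $R=\BC_q[H]$ and using the isomorphisms of right (resp.\ left) $R$-modules $(U^+)^\bigstar R\cong (U^+)^\bigstar\otimes_\BF R$ and $R(U^-)^\bigstar\cong R\otimes_\BF(U^-)^\bigstar$, together with the standard identification $(M\otimes_\BF R)\otimes_R(R\otimes_\BF N)\cong M\otimes_\BF N$, one gets
\[
(U^+)^\bigstar R\otimes_R R(U^-)^\bigstar\;\cong\;(U^+)^\bigstar\otimes_\BF R\otimes_\BF(U^-)^\bigstar\;\cong\;(U^+)^\bigstar R(U^-)^\bigstar ,
\]
the last isomorphism being the bijectivity recorded above. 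Following an elementary tensor $\psi h\otimes_R h'\varphi$ through these identifications yields $\psi(hh')\varphi=\psi h h'\varphi$, so the composite coincides with the multiplication map in question; hence that map is bijective, as required. No genuine obstacle arises beyond Proposition \ref{prop:localization1} itself; the only point that really needs attention is the identification of the one-sided $\BC_q[H]$-module structures with the regular ones, which is precisely where Lemmas \ref{lem:2b} and \ref{lem:2c} are used.
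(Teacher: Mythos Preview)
Your approach is correct and is exactly what the paper has in mind: the paper states this proposition immediately after Proposition~\ref{prop:localization1} with only the remark ``By Proposition~\ref{prop:localization1} we obtain the following results,'' and your argument unpacks precisely that implication using Lemmas~\ref{lem:2b} and~\ref{lem:2c}. One small slip: you write the standard identification as $(M\otimes_\BF R)\otimes_R(R\otimes_\BF N)\cong M\otimes_\BF N$, whereas it should be $M\otimes_\BF R\otimes_\BF N$; your displayed chain of isomorphisms has it right, so this is only a typo in the surrounding text.
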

\begin{proposition}
\label{prop:t}
For any $f\in\BC_q[G]$ there exists some $\lambda\in P^-$ such that
\[
\sigma_\lambda^1 f, f\sigma_\lambda^1\in \BC_q[G/N^-]
\BC_q[N^+\backslash G].
\]
\end{proposition}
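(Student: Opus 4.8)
The plan is to combine the localization results just established with the triangular decomposition of $\BC_q[G]$ from Lemma \ref{lem:2a}. Given $f\in\BC_q[G]$, by Lemma \ref{lem:2a} we may write $f$ inside $U^*$ as a finite sum of elements of the form $\psi\chi_\nu\varphi$ with $\psi\in(U^+_\gamma)^*$, $\chi_\nu\in\BC_q[H]$, $\varphi\in(U^-_{-\delta})^*$; the point is to absorb $\chi_\nu$ into the left or right factor at the cost of a power of $\sigma^1_\mu$. First I would reduce, as in Corollary \ref{cor:x}, to a single homogeneous summand $f=\psi\chi_\nu\varphi$, and note that $\psi\in(U^+)^\bigstar$ and $\varphi\in(U^-)^\bigstar$.

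\textbf{Key steps.} The main step is Lemma \ref{lem:2f}: for sufficiently small $\lambda_1,\lambda_3\in P^-$ we have $\chi_{\lambda_1}\psi\in\BC_q[G]$ and $\chi_{\lambda_3}\varphi\in\BC_q[G]$, and in fact $\chi_{\lambda_1}\psi\in\BC_q[G]\cap(U^+)^\bigstar\BC_q[H]=\BC_q[G/N^-]$ (since $\chi_{\lambda_1}\psi$ lies in $(U^+)^\bigstar\BC_q[H]$ by Lemma \ref{lem:2e}, and it is killed on the right by $U^-$ up to $\varepsilon$ by Lemma \ref{lem:2b}), and similarly $\chi_{\lambda_3}\varphi\in\BC_q[N^+\backslash G]$. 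Next, choose $\lambda_2\in P^-$ with $\lambda_2+\nu\in P^-$, and set $\mu=\lambda_1+\lambda_2+\lambda_3$. Using the commutation relations of Lemma \ref{lem:2e} to move the $\chi$'s past $\psi$ and $\varphi$, and Lemma \ref{lem:2c} to combine the $\chi$'s, one computes
\[
\sigma^1_\mu f
=\chi_{\lambda_1+\lambda_2+\lambda_3}\psi\chi_\nu\varphi
=q^{(\lambda_2+\lambda_3,\gamma)}(\chi_{\lambda_1}\psi)\,\chi_{\lambda_2+\nu}\,(\chi_{\lambda_3}\varphi).
\]
Since $\chi_{\lambda_2+\nu}=\sigma^1_{\lambda_2+\nu}\in\BC_q[G/N^-]\cap\BC_q[N^+\backslash G]$ (it is a $\sigma$ with $\lambda_2+\nu\in P^-$), the right-hand side is a product of three elements of $\BC_q[G/N^-]\BC_q[N^+\backslash G]$, hence lies in $\BC_q[G/N^-]\BC_q[N^+\backslash G]$ after absorbing the central-type element $\chi_{\lambda_2+\nu}$ into either factor. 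The argument for $f\sigma^1_\mu$ is symmetric, using the right-hand versions of Lemmas \ref{lem:2e} and \ref{lem:2f}. Summing over the finitely many homogeneous summands and taking $\mu$ small enough to work for all of them simultaneously completes the proof.

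\textbf{Main obstacle.} The only genuinely delicate point is the bookkeeping needed to keep the product inside $\BC_q[G/N^-]\BC_q[N^+\backslash G]$ rather than merely inside $(U^+)^\bigstar\BC_q[H](U^-)^\bigstar$: one must check that each factor $\chi_{\lambda_1}\psi$, $\chi_{\lambda_2+\nu}$, $\chi_{\lambda_3}\varphi$ actually lands in $\BC_q[G]$ (not just in $U^*$) and in the correct one-sided-invariant subalgebra, and that the order of the three factors can be rearranged into the two-factor form $\BC_q[G/N^-]\cdot\BC_q[N^+\backslash G]$ using that $\chi_{\lambda_2+\nu}$ belongs to both subalgebras and commutes with the others up to scalars. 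This is exactly the content already assembled in Corollary \ref{cor:x} and Proposition \ref{prop:2g}, so no new idea is required; it is essentially a matter of packaging those lemmas correctly.
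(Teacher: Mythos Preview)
Your proof is correct and follows essentially the same route as the paper. The paper simply records Proposition~\ref{prop:t} as an immediate consequence of Proposition~\ref{prop:localization1} (together with the tensor decomposition in the preceding proposition), whereas you have unwound that localization statement back into the explicit computation of Corollary~\ref{cor:x}, adding the observation that the resulting factors $\chi_{\lambda_1}\psi$, $\chi_{\lambda_2+\nu}$, $\chi_{\lambda_3}\varphi$ lie not merely in $\BC_q[G]$ but in $\BC_q[G/N^-]$ and $\BC_q[N^+\backslash G]$ respectively; the two arguments are the same in substance.
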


\subsection{}
In this subsection we investigate the localization of $\BC_q[G]$ with respect to $\CS_w$ for $w\in W$.

As a left (resp.\ right) $U$-module, $\BC_q[G]$ is a sum of submodules belonging to $\Mod_0(U)$ (resp.\ $\Mod_0^r(U)$).
Hence we have a left (resp.\ right) action of $\dT_w$ on $\BC_q[G]$.
\begin{lemma}
\label{lem:Tlr1}
For $w\in W$ we have
\[
\langle\dT_w\varphi,u\rangle
=\langle\varphi\dT_w,\dT_w^{-1}(u)\rangle,\qquad
\langle\varphi\dT_w,u\rangle
=\langle\dT_w\varphi,\dT_w(u)\rangle.
\]
for $\varphi\in\BC_q[G],\;u\in U$.
\end{lemma}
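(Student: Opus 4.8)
The plan is to reduce everything to matrix coefficients. Since $\BC_q[G]$ is spanned by the elements $\Phi_{v^*\otimes v}$ with $V\in\Mod_0(U)$, $v\in V$, $v^*\in V^*$, and since $\Mod_0(U)$ is semisimple so that we may take $V=V(\lambda)$ irreducible, by linearity it suffices to prove both identities for $\varphi=\Phi_{v^*\otimes v}$.

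First I would make the left and right $U$-module structures on $\BC_q[G]$ explicit on matrix coefficients. From $\langle u_1\varphi,u\rangle=\langle\varphi,uu_1\rangle$ and $\langle\varphi u_2,u\rangle=\langle\varphi,u_2u\rangle$ one computes directly that
\[
u_1\Phi_{v^*\otimes v}=\Phi_{v^*\otimes(u_1v)},\qquad
\Phi_{v^*\otimes v}u_2=\Phi_{(v^*u_2)\otimes v}
\qquad(u_1,u_2\in U).
\]
In particular, for $V=V(\lambda)$ the maps $V\to\BC_q[G]$, $v'\mapsto\Phi_{v^*\otimes v'}$ and $V^*\to\BC_q[G]$, $\eta\mapsto\Phi_{\eta\otimes v}$, are morphisms of left, resp.\ right, $U$-modules (injective for $v^*\ne0$, resp.\ $v\ne0$, by irreducibility of $V(\lambda)$). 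Since the operator $\dT_w$ is natural on $\Mod_0(U)$ and, by its definition via duality, also on $\Mod^r_0(U)$, it commutes with these morphisms; hence
\[
\dT_w\Phi_{v^*\otimes v}=\Phi_{v^*\otimes(\dT_wv)},\qquad
\Phi_{v^*\otimes v}\dT_w=\Phi_{(v^*\dT_w)\otimes v},
\]
where on the right-hand sides $\dT_w$ denotes the operator on $V\in\Mod_0(U)$, resp.\ the right operator on $V^*\in\Mod^r_0(U)$.

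Granting this, both identities are immediate. For $u\in U$ we have, using $\langle v^*\dT_w,v'\rangle=\langle v^*,\dT_wv'\rangle$ for $v'\in V$ and the compatibility $\dT_w(um)=\dT_w(u)\dT_w(m)$ of the algebra automorphism $\dT_w$ with the operator $\dT_w$ on $V$,
\[
\langle\Phi_{v^*\otimes v}\dT_w,u\rangle
=\langle v^*\dT_w,uv\rangle
=\langle v^*,\dT_w(uv)\rangle
=\langle v^*,\dT_w(u)\dT_w(v)\rangle,
\]
while on the other hand
\[
\langle\dT_w\Phi_{v^*\otimes v},\dT_w(u)\rangle
=\langle v^*,\dT_w(u)\dT_w(v)\rangle.
\]
Thus $\langle\varphi\dT_w,u\rangle=\langle\dT_w\varphi,\dT_w(u)\rangle$, which is the second identity; replacing $u$ by $\dT_w^{-1}(u)$ and using that $\dT_w$ is an automorphism of $U$ yields the first, $\langle\dT_w\varphi,u\rangle=\langle\varphi\dT_w,\dT_w^{-1}(u)\rangle$.

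The only point that is not a one-line calculation is the identification in the second paragraph of the abstractly defined left and right $\dT_w$-actions on $\BC_q[G]$ with the corresponding actions on the $V$- and $V^*$-factors of a matrix coefficient; this rests on the naturality of Lusztig's operators with respect to morphisms in $\Mod_0(U)$ and $\Mod^r_0(U)$, together with the injectivity provided by the irreducibility of $V(\lambda)$. Everything else is formal.
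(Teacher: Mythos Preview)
Your proof is correct and follows essentially the same approach as the paper: reduce to matrix coefficients $\varphi=\Phi_{v^*\otimes v}$, identify $\dT_w\varphi=\Phi_{v^*\otimes\dT_wv}$ and $\varphi\dT_w=\Phi_{(v^*\dT_w)\otimes v}$, and then use the compatibility $\dT_w(um)=\dT_w(u)\dT_w(m)$ together with the duality $\langle v^*\dT_w,v'\rangle=\langle v^*,\dT_wv'\rangle$. The paper's proof is the same computation in one line, proving the first identity directly and deducing the second, whereas you do the reverse; your remarks on naturality and irreducibility make explicit what the paper takes for granted (though the irreducibility and injectivity are not actually needed---naturality of Lusztig's operators alone suffices).
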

\begin{proof}
We may assume that $\varphi=\Phi_{v^*\otimes v}$.
Then we have
\[
\langle\dT_w\varphi,u\rangle
=\langle v^*,u\dT_wv\rangle
=\langle v^*\dT_w,(\dT_w^{-1}(u))v\rangle
=\langle\varphi\dT_w,\dT_w^{-1}(u)\rangle.
\]
The second formula follows from the first.
\end{proof}
Setting $u=1$ in Lemma \ref{lem:Tlr1} we obtain the following.
\begin{lemma}
\label{lem:Tlr2}
For $w\in W$ we have
\[
\varepsilon(\varphi \dT_{w})
=
\varepsilon(\dT_{w}\varphi)
\qquad
(\varphi\in\BC_q[G]).
\]
\end{lemma}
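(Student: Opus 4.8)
The plan is to read off the identity directly from Lemma~\ref{lem:Tlr1} by evaluating at the unit of $U$, exactly as the sentence preceding the statement suggests. First I would recall that for $\psi\in\BC_q[G]$, viewed as an element of $U^*$, the counit is evaluation at $1_U$, i.e.\ $\varepsilon(\psi)=\langle\psi,1\rangle$; this is because the Hopf algebra structure on $\BC_q[G]$ is transpose to that of $U$, so its counit is the transpose of the unit map $\BF\to U$. Next, since each $\dT_w$ is an algebra automorphism of $U$, it fixes the unit, $\dT_w(1)=1$. Setting $u=1$ in the second formula of Lemma~\ref{lem:Tlr1}, namely $\langle\varphi\dT_w,u\rangle=\langle\dT_w\varphi,\dT_w(u)\rangle$, then yields $\langle\varphi\dT_w,1\rangle=\langle\dT_w\varphi,\dT_w(1)\rangle=\langle\dT_w\varphi,1\rangle$, which is precisely $\varepsilon(\varphi\dT_w)=\varepsilon(\dT_w\varphi)$.

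I do not expect any real obstacle here: all the work is already contained in Lemma~\ref{lem:Tlr1}, whose proof in turn reduces, by writing $\varphi=\Phi_{v^*\otimes v}$, to the defining compatibility $\langle m\dT_w,m^*\rangle=\langle m,\dT_w m^*\rangle$ between the left and right $\dT_w$-actions on modules. The only points demanding a moment's care are the identification of $\varepsilon$ on $\BC_q[G]$ with evaluation at $1_U$ and the fact that $\dT_w(1)=1$; both are immediate. Alternatively one could give a self-contained argument not invoking Lemma~\ref{lem:Tlr1}: for $\varphi=\Phi_{v^*\otimes v}$ both $\varepsilon(\varphi\dT_w)$ and $\varepsilon(\dT_w\varphi)$ equal $\langle v^*,\dT_w v\rangle$, and such $\varphi$ span $\BC_q[G]$. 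I would present only the one-line deduction from Lemma~\ref{lem:Tlr1}.
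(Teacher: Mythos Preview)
Your proposal is correct and follows exactly the paper's approach: the paper simply says ``Setting $u=1$ in Lemma~\ref{lem:Tlr1} we obtain the following,'' which is precisely the one-line deduction you describe.
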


In the rest of this  section we fix $w\in W$.

\begin{lemma}
\begin{itemize}
\item[(i)] $\sigma^1_\lambda \dT_w^{-1}=\sigma^w_\lambda$ for $\lambda\in P^-$.
\item[(ii)]
$\BC_q[G/N^-]\dT_w^{-1}=\BC_q[G/N^-]$.
\item[(iii)]
$\BC_q[N^+\backslash G]\dT_w^{-1}
=\{\varphi\in\BC_q[G]\mid \varphi u=\varepsilon(u)\varphi\;\;(u\in \dT_w(U^+))\}$.
\end{itemize}
\end{lemma}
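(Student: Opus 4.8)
The plan is to read everything off the concrete model $\BC_q[G]\cong\bigoplus_{\lambda\in P^-}V^*(\lambda)\otimes V(\lambda)$ of \eqref{eq:decompCq}: a summand is the image of $\Phi\colon V^*(\lambda)\otimes V(\lambda)\to U^*$, the left $U$-action operates through the $V(\lambda)$-factor ($u\,\Phi_{v^*\otimes v}=\Phi_{v^*\otimes uv}$), and the right $U$-action together with the right $\dT_w$-action operate through the $V^*(\lambda)$-factor ($\Phi_{v^*\otimes v}\,u=\Phi_{v^*u\otimes v}$ and $\Phi_{v^*\otimes v}\,\dT_w=\Phi_{v^*\dT_w\otimes v}$, where $\langle v^*\dT_w,v'\rangle=\langle v^*,\dT_w v'\rangle$). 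Two consequences will be used repeatedly: the right $\dT_w$-action is a linear automorphism of $\BC_q[G]$ with inverse the right $\dT_w^{-1}$-action, and, acting through a different tensor factor, it commutes with the left $U$-action. Part (i) is then immediate, since $\sigma^1_\lambda=\Phi_{v^*_\lambda\otimes v_\lambda}$ (put $w=1$ in the definition of $\sigma^w_\lambda$, using $\dT_1=\id$), so $\sigma^1_\lambda\dT_w^{-1}=\Phi_{v^*_\lambda\dT_w^{-1}\otimes v_\lambda}=\Phi_{v^*_{w\lambda}\otimes v_\lambda}=\sigma^w_\lambda$ by the definition $v^*_{w\lambda}=v^*_\lambda\dT_w^{-1}$.

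For (ii), given $f\in\BC_q[G/N^-]$ and $y\in U^-$, the commutation of the left $U$-action with the right $\dT_w^{-1}$-action gives $y(f\dT_w^{-1})=(yf)\dT_w^{-1}=\varepsilon(y)\,f\dT_w^{-1}$, so $f\dT_w^{-1}\in\BC_q[G/N^-]$; hence $\BC_q[G/N^-]\dT_w^{-1}\subset\BC_q[G/N^-]$, and the same reasoning with $\dT_w$ in place of $\dT_w^{-1}$ yields $\BC_q[G/N^-]=(\BC_q[G/N^-]\dT_w)\dT_w^{-1}\subset\BC_q[G/N^-]\dT_w^{-1}$, so the two coincide.

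For (iii) the crucial identity is $(\varphi\dT_w^{-1})\dT_w(x)=(\varphi x)\dT_w^{-1}$ for $\varphi\in\BC_q[G]$ and $x\in U$, with $\dT_w(x)$ the image of $x$ under the algebra automorphism $\dT_w$ of $U$. Taking $\varphi=\Phi_{v^*\otimes v}$ and computing in $V^*(\lambda)$,
\[
\langle(v^*\dT_w^{-1})\dT_w(x),v'\rangle=\langle v^*,\dT_w^{-1}\bigl(\dT_w(x)v'\bigr)\rangle=\langle v^*,x\,\dT_w^{-1}(v')\rangle=\langle(v^*x)\dT_w^{-1},v'\rangle,
\]
using $\dT_w^{-1}(um)=\dT_w^{-1}(u)\dT_w^{-1}(m)$; the general case follows by linearity. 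Granting this, for $f\in\BC_q[N^+\backslash G]$ and $x\in U^+$ one gets $(f\dT_w^{-1})\dT_w(x)=(fx)\dT_w^{-1}=\varepsilon(x)f\dT_w^{-1}=\varepsilon(\dT_w(x))f\dT_w^{-1}$ by \eqref{eq:epsT}; since every element of $\dT_w(U^+)$ is of the form $\dT_w(x)$ with $x\in U^+$, this places $f\dT_w^{-1}$ in the right-hand side. Conversely, writing an element $\varphi$ of the right-hand side as $\varphi=f\dT_w^{-1}$ (possible by bijectivity of the right $\dT_w^{-1}$-action), the identity together with \eqref{eq:epsT} gives $(fx-\varepsilon(x)f)\dT_w^{-1}=(f\dT_w^{-1})\dT_w(x)-\varepsilon(\dT_w(x))\varphi=0$ for all $x\in U^+$, whence $fx=\varepsilon(x)f$ and $f\in\BC_q[N^+\backslash G]$.

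None of these steps is deep. The one place that demands care is the bookkeeping of left versus right actions and the direction in which the automorphism $\dT_w$ acts: concretely, establishing the identity in (iii) and the commutation statement in (ii) with the correct orderings. Once those are pinned down, everything else is purely formal, and one should double-check only that $\dT_w(U^+)$ is indeed a subalgebra of $U$ (it is, being the image of a subalgebra under an algebra automorphism).
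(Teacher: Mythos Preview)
Your proof is correct and follows the same line as the paper's: the paper dismisses (i) and (ii) as obvious and, for (iii), simply records the identity $(f\dT_w^{-1})(\dT_w(u))=(fu)\dT_w^{-1}$ together with \eqref{eq:epsT}, exactly the ingredients you spell out. Your version is just more explicit, giving the matrix-coefficient verification of that identity and of the commutation used in (ii).
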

\begin{proof}
The statements (i) and (ii) are obvious.
The remaining (iii) is a consequence of 
\[
(f\dT_w^{-1})(\dT_w(u))=(fu)\dT_w^{-1}
\qquad(f\in\BC_q[G], u\in U)
\]
and \eqref{eq:epsT}.
\end{proof}
\begin{lemma}
\label{lem:wa}
\begin{itemize}
\item[(i)]
$
(fh)\dT_w^{-1}=(f\dT_w^{-1})(h\dT_w^{-1})
\quad(h\in\BC_q[N^+\backslash G],\;f\in\BC_q[G])$.
\item[(ii)]
$
(f\sigma_\lambda^1)\dT_w^{-1}=
(f\dT_w^{-1})\sigma_\lambda^w
\quad(f\in\BC_q[G])$.
\item[(iii)]
$
(\sigma_\lambda^1f)\dT_{w^{-1}}\in\BF^\times
\sigma_\lambda^w(f\dT_{w^{-1}})
\quad(f\in\BC_q[G])$.
\end{itemize}
\end{lemma}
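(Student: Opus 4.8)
The plan is to carry out everything in terms of matrix coefficients. Recall that $\BC_q[G]=\bigoplus_{\lambda\in P^-}V^*(\lambda)\otimes V(\lambda)$ via $\Phi$, that a product of matrix coefficients is the corresponding matrix coefficient of the tensor product, $\Phi_{\xi\otimes v}\Phi_{\eta\otimes u}=\Phi_{(\xi\otimes\eta)\otimes(v\otimes u)}$ for $V,W\in\Mod_0(U)$, and that the right action of $\dT_w$ on $\BC_q[G]$ acts through $\Phi$ on the covector slot, $\Phi_{v^*\otimes v}\dT_w=\Phi_{(v^*\dT_w)\otimes v}$ (both the right $U$-action and the right $\dT_w$-action being defined by this decomposition). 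Note also that $\BC_q[N^+\backslash G]$ is spanned by the $\Phi_{v^*_\mu\otimes u}$ ($\mu\in P^-$, $u\in V(\mu)$), where $v^*_\mu\in V^*(\mu)_\mu$ is the dual lowest weight covector: indeed the right-$U^+$-invariants in $V^*(\mu)$ form the line $\BF v^*_\mu$, and $v^*_\mu$ satisfies $v^*_\mu x=\varepsilon(x)v^*_\mu$ for $x\in U^+$, and more generally $v^*_\mu u=\varepsilon'(u)v^*_\mu$ for $u\in U^{\geqq0}$, where $\varepsilon'\colon U^{\geqq0}\to\BF$ is the algebra homomorphism with $\varepsilon'|_{U^0}=\chi_\mu$ and $\varepsilon'|_{U^+}=\varepsilon$. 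Finally $\sigma^1_\lambda=\Phi_{v^*_\lambda\otimes v_\lambda}\in\BC_q[N^+\backslash G]$ and $\sigma^w_\lambda=\Phi_{v^*_{w\lambda}\otimes v_\lambda}$ with $v^*_{w\lambda}=v^*_\lambda\dT_w^{-1}$.

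For (i), by bilinearity I may take $f=\Phi_{\xi\otimes v}$ ($V\in\Mod_0(U)$, $\xi\in V^*$, $v\in V$) and $h=\Phi_{v^*_\mu\otimes u}$. Then $fh=\Phi_{(\xi\otimes v^*_\mu)\otimes(v\otimes u)}$ is a matrix coefficient of $V\otimes V(\mu)$, so $(fh)\dT_w^{-1}=\Phi_{((\xi\otimes v^*_\mu)\dT_w^{-1})\otimes(v\otimes u)}$, where $(\xi\otimes v^*_\mu)\dT_w^{-1}$ is the transpose of the operator $\Delta(\dT_w^{-1})$ of Corollary \ref{cor:dT} on $V\otimes V(\mu)$. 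Fixing $\Bi=(i_1,\dots,i_m)\in\CI_w$ and using the first expression $\Delta(\dT_w^{-1})=\exp_{q_{i_m}^{-1}}(-X_m)\cdots\exp_{q_{i_1}^{-1}}(-X_1)(\dT_w^{-1}\otimes\dT_w^{-1})$ with $X_r=(q_{i_r}-q_{i_r}^{-1})\tf_{\Bi,r}\otimes\te_{\Bi,r}$, each positive-degree term of a factor $\exp_{q_{i_r}^{-1}}(-X_r)$ acts on the second tensor slot by a positive power of $\te_{\Bi,r}\in U^+$; since $\varepsilon(\te_{\Bi,r})=\varepsilon(e_{i_r})=0$ by \eqref{eq:epsT}, we get $v^*_\mu\te_{\Bi,r}^n=\varepsilon(\te_{\Bi,r})^n v^*_\mu=0$ for $n\geqq1$, so the transpose of each $\exp_{q_{i_r}^{-1}}(-X_r)$ fixes $\xi\otimes v^*_\mu$. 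Hence $(\xi\otimes v^*_\mu)\dT_w^{-1}=(\xi\dT_w^{-1})\otimes(v^*_\mu\dT_w^{-1})$, and therefore $(fh)\dT_w^{-1}=\Phi_{(\xi\dT_w^{-1})\otimes v}\,\Phi_{(v^*_\mu\dT_w^{-1})\otimes u}=(f\dT_w^{-1})(h\dT_w^{-1})$, which is (i). Part (ii) follows at once by taking $h=\sigma^1_\lambda\in\BC_q[N^+\backslash G]$ in (i) and using $\sigma^1_\lambda\dT_w^{-1}=\sigma^w_\lambda$ (the preceding lemma, or directly $\Phi_{(v^*_\lambda\dT_w^{-1})\otimes v_\lambda}=\Phi_{v^*_{w\lambda}\otimes v_\lambda}$).

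For (iii) I argue the same way but with $\sigma^1_\lambda$ on the left. Writing $f=\Phi_{\xi\otimes v}$, we have $\sigma^1_\lambda f=\Phi_{(v^*_\lambda\otimes\xi)\otimes(v_\lambda\otimes v)}$, so $(\sigma^1_\lambda f)\dT_{w^{-1}}=\Phi_{((v^*_\lambda\otimes\xi)\dT_{w^{-1}})\otimes(v_\lambda\otimes v)}$, with $(v^*_\lambda\otimes\xi)\dT_{w^{-1}}$ the transpose of $\Delta(\dT_{w^{-1}})$. Here I use the \emph{other} expression of Corollary \ref{cor:dT}: for $\Bj=(j_1,\dots,j_\ell)\in\CI_{w^{-1}}$, $\Delta(\dT_{w^{-1}})=\exp_{q_{j_1}}(Y_1)\cdots\exp_{q_{j_\ell}}(Y_\ell)(\dT_{w^{-1}}\otimes\dT_{w^{-1}})$ with $Y_r=(q_{j_r}-q_{j_r}^{-1})k_{\Bj,r}^{-1}\de_{\Bj,r}\otimes\df_{\Bj,r}k_{\Bj,r}$; each positive-degree term of $\exp_{q_{j_r}}(Y_r)$ acts on the first tensor slot by a positive power of $k_{\Bj,r}^{-1}\de_{\Bj,r}\in U^{\geqq0}$, and $v^*_\lambda(k_{\Bj,r}^{-1}\de_{\Bj,r})^n=\varepsilon'(k_{\Bj,r}^{-1}\de_{\Bj,r})^n v^*_\lambda=0$ for $n\geqq1$, since $\varepsilon'(k_{\Bj,r}^{-1}\de_{\Bj,r})=\chi_\lambda(k_{\Bj,r}^{-1})\varepsilon(\de_{\Bj,r})=0$ by \eqref{eq:epsT}. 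So the transpose of each exponential fixes $v^*_\lambda\otimes\xi$, whence $(v^*_\lambda\otimes\xi)\dT_{w^{-1}}=(v^*_\lambda\dT_{w^{-1}})\otimes(\xi\dT_{w^{-1}})$ and $(\sigma^1_\lambda f)\dT_{w^{-1}}=(\sigma^1_\lambda\dT_{w^{-1}})(f\dT_{w^{-1}})$ with $\sigma^1_\lambda\dT_{w^{-1}}=\Phi_{(v^*_\lambda\dT_{w^{-1}})\otimes v_\lambda}$. Finally $v^*_\lambda\dT_{w^{-1}}$ and $v^*_{w\lambda}=v^*_\lambda\dT_w^{-1}$ both lie in the weight space $V^*(\lambda)_{w\lambda}$, which is one-dimensional because $w\lambda$ is an extremal weight of $V(\lambda)$, and both are nonzero since $\dT_{w^{-1}}$ and $\dT_w^{-1}$ act invertibly; hence $v^*_\lambda\dT_{w^{-1}}=c_\lambda v^*_{w\lambda}$ for some $c_\lambda\in\BF^\times$, so $\sigma^1_\lambda\dT_{w^{-1}}=c_\lambda\sigma^w_\lambda$ and $(\sigma^1_\lambda f)\dT_{w^{-1}}=c_\lambda\sigma^w_\lambda(f\dT_{w^{-1}})$, proving (iii).

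The step I expect to require the most care is the bookkeeping in these two matrix-coefficient computations: one must choose, among the two expressions for $\Delta(\dT_w^{-1})$ (resp.\ $\Delta(\dT_{w^{-1}})$) in Corollary \ref{cor:dT}, the one whose ``quasi-$R$-matrix'' correction factors touch the tensor slot occupied by the extremal covector $v^*_\mu$ (resp.\ $v^*_\lambda$) through a factor lying in $U^+$ with $\varepsilon=0$ (resp.\ in $U^{\geqq0}$ with $\varepsilon'=0$), so that all the corrections collapse and the $\dT$-action becomes multiplicative on the relevant product. The remaining ingredients — the identity $\Phi_{v^*\otimes v}\dT_w=\Phi_{(v^*\dT_w)\otimes v}$, the description of $\BC_q[N^+\backslash G]$ above, and the one-dimensionality of extremal weight spaces — are routine.
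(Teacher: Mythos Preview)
Your proof is correct and follows essentially the same approach as the paper: both argue via Corollary~\ref{cor:dT} and the formula~\eqref{eq:mult-right} for the right action on a product, using that the quasi-$R$-matrix factors annihilate the lowest weight covector $v^*_\mu$ (resp.\ $v^*_\lambda$) so that $\Delta(\dT_w^{-1})$ (resp.\ $\Delta(\dT_{w^{-1}})$) acts diagonally on the relevant tensor. Your version simply spells out in matrix-coefficient language what the paper compresses into a one-line citation of~\eqref{eq:mult-right} and Corollary~\ref{cor:dT}, and your handling of~(iii) via the one-dimensionality of $V^*(\lambda)_{w\lambda}$ is exactly the paper's argument.
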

\begin{proof}
The statement (i) follows from \eqref{eq:mult-right} and Corollary \ref{cor:dT}.
The statement (ii) is a special case of (i).
Since $V^*(\lambda)_{w\lambda}$ is one-dimensional, we have
$v^*_{w\lambda}\in\BF^\times v^*_\lambda \dot{T}_{w^{-1}}$.
Hence (iii) also follows from 
Corollary \ref{cor:dT}.
\end{proof}
\begin{corollary}
\label{cor:wa}
The linear map
$\BC_q[N^+\backslash G]\ni \varphi\mapsto\varphi \dT_w^{-1}\in\BC_q[G]$ is an algebra homomorphism.
Hence 
$\BC_q[N^+\backslash G]\dT_w^{-1}$ is a subalgebra of $\BC_q[G]$.\end{corollary}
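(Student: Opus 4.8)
The plan is to read off this corollary from Lemma \ref{lem:wa}(i) together with the fact, recorded just above, that $\BC_q[N^+\backslash G]$ is a subalgebra of $\BC_q[G]$. The point is that $\BC_q[N^+\backslash G]\subset\BC_q[G]$, so the right action of the operator $\dT_w^{-1}$ (coming from the right $U$-module structure on $\BC_q[G]$, whose homogeneous pieces lie in $\Mod^r_0(U)$) restricts to a well-defined $\BF$-linear map $\varphi\mapsto\varphi\dT_w^{-1}$ from $\BC_q[N^+\backslash G]$ into $\BC_q[G]$. Linearity is then automatic.

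For multiplicativity I would take $\varphi,\psi\in\BC_q[N^+\backslash G]$ and apply Lemma \ref{lem:wa}(i) with $f=\varphi$ and $h=\psi$: the hypothesis $h\in\BC_q[N^+\backslash G]$ is met, so $(\varphi\psi)\dT_w^{-1}=(\varphi\dT_w^{-1})(\psi\dT_w^{-1})$. Since $\BC_q[N^+\backslash G]$ is a subalgebra, $\varphi\psi$ again lies in $\BC_q[N^+\backslash G]$, so the left-hand side is genuinely the image of an element of the source. To see that the map is unital — and hence that the image is a subalgebra rather than merely a non-unital subring — I would use $1=\sigma^1_0$ and part (i) of the (unnamed) lemma preceding Lemma \ref{lem:wa}, namely $\sigma^1_\lambda\dT_w^{-1}=\sigma^w_\lambda$, with $\lambda=0\in P^-$; combined with $\sigma^w_0=1$ this gives $1\dT_w^{-1}=1$. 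Therefore $\varphi\mapsto\varphi\dT_w^{-1}$ is a unital algebra homomorphism $\BC_q[N^+\backslash G]\to\BC_q[G]$, and its image $\BC_q[N^+\backslash G]\dT_w^{-1}$ is a subalgebra of $\BC_q[G]$.

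There is essentially no obstacle here: all the real content sits in Lemma \ref{lem:wa}(i), whose proof already exploits the coproduct formula for $\dT_w^{-1}$ from Corollary \ref{cor:dT} and the right-$U^+$-invariance defining $\BC_q[N^+\backslash G]$ to kill the off-diagonal terms of $\Delta(\dT_w^{-1})$. The only mild point requiring attention is the unitality observation, which is what upgrades the conclusion from ``subring'' to ``subalgebra.''
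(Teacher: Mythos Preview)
Your argument is correct and is exactly the approach the paper intends: the corollary is recorded without proof because it is immediate from Lemma \ref{lem:wa}(i), and your extra care about unitality via $\sigma^1_0\dT_w^{-1}=\sigma^w_0=1$ is a harmless (and helpful) elaboration.
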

\begin{proposition}
\label{prop:2gw1}
Let $f\in\BC_q[G]$ and $\lambda\in P^-$.
\begin{itemize}
\item[(i)]
If $f\sigma^w_\lambda=0$, then $f=0$.
\item[(ii)]
If $\sigma^w_\lambda f=0$, then $f=0$.
\end{itemize}
\end{proposition}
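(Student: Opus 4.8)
The plan is to reduce both statements to the already established case $w=1$ (Proposition \ref{prop:2ga}) by transporting along the right action of Lusztig's operator $\dT_w$ on $\BC_q[G]$, in the same spirit as Proposition \ref{prop:2ga} itself rests on the invertibility of $\chi_\lambda=\sigma^1_\lambda$ in $U^*$. The two facts I would use are: first, since $\BC_q[G]$ is a sum of right $U$-submodules lying in $\Mod^r_0(U)$ (indeed $\BC_q[G]\cong\bigoplus_{\mu\in P^-}V^*(\mu)\otimes V(\mu)$ by \eqref{eq:decompCq}, with $\dT_v\in GL(V(\mu))$ for every $v\in W$), the right action of each $\dT_v$ is a bijective $\BF$-linear endomorphism of $\BC_q[G]$ whose inverse is the right action of $\dT_v^{-1}$; second, the compatibility formulas of Lemma \ref{lem:wa}(ii),(iii), which relate $\sigma^1_\lambda$, $\sigma^w_\lambda$ and these right actions.

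For (i), I would suppose $f\sigma^w_\lambda=0$. Using bijectivity of the right $\dT_w$-action, write $f=g\dT_w^{-1}$ with $g=f\dT_w\in\BC_q[G]$. Applying Lemma \ref{lem:wa}(ii) to $g$ gives $(g\sigma^1_\lambda)\dT_w^{-1}=(g\dT_w^{-1})\sigma^w_\lambda=f\sigma^w_\lambda=0$; since the right action of $\dT_w^{-1}$ is injective, this forces $g\sigma^1_\lambda=0$, hence $g=0$ by Proposition \ref{prop:2ga}(ii), and therefore $f=g\dT_w^{-1}=0$.

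For (ii), I would suppose $\sigma^w_\lambda f=0$. Using bijectivity of the right $\dT_{w^{-1}}$-action, write $f=h\dT_{w^{-1}}$ with $h\in\BC_q[G]$. Applying Lemma \ref{lem:wa}(iii) to $h$ yields $(\sigma^1_\lambda h)\dT_{w^{-1}}\in\BF^\times\,\sigma^w_\lambda(h\dT_{w^{-1}})=\BF^\times\,\sigma^w_\lambda f=\{0\}$, so $(\sigma^1_\lambda h)\dT_{w^{-1}}=0$; injectivity of the $\dT_{w^{-1}}$-action then gives $\sigma^1_\lambda h=0$, hence $h=0$ by Proposition \ref{prop:2ga}(i), and so $f=h\dT_{w^{-1}}=0$.

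I do not expect a genuine obstacle here; the argument is a clean change of variables. The only points needing a word of care are the verification that the right $\dT_v$-actions are bijective on the whole of $\BC_q[G]$ (rather than on individual matrix coefficients), which is exactly the decomposition \eqref{eq:decompCq} together with $\dT_v\in GL(V(\mu))$, and the bookkeeping of which part of Lemma \ref{lem:wa} — and of Proposition \ref{prop:2ga} — to invoke for the right-cancellation statement (i) versus the left-cancellation statement (ii).
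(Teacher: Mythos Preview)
Your proposal is correct and follows essentially the same route as the paper: both reduce to the $w=1$ case (Proposition~\ref{prop:2ga}) by writing $f\sigma^w_\lambda=((f\dT_w)\sigma^1_\lambda)\dT_w^{-1}$ via Lemma~\ref{lem:wa}(ii) and $\sigma^w_\lambda f\in\BF^\times(\sigma^1_\lambda(f\dT_{w^{-1}}^{-1}))\dT_{w^{-1}}$ via Lemma~\ref{lem:wa}(iii), then using bijectivity of the right $\dT_v$-actions. Your explicit remark that this bijectivity comes from \eqref{eq:decompCq} together with $\dT_v\in GL(V(\mu))$ is a useful clarification that the paper leaves implicit.
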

\begin{proof}
By Lemma \ref{lem:wa} we have
\[
f\sigma^w_\lambda=
(f\dT_w\dT_w^{-1})\sigma^w_\lambda
=
((f\dT_w)\sigma_\lambda^1)\dT_w^{-1},
\]
\[
\sigma^w_\lambda f=
\sigma^w_\lambda(f\dT_{w^{-1}}^{-1}\dT_{w^{-1}})
\in\BF^\times(\sigma^1_\lambda(f\dT_{w^{-1}}^{-1}))\dT_{w^{-1}}.
\]
Hence the assertion follows from Proposition \ref{prop:2g}.
\end{proof}
By Proposition \ref{prop:2g} and Corollary \ref{cor:wa} we have the following.
\begin{proposition}
\label{prop:2gw2}
For any $\varphi\in \BC_q[N^+\backslash G]\dT_w^{-1}$ and $\lambda\in P^-$ there exists some $\mu\in P^-$ such that $\sigma^w_\mu \varphi\in(\BC_q[N^+\backslash G]\dT_w^{-1})\sigma^w_\lambda$ and
$\varphi\sigma^w_\mu \in\sigma^w_\lambda(\BC_q[N^+\backslash G]\dT_w^{-1})$.
\end{proposition}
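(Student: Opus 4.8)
The plan is to derive Proposition \ref{prop:2gw2} by transporting the already-established Ore-type property (Proposition \ref{prop:2g}(ii) together with the identification of $\CS_1$-multipliers in $\BC_q[N^+\backslash G]$) through the algebra homomorphism $\varphi\mapsto\varphi\dT_w^{-1}$ of Corollary \ref{cor:wa}. Concretely, given $\varphi\in\BC_q[N^+\backslash G]\dT_w^{-1}$ write $\varphi=\varphi'\dT_w^{-1}$ with $\varphi'\in\BC_q[N^+\backslash G]$, and given $\lambda\in P^-$ apply Proposition \ref{prop:2g}(ii) to $\varphi'$ and $\sigma^1_\lambda$: there is $\mu\in P^-$ with $\sigma^1_\mu\varphi'\in\BC_q[N^+\backslash G]\sigma^1_\lambda$ and $\varphi'\sigma^1_\mu\in\sigma^1_\lambda\BC_q[N^+\backslash G]$. (Here I am using that $\sigma^1_\lambda=\chi_\lambda\in\BC_q[H]\subset(U^-)^\bigstar$-centralizing in the sense of Lemma \ref{lem:2e}, so these one-sided statements are genuinely available on both sides; alternatively invoke Corollary \ref{cor:Ore-1} directly in the ring $\BC_q[N^+\backslash G]$.)

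Next I would push these relations through $\dT_w^{-1}$. The key point is that $\varphi\mapsto\varphi\dT_w^{-1}$ restricted to $\BC_q[N^+\backslash G]$ is an algebra homomorphism (Corollary \ref{cor:wa}), and that it sends $\sigma^1_\lambda$ to $\sigma^w_\lambda$ (Lemma \ref{lem:wa}(ii), or the cited Lemma (i)), while preserving containments. Thus from $\sigma^1_\mu\varphi'=\psi\,\sigma^1_\lambda$ with $\psi\in\BC_q[N^+\backslash G]$ we obtain
\[
(\sigma^1_\mu\varphi')\dT_w^{-1}
=(\sigma^1_\mu\dT_w^{-1})(\varphi'\dT_w^{-1})
=\sigma^w_\mu\,\varphi,
\]
which equals $(\psi\dT_w^{-1})(\sigma^1_\lambda\dT_w^{-1})=(\psi\dT_w^{-1})\sigma^w_\lambda\in(\BC_q[N^+\backslash G]\dT_w^{-1})\sigma^w_\lambda$. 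The same computation applied to $\varphi'\sigma^1_\mu\in\sigma^1_\lambda\BC_q[N^+\backslash G]$ yields $\varphi\sigma^w_\mu\in\sigma^w_\lambda(\BC_q[N^+\backslash G]\dT_w^{-1})$. Note one must take a common $\mu$ working for both sides, which is harmless: run Proposition \ref{prop:2g}(ii) twice and add the two resulting dominant weights, using $\sigma^1_{\mu_1+\mu_2}=\sigma^1_{\mu_1}\sigma^1_{\mu_2}$ and that $\CS_1$ is multiplicative.

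The only subtle step is justifying that applying $\dT_w^{-1}$ to a product of the form $\psi\,\sigma^1_\lambda$ factors as $(\psi\dT_w^{-1})(\sigma^1_\lambda\dT_w^{-1})$; this is exactly Lemma \ref{lem:wa}(i)–(ii) (with $h=\sigma^1_\lambda\in\BC_q[N^+\backslash G]$, which lies in $\BC_q[N^+\backslash G]$ since $\sigma^1_\lambda=\chi_\lambda$ is killed by right multiplication by $U^+$), so in fact all of the needed multiplicativity is already in hand. I therefore do not expect a genuine obstacle here; the proposition is essentially a formal transport of Proposition \ref{prop:2g} along the homomorphism of Corollary \ref{cor:wa}, and the proof can be written in a few lines once one is careful to select a single $\mu$ serving both the left and right Ore relations.
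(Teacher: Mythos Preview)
Your proposal is correct and is precisely the argument the paper has in mind: the paper's entire proof is the sentence ``By Proposition \ref{prop:2g} and Corollary \ref{cor:wa} we have the following,'' and you have simply unpacked that citation, using Lemma \ref{lem:wa}(i)--(ii) to factor $\dT_w^{-1}$ across the relevant products and transport the Ore relations from $\CS_1$ in $\BC_q[N^+\backslash G]$ to $\CS_w$ in $\BC_q[N^+\backslash G]\dT_w^{-1}$.
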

The following result is proved similarly to \cite[Proposition 3.4]{T}.
\begin{proposition}
\label{prop:2gw3}
For any $\psi\in\BC_q[G/N^-]$ and $\lambda\in P^-$
there exists some $\mu\in P^-$ such that 
$\sigma^w_\mu \psi\in\BC_q[G/N^-]\sigma^w_\lambda$ and
$\psi\sigma^w_\mu \in\sigma^w_\lambda\BC_q[G/N^-]$.
\end{proposition}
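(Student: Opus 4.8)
\emph{Strategy.} The assertion is the $\BC_q[G/N^-]$-analogue of Proposition~\ref{prop:2g}(i), and the plan is to deduce it from that $w=1$ statement by transporting along the right actions of $\dT_w^{-1}$ and of $\dT_{w^{-1}}$ on $\BC_q[G]$, in the spirit of \cite[Proposition 3.4]{T}. The facts I would use: the right action of $\dT_w^{-1}$ (resp.\ $\dT_{w^{-1}}$) on $\BC_q[G]$ commutes with the left $U$-action, hence preserves $\BC_q[G/N^-]$ and restricts there to a bijection; one has $\sigma^1_\lambda\dT_w^{-1}=\sigma^w_\lambda$ and, since $V^*(\lambda)_{w\lambda}$ is one-dimensional, $\sigma^1_\lambda\dT_{w^{-1}}\in\BF^\times\sigma^w_\lambda$; by Lemma~\ref{lem:2c} and Lemma~\ref{lem:2e}, $\sigma^1_\lambda$ commutes with each weight-homogeneous component of an element of $\BC_q[G/N^-]$ up to a scalar, so that $\sigma^1_\lambda\,\BC_q[G/N^-]=\BC_q[G/N^-]\,\sigma^1_\lambda$; and $\sigma^1_\lambda\in\BC_q[N^+\backslash G]$, whence Lemma~\ref{lem:wa} together with its counterpart for $\dT_{w^{-1}}$, derived the same way from Corollary~\ref{cor:dT} applied to a reduced word for $w^{-1}$, gives $(f\sigma^1_\lambda)\dT_w^{-1}=(f\dT_w^{-1})\sigma^w_\lambda$ and $(\sigma^1_\lambda f)\dT_{w^{-1}}\in\BF^\times\sigma^w_\lambda(f\dT_{w^{-1}})$ for $f\in\BC_q[G]$.

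\emph{The two inclusions.} Given $\psi$, I would feed a suitably pre-twisted element of $\BC_q[G/N^-]$ into Proposition~\ref{prop:2g}(i) — namely $\psi\dT_w$ when aiming at $\sigma^w_\mu\psi\in\BC_q[G/N^-]\sigma^w_\lambda$, and $\psi\dT_{w^{-1}}^{-1}$ when aiming at $\psi\sigma^w_\mu\in\sigma^w_\lambda\BC_q[G/N^-]$ — obtaining $\mu$ together with one of the $w=1$ relations $\sigma^1_\mu\psi'=a\sigma^1_\lambda$ or $\psi'\sigma^1_\mu=\sigma^1_\lambda b$ with $a,b\in\BC_q[G/N^-]$. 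Acting on such a relation by $\dT_w^{-1}$ (resp.\ $\dT_{w^{-1}}$) and using the facts above — after sliding each $\sigma^1$-factor, via the semi-commutation, onto the side on which Lemma~\ref{lem:wa} (resp.\ its counterpart) applies — one reads off the claimed inclusion, the resulting scalar being absorbed into the element of $\BC_q[G/N^-]$; the non-vanishing statement of Proposition~\ref{prop:2gw1} is invoked wherever a cancellation is needed. A single $\mu$ serving both inclusions is then obtained because each is preserved under replacing $\mu$ by an element of $\mu+P^-$, since $\sigma^w_\nu\in\BC_q[G/N^-]$ for $\nu\in P^-$.

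\emph{Main obstacle.} The only real work is the bookkeeping of sides. Unlike $\BC_q[N^+\backslash G]$, on which $(-)\dT_w^{-1}$ is an algebra homomorphism (Corollary~\ref{cor:wa}) — so that Proposition~\ref{prop:2gw2} follows by merely applying it to the $w=1$ relation — the operators $(-)\dT_w^{-1}$ and $(-)\dT_{w^{-1}}$ are \emph{not} multiplicative on $\BC_q[G/N^-]$, and they distribute across a product only when the $\sigma^1$-factor sits on the side matched to the twist. Tracking this, choosing the pre-twist accordingly, and verifying that in each of the two inclusions the two $\sigma^w$-factors land on the correct (opposite) sides, is precisely the careful case analysis of \cite[Proposition 3.4]{T}; I expect that to be the part requiring the most attention.
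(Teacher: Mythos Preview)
There is a genuine gap. Your plan relies on transporting a $w=1$ Ore relation across a twist and using the semi-commutation of $\sigma^1_\nu$ with weight-homogeneous elements of $\BC_q[G/N^-]$ to place each $\sigma^1$-factor on the side where Lemma~\ref{lem:wa} applies. But trace what actually happens. With $\psi'=\psi\dT_w$ and, say, $\sigma^1_\mu\psi'=a\sigma^1_\lambda$, applying $\dT_w^{-1}$ to the right-hand side gives $(a\dT_w^{-1})\sigma^w_\lambda$ by Lemma~\ref{lem:wa}(ii). For the left-hand side you must first slide $\sigma^1_\mu$ to the right (since Lemma~\ref{lem:wa}(i),(ii) only distribute when the $\BC_q[N^+\backslash G]$-factor sits on the right), obtaining $c\,\psi'\sigma^1_\mu$, and then $(\psi'\sigma^1_\mu)\dT_w^{-1}=\psi\,\sigma^w_\mu$. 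So you arrive at $\psi\,\sigma^w_\mu\in\BC_q[G/N^-]\sigma^w_\lambda$ --- which is trivial (take $\mu=\lambda$). The same happens on the other side: $\dT_{w^{-1}}$ together with Lemma~\ref{lem:wa}(iii) forces \emph{both} $\sigma^w$-factors to the left, yielding the equally trivial $\sigma^w_\mu\psi\in\sigma^w_\lambda\BC_q[G/N^-]$. The point is that each twist operator places all emerging $\sigma^w$-factors on the \emph{same} side; the $w=1$ sliding only repositions $\sigma^1$-factors, and after twisting there is no mechanism to move a $\sigma^w$-factor across $\psi$. This is exactly why Proposition~\ref{prop:2gw2} goes through cleanly (there $(-)\dT_w^{-1}$ is multiplicative, Corollary~\ref{cor:wa}, so no sliding is needed and the sides are preserved) while the present case does not.

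The paper's route via \cite[Proposition~3.4]{T} is different: it works directly with the decomposition $\BC_q[G/N^-]\cong\bigoplus_{\nu\in P^-}\Phi(V^*(\nu)\otimes\BF v_\nu)$ and the multiplication-by-$\sigma^w_\lambda$ maps, which on the $V^*$-side become the projections $p:V^*(\nu)\otimes V^*(\lambda)\to V^*(\nu+\lambda)$ evaluated at $v^*\otimes v^*_{w\lambda}$ (resp.\ $v^*_{w\mu}\otimes v^*$). The required Ore relation then reduces to a surjectivity statement for these maps onto appropriate weight spaces --- this is the content of \cite[Lemma~3.5]{T}, invoked again in the present paper in the proof of Lemma~\ref{lem:Fw2}. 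That argument does not go through the braid-group twist at all, and the ``careful case analysis'' you anticipate is of a different nature than the side-tracking you describe.
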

\begin{lemma}
\label{lem:loc}
For any $f\in\BC_q[G]$ there exists some
$\lambda\in P^-$ such that $f\sigma_\lambda^w\in
\BC_q[G/N^-](\BC_q[N^+\backslash G]\dT_w^{-1})$.
\end{lemma}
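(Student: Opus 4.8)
The plan is to reduce the claim to Proposition \ref{prop:t}, which asserts that for any $f\in\BC_q[G]$ there is $\nu\in P^-$ with $f\sigma^1_\nu\in\BC_q[G/N^-]\BC_q[N^+\backslash G]$. The mechanism connecting the two statements is the right action of $\dT_w^{-1}$ on $\BC_q[G]$ together with the multiplicativity properties recorded in Lemma \ref{lem:wa} and Corollary \ref{cor:wa}. Concretely, I would start from $f\in\BC_q[G]$, set $g=f\dT_w\in\BC_q[G]$ (using that $\BC_q[G]$ carries a right action of $\dT_w$ because it is a sum of modules in $\Mod_0^r(U)$), apply Proposition \ref{prop:t} to $g$ to obtain $\nu\in P^-$ and a decomposition $g\sigma^1_\nu=\psi\varphi$ with $\psi\in\BC_q[G/N^-]$ and $\varphi\in\BC_q[N^+\backslash G]$, and then apply the operator $(\,\cdot\,)\dT_w^{-1}$ to both sides.

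On the left-hand side, by Lemma \ref{lem:wa}(ii) we have $(g\sigma^1_\nu)\dT_w^{-1}=(g\dT_w^{-1})\sigma^w_\nu=f\sigma^w_\nu$, since $g\dT_w^{-1}=(f\dT_w)\dT_w^{-1}=f$. On the right-hand side, by Lemma \ref{lem:wa}(i) (with $h=\varphi\in\BC_q[N^+\backslash G]$) we have $(\psi\varphi)\dT_w^{-1}=(\psi\dT_w^{-1})(\varphi\dT_w^{-1})$. Now $\psi\in\BC_q[G/N^-]$ gives $\psi\dT_w^{-1}\in\BC_q[G/N^-]$ by the preceding lemma (part (ii), $\BC_q[G/N^-]\dT_w^{-1}=\BC_q[G/N^-]$), while $\varphi\dT_w^{-1}\in\BC_q[N^+\backslash G]\dT_w^{-1}$ by definition. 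Combining, $f\sigma^w_\nu=(\psi\dT_w^{-1})(\varphi\dT_w^{-1})\in\BC_q[G/N^-](\BC_q[N^+\backslash G]\dT_w^{-1})$, which is exactly the assertion with $\lambda=\nu$.

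The only point requiring a little care — and the step I expect to be the genuine (if mild) obstacle — is the bookkeeping of which variant of the multiplicativity lemma applies on which side: Lemma \ref{lem:wa}(i) requires the \emph{right} factor to lie in $\BC_q[N^+\backslash G]$, which is why I apply Proposition \ref{prop:t} in the form producing $f\sigma^1_\nu$ with the $\BC_q[N^+\backslash G]$-factor on the right, rather than the form $\sigma^1_\nu f$. One also needs $\psi\dT_w^{-1}$ to stay in $\BC_q[G/N^-]$ — this is immediate from the identity $\BC_q[G/N^-]\dT_w^{-1}=\BC_q[G/N^-]$ proved just above — and one needs to know that the right action of $\dT_w^{-1}$ sends $\BC_q[G]$ into $\BC_q[G]$, which holds because $\BC_q[G]$ is a sum of right $U$-submodules in $\Mod_0^r(U)$ and $\dT_w^{-1}$ preserves that category. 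No new computation beyond invoking these already-established facts is needed.
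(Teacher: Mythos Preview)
Your proof is correct and follows essentially the same approach as the paper: apply Proposition~\ref{prop:t} to $f\dT_w$, then transport back via $(\,\cdot\,)\dT_w^{-1}$ using Lemma~\ref{lem:wa}(i),(ii) and the invariance $\BC_q[G/N^-]\dT_w^{-1}=\BC_q[G/N^-]$. The only cosmetic difference is that you write $g\sigma^1_\nu=\psi\varphi$ as a single product rather than as an element of $\BC_q[G/N^-]\BC_q[N^+\backslash G]$ (a sum of such products), but linearity makes this harmless.
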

\begin{proof}
By Proposition \ref{prop:t} there exists some $\lambda\in P^-$ such that 
$(f\dT_w)\sigma_\lambda^1\in
\BC_q[G/N^-]\BC_q[N^+\backslash G]$.
Hence by Lemma \ref{lem:wa} we have
\begin{align*}
f\sigma_\lambda^w
=((f\dT_w)\sigma_\lambda^1)\dT_w^{-1}
\in&
(\BC_q[G/N^-]\BC_q[N^+\backslash G])\dT_w^{-1}
\\
=&\BC_q[G/N^-](\BC_q[N^+\backslash G]\dT_w^{-1}).
\end{align*}
\end{proof}

\begin{proposition}
\label{prop:2gw}
For any $f\in\BC_q[G]$ and $\lambda\in P^-$ there exists some $\mu\in P^-$ such that
$\sigma^w_\mu f\in\BC_q[G]\sigma^w_\lambda$ and 
$f\sigma^w_\mu \in\sigma^w_\lambda\BC_q[G]$.
\end{proposition}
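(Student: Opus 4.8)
The plan is to deduce Proposition \ref{prop:2gw} from the analogous statement for $\CS_1$, namely Proposition \ref{prop:2g}(iii), by transporting everything through the right action of $\dT_w$ and its inverse. Recall from Lemma \ref{lem:wa}(ii)--(iii) that $(g\sigma_\lambda^1)\dT_w^{-1}=(g\dT_w^{-1})\sigma_\lambda^w$ for $g\in\BC_q[G]$, and that $(\sigma_\lambda^1 g)\dT_{w^{-1}}\in\BF^\times\sigma_\lambda^w(g\dT_{w^{-1}})$; together with the fact that $\varphi\mapsto\varphi\dT_w^{-1}$ is a bijection of $\BC_q[G]$ onto itself (its inverse being $\varphi\mapsto\varphi\dT_w$), these identities let us pull the $\CS_w$-Ore problem back to the already-solved $\CS_1$-Ore problem.

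For the right-hand Ore condition, given $f\in\BC_q[G]$ and $\lambda\in P^-$, I would first set $g=f\dT_w\in\BC_q[G]$ and apply Proposition \ref{prop:2g}(iii) to $g$ and $\lambda$: there is $\mu\in P^-$ with $g\sigma_\lambda^1=h\sigma_\mu^1$ for some $h\in\BC_q[G]$. Hmm—one must be careful with the direction of the Ore condition here; Proposition \ref{prop:2g}(iii) gives $\sigma^1_\mu f\in\BC_q[G]\sigma^1_\lambda$ and $f\sigma^1_\mu\in\sigma^1_\lambda\BC_q[G]$, so applying it to $g$ yields $\sigma_\mu^1 g\in\BC_q[G]\sigma_\lambda^1$, say $\sigma_\mu^1 g = g'\sigma_\lambda^1$ with $g'\in\BC_q[G]$. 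Now apply $(-)\dT_w^{-1}$ to both sides. On the left, $(\sigma_\mu^1 g)\dT_w^{-1}$; on the right, $(g'\sigma_\lambda^1)\dT_w^{-1}=(g'\dT_w^{-1})\sigma_\lambda^w$ by Lemma \ref{lem:wa}(ii). The point is that $(\sigma_\mu^1 g)\dT_w^{-1}$ is, up to a nonzero scalar, of the form $\sigma_\mu^w\bigl((g\dT_{\,\cdot})\dT_w^{-1}\bigr)$; matching this against the computation in the proof of Proposition \ref{prop:2gw1}, where $\sigma_\lambda^w f$ was rewritten as $\BF^\times(\sigma^1_\lambda(f\dT_{w^{-1}}^{-1}))\dT_{w^{-1}}$, gives $\sigma_\mu^w f\in\BC_q[G]\sigma_\lambda^w$ after unwinding $g=f\dT_w$. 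The other half, $f\sigma_\mu^w\in\sigma_\lambda^w\BC_q[G]$, is handled symmetrically, starting from $g\sigma_\mu^1\in\sigma_\lambda^1\BC_q[G]$ and applying $(-)\dT_w^{-1}$ together with Lemma \ref{lem:wa}(ii).

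Alternatively—and this is probably the cleaner route to write down—one can bypass the twisting bookkeeping entirely by combining Lemma \ref{lem:loc} with the Ore conditions on the two ``factor'' algebras. Indeed, by Lemma \ref{lem:loc} any $f$ admits $\lambda_0\in P^-$ with $f\sigma_{\lambda_0}^w\in\BC_q[G/N^-]\cdot(\BC_q[N^+\backslash G]\dT_w^{-1})$; write $f\sigma_{\lambda_0}^w=\sum_k\psi_k\varphi_k$ with $\psi_k\in\BC_q[G/N^-]$ and $\varphi_k\in\BC_q[N^+\backslash G]\dT_w^{-1}$. Then apply Proposition \ref{prop:2gw2} to each $\varphi_k$ and Proposition \ref{prop:2gw3} to each $\psi_k$, pushing a power $\sigma^w_\bullet$ successively past the $\varphi_k$'s and then past the $\psi_k$'s, using that $\CS_w$ commutes with itself and that $\BC_q[G/N^-]$, $\BC_q[N^+\backslash G]\dT_w^{-1}$ are subalgebras of $\BC_q[G]$ by Corollary \ref{cor:wa}; finiteness of the sum means one common $\mu$ works. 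This yields $\sigma^w_\mu(f\sigma^w_{\lambda_0})\in\BC_q[G]\sigma^w_{\lambda}$ for a suitable $\lambda$, and absorbing $\sigma^w_{\lambda_0}$ (it commutes with $\sigma^w_\mu$) gives $\sigma^w_{\mu}f\in\BC_q[G]\sigma^w_{\lambda-\lambda_0}$ after relabeling—note $\lambda-\lambda_0$ can be taken in $P^-$ by first choosing $\lambda$ sufficiently small. The statement $f\sigma^w_\mu\in\sigma^w_\lambda\BC_q[G]$ is symmetric.

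The main obstacle is purely organizational: keeping straight which side of the Ore condition is being invoked at each stage and ensuring the scalars from Lemma \ref{lem:wa}(iii) (which are nonzero, hence harmless) don't obscure the argument, and — in the second approach — verifying that $\sigma^w_\mu$ really does commute past elements of $\BC_q[G/N^-]$ and of $\BC_q[N^+\backslash G]\dT_w^{-1}$ only up to producing another element of the same subalgebra times a power of $\sigma^w$, which is exactly the content of Propositions \ref{prop:2gw2} and \ref{prop:2gw3}. No genuinely new input beyond the already-established lemmas is needed; this is a short ``assembly'' proof.
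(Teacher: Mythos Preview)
Your second approach is exactly the paper's proof: Lemma~\ref{lem:loc} places $f\sigma_\nu^w$ in the product $\BC_q[G/N^-]\cdot(\BC_q[N^+\backslash G]\dT_w^{-1})$, and then Propositions~\ref{prop:2gw2} and~\ref{prop:2gw3} let one push powers of $\sigma^w$ through each factor. The only point you underplay is the ``absorbing'' step: passing from $\sigma_\mu^w f\sigma_\nu^w\in\BC_q[G]\sigma_{\lambda+\nu}^w$ to $\sigma_\mu^w f\in\BC_q[G]\sigma_\lambda^w$ is a genuine cancellation of $\sigma_\nu^w$ on the right, and this requires Proposition~\ref{prop:2gw1} (the non--zero-divisor property of $\sigma_\lambda^w$), which the paper invokes explicitly and you should too. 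Your first approach via direct transport through $\dT_w$ is not wrong in spirit, but the asymmetry between Lemma~\ref{lem:wa}(ii) (which uses $\dT_w^{-1}$) and (iii) (which uses $\dT_{w^{-1}}$) makes it awkward to treat both Ore directions uniformly, which is why the paper---and you, ultimately---route through the factorization instead.
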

\begin{proof}
We can take $\nu\in P^-$ with
$f\sigma_\nu^w\in
\BC_q[G/N^-](\BC_q[N^+\backslash G]\dT_w^{-1})$
by Lemma \ref{lem:loc} .
By 
Proposition \ref{prop:2gw2} and Proposition \ref{prop:2gw3} we have 
$f\sigma_{\nu+\mu'}^w=\sigma_{\lambda}^w\BC_q[G]$
when $\mu'\in P^-$ is sufficiently small.
Similarly we have
$\sigma_{\mu''}^wf\sigma_{\nu}^w=\BC_q[G]\sigma_{\lambda+\nu}^w$.
when $\mu''\in P^-$ is sufficiently small.
Then we have
$\sigma_{\mu''}^wf=\BC_q[G]\sigma_{\lambda}^w$
by Proposition \ref{prop:2gw1}.
\end{proof}
By Proposition \ref{prop:2gw1}, Proposition \ref{prop:2gw2}, Proposition \ref{prop:2gw3}, and Proposition \ref{prop:2gw} we have the following.

\begin{corollary}
\label{cor:Ore-w}
The multiplicative set $\CS_w$ satisfies the left and right Ore conditions in all of the three rings $\BC_q[G/N^-]$, $\BC_q[N^+\backslash G]\dT_w^{-1}$, $\BC_q[G]$.
\end{corollary}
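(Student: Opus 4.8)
The statement to be proved is Corollary~\ref{cor:Ore-w}, which asserts that $\CS_w$ satisfies the left and right Ore conditions in each of the three rings $\BC_q[G/N^-]$, $\BC_q[N^+\backslash G]\dT_w^{-1}$, and $\BC_q[G]$. Recall that a multiplicative set $S$ of regular (non-zero-divisor) elements of a ring $R$ satisfies the left Ore condition if for every $f\in R$ and $s\in S$ there exist $g\in R$ and $t\in S$ with $tf=gs$, and similarly for the right condition. So the proof has two ingredients for each of the three rings: first, that the elements $\sigma^w_\lambda$ act as non-zero-divisors; second, the ``common multiple'' property. Both ingredients have already been assembled in the preceding propositions, so the proof is essentially a matter of citing them.

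\begin{proof}
By Proposition~\ref{prop:2gw1}, for $\lambda\in P^-$ the element $\sigma^w_\lambda$ is a non-zero-divisor in $\BC_q[G]$, and hence also in its subalgebras $\BC_q[G/N^-]$ and $\BC_q[N^+\backslash G]\dT_w^{-1}$ (the latter being a subalgebra by Corollary~\ref{cor:wa}); note that $\sigma^w_\lambda$ does lie in $\BC_q[N^+\backslash G]\dT_w^{-1}$ since $\sigma^1_\lambda\in\BC_q[N^+\backslash G]$ and $\sigma^1_\lambda\dT_w^{-1}=\sigma^w_\lambda$. Since $\CS_w=\{\sigma^w_\lambda\mid\lambda\in P^-\}$ and $\sigma^w_\lambda\sigma^w_\mu=\sigma^w_{\lambda+\mu}$, it remains to verify the Ore common-multiple property in each of the three rings. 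For $\BC_q[N^+\backslash G]\dT_w^{-1}$ this is Proposition~\ref{prop:2gw2}: given $\varphi$ in this ring and $\lambda\in P^-$, there is $\mu\in P^-$ with $\sigma^w_\mu\varphi\in(\BC_q[N^+\backslash G]\dT_w^{-1})\sigma^w_\lambda$ and $\varphi\sigma^w_\mu\in\sigma^w_\lambda(\BC_q[N^+\backslash G]\dT_w^{-1})$, which is exactly the left and right Ore condition. For $\BC_q[G/N^-]$ this is Proposition~\ref{prop:2gw3}, and for $\BC_q[G]$ this is Proposition~\ref{prop:2gw}. This completes the proof.
\end{proof}

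The only point requiring a moment's care is the observation that $\sigma^w_\lambda$ genuinely belongs to each of the three rings, so that $\CS_w$ is a multiplicative subset of each; but this is immediate from $\sigma^w_\lambda=\sigma^1_\lambda\dT_w^{-1}$ together with $\sigma^1_\lambda\in\BC_q[G/N^-]\cap\BC_q[N^+\backslash G]$ and $\BC_q[G/N^-]\dT_w^{-1}=\BC_q[G/N^-]$. There is no real obstacle here: all the substantive work — the non-vanishing statements and the construction of common multiples via shifting by sufficiently small dominant weights and transporting along the $\dT_w$-twist — has been carried out in Propositions~\ref{prop:2gw1}, \ref{prop:2gw2}, \ref{prop:2gw3}, and \ref{prop:2gw}, so the corollary is a direct assembly of these four results.
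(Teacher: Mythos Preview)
Your proof is correct and follows exactly the paper's approach: the paper simply states that the corollary follows from Proposition~\ref{prop:2gw1}, Proposition~\ref{prop:2gw2}, Proposition~\ref{prop:2gw3}, and Proposition~\ref{prop:2gw}, which is precisely what you do. Your additional remarks verifying that $\sigma^w_\lambda$ actually lies in each of the three rings are a helpful explicit check that the paper leaves to the reader.
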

It follows that we have the localizations
\begin{align}
\CS_w^{-1}\BC_q[G/N^-]
=&
\BC_q[G/N^-]\CS_w^{-1},
\\
\CS_w^{-1}(\BC_q[N^+\backslash G]\dT_w^{-1})
=&
(\BC_q[N^+\backslash G]\dT_w^{-1})\CS_w^{-1},
\\
\CS_w^{-1}\BC_q[G]=&\BC_q[G]\CS_w^{-1}.
\end{align}

For $\lambda\in P$ define $\sigma_\lambda^w\in\CS_w^{-1}\BC_q[G]$ by 
\begin{equation}
\sigma_\lambda^w
=
(\sigma_{\lambda_2}^w)^{-1}
\sigma_{\lambda_1}^w
\qquad(\lambda_1, \lambda_2\in P^-,\;\lambda=\lambda_1-\lambda_2),
\end{equation}
and set
\begin{equation}
\tilde{\CS}_w=\{\sigma_\lambda^w\mid\lambda\in P\},
\quad
\BF[\tilde{\CS}_w]
=\bigoplus_{\lambda\in P}\BF\sigma_\lambda^w\subset
\CS_w^{-1}\BC_q[G].
\end{equation}
Note that $\tilde{\CS}_w$ is naturally isomorphic to $P$ as a group.

\begin{proposition}
\label{prop:def-fw}
We can define a bijective linear map
\begin{equation}
F_w:\CS_1^{-1}\BC_q[G]\to\CS_w^{-1}\BC_q[G]
\end{equation}
by
\[
F_w(f(\sigma^1_\lambda)^{-1})
=
(f\dT_w^{-1})(\sigma^w_\lambda)^{-1}
\qquad(\lambda\in P^-, f\in\BC_q[G]).
\]
\end{proposition}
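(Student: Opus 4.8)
The plan is to realize $F_w$ as the right action of $\dT_w^{-1}$ transported through the two Ore localizations, and to check everything on representatives over a common denominator. First I would fix normal forms. Since $\CS_1=\{\sigma^1_\lambda\mid\lambda\in P^-\}$ with $\sigma^1_\lambda\sigma^1_\mu=\sigma^1_{\lambda+\mu}$, every element of $\CS_1^{-1}\BC_q[G]=\BC_q[G]\CS_1^{-1}$ is of the form $f(\sigma^1_\lambda)^{-1}$ with $f\in\BC_q[G]$, $\lambda\in P^-$; and from $(\sigma^1_{\lambda_1})^{-1}=\sigma^1_{\lambda_2}(\sigma^1_{\lambda_1+\lambda_2})^{-1}$ together with Proposition \ref{prop:2ga} (each $\sigma^1_\lambda$ is a non-zero-divisor, hence a unit in the localization, into which $\BC_q[G]$ embeds) one obtains the criterion
\[
f_1(\sigma^1_{\lambda_1})^{-1}=f_2(\sigma^1_{\lambda_2})^{-1}
\quad\Longleftrightarrow\quad
f_1\sigma^1_{\lambda_2}=f_2\sigma^1_{\lambda_1}\quad\text{in }\BC_q[G].
\]
The same holds for $\CS_w$, with Proposition \ref{prop:2gw1} replacing Proposition \ref{prop:2ga}.

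Next, for well-definedness, assume $f_1\sigma^1_{\lambda_2}=f_2\sigma^1_{\lambda_1}$. The right action of $\dT_w^{-1}$ on $\BC_q[G]$ is a linear automorphism, inverse to that of $\dT_w$, so applying it and invoking Lemma \ref{lem:wa}(ii) on each side gives
\[
(f_1\dT_w^{-1})\sigma^w_{\lambda_2}=(f_1\sigma^1_{\lambda_2})\dT_w^{-1}=(f_2\sigma^1_{\lambda_1})\dT_w^{-1}=(f_2\dT_w^{-1})\sigma^w_{\lambda_1},
\]
which by the criterion for $\CS_w$ is exactly $(f_1\dT_w^{-1})(\sigma^w_{\lambda_1})^{-1}=(f_2\dT_w^{-1})(\sigma^w_{\lambda_2})^{-1}$. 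Hence $F_w$ is well defined, and it is linear because on elements with a fixed denominator $\sigma^1_\lambda$ it is the composite of the linear map $f\mapsto f\dT_w^{-1}$ with right multiplication by $(\sigma^w_\lambda)^{-1}$, the general case reducing to this after passing to common denominators.

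Finally, for bijectivity I would exhibit the two-sided inverse. Substituting $f\dT_w^{-1}$ for $f$ in Lemma \ref{lem:wa}(ii) and then applying the right $\dT_w$-action yields $(g\sigma^w_\lambda)\dT_w=(g\dT_w)\sigma^1_\lambda$, so the symmetric map $G_w\colon g(\sigma^w_\lambda)^{-1}\mapsto (g\dT_w)(\sigma^1_\lambda)^{-1}$ is well defined by the same argument; and $G_wF_w=\id$, $F_wG_w=\id$ because the right actions of $\dT_w$ and $\dT_w^{-1}$ are mutually inverse. (Equivalently one argues directly: $F_w$ is injective since $(f\dT_w^{-1})(\sigma^w_\lambda)^{-1}=0$ forces $f\dT_w^{-1}=0$, hence $f=0$; and surjective since $g(\sigma^w_\lambda)^{-1}=F_w\!\big((g\dT_w)(\sigma^1_\lambda)^{-1}\big)$.) There is no serious obstacle; the one point needing care is the compatibility of $\dT_w^{-1}$ with denominators, which is precisely what Lemma \ref{lem:wa}(ii) and the invertibility of the right $\dT_w$-action supply.
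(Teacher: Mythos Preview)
Your proof is correct and follows essentially the same approach as the paper: both reduce well-definedness to the equality criterion $f_1\sigma^1_{\lambda_2}=f_2\sigma^1_{\lambda_1}$ and then apply Lemma~\ref{lem:wa}(ii) to transport it to the $\CS_w$ side. The paper's version is terser (it simply declares bijectivity ``obvious''), whereas you spell out the inverse $G_w$ explicitly, but the content is the same.
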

\begin{proof}
Assume 
$
f(\sigma^1_\lambda)^{-1}
=
f'(\sigma^1_\mu)^{-1}$\;
($\lambda, \mu\in P^-,\; f, f'\in\BC_q[G]$).
Then we have 
$f\sigma^1_\mu=f'\sigma^1_\lambda$, and hence 
we have
$(f\dT_w^{-1})\sigma^w_\mu=(f'\dT_w^{-1})\sigma^w_\lambda$
by Lemma \ref{lem:wa}(ii).
It follows that
$(f\dT_w^{-1})(\sigma^w_\lambda)^{-1}=(f'\dT_w^{-1})(\sigma^w_\mu)^{-1}$.
The bijectivity is obvious.
\end{proof}
\begin{lemma}
\label{lem:f-w property}
\begin{itemize}
\item[(i)] We have
\begin{align*}
F_w(\CS_1^{-1}\BC_q[G/N^-])=&
\CS_w^{-1}\BC_q[G/N^-],
\\
F_w(\CS_1^{-1}\BC_q[N^+\backslash G])=&
\CS_w^{-1}(\BC_q[N^+\backslash G]\dT_w^{-1}).
\end{align*}
\item[(ii)]
The linear map
$\CS_1^{-1}\BC_q[N^+\backslash G]
\ni f\mapsto F_w(f)\in
\CS_w^{-1}(\BC_q[N^+\backslash G]\dT_w^{-1})$ is an algebra isomorphism.
\item[(iii)]
For 
$\varphi\in\CS_1^{-1}\BC_q[G/N^-]$ and $\psi\in\CS_1^{-1}\BC_q[N^+\backslash G]$ we  have $F_w(\varphi\psi)=F_w(\varphi)F_w(\psi)$.
\end{itemize}
\end{lemma}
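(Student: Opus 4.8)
The statement to prove is Lemma~\ref{lem:f-w property}, which describes how the bijection $F_w$ from Proposition~\ref{prop:def-fw} interacts with the two subalgebras $\CS_1^{-1}\BC_q[G/N^-]$ and $\CS_1^{-1}\BC_q[N^+\backslash G]$, and with products of elements from the former against the latter. Throughout I will write elements of $\CS_1^{-1}\BC_q[G]$ in the form $f(\sigma^1_\lambda)^{-1}$ with $\lambda\in P^-$, $f\in\BC_q[G]$, which is legitimate by Corollary~\ref{cor:Ore-1}; the defining formula for $F_w$ is $F_w(f(\sigma^1_\lambda)^{-1})=(f\dT_w^{-1})(\sigma^w_\lambda)^{-1}$.

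\textbf{Part (i).} For the first equality, take $f\in\BC_q[G/N^-]$ and $\lambda\in P^-$. Since $\BC_q[G/N^-]\dT_w^{-1}=\BC_q[G/N^-]$ (this is part (ii) of the displayed Lemma preceding Lemma~\ref{lem:wa}), we get $f\dT_w^{-1}\in\BC_q[G/N^-]$, and $\sigma^w_\lambda=\sigma^1_\lambda\dT_w^{-1}\in\CS_w$, so $F_w(f(\sigma^1_\lambda)^{-1})\in\CS_w^{-1}\BC_q[G/N^-]$. As $F_w$ is a bijection and these localizations have matching ``sizes'' one should check both inclusions; but surjectivity onto $\CS_w^{-1}\BC_q[G/N^-]$ is clear because every element there is $g(\sigma^w_\mu)^{-1}=g\dT_w^{-1}\cdot((g')\dT_w^{-1})^{-1}$-type and we can pull back: given $g\in\BC_q[G/N^-]$ we have $g=g'\dT_w^{-1}$ for a unique $g'\in\BC_q[G/N^-]$, whence $g(\sigma^w_\mu)^{-1}=F_w(g'(\sigma^1_\mu)^{-1})$. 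For the second equality, take $f\in\BC_q[N^+\backslash G]$; then by Corollary~\ref{cor:wa} the map $\varphi\mapsto\varphi\dT_w^{-1}$ is an algebra homomorphism with image the subalgebra $\BC_q[N^+\backslash G]\dT_w^{-1}$, so $f\dT_w^{-1}\in\BC_q[N^+\backslash G]\dT_w^{-1}$ and hence $F_w(f(\sigma^1_\lambda)^{-1})\in\CS_w^{-1}(\BC_q[N^+\backslash G]\dT_w^{-1})$; again surjectivity onto that localization follows because every element of $\BC_q[N^+\backslash G]\dT_w^{-1}$ is exactly of the form $f\dT_w^{-1}$.

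\textbf{Parts (ii) and (iii).} These are essentially computations using the multiplicativity of the operator $\varphi\mapsto\varphi\dT_w^{-1}$ restricted to the right places. For (iii), write $\varphi=\varphi_0(\sigma^1_\lambda)^{-1}$ with $\varphi_0\in\BC_q[G/N^-]$ and $\psi=\psi_0(\sigma^1_\mu)^{-1}$ with $\psi_0\in\BC_q[N^+\backslash G]$. Using the Ore condition (Corollary~\ref{cor:Ore-1}) I would first move the $(\sigma^1_\lambda)^{-1}$ past $\psi_0$: since $\sigma^1_\lambda\in\CS_1$ there is $\psi_0'\in\BC_q[G]$ and $\nu$ with $(\sigma^1_\lambda)^{-1}\psi_0=\psi_0'(\sigma^1_\nu)^{-1}$; in fact by Lemma~\ref{lem:2e} the $\chi$'s $q$-commute with the homogeneous pieces, so one can keep everything inside $\BC_q[N^+\backslash G]$ up to a power of $q$ and a shift. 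Then $\varphi\psi=\varphi_0\psi_0'(\sigma^1_{\nu+\mu})^{-1}$ with $\varphi_0\psi_0'$ no longer in a single factor, so I must instead use Lemma~\ref{lem:wa}(i): $(fh)\dT_w^{-1}=(f\dT_w^{-1})(h\dT_w^{-1})$ whenever $h\in\BC_q[N^+\backslash G]$. The cleanest route: reduce (iii) to the identity $F_w(f\cdot h)=F_w(f)\cdot F_w(h)$ for $f\in\BC_q[G/N^-]$ (or more generally $f\in\BC_q[G]$) and $h\in\BC_q[N^+\backslash G]$, both without denominators, by clearing denominators using Lemma~\ref{lem:wa}(ii) ($(f\sigma^1_\lambda)\dT_w^{-1}=(f\dT_w^{-1})\sigma^w_\lambda$); for such $f,h$ the identity is precisely Lemma~\ref{lem:wa}(i) combined with $\sigma$-shuffling. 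Part (ii) is the special case $f=h\in\BC_q[N^+\backslash G]$ of (iii) together with part (i), giving that $F_w$ restricts to an algebra homomorphism $\CS_1^{-1}\BC_q[N^+\backslash G]\to\CS_w^{-1}(\BC_q[N^+\backslash G]\dT_w^{-1})$; bijectivity of the restriction was already observed in (i).

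\textbf{Main obstacle.} The genuine bookkeeping difficulty is handling the $\CS_1$-denominators when multiplying an element of $\BC_q[G/N^-]$ by an element of $\BC_q[N^+\backslash G]$: the product of the underlying numerators need not lie in either factor, so Lemma~\ref{lem:wa}(i) does not apply directly to $\varphi\psi$, only after the denominator on the left of $\psi$ has been absorbed. The technical heart is therefore to verify that pushing $(\sigma^1_\lambda)^{-1}$ through $\psi_0\in\BC_q[N^+\backslash G]$ (using Lemma~\ref{lem:2e}, which controls the commutation up to a scalar $q^{(\lambda,\gamma)}$ and a weight shift) is compatible — via the same scalar — with pushing $(\sigma^w_\lambda)^{-1}$ through $\psi_0\dT_w^{-1}$; this compatibility is exactly what makes $F_w$ multiplicative on the mixed products, and it follows because $\dT_w^{-1}$ intertwines the two $\sigma$-commutation relations, as recorded in Lemma~\ref{lem:wa}(ii)--(iii). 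Once that scalar-matching is checked, (ii) and (iii) are immediate and (i) is the soft part already sketched above.
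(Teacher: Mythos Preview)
Your argument is correct and follows the same route as the paper, which disposes of (i) and (ii) as ``obvious'' and says (iii) is a consequence of Lemma~\ref{lem:wa}. Two small remarks. First, your claim that (ii) is literally a special case of (iii) is not quite right, since in (iii) the left factor lies in $\CS_1^{-1}\BC_q[G/N^-]$, not in $\CS_1^{-1}\BC_q[N^+\backslash G]$; but Lemma~\ref{lem:wa}(i) is stated for arbitrary $f\in\BC_q[G]$, so the same computation handles both. Second, your ``Main obstacle'' is overthought: since $\varphi\mapsto\varphi\dT_w^{-1}$ is an algebra homomorphism $\BC_q[N^+\backslash G]\to\BC_q[N^+\backslash G]\dT_w^{-1}$ (Corollary~\ref{cor:wa}) sending $\sigma^1_\lambda$ to the invertible $\sigma^w_\lambda$, the universal property of localization gives (ii) immediately, and then (iii) follows by writing $\varphi\psi=f\cdot\bigl((\sigma^1_\lambda)^{-1}\psi\bigr)$ with $f\in\BC_q[G]$ and $(\sigma^1_\lambda)^{-1}\psi\in\CS_1^{-1}\BC_q[N^+\backslash G]$, applying Lemma~\ref{lem:wa}(i) once and (ii) once --- no scalar-matching is needed.
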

\begin{proof}
The statements (i) and (ii) are obvious.
The statement (iii) is a consequence of Lemma \ref{lem:wa}.
\end{proof}
By the above arguments we obtain the following results.
\begin{proposition}
\label{prop:wisom}
The multiplication induces
\[
\CS_w^{-1}\BC_q[G/N^-]
\otimes_{\BF[\tilde{\CS}_w]}
\CS_w^{-1}(\BC_q[N^+\backslash G]\dT_w^{-1})
\cong
\CS_w^{-1}\BC_q[G].
\]
\end{proposition}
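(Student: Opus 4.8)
The plan is to reduce the general case to the case $w=1$, which has already been established above: the multiplication of $\CS_1^{-1}\BC_q[G]$ induces an isomorphism $\CS_1^{-1}\BC_q[G/N^-]\otimes_{\BC_q[H]}\CS_1^{-1}\BC_q[N^+\backslash G]\cong\CS_1^{-1}\BC_q[G]$. Note that $\BC_q[H]=\bigoplus_{\lambda\in P}\BF\chi_\lambda=\bigoplus_{\lambda\in P}\BF\sigma^1_\lambda=\BF[\tilde{\CS}_1]$, and that $\BC_q[H]$ is a common subalgebra of $\BC_q[G/N^-]$ and $\BC_q[N^+\backslash G]$ by the explicit descriptions of these subalgebras. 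First I would check that the linear bijection $F_w\colon\CS_1^{-1}\BC_q[G]\to\CS_w^{-1}\BC_q[G]$ of Proposition \ref{prop:def-fw} restricts to an algebra isomorphism $\BF[\tilde{\CS}_1]\xrightarrow{\sim}\BF[\tilde{\CS}_w]$ with $\sigma^1_\lambda\mapsto\sigma^w_\lambda$; this is immediate from $\sigma^1_\lambda\dT_w^{-1}=\sigma^w_\lambda$ and the defining formula for $F_w$. In particular $\BF[\tilde{\CS}_w]$ is a common subalgebra of $\CS_w^{-1}\BC_q[G/N^-]$ and of $\CS_w^{-1}(\BC_q[N^+\backslash G]\dT_w^{-1})$ (the latter being a subalgebra by Corollary \ref{cor:wa}), so the balanced tensor product appearing in the statement is well defined.

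Next I would use Lemma \ref{lem:f-w property} to lift $F_w$ to a linear isomorphism between the two tensor products. By Lemma \ref{lem:f-w property}(i), $F_w$ maps $\CS_1^{-1}\BC_q[G/N^-]$ onto $\CS_w^{-1}\BC_q[G/N^-]$ and $\CS_1^{-1}\BC_q[N^+\backslash G]$ onto $\CS_w^{-1}(\BC_q[N^+\backslash G]\dT_w^{-1})$; by Lemma \ref{lem:f-w property}(iii) applied with $h\in\BC_q[H]\subset\BC_q[N^+\backslash G]$, $F_w$ is a right $\BC_q[H]$-module map on the first of these, and by Lemma \ref{lem:f-w property}(ii) it is a left $\BC_q[H]$-module map on the second, both compatibly with the algebra isomorphism $F_w|_{\BC_q[H]}$ of the first paragraph. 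Hence $a\otimes b\mapsto F_w(a)\otimes F_w(b)$ is a well-defined linear map
\[
\overline{F}_w\colon\CS_1^{-1}\BC_q[G/N^-]\otimes_{\BC_q[H]}\CS_1^{-1}\BC_q[N^+\backslash G]\longrightarrow\CS_w^{-1}\BC_q[G/N^-]\otimes_{\BF[\tilde{\CS}_w]}\CS_w^{-1}(\BC_q[N^+\backslash G]\dT_w^{-1}),
\]
and performing the same construction with $F_w^{-1}$ gives a two-sided inverse, so $\overline{F}_w$ is bijective.

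Finally I would check the compatibility with multiplication. Writing $\mu_1$ and $\mu_w$ for the two multiplication maps out of the respective tensor products, Lemma \ref{lem:f-w property}(iii) gives $\mu_w(\overline{F}_w(a\otimes b))=F_w(a)F_w(b)=F_w(ab)=F_w(\mu_1(a\otimes b))$, that is, $\mu_w\circ\overline{F}_w=F_w\circ\mu_1$. Since $\mu_1$ is bijective (the case $w=1$), $F_w$ is bijective (Proposition \ref{prop:def-fw}), and $\overline{F}_w$ is bijective by the previous paragraph, it follows that $\mu_w$ is bijective; being the multiplication map out of a balanced tensor product, $\mu_w$ is automatically a bimodule homomorphism, so it is the desired isomorphism. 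The one point requiring care is the second paragraph, where one must match the $\BC_q[H]$-module structures on the $w=1$ side with the $\BF[\tilde{\CS}_w]$-module structures on the $w$ side so that $\overline{F}_w$ descends to the balanced tensor products; granting that, the proof is a routine transport of structure along $F_w$.
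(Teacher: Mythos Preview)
Your proposal is correct and is precisely the argument the paper has in mind: in the paper the proposition is stated immediately after Lemma~\ref{lem:f-w property} with only the remark ``By the above arguments we obtain the following results,'' and what you have written spells out exactly this transport of the $w=1$ isomorphism along $F_w$ using Lemma~\ref{lem:f-w property}(i)--(iii). The one minor over-statement is the phrase ``automatically a bimodule homomorphism'' at the end---all that is claimed or needed is that $\mu_w$ is a well-defined linear map on the balanced tensor product (associativity) and that it is bijective, which is what you have shown.
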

\begin{proposition}
\label{prop:wt}
For any $f\in\BC_q[G]$ there exists some $\lambda\in P^-$ such that
\[
\sigma_\lambda^wf, f\sigma_\lambda^w\in
\BC_q[G/N^-](\BC_q[N^+\backslash G]\dT_w^{-1}).
\]
\end{proposition}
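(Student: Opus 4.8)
The plan is as follows. One half of the assertion, the one for $f\sigma^w_\lambda$, is already Lemma~\ref{lem:loc}, so the real content is the left-hand version for $\sigma^w_\lambda f$, together with the easy step of merging the two parameters into a single $\lambda$. Throughout write $A_w=\BC_q[G/N^-]\,(\BC_q[N^+\backslash G]\dT_w^{-1})$ for the algebra appearing in the statement.

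First I would record two elementary facts about $A_w$. Since $\BC_q[N^+\backslash G]\dT_w^{-1}$ is a subalgebra of $\BC_q[G]$ by Corollary~\ref{cor:wa} and $\BC_q[G/N^-]$ is a subalgebra, it is enough to observe that $\sigma^w_\nu$ lies in both $\BC_q[G/N^-]$ and $\BC_q[N^+\backslash G]\dT_w^{-1}$ for every $\nu\in P^-$. The first holds because $\sigma^w_\nu=\Phi_{v^*_{w\nu}\otimes v_\nu}$ with $v_\nu$ a lowest weight vector, so $yv_\nu=\varepsilon(y)v_\nu$ for $y\in U^-$; the second holds because $\sigma^w_\nu=\sigma^1_\nu\dT_w^{-1}$ and $\sigma^1_\nu\in\BC_q[N^+\backslash G]$ (as $v^*_\nu x=\varepsilon(x)v^*_\nu$ for $x\in U^+$). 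Hence $\sigma^w_\nu A_w\subset A_w$ and $A_w\sigma^w_\nu\subset A_w$ for all $\nu\in P^-$, and in particular $\CS_w\subset\BC_q[G/N^-]$.

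For the left-hand assertion I would work inside the localization. Viewing $f$ as an element of $\CS_w^{-1}\BC_q[G]$, Proposition~\ref{prop:wisom} lets us write $f=\sum_j\alpha_j\beta_j$ with $\alpha_j\in\CS_w^{-1}\BC_q[G/N^-]$ and $\beta_j\in\CS_w^{-1}(\BC_q[N^+\backslash G]\dT_w^{-1})$. By the Ore conditions of Corollary~\ref{cor:Ore-w} we may take $\alpha_j=(\sigma^w_{\mu_j})^{-1}\varphi_j$ and $\beta_j=(\sigma^w_{\tau_j})^{-1}\eta_j$ with $\varphi_j\in\BC_q[G/N^-]$ and $\eta_j\in\BC_q[N^+\backslash G]\dT_w^{-1}$; commuting $(\sigma^w_{\tau_j})^{-1}$ to the left past $\varphi_j$ inside $\CS_w^{-1}\BC_q[G/N^-]$ (again Ore), one gets $f=\sum_j(\sigma^w_{\rho_j})^{-1}\varphi_j'\eta_j$ with $\varphi_j'\in\BC_q[G/N^-]$ and $\rho_j\in P^-$. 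Setting $\lambda_2=\sum_j\rho_j\in P^-$ we have $\sigma^w_{\lambda_2}(\sigma^w_{\rho_j})^{-1}=\sigma^w_{\lambda_2-\rho_j}\in\CS_w\subset\BC_q[G/N^-]$, whence
\[
\sigma^w_{\lambda_2}f=\sum_j\sigma^w_{\lambda_2-\rho_j}\varphi_j'\,\eta_j\in\BC_q[G/N^-]\,(\BC_q[N^+\backslash G]\dT_w^{-1})=A_w .
\]
Then, picking $\lambda_1\in P^-$ with $f\sigma^w_{\lambda_1}\in A_w$ (Lemma~\ref{lem:loc}) and putting $\lambda=\lambda_1+\lambda_2$, the inclusions of the second paragraph give both $\sigma^w_\lambda f=\sigma^w_{\lambda_1}(\sigma^w_{\lambda_2}f)\in A_w$ and $f\sigma^w_\lambda=(f\sigma^w_{\lambda_1})\sigma^w_{\lambda_2}\in A_w$, as required.

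The main obstacle is exactly the left-hand statement $\sigma^w_\lambda f\in A_w$: unlike $f\sigma^w_\lambda$, it has no direct proof through the $\dT_w^{-1}$-multiplicativity of Lemma~\ref{lem:wa}, so one must pass through $\CS_w^{-1}\BC_q[G]$ and carefully control one-sided fractions, keeping every intermediate factor inside either $\BC_q[G/N^-]$ or $\BC_q[N^+\backslash G]\dT_w^{-1}$; the inconspicuous point that makes this bookkeeping close up is that $\sigma^w_\nu\in\BC_q[G/N^-]$ for $\nu\in P^-$. (If one prefers, the decomposition $f=\sum_j(\sigma^w_{\rho_j})^{-1}\varphi_j'\eta_j$ can be obtained from Lemma~\ref{lem:loc} and the Ore conditions alone, without invoking Proposition~\ref{prop:wisom}.)
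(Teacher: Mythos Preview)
Your argument is correct and is precisely the kind of detail the paper leaves implicit when it writes ``By the above arguments we obtain the following results'' before Propositions~\ref{prop:wisom} and~\ref{prop:wt}: the right-hand inclusion $f\sigma^w_\lambda\in A_w$ is Lemma~\ref{lem:loc}, and the left-hand one is obtained by writing $f$ in $\CS_w^{-1}\BC_q[G]$ as a sum $\sum_j(\sigma^w_{\rho_j})^{-1}\varphi'_j\eta_j$ and clearing denominators from the left, using that $\sigma^w_\nu\in\BC_q[G/N^-]$ for $\nu\in P^-$. One small wording point: you call $A_w$ ``the algebra appearing in the statement'', but you neither prove nor need that $A_w$ is a subalgebra; what you actually use (and correctly justify) is the weaker pair of inclusions $\sigma^w_\nu A_w\subset A_w$ and $A_w\sigma^w_\nu\subset A_w$ for $\nu\in P^-$.
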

\subsection{}
Set
\begin{align}
\BC_q[N_w^-\backslash G]=
\{\varphi\in\BC_q[G]\mid \varphi y=\varepsilon(y)\varphi\;\;(y\in U^-[\dT_w])\}.
\end{align}
Note that
\[
\sigma^w_\lambda\in\BC_q[G/N^-]\cap\BC_q[N_w^-\backslash G]
\qquad(\lambda\in P^+).
\]
\begin{proposition}
The subspace $\BC_q[N_w^-\backslash G]$ of $\BC_q[G]$ is a subalgebra of $\BC_q[G]$.
\end{proposition}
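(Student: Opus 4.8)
The plan is to check directly that $\BC_q[N_w^-\backslash G]$ is closed under the multiplication of $\BC_q[G]$ (it obviously contains $1$, since $1\cdot u=\varepsilon(u)1$ for all $u\in U$). By the definition of the right $U$-action and \eqref{eq:mult-right}, for $\varphi,\psi\in\BC_q[G]$ and $y\in U^-[\dT_w]$ one has $(\varphi\psi)y=\sum_{(y)}(\varphi y_{(0)})(\psi y_{(1)})$, and since the defining condition of $\BC_q[N_w^-\backslash G]$ is linear in $y$ while $U^-[\dT_w]$ is a sum of weight spaces of $U^-$, it suffices to prove $(\varphi\psi)y=\varepsilon(y)\varphi\psi$ for $\varphi,\psi\in\BC_q[N_w^-\backslash G]$ and homogeneous $y\in U^-[\dT_w]$ of weight $-\gamma$ with $\gamma\in Q^+$. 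The case $\gamma=0$ is trivial, so I would fix such a $y$ with $\gamma\ne0$ and aim to show $(\varphi\psi)y=0$.

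The key step will be the comultiplication estimate
\[
\Delta(U^-[\dT_w])\subset U^{\leqq0}\otimes U^-[\dT_w]U^0 .
\]
To prove it I would argue as follows. Since $U^-[\dT_w]=U^-\cap\dT_w(U^{\geqq0})\subset\dT_w(U^{\geqq0})=\dT_w(U^+)U^0$, Lemma \ref{lem:delta-U} together with $\Delta(U^0)\subset U^0\otimes U^0$ and $\dT_w(U^0)=U^0$ gives $\Delta(\dT_w(U^{\geqq0}))\subset U\otimes\dT_w(U^{\geqq0})$; and since $U^-\subset U^{\leqq0}$ with $U^{\leqq0}$ a Hopf subalgebra, $\Delta(U^-)\subset U^{\leqq0}\otimes U^{\leqq0}$. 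Intersecting these two inclusions in the second tensor factor yields $\Delta(U^-[\dT_w])\subset U^{\leqq0}\otimes(U^{\leqq0}\cap\dT_w(U^{\geqq0}))$, and $U^{\leqq0}\cap\dT_w(U^{\geqq0})=U^-[\dT_w]U^0$: the inclusion $\supset$ is immediate, while for $\subset$ one writes an element of the intersection as $yt$ with $y\in U^-$, $t\in U^0$ using the triangular decomposition and notes $y=(yt)t^{-1}\in U^-\cap\dT_w(U^{\geqq0})=U^-[\dT_w]$.

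Granting the estimate, I would split the second tensor factor via $U^-[\dT_w]U^0=U^0\oplus(U^-[\dT_w]\cap\Ker\varepsilon)U^0$ and write $\Delta(y)=A+B$ with $A\in U^{\leqq0}\otimes U^0$ and $B\in U^{\leqq0}\otimes(U^-[\dT_w]\cap\Ker\varepsilon)U^0$. Writing $B=\sum_k a_k\otimes z_kt_k$ with $z_k\in U^-[\dT_w]\cap\Ker\varepsilon$ and $t_k\in U^0$, the right-invariance of $\psi$ gives $\psi(z_kt_k)=(\psi z_k)t_k=\varepsilon(z_k)(\psi t_k)=0$, so the contribution of $B$ to $\sum(\varphi y_{(0)})(\psi y_{(1)})$ vanishes. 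For $A$, one has $A=(\id\otimes\pi_0)\Delta(y)=y\otimes k_{-\gamma}$, where $\pi_0\colon U^{\leqq0}\to U^0$ is the algebra homomorphism with $\pi_0|_{U^0}=\id$ and $\pi_0(f_i)=0$; this identity holds because $(\id\otimes\pi_0)\circ\Delta$ is an algebra homomorphism sending $f_i\mapsto f_i\otimes k_i^{-1}$ and $k_\lambda\mapsto k_\lambda\otimes k_\lambda$. Hence the contribution of $A$ is $(\varphi y)(\psi k_{-\gamma})=(\varepsilon(y)\varphi)(\psi k_{-\gamma})=0$ since $\varepsilon(y)=0$ for $\gamma\ne0$. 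Therefore $(\varphi\psi)y=0=\varepsilon(y)\varphi\psi$, and consequently $\varphi\psi\in\BC_q[N_w^-\backslash G]$.

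I expect the main obstacle to be the comultiplication estimate, and more specifically the identification of its ``$U^0$-part'' $A$: Lemma \ref{lem:delta-U} controls only one of the two tensor slots of $\Delta$, so one has to combine it with the a priori information $\Delta(U^-)\subset U^{\leqq0}\otimes U^{\leqq0}$ coming from $U^{\leqq0}$ being a Hopf subalgebra, and then invoke the standard leading-term identity $(\id\otimes\pi_0)\Delta(y)=y\otimes k_{-\gamma}$ to dispose of the single term on which the right tensor factor fails to lie in the augmentation ideal of $U^-[\dT_w]$. Everything else is routine manipulation with the triangular decomposition and the coalgebra axioms.
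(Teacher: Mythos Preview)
Your proof is correct and takes the same approach as the paper's (which is a one-liner citing Lemma~\ref{lem:delta-U}): use Lemma~\ref{lem:delta-U} together with $\Delta(U^-)\subset U^{\leqq0}\otimes U^{\leqq0}$ to force the second Sweedler leg of $\Delta(y)$ into $U^-[\dT_w]U^0$, then use the invariance of $\psi$ and the identity $(\id\otimes\pi_0)\Delta(y)=y\otimes k_{-\gamma}$ to reduce to $(\varphi y)(\psi k_{-\gamma})=\varepsilon(y)\varphi\psi$. One small quibble: your justification of $U^{\leqq0}\cap\dT_w(U^{\geqq0})\subset U^-[\dT_w]U^0$ treats an element of the intersection as a single product $yt$ rather than a sum of such, but this is easily repaired (for instance via the factorization $U^-=(U^-\cap\dT_wU^-)\cdot U^-[\dT_w]$ of Proposition~\ref{prop:decomposition}, which shows $\dT_w^{-1}(U^{\leqq0})=(U^-\cap\dT_w^{-1}U^-)\cdot\dT_w^{-1}(U^-[\dT_w])U^0$ sits inside $U^-\otimes U^{\geqq0}$ in a way that makes the intersection with $U^{\geqq0}$ transparent), and the paper does not spell out this step either.
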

\begin{proof}
Let $\varphi, \psi\in\BC_q[N_w^-\backslash G]$.
For $y\in U^-[\dT_w]\cap U^-_{-\gamma}$ with $\gamma\in Q^+$
we have
\[
(\varphi\psi)y=
\sum_{(y)}(\varphi y_{(0)})(\psi y_{(1)})
=
(\varphi y)(\psi k_{-\gamma})
=\varepsilon(y)\varphi\psi.
\]
by Lemma \ref{lem:delta-U}.
Hence $\varphi\psi\in\BC_q[N_w^-\backslash G]$.
\end{proof}
\begin{lemma}
\label{lem:equiv0}
Let $\gamma\in Q^+$ and $y\in U^-[\dT_w]\cap U^-_{-\gamma}$.
Then 
for $\varphi\in\BC_q[G]$, $\lambda\in P^-$ we have
\[
(\varphi\sigma_\lambda^w)y
=q^{-(w\lambda,\gamma)}
(\varphi y)\sigma_\lambda^w.
\]
\end{lemma}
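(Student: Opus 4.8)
\emph{Proof proposal.} The plan is to expand $(\varphi\sigma^w_\lambda)y$ by the product rule \eqref{eq:mult-right},
\[
(\varphi\sigma^w_\lambda)y=\sum_{(y)}(\varphi\,y_{(0)})(\sigma^w_\lambda\,y_{(1)}),
\]
and to show that every term vanishes except the one coming from the ``leading'' part of $\Delta(y)$. Write $\gamma\in Q^+$ with $y\in U^-_{-\gamma}$. First I would recall the standard shape of the comultiplication on $U^-$: for $y\in U^-_{-\gamma}$ one has
\[
\Delta(y)\in y\otimes k_{-\gamma}+\sum_{0<\gamma'\le\gamma}U^-_{-(\gamma-\gamma')}\otimes k_{-(\gamma-\gamma')}U^-_{-\gamma'},
\]
and $y\otimes k_{-\gamma}$ is exactly the part whose second tensor factor lies in $U^0$.

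The key step is to sharpen this presentation using $y\in U^-[\dT_w]=U^-\cap\dT_w(U^{\geqq0})$. Since $\dT_w(U^{\geqq0})=\dT_w(U^+)U^0$, Lemma \ref{lem:delta-U} gives $\Delta(y)\in U\otimes\dT_w(U^{\geqq0})$. Intersecting this with the previous description, and using that $\dT_w(U^{\geqq0})$ is weight-graded, contains $U^0$, and is a subalgebra (hence stable under left multiplication by $k_{\gamma-\gamma'}$), I would deduce a presentation $\Delta(y)=y\otimes k_{-\gamma}+\sum_{\gamma',\alpha}a^{(\alpha)}_{\gamma'}\otimes k_{-(\gamma-\gamma')}z^{(\alpha)}_{\gamma'}$ (sum over $0<\gamma'\le\gamma$) in which each $z^{(\alpha)}_{\gamma'}\in U^-[\dT_w]\cap U^-_{-\gamma'}$. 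This compatibility between the $\dT_w$-twisted grading and the ordinary $Q$-grading carried by $\Delta$ is the point I expect to require the most care.

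Next I would record how $\sigma^w_\lambda$ reacts under right multiplication. Because the right action on $\BC_q[G]$ acts through the covector slot of a matrix coefficient and $v^*_{w\lambda}\in V^*(\lambda)_{w\lambda}$, one gets $\sigma^w_\lambda k_\mu=q^{(w\lambda,\mu)}\sigma^w_\lambda$ for all $\mu\in P$. Also, for $z\in U^-[\dT_w]$ one has $\sigma^w_\lambda z=\varepsilon(z)\sigma^w_\lambda$: writing $v^*_{w\lambda}=v^*_\lambda\dT^{-1}_w$ and using $\dT^{-1}_w(z)\in U^{\geqq0}$ together with $\varepsilon(\dT^{-1}_w(z))=\varepsilon(z)$ from \eqref{eq:epsT}, the claim reduces to the fact that $v^*_\lambda$ is annihilated by the right action of every $e_i$ (as $\lambda$ is the lowest weight of $V(\lambda)$) and scaled by $q^{(\lambda,\,\cdot\,)}$ under $U^0$, so $v^*_\lambda$ is killed by the right action of any weight-homogeneous element of $U^{\geqq0}$ of nonzero weight. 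Equivalently, $\sigma^w_\lambda\in\BC_q[N_w^-\backslash G]$.

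Finally I would substitute these into the expansion. A term with $\gamma'>0$ contributes
\[
(\varphi\,a^{(\alpha)}_{\gamma'})\bigl(\sigma^w_\lambda\,k_{-(\gamma-\gamma')}z^{(\alpha)}_{\gamma'}\bigr)=q^{-(w\lambda,\gamma-\gamma')}\varepsilon(z^{(\alpha)}_{\gamma'})\,(\varphi\,a^{(\alpha)}_{\gamma'})\,\sigma^w_\lambda=0,
\]
since $\varepsilon$ vanishes on $U^-_{-\gamma'}$ for $\gamma'\neq0$, while the leading term contributes $(\varphi y)(\sigma^w_\lambda k_{-\gamma})=q^{-(w\lambda,\gamma)}(\varphi y)\sigma^w_\lambda$. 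Summing gives the asserted identity.
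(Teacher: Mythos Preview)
Your proposal is correct and follows essentially the same route as the paper. The paper's proof is a one-liner---``By Lemma~\ref{lem:delta-U} we have $(\varphi\sigma_\lambda^w)y=(\varphi y)(\sigma_\lambda^w k_{-\gamma})=q^{-(w\lambda,\gamma)}(\varphi y)\sigma_\lambda^w$''---which compresses exactly the steps you unpack: Lemma~\ref{lem:delta-U} forces the second tensor factors of $\Delta(y)$ into $\dT_w(U^{\geqq0})\cap U^{\leqq0}=U^-[\dT_w]U^0$, and then $\sigma^w_\lambda\in\BC_q[N_w^-\backslash G]$ (recorded just after the definition of $\BC_q[N_w^-\backslash G]$) kills every term except $y\otimes k_{-\gamma}$.
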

\begin{proof}
By Lemma \ref{lem:delta-U} we have
\[
(\varphi\sigma_\lambda^w)y
=
(\varphi y)(\sigma_\lambda^wk_{-\gamma})
=q^{-(w\lambda,\gamma)}
(\varphi y)\sigma_\lambda^w.
\]
\end{proof}
By Lemma \ref{lem:equiv0} we have the following.
\begin{lemma}
\label{lem:equiv}
For $\varphi\in\BC_q[G]$, $\lambda\in P^-$ we have
$
\varphi\in\BC_q[N_w^-\backslash G]\
$ 
if and only if 
$
\varphi\sigma_\lambda^w\in\BC_q[N_w^-\backslash G]
$.
\end{lemma}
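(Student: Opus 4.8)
The plan is to derive Lemma \ref{lem:equiv} as a formal consequence of the identity in Lemma \ref{lem:equiv0}, together with the cancellation property of Proposition \ref{prop:2gw1}(i): for $\lambda\in P^-$, right multiplication by $\sigma_\lambda^w$ is injective on $\BC_q[G]$. First I would reduce the testing of the defining condition of $\BC_q[N_w^-\backslash G]$ to homogeneous elements. Since $U^-[\dT_w]=U^-\cap\dT_w(U^{\geqq0})$ and both $U^-$ and $\dT_w(U^{\geqq0})$ are stable under conjugation by the $k_i$ (equivalently, by Proposition \ref{prop:base2}(i) the PBW basis of $U^-[\dT_w]$ consists of weight vectors), we have
\[
U^-[\dT_w]=\bigoplus_{\gamma\in Q^+}\bigl(U^-[\dT_w]\cap U^-_{-\gamma}\bigr),
\]
so it suffices to check $\varphi y=\varepsilon(y)\varphi$ for homogeneous $y\in U^-[\dT_w]\cap U^-_{-\gamma}$, and for such $y$ one has $\varepsilon(y)=0$ unless $\gamma=0$. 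Consequently the scalar $q^{-(w\lambda,\gamma)}$ occurring in Lemma \ref{lem:equiv0} always satisfies $q^{-(w\lambda,\gamma)}\varepsilon(y)=\varepsilon(y)$ on homogeneous $y$.

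For the implication $\varphi\in\BC_q[N_w^-\backslash G]\ \Rightarrow\ \varphi\sigma_\lambda^w\in\BC_q[N_w^-\backslash G]$, take homogeneous $y\in U^-[\dT_w]\cap U^-_{-\gamma}$ and apply Lemma \ref{lem:equiv0} to get $(\varphi\sigma_\lambda^w)y=q^{-(w\lambda,\gamma)}(\varphi y)\sigma_\lambda^w=q^{-(w\lambda,\gamma)}\varepsilon(y)\varphi\sigma_\lambda^w=\varepsilon(y)(\varphi\sigma_\lambda^w)$, using the remark above for the last step; hence $\varphi\sigma_\lambda^w\in\BC_q[N_w^-\backslash G]$. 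Conversely, suppose $\varphi\sigma_\lambda^w\in\BC_q[N_w^-\backslash G]$. For homogeneous $y$, Lemma \ref{lem:equiv0} gives $q^{-(w\lambda,\gamma)}(\varphi y)\sigma_\lambda^w=\varepsilon(y)\varphi\sigma_\lambda^w$, which rearranges (again using $q^{(w\lambda,\gamma)}\varepsilon(y)=\varepsilon(y)$) to $\bigl(\varphi y-\varepsilon(y)\varphi\bigr)\sigma_\lambda^w=0$. Since $\lambda\in P^-$, Proposition \ref{prop:2gw1}(i) yields $\varphi y=\varepsilon(y)\varphi$; as $y$ ranges over the homogeneous elements spanning $U^-[\dT_w]$ this shows $\varphi\in\BC_q[N_w^-\backslash G]$.

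This argument is essentially formal once Lemma \ref{lem:equiv0} is in place. The only two points that require a little care are the homogeneity reduction, which is what makes the powers of $q$ harmless, and the cancellation of $\sigma_\lambda^w$ on the right, which is exactly Proposition \ref{prop:2gw1}(i); neither presents a genuine obstacle.
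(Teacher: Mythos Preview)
Your argument is correct and is exactly the route the paper takes: the paper's proof is the single line ``By Lemma \ref{lem:equiv0} we have the following,'' and you have simply written out what that entails, including the cancellation step via Proposition \ref{prop:2gw1}(i) for the reverse implication, which the paper leaves implicit.
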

\begin{proposition}
\label{prop:leftOre}
The multiplicative set $\CS_w$ satisfies the left Ore condition in $\BC_q[N_w^-\backslash G]$.
\end{proposition}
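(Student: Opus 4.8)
The plan is to reduce the left Ore condition for $\CS_w$ in $\BC_q[N_w^-\backslash G]$ to the one already available in the ambient algebra $\BC_q[G]$ (Proposition \ref{prop:2gw}), and then use Lemma \ref{lem:equiv} to check that the auxiliary element produced there automatically lies in the subalgebra $\BC_q[N_w^-\backslash G]$.

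Concretely, I would fix $\varphi\in\BC_q[N_w^-\backslash G]$ and $\sigma^w_\lambda\in\CS_w$ (so $\lambda\in P^-$), and seek $\mu\in P^-$ and $\psi\in\BC_q[N_w^-\backslash G]$ with $\sigma^w_\mu\varphi=\psi\sigma^w_\lambda$. First, viewing $\varphi$ as an element of $\BC_q[G]$, apply Proposition \ref{prop:2gw} to obtain $\mu\in P^-$ and $g\in\BC_q[G]$ with $\sigma^w_\mu\varphi=g\sigma^w_\lambda$. Next, since $\sigma^w_\mu\in\BC_q[N_w^-\backslash G]$ and $\BC_q[N_w^-\backslash G]$ is a subalgebra of $\BC_q[G]$, the product $\sigma^w_\mu\varphi$, equivalently $g\sigma^w_\lambda$, lies in $\BC_q[N_w^-\backslash G]$. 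Then, by Lemma \ref{lem:equiv} applied to $g$ in place of $\varphi$, membership of $g\sigma^w_\lambda$ in $\BC_q[N_w^-\backslash G]$ forces $g\in\BC_q[N_w^-\backslash G]$, so $\psi:=g$ works and $\sigma^w_\mu\varphi=\psi\sigma^w_\lambda$ is the desired relation. Finally, if one wishes to record that the localization $\CS_w^{-1}\BC_q[N_w^-\backslash G]$ actually exists, the required non-zero-divisor property of the elements of $\CS_w$ is inherited from $\BC_q[G]$ via Proposition \ref{prop:2gw1}, because $\BC_q[N_w^-\backslash G]\subset\BC_q[G]$.

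The argument is essentially formal, so I do not expect a genuine obstacle. The one step that is not pure bookkeeping is transporting the displaced factor $g$ from $\BC_q[G]$ back into $\BC_q[N_w^-\backslash G]$, and this is exactly the content of Lemma \ref{lem:equiv} (which rests on the commutation identity of Lemma \ref{lem:equiv0}); everything else is just unwinding definitions. Note that only the left Ore condition is asserted, which is consistent with the fact that the invariance defining $\BC_q[N_w^-\backslash G]$ is one-sided, so the symmetric computation is neither needed nor automatic.
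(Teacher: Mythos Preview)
Your proposal is correct and follows essentially the same approach as the paper: apply the left Ore condition already established in $\BC_q[G]$ (Proposition \ref{prop:2gw}) to obtain $\sigma^w_\mu\varphi=g\sigma^w_\lambda$ with $g\in\BC_q[G]$, and then invoke Lemma \ref{lem:equiv} to conclude $g\in\BC_q[N_w^-\backslash G]$. The paper's proof is terser but identical in substance.
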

\begin{proof}
Let $f\in\BC_q[N_w^-\backslash G]$, $\lambda\in P^-$.
Then we can take $f'\in\BC_q[G]$ and  $\mu\in P^-$ satisfying
$\sigma^w_\mu f=f'\sigma^w_\lambda$.
Then by Lemma \ref{lem:equiv} we obtain $f'\in\BC_q[N_w^-\backslash G]$.
\end{proof}
We will show later that $\CS_w$ also satisfies the right Ore condition in $\BC_q[N_w^-\backslash G]$ (see Proposition \ref{prop:rightOre} below).

By Proposition \ref{prop:leftOre} we have the left localizations
\[
\CS_w^{-1}\BC_q[N_w^-\backslash G],\qquad
\CS_w^{-1}(\BC_q[G/N^-]\cap\BC_q[N_w^-\backslash G]).
\]
\begin{proposition}
\label{prop:tensor}
The multiplication of $\CS_w^{-1}\BC_q[N_w^-\backslash G]$ induces the isomorphism
\begin{align*}
&\CS_w^{-1}\BC_q[N_w^-\backslash G]
\\
\cong&
\CS_w^{-1}(\BC_q[G/N^-]\cap\BC_q[N_w^-\backslash G])
\otimes_{\BF[\tilde{\CS}_w]}
\CS_w^{-1}(\BC_q[N^+\backslash G]\dT_w^{-1}).
\end{align*}
\end{proposition}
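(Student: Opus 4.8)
The plan is to obtain the decomposition by restricting the isomorphism of Proposition~\ref{prop:wisom},
\[
\Theta:\ \CS_w^{-1}\BC_q[G/N^-]\otimes_{\BF[\tilde\CS_w]}\CS_w^{-1}(\BC_q[N^+\backslash G]\dT_w^{-1})\ \xrightarrow{\ \sim\ }\ \CS_w^{-1}\BC_q[G],
\]
to the subalgebra $\CS_w^{-1}\BC_q[N_w^-\backslash G]$. The first step is the inclusion $\BC_q[N^+\backslash G]\dT_w^{-1}\subseteq\BC_q[N_w^-\backslash G]$. Recall $\BC_q[N^+\backslash G]\dT_w^{-1}=\{\varphi\in\BC_q[G]\mid\varphi u=\varepsilon(u)\varphi\ (u\in\dT_w(U^+))\}$, which is moreover stable under the right $U^0$-action because $(\eta\dT_w^{-1})u=(\eta\dT_w^{-1}(u))\dT_w^{-1}$ and $\dT_w^{-1}(U^0)=U^0$. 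By \eqref{eq:twist1}, for $\gamma\in Q^+\setminus\{0\}$ every $y\in U^-[\dT_w]\cap U^-_{-\gamma}$ equals $k_{-\gamma}\dT_w(x)$ with $x\in U^+_{-w^{-1}\gamma}$, so $\varepsilon(y)=\varepsilon(x)=0$ and $\varphi y=(\varphi k_{-\gamma})\dT_w(x)=\varepsilon(x)(\varphi k_{-\gamma})=0$ for $\varphi\in\BC_q[N^+\backslash G]\dT_w^{-1}$. Since $\BC_q[N_w^-\backslash G]$ is a subalgebra, $\sigma^w_\lambda$ lies both in $\BC_q[G/N^-]\cap\BC_q[N_w^-\backslash G]$ and in $\BC_q[N^+\backslash G]\dT_w^{-1}$, and the left Ore condition passes to these subalgebras (combine Proposition~\ref{prop:2gw3} with Lemma~\ref{lem:equiv}), the multiplication induces a well-defined linear map
\[
\mu:\ \CS_w^{-1}(\BC_q[G/N^-]\cap\BC_q[N_w^-\backslash G])\otimes_{\BF[\tilde\CS_w]}\CS_w^{-1}(\BC_q[N^+\backslash G]\dT_w^{-1})\longrightarrow\CS_w^{-1}\BC_q[N_w^-\backslash G],
\]
and by construction $\iota\circ\mu=\Theta\circ j$, where $j$ is the natural map into the domain of $\Theta$ and $\iota:\CS_w^{-1}\BC_q[N_w^-\backslash G]\hookrightarrow\CS_w^{-1}\BC_q[G]$ is the (injective, by Proposition~\ref{prop:2gw1}) inclusion.

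Injectivity of $\mu$ then reduces to injectivity of $j$. Via $F_w$ (Lemma~\ref{lem:f-w property}, Proposition~\ref{prop:localization1}) the factor $\CS_w^{-1}(\BC_q[N^+\backslash G]\dT_w^{-1})$ is identified with $\CS_1^{-1}\BC_q[N^+\backslash G]=\BC_q[H](U^-)^\bigstar$, which by Lemmas~\ref{lem:2b}--\ref{lem:2e} is free over $\BC_q[H]\cong\BF[\tilde\CS_w]$ on any $\BF$-basis of $(U^-)^\bigstar$; since Ore localization is exact, $j$ is injective, hence so is $\mu$.

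For surjectivity, take $\varphi\in\BC_q[N_w^-\backslash G]$; by Proposition~\ref{prop:wt} there is $\lambda\in P^-$ with $g:=\sigma^w_\lambda\varphi\in\BC_q[G/N^-]\cdot(\BC_q[N^+\backslash G]\dT_w^{-1})$, and $g\in\BC_q[N_w^-\backslash G]$. Working inside $\CS_w^{-1}\BC_q[G]$, write $\Theta^{-1}(g)=\sum_i a_i\otimes b_i$ with the $b_i\in\CS_w^{-1}(\BC_q[N^+\backslash G]\dT_w^{-1})$ chosen $U^0$-homogeneous and $\BF[\tilde\CS_w]$-linearly independent (possible by the freeness just used) and $a_i\in\CS_w^{-1}\BC_q[G/N^-]$. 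The right action of $\dT_w(U^{\geqq0})$, hence of $U^-[\dT_w]$, extends to $\CS_w^{-1}\BC_q[G]$ (the $\sigma^w_\lambda$ quasi-commuting with it up to nonzero scalars, cf.\ Lemma~\ref{lem:equiv0} and $\sigma^w_\lambda\in\BC_q[N^+\backslash G]\dT_w^{-1}$), and $\CS_w^{-1}\BC_q[N_w^-\backslash G]=\{z\mid z\cdot y=\varepsilon(y)z\ (y\in U^-[\dT_w])\}$. For $y=k_{-\gamma}\dT_w(x)\in U^-[\dT_w]\cap U^-_{-\gamma}$ with $\gamma\in Q^+\setminus\{0\}$ and $x\in U^+_{-w^{-1}\gamma}$ one has $g\cdot y=0$; using \eqref{eq:mult-right}, $\Delta(k_{-\gamma})=k_{-\gamma}\otimes k_{-\gamma}$, Lemma~\ref{lem:delta-U}, the $U^0$-homogeneity and $\dT_w(U^+)$-invariance of the $b_i$, and the identity
\[
(\id\otimes\rho)\bigl(\Delta(\dT_w(x))\bigr)=\dT_w(x)\otimes1
\]
(where $\rho:\dT_w(U^{\geqq0})\to U^0$ is the projection with kernel $U^0\dT_w(U^+_{>0})$), one computes $g\cdot y=\sum_i\bigl((a_ik_{-\gamma})\dT_w(x)\bigr)(b_ik_{-\gamma})$, with $(a_ik_{-\gamma})\dT_w(x)\in\CS_w^{-1}\BC_q[G/N^-]$ (a right $U$-submodule of $\CS_w^{-1}\BC_q[G]$, its defining condition involving only the left $U^-$-action, which commutes with the right one) and the $b_ik_{-\gamma}$ again $\BF[\tilde\CS_w]$-independent; injectivity of $\Theta$ forces $(a_ik_{-\gamma})\dT_w(x)=0$. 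Letting $\gamma,x$ vary, $a_i\cdot y=\varepsilon(y)a_i$ for all $y\in U^-[\dT_w]$, so $a_i\in\CS_w^{-1}\BC_q[G/N^-]\cap\CS_w^{-1}\BC_q[N_w^-\backslash G]=\CS_w^{-1}(\BC_q[G/N^-]\cap\BC_q[N_w^-\backslash G])$. Hence $g\in\Image(\mu)$, and $\varphi=(\sigma^w_\lambda)^{-1}g\in\Image(\mu)$ since $\Image(\mu)$ is stable under left multiplication by $\CS_w^{-1}(\BC_q[G/N^-]\cap\BC_q[N_w^-\backslash G])$.

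The identity $(\id\otimes\rho)\Delta(\dT_w(x))=\dT_w(x)\otimes1$ is the step I expect to require the most care; I would prove it via Corollary~\ref{cor:dT}. As in the proof of Lemma~\ref{lem:delta-U}, $(\dT_w^{-1}\otimes\dT_w^{-1})\Delta(\dT_w(x))=Z\,\Delta(x)\,Z^{-1}$ with $Z=\exp_{q_{i_1}}(X_1)\cdots\exp_{q_{i_m}}(X_m)$; the second tensor leg of every factor of $Z^{\pm1}$ lies in $U^+$, and that of $\Delta(x)-x\otimes1$ lies in $U^+_{>0}$, so $Z\,\Delta(x)\,Z^{-1}-x\otimes1$ has second leg in $U^+_{>0}$, and applying $\dT_w\otimes\dT_w$ followed by $\id\otimes\rho$ (which annihilates $\dT_w(U^+_{>0})$) leaves only $\dT_w(x)\otimes1$. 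Everything else — the Ore transfers, the compatibility of the localizations with intersections, and the persistence of the various invariance conditions under multiplication by $\CS_w$ — is routine bookkeeping.
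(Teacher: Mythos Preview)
Your proof is correct and follows essentially the same strategy as the paper's: both establish $\BC_q[N^+\backslash G]\dT_w^{-1}\subset\BC_q[N_w^-\backslash G]$, use freeness of $\CS_w^{-1}(\BC_q[N^+\backslash G]\dT_w^{-1})$ over $\BF[\tilde\CS_w]$ (via $F_w$ and the $w=1$ case), and then invoke Lemma~\ref{lem:delta-U} to show that in a decomposition $f=\sum\varphi_j\psi_j$ with $f$ being $U^-[\dT_w]$-invariant the left factors $\varphi_j$ are forced to be invariant as well. The one technical wrinkle is your appeal to \eqref{eq:mult-right} for elements $a_i,b_i$ lying in the localization rather than in $\BC_q[G]$; the paper sidesteps this by first choosing $\lambda\in P^-$ so that every $\sigma^w_\lambda\varphi_j$ lands in $\BC_q[G/N^-]\subset\BC_q[G]$ and only then applying \eqref{eq:mult-right}, which is the cleanest way to close your argument too. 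Your identity $(\id\otimes\rho)\Delta(\dT_w(x))=\dT_w(x)\otimes1$ and its proof via Corollary~\ref{cor:dT} are correct and amount to exactly the refinement of Lemma~\ref{lem:delta-U} that the paper uses implicitly; indeed you are slightly more careful than the paper in tracking the residual $k_{-\gamma}$ on the right factor.
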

\begin{proof}
We see easily that
$\BC_q[N^+\backslash G]\dT_w^{-1}\subset\BC_q[N^-_w\backslash G]$.
Let us show that 
$\CS_w^{-1}(\BC_q[N^+\backslash G]\dT_w^{-1})$ is a free left $\BF[\tilde{\CS}_w]$-module.
In the case $w=1$ this is a consequence of Proposition \ref{prop:localization1}.
For general $w$ this follows from the case $w=1$ and Lemma \ref{lem:f-w property}.
Take a basis $\{\psi_j\}_{j\in J}$ of the left free $\BF[\tilde{\CS}_w]$-module 
$\CS_w^{-1}(\BC_q[N^+\backslash G]\dT_w^{-1})$.
We may assume that $\psi_j\in\BC_q[N^+\backslash G]\dT_w^{-1}$.

Let
$f\in\CS_w^{-1}\BC_q[G]$.
By Proposition \ref{prop:wisom}
we can uniquely write 
\[
f=\sum_{j\in J_0}\varphi_j\psi_j\qquad(\varphi_j\in\CS_w^{-1}\BC_q[G/N^-]),
\]
where $J_0$ is a finite subset of $J$.
Then we need to show
\[
f\in
\CS_w^{-1}
\BC_q[N_w^-\backslash G]
\;\Longleftrightarrow\;
\varphi_j\in
\CS_w^{-1}
(\BC_q[G/N^-]\cap\BC_q[N_w^-\backslash G])
\quad(\forall j\in J_0).
\]

Assume that 
$\varphi_j\in
\CS_w^{-1}
(\BC_q[G/N^-]\cap\BC_q[N_w^-\backslash G])
$ for any $j\in J_0$.
We can take $\lambda\in P^-$ such that 
$\sigma^w_\lambda\varphi_j\in\BC_q[G/N^-]\cap\BC_q[N_w^-\backslash G]$ for any $j\in J_0$.
Then  from
\[
\sigma^w_\lambda f
=
\sum_{j\in J_0}(\sigma^w_\lambda\varphi_j)\psi_j
\in\BC_q[N_w^-\backslash G]
\]
we obtain $f\in
\CS_w^{-1}
\BC_q[N_w^-\backslash G]$.

Assume that $f\in
\CS_w^{-1}
\BC_q[N_w^-\backslash G]$.
Taking 
$\lambda\in P^-$
such that
$\sigma^w_\lambda\varphi_j\in\BC_q[G/N^-]$ for any $j\in J_0$, we have
\[
\sigma^w_\lambda f
=
\sum_{j\in J_0}(\sigma^w_\lambda\varphi_j)\psi_j
\qquad
(\sigma_\lambda^w\varphi_j\in\BC_q[G/N^-]).
\]
By 
$f\in
\CS_w^{-1}
\BC_q[N_w^-\backslash G]$
we may assume that 
$\sigma^w_\lambda f\in
\BC_q[N_w^-\backslash G]$.
Then by Lemma \ref{lem:delta-U}
we have
\[
\varepsilon(y)\sigma^w_\lambda f
=
(\sigma^w_\lambda f)y
=
\sum_{j\in J_0}((\sigma^w_\lambda\varphi_j)y)\psi_j
\qquad(y\in U^-[\dT_w]).
\]
By $(\sigma^w_\lambda\varphi_j)y\in\BC_q[G/N^-]$ we have
$(\sigma^w_\lambda\varphi_j)y=\varepsilon(y)(\sigma^w_\lambda\varphi_j)$ for any $j\in J_0$, and hence $\sigma^w_\lambda\varphi_j\in
\BC_q[G/N^-]\cap\BC_q[N_w^-\backslash G]$.
It follows that 
$\varphi_j\in
\CS_w^{-1}
(\BC_q[G/N^-]\cap\BC_q[N_w^-\backslash G])$ for any 
$j\in J_0$.
\end{proof} 

\subsection{}
By Proposition \ref{prop:localization1} we have
\[
\CS_1^{-1}\BC_q[G/N^-]\cong(U^+)^\bigstar\otimes\BC_q[H].
\]
Hence the linear isomorphism
$F_w:\CS_1^{-1}\BC_q[G/N^-]\to\CS_w^{-1}\BC_q[G/N^-]$
induces an isomorphism
\begin{equation}
\label{eq:FS}
\CS_w^{-1}\BC_q[G/N^-]\cong F_w((U^+)^\bigstar)\otimes_\BF\BF[\tilde{\CS}_w]
\qquad
(f\sigma_\lambda^w\leftrightarrow f\otimes \sigma_\lambda^w)
\end{equation}
of vector spaces.

In this subsection we are going to show the following.

\begin{proposition}
\label{prop:XX1}
We have
\begin{align*}
&\CS_w^{-1}(\BC_q[G/N^-]\cap\BC_q[N_w^-\backslash G])
\\
=&
\left\{F_w((U^+)^\bigstar)\cap\CS_w^{-1}\BC_q[N_w^-\backslash G]\right\}
\otimes_\BF
\BF[\tilde{\CS}_w].
\end{align*}
\end{proposition}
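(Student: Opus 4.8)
The plan is to prove the two inclusions separately, using the tensor decomposition \eqref{eq:FS} together with Lemma \ref{lem:equiv} and Proposition \ref{prop:tensor}. First I would record the easy half. Any element of $\CS_w^{-1}(\BC_q[G/N^-]\cap\BC_q[N_w^-\backslash G])$ can be written as $(\sigma^w_\mu)^{-1}g$ with $g\in\BC_q[G/N^-]\cap\BC_q[N_w^-\backslash G]$. Under the isomorphism \eqref{eq:FS} we write $g=F_w(\psi_0)\sigma^w_\nu$ for a unique $\psi_0\in(U^+)^\bigstar$ and $\nu\in P^+$ (absorbing the $\BF[\tilde\CS_w]$ part). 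By Lemma \ref{lem:equiv}, $g\in\BC_q[N_w^-\backslash G]$ is equivalent to $g(\sigma^w_\nu)^{-1}=F_w(\psi_0)\in\CS_w^{-1}\BC_q[N_w^-\backslash G]$; hence $F_w(\psi_0)$ lies in $F_w((U^+)^\bigstar)\cap\CS_w^{-1}\BC_q[N_w^-\backslash G]$, and $(\sigma^w_\mu)^{-1}g=(\sigma^w_\mu)^{-1}F_w(\psi_0)\sigma^w_\nu$ visibly sits in the right-hand side. Here I use that $\BF[\tilde\CS_w]$ central-izes nothing, but the relevant commutation of $\sigma^w_\nu$ past $F_w(\psi_0)$ modulo $\BF^\times$ is exactly Lemma \ref{lem:equiv0}/Lemma \ref{lem:2e}, so the expression can be put into the stated normal form.

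For the reverse inclusion, I would take $F_w(\psi_0)\otimes\sigma^w_\lambda$ with $F_w(\psi_0)\in F_w((U^+)^\bigstar)\cap\CS_w^{-1}\BC_q[N_w^-\backslash G]$ and $\lambda\in P$. Since $F_w(\psi_0)\in\CS_w^{-1}\BC_q[N_w^-\backslash G]$ we may, by definition of the localization, choose $\mu\in P^-$ with $\sigma^w_\mu F_w(\psi_0)\in\BC_q[N_w^-\backslash G]$, and after enlarging $\mu$ we may simultaneously arrange $\sigma^w_\mu F_w(\psi_0)\in\BC_q[G/N^-]$ using \eqref{eq:FS} (elements of $F_w((U^+)^\bigstar)$ become elements of $\BC_q[G/N^-]$ after multiplying by a sufficiently divisible $\sigma^w_\mu$). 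Then $\sigma^w_\mu F_w(\psi_0)\in\BC_q[G/N^-]\cap\BC_q[N_w^-\backslash G]$, so $F_w(\psi_0)\in\CS_w^{-1}(\BC_q[G/N^-]\cap\BC_q[N_w^-\backslash G])$, and multiplying by $\sigma^w_\lambda\in\tilde\CS_w$ keeps us in this localized algebra. This shows the right-hand side is contained in the left-hand side.

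The main technical point — and the step I expect to cause the most trouble — is the claim, used in both directions, that multiplication by $\sigma^w_\lambda$ respects the direct-sum decomposition over $\BF[\tilde\CS_w]$ compatibly with the condition of lying in $\BC_q[N_w^-\backslash G]$; that is, that the two filtration-type conditions ``lies in $\BC_q[G/N^-]$'' and ``lies in $\BC_q[N_w^-\backslash G]$'' can be achieved by the \emph{same} $\sigma^w_\mu$, and that the $\BF[\tilde\CS_w]$-component is unaffected. For the first, one uses that $\BC_q[G/N^-]\cap\BC_q[N_w^-\backslash G]$ is a subalgebra containing all $\sigma^w_\nu$ ($\nu\in P^+$) and Corollary \ref{cor:x}/Lemma \ref{lem:2f} to clear denominators; for the second, Lemma \ref{lem:equiv} is exactly the statement that multiplication by $\sigma^w_\lambda$ preserves membership in $\BC_q[N_w^-\backslash G]$ in both directions, so no information is lost. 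Once these bookkeeping facts are in place the proof is a direct comparison of normal forms under \eqref{eq:FS}; I would present it as the two inclusions above, invoking Lemma \ref{lem:equiv} at the crucial point in each.
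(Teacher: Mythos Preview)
Your argument for the inclusion $\supseteq$ (your ``reverse inclusion'') is essentially correct and matches the paper's proof of that direction.

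The gap is in your ``easy half,'' the inclusion $\subseteq$, which is in fact the harder direction. You write that under \eqref{eq:FS} an element $g\in\BC_q[G/N^-]\cap\BC_q[N_w^-\backslash G]$ can be expressed as a \emph{single} product $g=F_w(\psi_0)\sigma^w_\nu$ for unique $\psi_0$ and $\nu$. This is false in general: the isomorphism \eqref{eq:FS} is a tensor decomposition over $\BF$, so an arbitrary $g$ decomposes as a \emph{sum} $g=\sum_\lambda F_w(\varphi_\lambda)\sigma^w_\lambda$ with several nonzero terms. The content of Proposition~\ref{prop:XX1} is precisely that the invariance condition $g\in\BC_q[N_w^-\backslash G]$ forces each component $F_w(\varphi_\lambda)$ separately to lie in $\CS_w^{-1}\BC_q[N_w^-\backslash G]$. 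Your argument, which works trivially for a single term, assumes this decoupling rather than proving it.

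The paper establishes the decoupling by an explicit computation (Lemmas~\ref{lem:Fw2} and~\ref{lem:Fy}). After replacing $f$ by $\sigma^w_\mu f\in\BC_q[G/N^-]\cap\BC_q[N_w^-\backslash G]$ and rewriting $\sigma^w_\mu F_w(\varphi_\lambda)=F_w(\varphi'_\lambda)\sigma^w_\mu$, one applies $y\in U^-[\dT_w]\cap U^-_{-\gamma}$ with $\gamma\ne0$ on the right. Lemma~\ref{lem:equiv0} and Lemma~\ref{lem:Fy} give
\[
0=(\sigma^w_\mu f)y=\sum_{\lambda}q^{-(w\lambda,\gamma)}F_w(\varphi''_\lambda)\sigma^w_{\lambda+\mu},
\]
and since the $\sigma^w_{\lambda+\mu}$ are linearly independent over $F_w((U^+)^\bigstar)$ by \eqref{eq:FS}, each $F_w(\varphi''_\lambda)=0$. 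Unwinding, this says $(\sigma^w_\mu F_w(\varphi_\lambda))y=0$ for every such $y$, hence each $F_w(\varphi_\lambda)\in\CS_w^{-1}\BC_q[N_w^-\backslash G]$. This separation of terms via the $\lambda$-dependent scalar and the explicit formula of Lemma~\ref{lem:Fy} is the missing ingredient in your proposal; Lemma~\ref{lem:equiv} alone cannot do it, because it does not tell you how the $U^-[\dT_w]$-action mixes (or fails to mix) the $F_w((U^+)^\bigstar)$ and $\BF[\tilde\CS_w]$ factors.
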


Let $\varphi\in(U^+)^\bigstar$.
Then for any sufficiently small $\lambda\in P^-$ there a unique
$v^*\in V^*(\lambda)$ such that
\[
\langle v^*,xv_\lambda\rangle=
\langle\varphi,x\rangle\qquad(x\in U^+)
\]
by Proposition \ref{prop:UV}.
We denote this $v^*$ by $v^*(\varphi,\lambda)$.

\begin{lemma}
\label{lem:Fw}
Let $\varphi\in(U^+)^\bigstar$.
Then  for sufficiently small $\lambda\in P^-$ we have
\[
F_w(\varphi)=\Phi_{v^*(\varphi,\lambda)\dT_w^{-1}\otimes v_\lambda}(\sigma^w_\lambda)^{-1}.
\]
\end{lemma}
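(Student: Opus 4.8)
The statement to establish is Lemma \ref{lem:Fw}: for $\varphi\in(U^+)^\bigstar$ and sufficiently small $\lambda\in P^-$, we have $F_w(\varphi)=\Phi_{v^*(\varphi,\lambda)\dT_w^{-1}\otimes v_\lambda}(\sigma^w_\lambda)^{-1}$. The natural strategy is to go back to the defining formula of $F_w$ in Proposition \ref{prop:def-fw}, namely $F_w(f(\sigma^1_\lambda)^{-1})=(f\dT_w^{-1})(\sigma^w_\lambda)^{-1}$, and to produce an explicit element $f\in\BC_q[G]$ with $f(\sigma^1_\lambda)^{-1}=\varphi$ as elements of $\CS_1^{-1}\BC_q[G]$; then apply $\cdot\,\dT_w^{-1}$ to $f$ and identify the result with $\Phi_{v^*(\varphi,\lambda)\dT_w^{-1}\otimes v_\lambda}$.

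\textbf{Step 1: identify the lift $f$.} First I would note that since $\varphi\in(U^+)^\bigstar$, we may assume $\varphi\in(U^+_\gamma)^*$ for some $\gamma\in Q^+$, and by Corollary \ref{cor:x} (or Proposition \ref{prop:localization1}(ii)) there is a sufficiently small $\lambda\in P^-$ with $\chi_\lambda\varphi=\sigma^1_\lambda\varphi\in\BC_q[G]$. The key point is that this element equals $\Phi_{v^*(\varphi,\lambda)\otimes v_\lambda}$: indeed by the definition of $v^*(\varphi,\lambda)$ via Proposition \ref{prop:UV} and the identification of $(U^+)^*$, $(U^0)^*$, $(U^-)^*$ as subspaces of $U^*$ (the computation is parallel to the one in the proof of Lemma \ref{lem:2f}), one checks $\langle\Phi_{v^*(\varphi,\lambda)\otimes v_\lambda},xty\rangle=\varepsilon(x)^{-1}\!$—no; more carefully, $\langle\Phi_{v^*(\varphi,\lambda)\otimes v_\lambda},xty\rangle=\langle v^*(\varphi,\lambda),xtyv_\lambda\rangle=\varepsilon(y)\langle\chi_\lambda,t\rangle\langle\varphi,x\rangle=\langle\sigma^1_\lambda\varphi,xty\rangle$, using $yv_\lambda=\varepsilon(y)v_\lambda$ and $tv_\lambda=\chi_\lambda(t)v_\lambda$. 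Hence $f:=\Phi_{v^*(\varphi,\lambda)\otimes v_\lambda}=\sigma^1_\lambda\varphi$ satisfies $f(\sigma^1_\lambda)^{-1}=\varphi$ in $\CS_1^{-1}\BC_q[G]$, using that $\sigma^1_\lambda=\chi_\lambda$ commutes with $\varphi$ up to the scalar $q^{(\lambda,\gamma)}$ (Lemma \ref{lem:2e}) and is central enough for the Ore fraction to make sense; in fact $f(\sigma^1_\lambda)^{-1}=\varphi$ exactly because $\chi_\lambda$ is invertible in $U^*$ with inverse $\chi_{-\lambda}$.

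\textbf{Step 2: apply $\dT_w^{-1}$.} By Proposition \ref{prop:def-fw}, $F_w(\varphi)=F_w(f(\sigma^1_\lambda)^{-1})=(f\dT_w^{-1})(\sigma^w_\lambda)^{-1}$. So it remains to show $f\dT_w^{-1}=\Phi_{v^*(\varphi,\lambda)\dT_w^{-1}\otimes v_\lambda}$. This is immediate from the definition of the right action of $\dT_w$ on matrix coefficients: for $f=\Phi_{v^*\otimes v}$ one has, by the very definition of the right $\dT_w$-action on $\BC_q[G]$ (dual to the left $\dT_w$-action on $U$-modules) together with $\langle\Phi_{v^*\otimes v}\dT_w^{-1},u\rangle=\langle\Phi_{v^*\otimes v},\dT_w^{-1}(u)\rangle=\langle v^*,\dT_w^{-1}(u)v\rangle=\langle v^*\dT_w^{-1},uv\rangle$—wait, the right $\dT_w$-action is defined via $\langle v^*\dT_w^{-1},u\rangle$-type pairings on $V^*$, so $\langle\Phi_{v^*\otimes v}\dT_w^{-1},u\rangle=\langle v^*\dT_w^{-1},uv\rangle$ directly, giving $f\dT_w^{-1}=\Phi_{v^*\dT_w^{-1}\otimes v}$ with $v^*=v^*(\varphi,\lambda)$, $v=v_\lambda$. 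Combining, $F_w(\varphi)=\Phi_{v^*(\varphi,\lambda)\dT_w^{-1}\otimes v_\lambda}(\sigma^w_\lambda)^{-1}$, as claimed.

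\textbf{Main obstacle.} The routine computations are the $U^*$-algebra manipulations in Step 1 (verifying $\Phi_{v^*(\varphi,\lambda)\otimes v_\lambda}=\sigma^1_\lambda\varphi$ and that this represents $\varphi$ in the localization), which follow the template already set in Lemmas \ref{lem:2b}–\ref{lem:2f}. The one point requiring genuine care is \emph{uniformity in $\lambda$}: we must check that ``sufficiently small $\lambda$'' can be chosen simultaneously so that $v^*(\varphi,\lambda)$ is well-defined (Proposition \ref{prop:UV}(ii)), so that $\sigma^1_\lambda\varphi\in\BC_q[G]$ (Lemma \ref{lem:2f}), and so that $\sigma^w_\lambda$ is invertible in $\CS_w^{-1}\BC_q[G]$; but each of these is an ``eventually'' condition in the sense of the Remark after Proposition \ref{prop:UV}, so their common refinement is again of that form, and there is no real difficulty. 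I expect the entire argument to be short.
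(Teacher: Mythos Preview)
Your approach is essentially identical to the paper's: compute $\Phi_{v^*(\varphi,\lambda)\otimes v_\lambda}$ on $U^+U^0U^-$, identify it as $\varphi$ times a power of $\chi_\lambda$ inside $U^*$, and then apply the defining formula for $F_w$ together with $\Phi_{v^*\otimes v}\dT_w^{-1}=\Phi_{v^*\dT_w^{-1}\otimes v}$. One small slip to fix: your computation
\[
\langle\Phi_{v^*(\varphi,\lambda)\otimes v_\lambda},xty\rangle=\langle\varphi,x\rangle\langle\chi_\lambda,t\rangle\varepsilon(y)
\]
identifies $\Phi_{v^*(\varphi,\lambda)\otimes v_\lambda}$ with $\varphi\chi_\lambda$ (this is exactly Lemma~\ref{lem:2b} with $\psi=\varphi$, $\chi=\chi_\lambda$, and the $(U^-)^*$-factor equal to $\varepsilon$), \emph{not} with $\sigma^1_\lambda\varphi=\chi_\lambda\varphi$; by Lemma~\ref{lem:2e} these differ by the scalar $q^{(\lambda,\gamma)}$ for $\varphi\in(U^+_\gamma)^*$, so with your choice of $f$ you would get $f(\sigma^1_\lambda)^{-1}=q^{(\lambda,\gamma)}\varphi$ rather than $\varphi$. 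Taking $f=\varphi\chi_\lambda$ instead gives $f(\sigma^1_\lambda)^{-1}=\varphi$ on the nose, and the rest of your argument (Step~2 and the uniformity remark) goes through unchanged and matches the paper.
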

\begin{proof}
For 
$x\in U^+, t\in U^0, y\in U^-$ we have
\begin{align*}
\langle
\Phi_{v^*(\varphi,\lambda)\otimes v_\lambda},
xty\rangle
=&
\langle
v^*(\varphi,\lambda), xtyv_\lambda
\rangle
=
\langle
v^*(\varphi,\lambda), xv_\lambda
\rangle
\chi_\lambda(t)\varepsilon(y)
\\
=&\langle\varphi,x\rangle\chi_\lambda(t)\varepsilon(y).
\end{align*}
Hence we obtain
$\Phi_{v^*(\varphi,\lambda)\otimes v_\lambda}=\varphi\chi_\lambda$, 
or equivalently, 
$\varphi=\Phi_{v^*(\varphi,\lambda)\otimes v_\lambda}(\chi_\lambda)^{-1}$.
It follows that
\[
F_w(\varphi)=
\{\Phi_{v^*(\varphi,\lambda)\otimes v_\lambda}\dT_w^{-1}\}(\sigma^w_\lambda)^{-1}
=
\Phi_{v^*(\varphi,\lambda)\dT_w^{-1}\otimes v_\lambda}(\sigma^w_\lambda)^{-1}.
\]
\end{proof}

\begin{corollary}
\label{cor:Fw}
$F_w((U^+)^\bigstar)=
\bigcup_{\lambda\in P^-}
\{
\Phi_{v^*\otimes v_\lambda}(\sigma^w_\lambda)^{-1}
\mid
v^*\in V^*(\lambda)\}.
$
\end{corollary}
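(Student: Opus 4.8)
The plan is to deduce the statement from the computation in the proof of Lemma~\ref{lem:Fw}, used in both directions. The observation I would isolate is that for \emph{every} $\lambda\in P^-$ and \emph{every} $u^*\in V^*(\lambda)$ the functional $\psi_{u^*,\lambda}\colon U^+\to\BF$, $x\mapsto\langle u^*, xv_\lambda\rangle$, lies in $(U^+)^\bigstar$: it factors through the surjection $U^+\twoheadrightarrow V(\lambda)=U^+v_\lambda$ onto the finite-dimensional module $V(\lambda)$, hence vanishes on $U^+_\gamma$ for all but finitely many $\gamma\in Q^+$. With this, the same computation as in the proof of Lemma~\ref{lem:Fw} gives $\Phi_{u^*\otimes v_\lambda}=\psi_{u^*,\lambda}\,\sigma^1_\lambda$ in $\CS_1^{-1}\BC_q[G]$, and --- this is the point --- no smallness hypothesis on $\lambda$ is needed here.

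For the inclusion ``$\subseteq$'', take $\varphi\in(U^+)^\bigstar$ and $\lambda\in P^-$ small enough that Lemma~\ref{lem:Fw} applies; then $F_w(\varphi)=\Phi_{v^*(\varphi,\lambda)\dT_w^{-1}\otimes v_\lambda}(\sigma^w_\lambda)^{-1}$, and since $\dT_w^{-1}$ stabilizes $V^*(\lambda)$ the vector $v^*(\varphi,\lambda)\dT_w^{-1}$ lies in $V^*(\lambda)$, so $F_w(\varphi)$ is of the asserted form. For ``$\supseteq$'', fix $\lambda\in P^-$ and $v^*\in V^*(\lambda)$, and set $u^*=v^*\dT_w\in V^*(\lambda)$. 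By the observation above, $\psi:=\Phi_{u^*\otimes v_\lambda}(\sigma^1_\lambda)^{-1}=\psi_{u^*,\lambda}$ lies in $(U^+)^\bigstar$. Applying $F_w$ as defined in Proposition~\ref{prop:def-fw}, together with the identity $\Phi_{m^*\otimes m}\dT_w^{-1}=\Phi_{m^*\dT_w^{-1}\otimes m}$ (which follows exactly as in the proof of Lemma~\ref{lem:Tlr1}), one gets $F_w(\psi)=(\Phi_{u^*\otimes v_\lambda}\dT_w^{-1})(\sigma^w_\lambda)^{-1}=\Phi_{u^*\dT_w^{-1}\otimes v_\lambda}(\sigma^w_\lambda)^{-1}=\Phi_{v^*\otimes v_\lambda}(\sigma^w_\lambda)^{-1}$, so the right-hand side is contained in $F_w((U^+)^\bigstar)$.

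I expect the only real obstacle to be the inclusion ``$\supseteq$'' for $\lambda$ that are not small: one cannot simply invoke Lemma~\ref{lem:Fw}, which is stated only ``for sufficiently small $\lambda\in P^-$'', so one has to extract from its proof the part that survives without that restriction, namely the identity $\Phi_{u^*\otimes v_\lambda}=\psi_{u^*,\lambda}\,\sigma^1_\lambda$. The smallness enters Lemma~\ref{lem:Fw} only through the \emph{reverse} passage $\varphi\mapsto v^*(\varphi,\lambda)$, which uses the bijectivity of $U^+_\gamma\to V(\lambda)_{\lambda+\gamma}$ from Proposition~\ref{prop:UV}; the passage $u^*\mapsto\psi_{u^*,\lambda}$ needed here goes the other way and is unconditional. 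Everything else is direct substitution using the properties of $F_w$ and of the right $\dT_w$-action on matrix coefficients.
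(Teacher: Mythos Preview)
Your proof is correct and is exactly the argument the paper intends: the corollary is stated without proof because both inclusions follow from the computation in the proof of Lemma~\ref{lem:Fw}, with the key observation---which you isolate correctly---that the identity $\Phi_{u^*\otimes v_\lambda}=\psi_{u^*,\lambda}\,\sigma^1_\lambda$ holds for \emph{every} $\lambda\in P^-$ and $u^*\in V^*(\lambda)$, the smallness hypothesis being needed only for the inverse passage $\varphi\mapsto v^*(\varphi,\lambda)$.
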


\begin{lemma}
\label{lem:Fw2}
For 
$\mu\in P$ we have
$\sigma_\mu^wF_w((U^+)^\bigstar)
=
F_w((U^+)^\bigstar)\sigma_\mu^w$.
\end{lemma}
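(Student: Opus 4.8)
The plan is to reduce everything, via Corollary \ref{cor:Fw}, to the problem of moving $\sigma^w_\mu$ past an element of the form $\Phi_{v^*\otimes v_\lambda}(\sigma^w_\lambda)^{-1}$ with $\lambda\in P^-$, $v^*\in V^*(\lambda)$, and then to a count of dimensions of weight spaces. Since the $\sigma^w_\nu$ $(\nu\in P)$ commute and are units in $\CS_w^{-1}\BC_q[G]$, and any $\mu\in P$ can be written $\mu=\mu_1-\mu_2$ with $\mu_1,\mu_2\in P^-$, it suffices to prove $\sigma^w_\mu F_w((U^+)^\bigstar)(\sigma^w_\mu)^{-1}=F_w((U^+)^\bigstar)$ for $\mu\in P^-$.

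First I would establish the inclusion ``$\subseteq$''. Write $\sigma^w_\mu=\Phi_{v^*_{w\mu}\otimes v_\mu}$ with $v^*_{w\mu}=v^*_\mu\dT_w^{-1}$. Then $\sigma^w_\mu\,\Phi_{v^*\otimes v_\lambda}=\Phi_{(v^*_{w\mu}\otimes v^*)\otimes(v_\mu\otimes v_\lambda)}$ is a matrix coefficient of $V(\mu)\otimes V(\lambda)$, where $v^*_{w\mu}\otimes v^*\in V^*(\mu)\otimes V^*(\lambda)=(V(\mu)\otimes V(\lambda))^*$. Since $v_\mu\otimes v_\lambda$ is annihilated by every $f_i$, it is a lowest weight vector generating a submodule isomorphic to $V(\mu+\lambda)$ via $v_{\mu+\lambda}\mapsto v_\mu\otimes v_\lambda$; restricting $v^*_{w\mu}\otimes v^*$ to that submodule yields some $\eta\in V^*(\mu+\lambda)$ with $\sigma^w_\mu\,\Phi_{v^*\otimes v_\lambda}=\Phi_{\eta\otimes v_{\mu+\lambda}}$. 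Using $(\sigma^w_\lambda)^{-1}=(\sigma^w_{\mu+\lambda})^{-1}\sigma^w_\mu$ we then get $\sigma^w_\mu\bigl(\Phi_{v^*\otimes v_\lambda}(\sigma^w_\lambda)^{-1}\bigr)=\Phi_{\eta\otimes v_{\mu+\lambda}}(\sigma^w_{\mu+\lambda})^{-1}\sigma^w_\mu$, and $\Phi_{\eta\otimes v_{\mu+\lambda}}(\sigma^w_{\mu+\lambda})^{-1}\in F_w((U^+)^\bigstar)$ by Corollary \ref{cor:Fw} since $\mu+\lambda\in P^-$. This gives $\sigma^w_\mu F_w((U^+)^\bigstar)(\sigma^w_\mu)^{-1}\subseteq F_w((U^+)^\bigstar)$.

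To promote this to an equality I would use the grading of $\BC_q[G]$ by left and right $U^0$-weights, which extends to $\CS_w^{-1}\BC_q[G]$ because every $\sigma^w_\lambda$ is bihomogeneous. A matrix coefficient $\Phi_{v^*\otimes v}$ has left weight $\wt(v)$ and right weight $\wt(v^*)$; combining this with Lemma \ref{lem:Fw} and the fact that $v^*(\varphi,\lambda)$ has weight $\lambda+\gamma$ when $\varphi\in(U^+_\gamma)^*$, one reads off that $F_w((U^+_\gamma)^*)$ has left weight $0$ and right weight $w\gamma$. Hence $F_w((U^+)^\bigstar)=\bigoplus_{\gamma\in Q^+}F_w((U^+_\gamma)^*)$ is precisely the decomposition into right-weight spaces (the weights $w\gamma$ being distinct since $w$ is bijective on $P$), and $\dim F_w((U^+_\gamma)^*)=\dim U^+_\gamma<\infty$. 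Conjugation by $\sigma^w_\mu$ is injective and preserves both $U^0$-weights, so it maps $F_w((U^+_\gamma)^*)$ into the right-weight-$w\gamma$ part of $F_w((U^+)^\bigstar)$, i.e.\ into $F_w((U^+_\gamma)^*)$ itself; equality of finite dimensions forces $\sigma^w_\mu F_w((U^+_\gamma)^*)(\sigma^w_\mu)^{-1}=F_w((U^+_\gamma)^*)$, and summing over $\gamma$ gives the claim for $\mu\in P^-$. Finally, for general $\mu\in P$ write $\mu=\mu_1-\mu_2$ with $\mu_i\in P^-$: conjugation by $\sigma^w_{\mu_2}$ fixes $F_w((U^+)^\bigstar)$, hence so does conjugation by $(\sigma^w_{\mu_2})^{-1}$, and therefore so does conjugation by $\sigma^w_\mu=(\sigma^w_{\mu_2})^{-1}\sigma^w_{\mu_1}$, which is the assertion.

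The step I expect to be most delicate is the first one: identifying the submodule of $V(\mu)\otimes V(\lambda)$ generated by $v_\mu\otimes v_\lambda$ with $V(\mu+\lambda)$, checking that restricting $v^*_{w\mu}\otimes v^*$ to it produces a genuine element of $V^*(\mu+\lambda)$, and keeping the conventions for the right action of $\dT_w$ on weight spaces straight so that the right weight of $F_w((U^+_\gamma)^*)$ indeed comes out to be $w\gamma$ (and the $w\gamma$ for distinct $\gamma$ are genuinely distinct). The remaining manipulations are formal consequences of Corollary \ref{cor:Fw} and the bigrading.
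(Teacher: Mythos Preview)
Your argument is correct. The first half, establishing the inclusion $\sigma^w_\mu F_w((U^+)^\bigstar)(\sigma^w_\mu)^{-1}\subset F_w((U^+)^\bigstar)$ by restricting $v^*_{w\mu}\otimes v^*$ to the lowest-weight submodule $V(\mu+\lambda)\hookrightarrow V(\mu)\otimes V(\lambda)$, is essentially what the paper does as well. The difference lies in how equality is obtained. The paper treats both sides symmetrically: it also writes $\Phi_{v^*\otimes v_\lambda}\sigma^w_\mu$ as a matrix coefficient of $V(\lambda+\mu)$ via the companion projection $p':V^*(\lambda)\otimes V^*(\mu)\to V^*(\lambda+\mu)$, and then invokes \cite[Lemma~3.5]{T} to see that for $\lambda$ sufficiently small both $p(v^*_{w\mu}\otimes V^*(\lambda)_{\lambda+\gamma})$ and $p'(V^*(\lambda)_{\lambda+\gamma}\otimes v^*_{w\mu})$ are \emph{surjective} onto $V^*(\lambda+\mu)_{w\mu+\lambda+\gamma}$, which gives both inclusions at once.

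Your dimension-count bypasses this external lemma: you only need the (trivial) existence of $\eta$ by restriction, and then the observation that $F_w((U^+_\gamma)^*)$ is exactly the right-$U^0$-weight-$w\gamma$ piece of $F_w((U^+)^\bigstar)$, that conjugation by the bihomogeneous unit $\sigma^w_\mu$ preserves this grading, and that $\dim F_w((U^+_\gamma)^*)=\dim U^+_\gamma<\infty$. This is more self-contained, at the cost of having to check carefully that the $P\times P$-grading extends to $\CS_w^{-1}\BC_q[G]$ and that the right weight of $F_w(\varphi)$ for $\varphi\in(U^+_\gamma)^*$ is indeed $w\gamma$ (which it is, via Lemma~\ref{lem:Fw} and $v^*(\varphi,\lambda)\dT_w^{-1}\in V^*(\lambda)_{w(\lambda+\gamma)}$). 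Both approaches are valid; yours trades an external surjectivity statement for a routine weight bookkeeping argument.
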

\begin{proof}
We may assume that $\mu\in P^-$.
For 
$\lambda\in P^-$, $v^*\in V^*(\lambda)$ we have
\[
\sigma_\mu^w\Phi_{v^*\otimes v_\lambda}
=
\Phi_{v_{w\mu}^*\otimes v_\mu}\Phi_{v^*\otimes v_\lambda}
=
\Phi_{(v_{w\mu}^*\otimes v^*)\otimes(v_\mu\otimes v_\lambda)},
\]
\[
\Phi_{v^*\otimes v_\lambda}\sigma_\mu^w
=
\Phi_{v^*\otimes v_\lambda}\Phi_{v_{w\mu}^*\otimes v_\mu}
=
\Phi_{(v^*\otimes v_{w\mu}^*)\otimes(v_\lambda\otimes v_\mu)}.
\]
Let 
\[
p:V^*(\mu)\otimes V^*(\lambda)\to V^*(\lambda+\mu),
\quad
p':V^*(\lambda)\otimes V^*(\mu)\to V^*(\lambda+\mu)
\]
be the homomorphisms of right $U$-modules
such that $p(v^*_\lambda\otimes v^*_\mu)=v^*_{\lambda+\mu}$, 
$p'(v^*_\mu\otimes v^*_\lambda)=v^*_{\lambda+\mu}$
Then by \cite[Lemma 3.5]{T} we have
\[
p(v^*_{w\mu}\otimes V^*(\lambda)_{\lambda+\gamma})=
V^*(\lambda+\mu)_{w\mu+\lambda+\gamma}=
p'(V^*(\lambda)_{\lambda+\gamma}\otimes v^*_{w\mu})
\]
for $\gamma\in Q^+$ if $\lambda\in P^-$ is sufficiently small.
Hence the assertion follows from
Corollary \ref{cor:Fw}.
\end{proof}

\begin{lemma}
\label{lem:Fy}
Let $\gamma, \delta\in Q^+$, and let
$\varphi\in(U^+_\delta)^*$, $y\in U^-[\dT_w]\cap U^-_{-\gamma}$.
Take $z\in U^+_{-w^{-1}\gamma}$ such that 
$
y=\dT_w(k_{-w^{-1}\gamma}z)
$
$($see \eqref{eq:twist1}$)$, and 
define $\varphi^z\in(U^+)^\bigstar$ by
\[
\langle \varphi^z,x\rangle
=
\langle \varphi,zx\rangle\qquad(x\in U^+).
\]
If $\lambda\in P^-$ is sufficiently small, then we have
$F_w(\varphi)\sigma^w_\lambda\in\BC_q[G/N^-]$, and 
\[
(F_w(\varphi)\sigma^w_\lambda)y
=q^{-(w^{-1}\gamma,\lambda+\delta)}
F_w(\varphi^z)\sigma^w_\lambda.
\]
\end{lemma}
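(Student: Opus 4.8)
The statement of Lemma \ref{lem:Fy} has two parts: first that $F_w(\varphi)\sigma^w_\lambda \in \BC_q[G/N^-]$ for sufficiently small $\lambda \in P^-$, and second the commutation formula for the right action of $y \in U^-[\dT_w] \cap U^-_{-\gamma}$ on it. The first part is essentially already in hand: by Lemma \ref{lem:Fw} we have $F_w(\varphi) = \Phi_{v^*(\varphi,\lambda)\dT_w^{-1} \otimes v_\lambda}(\sigma^w_\lambda)^{-1}$ for sufficiently small $\lambda$, so $F_w(\varphi)\sigma^w_\lambda = \Phi_{v^*(\varphi,\lambda)\dT_w^{-1} \otimes v_\lambda}$, which lies in $\BC_q[G]$; and since $v_\lambda$ is a lowest weight vector, $f_i v_\lambda = 0$ for all $i$, so this matrix coefficient is killed on the right by $U^-$ acting on the second tensor slot, i.e. it lies in $\BC_q[G/N^-]$. (Here one uses $y\,\Phi_{v^*\otimes v} = \Phi_{v^* \otimes yv}$ for the right $U$-action, which is immediate from the definitions in the excerpt.)

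For the commutation formula, the plan is to compute $(F_w(\varphi)\sigma^w_\lambda)y = \Phi_{v^*(\varphi,\lambda)\dT_w^{-1} \otimes v_\lambda}\, y = \Phi_{v^*(\varphi,\lambda)\dT_w^{-1} \otimes (y v_\lambda)}$. Since $y = \dT_w(k_{-w^{-1}\gamma} z)$ with $z \in U^+_{-w^{-1}\gamma}$, we have $y v_\lambda = \dT_w(k_{-w^{-1}\gamma} z) v_\lambda = \dT_w\big(k_{-w^{-1}\gamma} z\, \dT_w^{-1}(v_\lambda)\big)$. The point is that $\dT_w^{-1}(v_\lambda) \in V(\lambda)_{w^{-1}\lambda}$ is, up to a nonzero scalar, the extremal weight vector; applying $z \in U^+_{-w^{-1}\gamma}$ and then $k_{-w^{-1}\gamma}$ produces a scalar $q^{-(w^{-1}\gamma, \text{something})}$ times a vector, and then applying $\dT_w$ brings us back. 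I would track the weight carefully: $z\,\dT_w^{-1}(v_\lambda)$ has weight $w^{-1}\lambda - w^{-1}\gamma = w^{-1}(\lambda - \gamma)$, so $k_{-w^{-1}\gamma}$ contributes the eigenvalue $q^{-(w^{-1}\gamma,\, w^{-1}(\lambda-\gamma))} = q^{-(\gamma,\lambda-\gamma)}$ (using $W$-invariance of the form). Meanwhile the definition of $\varphi^z$ via $\langle \varphi^z, x\rangle = \langle \varphi, zx\rangle$ is precisely designed so that, after unwinding $v^*(\varphi,\lambda)$ and $v^*(\varphi^z,\lambda)$ through Proposition \ref{prop:UV} and the pairing $\langle v^*(\varphi,\lambda)\dT_w^{-1}, \dT_w(\cdots)v_\lambda\rangle$, the matrix coefficient $\Phi_{v^*(\varphi,\lambda)\dT_w^{-1}\otimes (yv_\lambda)}$ becomes a scalar multiple of $\Phi_{v^*(\varphi^z,\lambda)\dT_w^{-1}\otimes v_\lambda} = F_w(\varphi^z)\sigma^w_\lambda$. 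Reconciling the accumulated scalar with the claimed $q^{-(w^{-1}\gamma,\lambda+\delta)}$ — noting that $\varphi \in (U^+_\delta)^*$ contributes the $\delta$ — is the bookkeeping core of the proof.

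The main obstacle I expect is the careful identification of $v^*(\varphi^z,\lambda)$ in terms of $v^*(\varphi,\lambda)$ and $z$, and pinning down the exact power of $q$. Concretely: for $x \in U^+$, $\langle v^*(\varphi^z,\lambda), x v_\lambda\rangle = \langle \varphi^z, x\rangle = \langle \varphi, zx\rangle = \langle v^*(\varphi,\lambda), zx\, v_\lambda\rangle$, so $v^*(\varphi^z,\lambda) = v^*(\varphi,\lambda)\cdot z$ as elements of the right module $V^*(\lambda)$ (all valid once $\lambda$ is small enough that the relevant maps in Proposition \ref{prop:UV} are bijective, and here one must choose $\lambda$ small enough to simultaneously accommodate $\varphi$, $\varphi^z$, and Lemma \ref{lem:Fw}). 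Then one needs $\langle v^*(\varphi,\lambda)\dT_w^{-1}, \dT_w(k_{-w^{-1}\gamma} z)\, v_\lambda\rangle$ to equal $\langle v^*(\varphi,\lambda), k_{-w^{-1}\gamma} z\, \dT_w^{-1}(v_\lambda)\rangle$, and since $\dT_w^{-1}(v_\lambda) \in \BF^\times v_{w^{-1}\lambda}$ and one is free to normalize, comparing this against $\langle v^*(\varphi^z,\lambda)\dT_w^{-1}, v_\lambda\rangle = \langle v^*(\varphi,\lambda) z, \dT_w^{-1}(v_\lambda)\rangle$ shows the two differ exactly by the $k_{-w^{-1}\gamma}$-eigenvalue on the weight-$w^{-1}(\lambda-\gamma)$ vector, i.e. by $q^{-(w^{-1}\gamma,\lambda)}$ after rewriting; the remaining $q^{-(w^{-1}\gamma,\delta)} = q^{(\gamma,w^{-1}\gamma)\cdots}$ factor emerges from how $z \in U^+_{-w^{-1}\gamma}$ shifts weights relative to the grading $\delta$ of $\varphi$. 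I would organize the computation so that each $q$-power is attributed to a single identified source (the action of $k_{-w^{-1}\gamma}$, and the weight shift by $z$), then combine and check it matches $q^{-(w^{-1}\gamma,\lambda+\delta)}$.
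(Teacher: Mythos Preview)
There is a genuine error: you have confused the left and right actions of $U$ on $\BC_q[G]$. The identity $y\,\Phi_{v^*\otimes v} = \Phi_{v^*\otimes yv}$ that you cite is correct, but it describes the \emph{left} action (the $y$ is written on the left of $\Phi$). The quantity $(F_w(\varphi)\sigma^w_\lambda)y$ in the lemma is the \emph{right} action, for which one has instead $\Phi_{v^*\otimes v}\,y = \Phi_{v^*y\otimes v}$. Your formula $\Phi_{v^*(\varphi,\lambda)\dT_w^{-1}\otimes(yv_\lambda)}$ is therefore wrong, and in fact would vanish whenever $\gamma\ne0$, since $v_\lambda$ is a lowest weight vector and $y\in U^-_{-\gamma}$ kills it. All the subsequent bookkeeping involving $\dT_w^{-1}(v_\lambda)$ is an attempt to extract a nonzero answer from an expression that is zero.

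Once this is fixed, the computation becomes much shorter and is exactly the paper's argument. One has
\[
(F_w(\varphi)\sigma^w_\lambda)y
=\Phi_{(v^*(\varphi,\lambda)\dT_w^{-1})y\,\otimes\, v_\lambda}
=\Phi_{v^*(\varphi,\lambda)\,(\dT_w^{-1}y)\,\dT_w^{-1}\,\otimes\, v_\lambda}
=\Phi_{v^*(\varphi,\lambda)\,k_{-w^{-1}\gamma}z\,\dT_w^{-1}\,\otimes\, v_\lambda},
\]
using $\dT_w^{-1}(y)=k_{-w^{-1}\gamma}z$. Since $\varphi\in(U^+_\delta)^*$ forces $v^*(\varphi,\lambda)\in V^*(\lambda)_{\lambda+\delta}$, the right action of $k_{-w^{-1}\gamma}$ multiplies by $q^{-(w^{-1}\gamma,\lambda+\delta)}$; and your own (correct) observation $v^*(\varphi,\lambda)z=v^*(\varphi^z,\lambda)$ then gives the result immediately. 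No passage through $\dT_w^{-1}(v_\lambda)$ or extremal weight vectors is needed, and the scalar falls out in one step rather than having to be assembled from two separate sources.
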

\begin{proof}
If $\lambda\in P^-$ is sufficiently small, then  we have $F_w(\varphi)\sigma^w_\lambda=
\Phi_{v^*(\varphi,\lambda)\dT_w^{-1}\otimes v_\lambda}\in\BC_q[G/N^-]$.
For 
$x\in U^+$ we have
\[
\langle
v^*(\varphi,\lambda)z,xv_\lambda\rangle
=
\langle v^*(\varphi,\lambda),zxv_\lambda\rangle
=
\langle \varphi,zx\rangle
=
\langle \varphi^z,x\rangle
\]
and hence 
$
v^*(\varphi,\lambda)z=v^*(\varphi^z,\lambda)$.
It follows that
\begin{align*}
&(F_w(\varphi)\sigma^w_\lambda)y
=
\Phi_{v^*(\varphi,\lambda)\dT_w^{-1}\otimes v_\lambda}y=
\Phi_{v^*(\varphi,\lambda)k_{-w^{-1}\gamma}z\dT_w^{-1}\otimes v_\lambda}
\\
=&
q^{-(w^{-1}\gamma,\lambda+\delta)}
\Phi_{v^*(\varphi^z,\lambda)\dT_w^{-1}\otimes v_\lambda}
=
q^{-(w^{-1}\gamma,\lambda+\delta)}
F_w(\varphi^z)\sigma^w_\lambda.
\end{align*}
\end{proof}
Let us give a proof of  Proposition \ref{prop:XX1}.
By \eqref{eq:FS} any $f\in \CS_w^{-1}\BC_q[G/N^-]$ is uniquely written as
\[
f=\sum_{\lambda\in P}F_w(\varphi_\lambda)\sigma^w_\lambda
\in
\CS_w^{-1}\BC_q[G/N^-]
\qquad(\varphi_\lambda\in(U^+)^\bigstar).
\]
We need to show that 
$f\in\CS_w^{-1}(\BC_q[G/N^-]\cap\BC_q[N_w^-\backslash G])$
if and only if 
$F_w(\varphi_\lambda)\in\CS_w^{-1}\BC_q[N_w^-\backslash G]$
for any $\lambda\in P$.
By Lemma \ref{lem:equiv} we have
\begin{align*}
&f\in
\CS_w^{-1}(\BC_q[G/N^-]\cap\BC_q[N_w^-\backslash G])
\\
\Longleftrightarrow\;&
\exists\nu\in P^-\;\text{s.t.}\;
\sigma_\nu^wf\in\BC_q[G/N^-]\cap\BC_q[N_w^-\backslash G]
\\
\Longleftrightarrow\;&
\exists\nu\in P^-\;\text{s.t.}\;
\sigma_\nu^wf\in\BC_q[G/N^-],\;
\sigma_\nu^wf\sigma_\mu^w\in\BC_q[G/N^-]\cap\BC_q[N_w^-\backslash G]
\\
\Longleftrightarrow\;&
f\sigma_\mu^w\in
\CS_w^{-1}(\BC_q[G/N^-]\cap\BC_q[N_w^-\backslash G])
\end{align*}
for any $\mu\in P^-$.
Hence we may assume from the beginning that $f$ is written as
\[
f=\sum_{\lambda\in P^-}F_w(\varphi_\lambda)\sigma^w_\lambda
\qquad(\varphi_\lambda\in(U^+)^\bigstar).
\]

If $F_w(\varphi_\lambda)\in\CS_w^{-1}\BC_q[N_w^-\backslash G]$
for any $\lambda\in P^-$, there exists some $\mu\in P^-$ such that 
$\sigma_\mu^wF_w(\varphi_\lambda)\in\BC_q[G/N^-]\cap\BC_q[N^-_w\backslash G]$ for any $\lambda\in P^-$.
It follows that 
\[
\sigma_\mu^wf
=
\sum_{\lambda\in P^-}
(\sigma_\mu^wF_w(\varphi_\lambda))\sigma^w_\lambda
\in\BC_q[G/N^-]\cap\BC_q[N^-_w\backslash G]
\]
by Lemma \ref{lem:equiv}, and hence $f\in\CS_w^{-1}(\BC_q[G/N^-]\cap\BC_q[N_w^-\backslash G])$.

It remains to show that if 
$f\in\CS_w^{-1}(\BC_q[G/N^-]\cap\BC_q[N_w^-\backslash G])$, then 
$F_w(\varphi_\lambda)\in\CS_w^{-1}\BC_q[N_w^-\backslash G]$
for any $\lambda\in P^-$.
So assume that $f\in\CS_w^{-1}(\BC_q[G/N^-]\cap\BC_q[N_w^-\backslash G])$.
Take $\mu\in P^-$ which is sufficiently small.
Then 
we have
$
\sigma_\mu^wf\in
\BC_q[G/N^-]\cap\BC_q[N_w^-\backslash G]
$.
By
Lemma \ref{lem:Fw2} we can write
\[
\sigma_\mu^wF_w(\varphi_\lambda)=
F_w(\varphi'_\lambda)\sigma_\mu^w
\in\BC_q[G/N^-]
\qquad(\lambda\in P^-, \varphi'_\lambda\in(U^+)^\bigstar),
\]
and hence 
\[
\sigma_\mu^wf
=
\sum_{\lambda\in P^-}
F_w(\varphi'_\lambda)\sigma^w_{\mu+\lambda},\qquad
F_w(\varphi'_\lambda)\sigma^w_{\mu+\lambda}\in\BC_q[G/N^-]\quad(\lambda\in P^-).
\]
Let $\gamma\in Q^+\setminus\{0\}$ and
$y\in U^-[\dT_w]\cap U^-_{-\gamma}$.
By $\sigma_\mu^wf\in\BC_q[N_w^-\backslash G]$ we have
$(\sigma_\mu^wf)y=0$.
On the other hand we have 
\[
(\sigma_\mu^wf)y=
\sum_{\lambda\in P^-}(F_w(\varphi'_\lambda)\sigma^w_{\lambda+\mu})y
=
\sum_{\lambda\in P^-}q^{-(w\lambda,\gamma)}
((F_w(\varphi'_\lambda)\sigma^w_{\mu})y)
\sigma^w_{\lambda}.
\]
By Lemma \ref{lem:Fy} we have
\[
(F_w(\varphi'_\lambda)\sigma^w_{\mu})y
=F_w(\varphi''_\lambda)\sigma^w_{\mu}
\]
for some $\varphi''_\lambda\in (U^+)^\bigstar$, and hence
\[
\sum_{\lambda\in P^-}q^{-(w\lambda,\gamma)}
F_w(\varphi''_\lambda)\sigma^w_{\lambda+\mu}
=0
\]
By \eqref{eq:FS} we obtain 
$F_w(\varphi''_\lambda)=0$
for any $\lambda \in P^-$.
It follows that
\[
(\sigma^w_\mu F_w(\varphi_\lambda))y=
(F_w(\varphi'_\lambda)\sigma_\mu^w)y
=
F_w(\varphi''_\lambda)\sigma^w_{\mu}
=0.
\]
We obtain $F_w(\varphi_\lambda)\in\CS_w^{-1}\BC_q[N_w^-\backslash G]$
for any $\lambda\in P^-$.
The proof of  Proposition \ref{prop:XX1} is complete.
\subsection{}
Set
\begin{align*}
\CJ_w=&\{\psi\in(U^+)^\bigstar\mid
\psi^z=\varepsilon(z)\psi\quad
(z\in U^+[\dT_w^{-1}])\}
\\
=&
\{
\psi\in(U^+)^\bigstar\mid
\psi|_{\Ker(\varepsilon: U^+[\dT_w^{-1}]\to\BF)U^+}=0\}
\end{align*}
In this subsection we are going to show the following.
\begin{proposition}
\label{prop:intersec}
$F_w((U^+)^\bigstar)\cap\CS_w^{-1}\BC_q[N_w^-\backslash G]
=
F_w(\CJ_w)$.
\end{proposition}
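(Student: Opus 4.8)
The plan is to prove, for every $\psi\in(U^+)^\bigstar$, the equivalence $F_w(\psi)\in\CS_w^{-1}\BC_q[N_w^-\backslash G]\Leftrightarrow\psi\in\CJ_w$; since $F_w$ is bijective (Proposition \ref{prop:def-fw}), this is precisely the assertion. By the weight grading of $(U^+)^\bigstar$ it suffices to treat $\psi\in(U^+_\delta)^*$ for a fixed $\delta\in Q^+$. For $\lambda\in P^-$ sufficiently small, Lemma \ref{lem:Fw} gives $F_w(\psi)\sigma_\lambda^w=\Phi_{v^*(\psi,\lambda)\dT_w^{-1}\otimes v_\lambda}\in\BC_q[G/N^-]\subset\BC_q[G]$, so this element lies in $\BC_q[N_w^-\backslash G]$ exactly when $(F_w(\psi)\sigma_\lambda^w)y=0$ for all homogeneous $y\in U^-[\dT_w]\cap U^-_{-\gamma}$, $\gamma\in Q^+\setminus\{0\}$. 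By \eqref{eq:twist1} such $y$ are exactly $\dT_w(k_{-w^{-1}\gamma}z)$ with $z\in Z_\gamma:=\{z\in U^+_{-w^{-1}\gamma}\mid\dT_w(k_{-w^{-1}\gamma}z)\in U^-\}$, and Lemma \ref{lem:Fy} yields $(F_w(\psi)\sigma_\lambda^w)y=q^{-(w^{-1}\gamma,\lambda+\delta)}\,F_w(\psi^z)\sigma_\lambda^w$. As $\sigma_\lambda^w$ is invertible in $\CS_w^{-1}\BC_q[G]$ and $F_w$ is injective, this vanishes iff $\psi^z=0$; writing $Z=\bigoplus_{\gamma\ne0}Z_\gamma$ we obtain
\[
F_w(\psi)\sigma_\lambda^w\in\BC_q[N_w^-\backslash G]\iff\psi^z=0\ \ (z\in Z)\iff\psi|_{ZU^+}=0.
\]

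The key step is then the identification $Z=\Ker(\varepsilon\colon U^+[\dT_w^{-1}]\to\BF)$, i.e.\ that $Z$ is exactly the non-scalar part of $U^+[\dT_w^{-1}]$: granting it, the last condition reads $\psi|_{\Ker(\varepsilon|_{U^+[\dT_w^{-1}]})U^+}=0$, which is the definition of $\psi\in\CJ_w$. I would prove $Z=\Ker(\varepsilon|_{U^+[\dT_w^{-1}]})$ using the twist formula \eqref{eq:twist2}. If $z\in Z_\gamma$ with $\gamma\ne0$, then $\dT_w(z)=k_\gamma\,\dT_w(k_{-w^{-1}\gamma}z)\in k_\gamma U^-\subset U^{\leqq0}$, so $z\in U^+\cap\dT_w^{-1}(U^{\leqq0})=U^+[\dT_w^{-1}]$, and $z$ is non-scalar because $\gamma\ne0$. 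Conversely, if $0\ne z\in U^+[\dT_w^{-1}]\cap U^+_\delta$ with $\delta\ne0$, then \eqref{eq:twist2} forces $\dT_w(z)\in k_{-w\delta}U^-_{w\delta}$, hence $\dT_w(k_\delta z)=k_{w\delta}\dT_w(z)\in U^-_{w\delta}\subset U^-$, that is $z\in Z_{-w\delta}$. Thus $F_w(\psi)\sigma_\lambda^w\in\BC_q[N_w^-\backslash G]$ if and only if $\psi\in\CJ_w$.

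Finally one must pass from $F_w(\psi)\sigma_\lambda^w\in\BC_q[N_w^-\backslash G]$ to $F_w(\psi)\in\CS_w^{-1}\BC_q[N_w^-\backslash G]$. If $\psi\in\CJ_w$, then $F_w(\psi)\sigma_\lambda^w\in\BC_q[N_w^-\backslash G]$, and $F_w(\psi)=(F_w(\psi)\sigma_\lambda^w)(\sigma_\lambda^w)^{-1}$ lies in $\CS_w^{-1}\BC_q[N_w^-\backslash G]$ by the left Ore condition (Proposition \ref{prop:leftOre}); this gives $F_w(\CJ_w)\subset F_w((U^+)^\bigstar)\cap\CS_w^{-1}\BC_q[N_w^-\backslash G]$. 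Conversely, write $F_w(\psi)=(\sigma_\mu^w)^{-1}g$ with $g\in\BC_q[N_w^-\backslash G]$, $\mu\in P^-$, so that $\sigma_\mu^w\bigl(F_w(\psi)\sigma_\lambda^w\bigr)=g\sigma_\lambda^w\in\BC_q[N_w^-\backslash G]$ by Lemma \ref{lem:equiv}. Since $\sigma_\mu^w=\Phi_{v^*_{w\mu}\otimes v_\mu}$ is a matrix coefficient at the lowest weight vector $v_\mu$, it satisfies $\sigma_\mu^w y=\varepsilon(y)\sigma_\mu^w$ for all $y\in U^-$, and \eqref{eq:mult-right} then gives $(\sigma_\mu^w h)y=\sigma_\mu^w(hy)$ for every $h\in\BC_q[G]$ and $y\in U^-$. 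Applying this with $h=F_w(\psi)\sigma_\lambda^w$ and a non-scalar homogeneous $y\in U^-[\dT_w]$ yields $\sigma_\mu^w\bigl((F_w(\psi)\sigma_\lambda^w)y\bigr)=(g\sigma_\lambda^w)y=0$, whence $(F_w(\psi)\sigma_\lambda^w)y=0$ by Proposition \ref{prop:2gw1}; so $F_w(\psi)\sigma_\lambda^w\in\BC_q[N_w^-\backslash G]$, and the second paragraph gives $\psi\in\CJ_w$.

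The substance is therefore the translation---via Lemmas \ref{lem:Fw} and \ref{lem:Fy}---of the $U^-[\dT_w]$-invariance defining $\BC_q[N_w^-\backslash G]$ into the relations $\psi^z=\varepsilon(z)\psi$ defining $\CJ_w$, together with the observation drawn from \eqref{eq:twist2} that the $z$'s which occur are exactly those of $\Ker(\varepsilon|_{U^+[\dT_w^{-1}]})$; this identification is the one genuinely new point, while the localization bookkeeping in the last paragraph is routine and parallels the proof of Proposition \ref{prop:XX1}.
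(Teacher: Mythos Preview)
Your treatment of the inclusion $F_w(\CJ_w)\subset\CS_w^{-1}\BC_q[N_w^-\backslash G]$ and the identification $Z=\Ker(\varepsilon|_{U^+[\dT_w^{-1}]})$ is correct and matches the paper's use of Lemma~\ref{lem:Fy} together with \eqref{eq:twist1}--\eqref{eq:twist2}. The problem is in the converse direction.

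The claim ``$\sigma_\mu^w y=\varepsilon(y)\sigma_\mu^w$ for all $y\in U^-$'' is false. By the bimodule convention $\langle\varphi u_2,u\rangle=\langle\varphi,u_2u\rangle$ one has $\sigma_\mu^w y=\Phi_{v^*_{w\mu}y\otimes v_\mu}$, so the right action of $U^-$ involves $v^*_{w\mu}$, not the lowest weight vector $v_\mu$; for generic $j$ with $f_j\notin U^-[\dT_w]$ one has $v^*_{w\mu}f_j\neq 0$. What \emph{is} true is the weaker statement $\sigma_\mu^w y=\varepsilon(y)\sigma_\mu^w$ for $y\in U^-[\dT_w]$ (this is just $\sigma_\mu^w\in\BC_q[N_w^-\backslash G]$). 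But even with that correction, the step ``$(\sigma_\mu^w h)y=\sigma_\mu^w(hy)$'' does not follow from \eqref{eq:mult-right}: writing $(\sigma_\mu^w h)y=\sum_{(y)}(\sigma_\mu^w y_{(0)})(hy_{(1)})$, you would need the \emph{left} tensorands $y_{(0)}$ to lie in $U^-[\dT_w]U^0$. Lemma~\ref{lem:delta-U} (applied to $\dT_w(U^+)$, since $U^-[\dT_w]\subset\dT_w(U^{\geqq0})$) only constrains the \emph{right} tensorands, giving $y_{(1)}\in U^-[\dT_w]$; it says nothing about $y_{(0)}$. So the implication ``$\sigma_\mu^w h\in\BC_q[N_w^-\backslash G]\Rightarrow h\in\BC_q[N_w^-\backslash G]$'' is not established, and your cancellation via Proposition~\ref{prop:2gw1} is not justified.

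The paper closes exactly this gap by a commutation argument: from $\sigma_\mu^w F_w(\varphi)\in\BC_q[N_w^-\backslash G]$ it first uses Lemma~\ref{lem:Fw2} to write $\sigma_\mu^w F_w(\varphi)=F_w(\varphi')\sigma_\mu^w$ with $\sigma_\mu^w$ on the \emph{right}, so that Lemma~\ref{lem:Fy} applies directly and yields $\varphi'\in\CJ_w$; then Lemma~\ref{lem:commm} (proved just before the proposition) shows that for $\varphi'\in\CJ_w$ one has $\sigma_\mu^w F_w(\varphi')=q^{(\mu,\gamma)}F_w(\varphi')\sigma_\mu^w$, whence $F_w(\varphi)=q^{(\mu,\gamma)}F_w(\varphi')$ and $\varphi\in\CJ_w$. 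In short, you cannot bypass Lemmas~\ref{lem:Fw2} and~\ref{lem:commm} without supplying an independent left-coideal statement for $U^-[\dT_w]$, which the paper does not provide.
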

We first show the following result.
\begin{lemma}
\label{lem:commm}
Let $\gamma\in Q^+$, $\psi\in\CJ_w\cap (U^+_\gamma)^*$ and $\mu\in P$.
Then we have
$q^{(\mu,\gamma)}F_w(\psi)\sigma_\mu^w=\sigma_\mu^wF_w(\psi)$.
\end{lemma}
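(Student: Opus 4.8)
\emph{Plan.} The strategy is to multiply the asserted identity on the right by a large power $\sigma^w_\lambda$, to express both sides as matrix coefficients of a tensor product of two lowest-weight irreducibles, to push these through the unique lowest-weight quotient $V(\lambda+\mu)$, and thereby to reduce everything to the case $w=1$, which is Lemma~\ref{lem:2e}(i). First I would reduce to $\mu\in P^-$: writing a general $\mu\in P$ as $\mu_1-\mu_2$ with $\mu_1,\mu_2\in P^-$, the identity for $\mu$ follows from those for $\mu_1,\mu_2$ by conjugating with the invertible, mutually commuting elements $\sigma^w_{\mu_1},\sigma^w_{\mu_2}$ and using $(\mu,\gamma)=(\mu_1,\gamma)-(\mu_2,\gamma)$. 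So fix $\mu\in P^-$, take $\lambda\in P^-$ sufficiently small, and put $v^*=v^*(\psi,\lambda)\in V^*(\lambda)$. By Lemma~\ref{lem:Fw} we have $F_w(\psi)\sigma^w_\lambda=\Phi_{v^*\dT_w^{-1}\otimes v_\lambda}$, and by definition $\sigma^w_\mu=\Phi_{v^*_\mu\dT_w^{-1}\otimes v_\mu}$. Multiplying matrix coefficients and using that $v_\mu\otimes v_\lambda\in V(\mu)\otimes V(\lambda)$ and $v_\lambda\otimes v_\mu\in V(\lambda)\otimes V(\mu)$ each generate a copy of $V(\lambda+\mu)$ (they are lowest-weight vectors killed by all $f_i$), one obtains
\[
\sigma^w_\mu F_w(\psi)\sigma^w_\lambda=\Phi_{\xi_2\otimes v_{\lambda+\mu}},\qquad
F_w(\psi)\sigma^w_\mu\sigma^w_\lambda=\Phi_{\xi_1\otimes v_{\lambda+\mu}},
\]
where $\xi_1\in V^*(\lambda+\mu)$ is the image of $v^*\dT_w^{-1}\otimes v^*_\mu\dT_w^{-1}$ under the surjection $V^*(\lambda)\otimes V^*(\mu)\to V^*(\lambda+\mu)$ dual to $v_{\lambda+\mu}\mapsto v_\lambda\otimes v_\mu$, and $\xi_2$ is the image of $v^*_\mu\dT_w^{-1}\otimes v^*\dT_w^{-1}$ under the analogous surjection $V^*(\mu)\otimes V^*(\lambda)\to V^*(\lambda+\mu)$. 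Since $\eta\mapsto\Phi_{\eta\otimes v_{\lambda+\mu}}$ is injective and $\sigma^w_\lambda$ is invertible in $\CS_w^{-1}\BC_q[G]$, the lemma is equivalent to $q^{(\mu,\gamma)}\xi_1=\xi_2$.

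The heart of the argument is to strip the braid operators off the two tensor factors. As an operator on a tensor product of two modules in $\Mod_0(U)$, Corollary~\ref{cor:dT} gives $\dT_w^{-1}\otimes\dT_w^{-1}=\tilde Z\circ\Delta(\dT_w^{-1})$, where $\tilde Z=1+W$ and $W\in U^-\otimes\big(\Ker\varepsilon\cap U^+[\dT_w^{-1}]\big)$: indeed $\tilde Z$ is built from the $q$-exponentials of the elements $(q_{i_r}-q_{i_r}^{-1})\tf_{\Bi,r}\otimes\te_{\Bi,r}$, and the $\te_{\Bi,r}$ generate $U^+[\dT_w^{-1}]$ by Proposition~\ref{prop:base2}(ii) and have counit zero by \eqref{eq:epsT}. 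Now the hypothesis $\psi\in\CJ_w$ means precisely that $v^*$ annihilates $\big(\Ker\varepsilon\cap U^+[\dT_w^{-1}]\big)V(\lambda)$ — using $U^+v_\lambda=V(\lambda)$ for $\lambda$ small (Proposition~\ref{prop:UV}) this is just $\psi|_{(\Ker\varepsilon\cap U^+[\dT_w^{-1}])U^+}=0$ — so pairing $W$ against $v^*$ in the $V(\lambda)$-slot kills it; hence $(v^*_\mu\otimes v^*)\tilde Z=v^*_\mu\otimes v^*$, and by functoriality of the braid operators (the embedding $V(\lambda+\mu)\hookrightarrow V(\mu)\otimes V(\lambda)$ is a $U$-module map) we get $\xi_2=\eta_2\,\dT_w^{-1}$, where $\eta_2\in V^*(\lambda+\mu)$ is the image of $v^*_\mu\otimes v^*$. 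Symmetrically, in $\xi_1$ the only slot that could cause trouble is the $V(\mu)$-slot paired against $v^*_\mu$, but $v^*_\mu$ is a lowest-weight vector and therefore kills $\big(\Ker\varepsilon\cap U^+\big)V(\mu)$; so $(v^*\otimes v^*_\mu)\tilde Z=v^*\otimes v^*_\mu$ and $\xi_1=\eta_1\,\dT_w^{-1}$ with $\eta_1\in V^*(\lambda+\mu)$ the image of $v^*\otimes v^*_\mu$.

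Since $\dT_w^{-1}$ acts the same way in both expressions, $q^{(\mu,\gamma)}\xi_1=\xi_2$ is equivalent to $q^{(\mu,\gamma)}\eta_1=\eta_2$. But running the previous paragraph in the trivial case $w=1$ (where $\dT_1=\id$, $\tilde Z=1$, $\CJ_1=(U^+)^\bigstar$) identifies $q^{(\mu,\gamma)}\eta_1=\eta_2$ with the relation $\chi_\mu\psi=q^{(\mu,\gamma)}\psi\chi_\mu$, which is Lemma~\ref{lem:2e}(i). Putting this together and cancelling $\sigma^w_\lambda$ finishes the proof. I expect the second paragraph to be the main obstacle: one must check carefully that the discrepancy between $\Delta(\dT_w)$ and $\dT_w\otimes\dT_w$ really lies in $U^-\otimes(\Ker\varepsilon\cap U^+[\dT_w^{-1}])$, and that it is annihilated on the $V(\lambda)$-side precisely by the hypothesis $\psi\in\CJ_w$ and on the $V(\mu)$-side by the lowest-weight property of $v^*_\mu$.
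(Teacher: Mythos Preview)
Your proof is correct and follows essentially the same route as the paper: reduce to $\mu\in P^-$; use Lemma~\ref{lem:Fw} to realize $F_w(\psi)\sigma^w_\lambda$ as a matrix coefficient; write both sides as matrix coefficients on a tensor product and pass to $V^*(\lambda+\mu)$; then use Corollary~\ref{cor:dT} (the correction term $\tilde Z-1$ lies in $U^-\otimes(\Ker\varepsilon\cap U^+[\dT_w^{-1}])$) together with the hypothesis $\psi\in\CJ_w$ for one ordering and the lowest-weight property of $v^*_\mu$ for the other, to strip off $\dT_w^{-1}$. The paper's proof is identical in structure; the only difference is in the final step: the paper verifies $q^{(\mu,\gamma)}\eta_1=\eta_2$ by the two-line direct computation $\langle(v^*\otimes v^*_\mu)x,v_\lambda\otimes v_\mu\rangle=\langle\psi,x\rangle$ and $\langle(v^*_\mu\otimes v^*)x,v_\mu\otimes v_\lambda\rangle=q^{(\mu,\gamma)}\langle\psi,x\rangle$ (so both images equal $v^*(\psi,\lambda+\mu)$ up to the scalar), whereas you recognize this as the $w=1$ instance of the lemma, i.e.\ Lemma~\ref{lem:2e}(i). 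Both are equally short.
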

\begin{proof}
We may assume $\mu\in P^-$.
When $\lambda\in P^-$ is sufficiently small, we have
$F_w(\psi)=\Phi_{v^*(\psi,\lambda)\dT_w^{-1}\otimes v_\lambda}(\sigma^w_\lambda)^{-1}$, and hence it is sufficient to show 
\[
q^{(\mu,\gamma)}\Phi_{v^*(\psi,\lambda)\dT_w^{-1}\otimes v_\lambda}\sigma_\mu^w
=
\sigma_\mu^w\Phi_{v^*(\psi,\lambda)\dT_w^{-1}\otimes v_\lambda}.
\]
We have
\begin{align*}
\Phi_{v^*(\psi,\lambda)\dT_w^{-1}\otimes v_\lambda}\sigma_\mu^w
=&
\Phi_{v^*(\psi,\lambda)\dT_w^{-1}\otimes v_\lambda}
\Phi_{v^*_\mu\dT_w^{-1}\otimes v_\mu}
=
\Phi_{
(v^*(\psi,\lambda)\dT_w^{-1}\otimes v^*_\mu\dT_w^{-1})
\otimes
(v_{\lambda}\otimes v_\mu)},
\\
\sigma_\mu^w\Phi_{v^*(\psi,\lambda)\dT_w^{-1}\otimes v_\lambda}
=&
\Phi_{v^*_\mu\dT_w^{-1}\otimes v_\mu}
\Phi_{v^*(\psi,\lambda)\dT_w^{-1}\otimes v_\lambda}
=
\Phi_{
(v^*_\mu\dT_w^{-1}\otimes v^*(\psi,\lambda)\dT_w^{-1})
\otimes
(v_{\mu}\otimes v_\lambda)}.
\end{align*}
Since $v^*_\mu$ is the lowest weight vector we have
\[
v^*(\psi,\lambda)\dT_w^{-1}\otimes v^*_\mu\dT_w^{-1}
=
(v^*(\psi,\lambda)\otimes v^*_\mu)\dT_w^{-1}.
\]
On the other hand by $\psi\in\CJ_w$ we have
\[
v^*(\psi,\lambda)z=\varepsilon(z)v^*(\psi,\lambda)
\qquad(z\in  U^+[\dT_w^{-1}]),
\]
and hence
\[
v^*_\mu\dT_w^{-1}
\otimes
v^*(\psi,\lambda)\dT_w^{-1}
=
(v^*_\mu\otimes v^*(\psi,\lambda))\dT_w^{-1}.
\]
Therefore, we have only to show
\[
q^{(\mu,\gamma)}\Phi_{
(v^*(\psi,\lambda)\otimes v^*_\mu)\dT_w^{-1}
\otimes
(v_{\lambda}\otimes v_\mu)}
=
\Phi_{
(v^*_\mu\otimes v^*(\psi,\lambda))\dT_w^{-1})
\otimes
(v_{\mu}\otimes v_\lambda)}.
\]
Let 
\[
p:V^*(\lambda)\otimes V^*(\mu)\to V^*(\lambda+\mu),\quad
p':V^*(\mu)\otimes V^*(\lambda)\to V^*(\lambda+\mu)
\]
be the homomorphisms of $U$-modules such that 
$p(v^*_\lambda\otimes v^*_\mu)=v^*_{\lambda+\mu}$ and 
$p'(v^*_\mu\otimes v^*_\lambda)=v^*_{\lambda+\mu}$.
The our assertion is equivalent to
\[
q^{(\mu,\gamma)}p(v^*(\psi,\lambda)\otimes v^*_\mu)
=q^{(\mu,\gamma)}v^*(\psi,\lambda+\mu)
=p'(v^*_\mu\otimes v^*(\psi,\lambda)).
\]
This follows from
\begin{align*}
\langle(v^*(\psi,\lambda)\otimes v^*_\mu)x,v_\lambda\otimes v_\mu\rangle
=&
\langle v^*(\psi,\lambda)x\otimes v^*_\mu,v_\lambda\otimes v_\mu\rangle
=\langle\psi,x\rangle,
\\
\langle(v^*_\mu\otimes v^*(\psi,\lambda))x,v_\mu\otimes v_\lambda\rangle
=&
\langle v^*_\mu k_\gamma\otimes v^*(\psi,\lambda)x,v_\mu\otimes v_\lambda\rangle
=q^{(\mu,\gamma)}\langle\psi,x\rangle
\end{align*}
for $x\in U^+$.
\end{proof}

Let us give a proof of Proposition \ref{prop:intersec}.
Assume that $\varphi\in(U^+)^\bigstar$ satisfies $F_w(\varphi)\in\CS_w^{-1}\BC_q[N_w^-\backslash G]$.
When $\mu\in P^-$ is sufficiently small, we have
$\sigma_\mu^wF_w(\varphi)\in\BC_q[N_w^-\backslash G]$.
By Lemma \ref{lem:Fw2} we have
\begin{equation}
\label{eq:commm}
\sigma_\mu^wF_w(\varphi)=F_w(\varphi')\sigma_\mu^w.
\end{equation}
By Lemma \ref{lem:Fy} we have
\[
(\varphi')^z=\varepsilon(z)\varphi'\qquad
(z\in  U^+[\dT_w^{-1}]),
\]
namely
$\varphi'\in\CJ_w$.
Hence \eqref{eq:commm} and Lemma \ref{lem:commm} implies $\varphi\in\CJ_w$.

Assume conversely that $\varphi\in\CJ_w$.
Then by Lemma \ref{lem:Fy} and Lemma \ref{lem:commm} we have
$F_w(\varphi)\in\CS_w^{-1}\BC_q[N_w^-\backslash G]$.

The proof of Proposition \ref{prop:intersec} is complete.

\subsection{}

By Proposition \ref{prop:tensor}, Corollary \ref{cor:Ore-w}, Proposition \ref{prop:XX1}, Proposition \ref{prop:intersec}, and Lemma \ref{lem:commm}
we obtain the following.
\begin{proposition}
\label{prop:rightOre}
The multiplicative set $\CS_w$ satisfies the right Ore condition in $\BC_q[N_w^-\backslash G]$.
\end{proposition}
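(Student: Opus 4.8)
The plan is to deduce the right Ore condition from the left one, which is already available by Proposition~\ref{prop:leftOre}, by exploiting the explicit description of the localization furnished by the preceding propositions. Write $R=\BC_q[N_w^-\backslash G]$. Combining Proposition~\ref{prop:tensor} with Proposition~\ref{prop:XX1} and Proposition~\ref{prop:intersec} one obtains, inside $\CS_w^{-1}\BC_q[G]$,
\[
\CS_w^{-1}R\;\cong\;F_w(\CJ_w)\otimes_\BF\BF[\tilde{\CS}_w]\otimes_\BF\bigl(\BC_q[N^+\backslash G]\dT_w^{-1}\bigr),
\]
the copy of $\BF[\tilde{\CS}_w]$ being fused into both of its neighbours; in particular every element of $\CS_w^{-1}R$ is a finite sum of products $F_w(\psi)\,\sigma^w_\nu\,\eta$ with $\psi\in\CJ_w\cap(U^+_\gamma)^*$ homogeneous, $\nu\in P$, and $\eta\in\BC_q[N^+\backslash G]\dT_w^{-1}\subseteq R$. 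I shall use freely that $\BC_q[N^+\backslash G]\dT_w^{-1}$ is a subalgebra of $R$ containing every $\sigma^w_\mu$ with $\mu\in P^-$, that $\CS_w$ satisfies the two-sided Ore condition there (Corollary~\ref{cor:Ore-w}), and that the elements of $\CS_w$ are regular in $R$ (Proposition~\ref{prop:2gw1}).

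Since $\CS_w$ is a left Ore set of regular elements in $R$, verifying the right Ore condition amounts to showing that every element of $\CS_w^{-1}R$ can be rewritten as a right fraction $r(\sigma^w_\lambda)^{-1}$ with $r\in R$, $\lambda\in P^-$; equivalently, given $x=\sum_j F_w(\psi_j)\,\sigma^w_{\nu_j}\,\eta_j$ as above, I must produce $\lambda\in P^-$ with $x\,\sigma^w_\lambda\in R$. I would proceed term by term. First choose $\nu'\in P^-$ deep enough that $\nu_j+\nu'\in P^-$ and, by Corollary~\ref{cor:Fw}, Lemma~\ref{lem:Fy} and Proposition~\ref{prop:intersec}, $F_w(\psi_j)\,\sigma^w_{\nu_j+\nu'}\in\BC_q[G/N^-]\cap R$ for all (finitely many) $j$; this presents $F_w(\psi_j)\,\sigma^w_{\nu_j}=\bigl(F_w(\psi_j)\,\sigma^w_{\nu_j+\nu'}\bigr)(\sigma^w_{\nu'})^{-1}$ as a right fraction with numerator in $\BC_q[G/N^-]\cap R$. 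Then, for a further $\lambda\in P^-$, $\eta_j\,\sigma^w_\lambda\in\BC_q[N^+\backslash G]\dT_w^{-1}$, and, using the two-sided Ore property of $\BC_q[N^+\backslash G]\dT_w^{-1}$, one rewrites $(\sigma^w_{\nu'})^{-1}\,\eta_j\,\sigma^w_\lambda$ as a right fraction $\zeta_j(\sigma^w_{\omega_j})^{-1}$ with $\zeta_j\in\BC_q[N^+\backslash G]\dT_w^{-1}$. Hence $F_w(\psi_j)\,\sigma^w_{\nu_j}\,\eta_j\,\sigma^w_\lambda=\bigl(F_w(\psi_j)\,\sigma^w_{\nu_j+\nu'}\bigr)\zeta_j\,(\sigma^w_{\omega_j})^{-1}$, whose numerator is a product of an element of $\BC_q[G/N^-]\cap R$ and an element of $\BC_q[N^+\backslash G]\dT_w^{-1}$, and hence lies in the subalgebra $R$. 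Lemma~\ref{lem:commm} is what allows the intermediate shuffles of powers of $\sigma^w$ past the factors $F_w(\psi_j)$ (they produce only the scalars $q^{(\,\cdot\,,\gamma)}$) without leaving the above tensor decomposition.

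Collecting terms gives $x\,\sigma^w_\lambda=\sum_j r_j(\sigma^w_{\omega_j})^{-1}$ with $r_j\in R$; choosing $\omega\in P^-$ with $\omega-\omega_j\in P^-$ for every $j$ and multiplying on the right by $\sigma^w_\omega$ turns this into $\sum_j r_j\,\sigma^w_{\omega-\omega_j}\in R$, so $x\,\sigma^w_{\lambda+\omega}\in R$; that is, $x$ is a right fraction with numerator in $R$, and by the reduction above this yields the right Ore condition. The step I expect to be the real obstacle is the denominator bookkeeping in the middle paragraph: the given exponents $\nu_j$ need not be deep enough for $F_w(\psi_j)\,\sigma^w_{\nu_j}$ itself to lie in $\BC_q[G/N^-]\cap R$, so one must manufacture, uniformly over the finitely many terms, a single right denominator $\sigma^w_{\lambda+\omega}$ that simultaneously clears the $F_w$-part and the $\eta$-part; making the two clearings compatible — passing $(\sigma^w_{\nu'})^{-1}$ to the right of $\eta_j$ through the two-sided Ore property of $\BC_q[N^+\backslash G]\dT_w^{-1}$ — is the crux, the rest being formal manipulation of fractions.
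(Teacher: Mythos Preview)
Your argument is correct and follows exactly the route the paper intends: the paper's ``proof'' is the single sentence preceding the statement, listing Proposition~\ref{prop:tensor}, Corollary~\ref{cor:Ore-w}, Proposition~\ref{prop:XX1}, Proposition~\ref{prop:intersec}, and Lemma~\ref{lem:commm} as the ingredients, and what you have written is precisely the unpacking of how those ingredients combine. Your reduction (right Ore $\Leftrightarrow$ every left fraction is a right fraction, given regularity of $\CS_w$) is the standard one, and the decomposition $\CS_w^{-1}R\cong F_w(\CJ_w)\otimes_\BF\CS_w^{-1}(\BC_q[N^+\backslash G]\dT_w^{-1})$ together with Lemma~\ref{lem:commm} (to pass $\sigma^w_\mu$ across $F_w(\psi_j)$ at the cost of a scalar) and the already-known right Ore property in $\BC_q[N^+\backslash G]\dT_w^{-1}$ (to clear the remaining denominators) is exactly what the paper has in mind. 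The ``real obstacle'' you flag in your last paragraph is not in fact an obstacle: the right Ore condition in $\BC_q[N^+\backslash G]\dT_w^{-1}$ from Corollary~\ref{cor:Ore-w} dispatches it cleanly, and the finiteness of the sum lets you choose a single common denominator.
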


Set
\begin{equation}
U^+[\dT_w^{-1}]^\bigstar
=\sum_{\gamma\in Q^+}
(U^+[\dT_w^{-1}]\cap U^+_\gamma)^*\subset (U^+[\dT_w^{-1}])^*.
\end{equation}
In view of \eqref{eq:Ubunkai-di} we can define
an injective linear map
\begin{equation}
i_w^+: U^+[\dT_w^{-1}]^\bigstar\to(U^+)^\bigstar
\end{equation}
by
\[
\langle i_w^+(\varphi),x_1x_2\rangle=
\langle \varphi, u_1\rangle\varepsilon(u_2)
\quad(x_1\in  U^+[\dT_w^{-1}],\;
x_2\in U^+\cap \dT_w^{-1}(U^+)).
\]
\begin{proposition}
\label{prop:Fw}
\begin{itemize}
\item[(i)]
The multiplication of 
$(U^+)^\bigstar$ $($as a subalgebra of $U^*$$)$ induces an isomorphism
\[
 i_w^+(U^+[\dT_w^{-1}]^\bigstar)\otimes\CJ_w\cong(U^+)^\bigstar
\]
of vector spaces.
\item[(ii)]
For $\varphi\in U^+[\dT_w^{-1}]^\bigstar$, $\psi\in\CJ_w$ we have
\[
F_w(i_w^+(\varphi)\psi)=F_w(i_w^+(\varphi))F_w(\psi)\qquad
(\varphi\in U^+[\dT_w^{-1}]^\bigstar,\; \psi\in\CJ_w).
\]
\end{itemize}
\end{proposition}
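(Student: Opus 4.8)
The plan is to keep throughout the description of the product of $(U^+)^\bigstar$ as a subalgebra of $U^*$ (Lemmas \ref{lem:2a} and \ref{lem:2d}): regarding $\xi\in(U^+)^\bigstar$ as the functional $xty\mapsto\langle\xi,x\rangle\varepsilon(t)\varepsilon(y)$ on $U$, one has $\langle\xi\eta,x\rangle=\sum_{(x)}\langle\xi,x_{(0)}\rangle\langle\eta,x_{(1)}\rangle$ for $x\in U^+$, the $U^0$-factors occurring in $\Delta(x)$ being immaterial. Set $A=U^+[\dT_w^{-1}]$ and $B=U^+\cap\dT_w^{-1}(U^+)$, so by Proposition \ref{prop:decomposition} multiplication gives $U^+\cong A\otimes B$; let $\pi_A:U^+\to A$ and $\pi_B:U^+\to B$ be the associated projections, so that $\langle i_w^+(\varphi),u\rangle=\langle\varphi,\pi_A(u)\rangle$ for $\varphi\in A^\bigstar$ and $\langle\psi,u\rangle=\langle\psi|_B,\pi_B(u)\rangle$ for $\psi\in\CJ_w$.

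For part (i) the crux is the identity
\[
\langle i_w^+(\varphi)\,\psi,\;ab\rangle=\langle\varphi,a\rangle\,\langle\psi,b\rangle\qquad(\varphi\in A^\bigstar,\ \psi\in\CJ_w,\ a\in A,\ b\in B).
\]
To prove it I would first record, as a consequence of Lemma \ref{lem:delta-U} (using $A\subset\dT_w^{-1}(U^{\leqq0})$ and $B\subset\dT_w^{-1}(U^+)$), that in $\Delta$ the second tensor factors coming from $a$ stay in $AU^0$ while the first tensor factors coming from $b$ stay in $BU^0$. Then, expanding $\Delta(ab)=\Delta(a)\Delta(b)$, and using that $\pi_A$ is left $A$-linear, $\pi_B$ is right $B$-linear, $\pi_A|_A=\id$, $\pi_B|_B=\id$, $\pi_A(b')=\varepsilon(b')1$ and $\pi_B(a')=\varepsilon(a')1$ for $a'\in A$, $b'\in B$, together with the counit axioms for the induced coproduct of $U^+$, every summand collapses except $\langle\varphi,a\rangle\langle\psi,b\rangle$ (the stray $q$-powers disappear since the surviving tensor factors are of weight $0$ in the relevant slot). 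Granted the identity, the multiplication map $i_w^+(A^\bigstar)\otimes\CJ_w\to(U^+)^\bigstar$ is injective — given $\sum_k i_w^+(\varphi_k)\psi_k=0$ with the $\psi_k$ linearly independent, pair with $ab$, vary $b$ to get $\sum_k\langle\psi_k,b\rangle\varphi_k=0$, and use that a $\psi\in\CJ_w$ is determined by $\psi|_B$ — and it is surjective by comparing, in each weight $\gamma$, $\dim U^+_\gamma=\sum_{\alpha+\beta=\gamma}\dim(A\cap U^+_\alpha)\cdot\dim(B\cap U^+_\beta)$ with $\dim(\CJ_w\cap(U^+_\beta)^*)=\dim(B\cap U^+_\beta)$, $i_w^+$ being injective and weight-preserving.

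For part (ii), fix sufficiently small $\lambda,\mu\in P^-$, put $\nu=\lambda+\mu$, and take homogeneous $\varphi\in A^\bigstar\cap(U^+_\delta)^*$, $\psi\in\CJ_w\cap(U^+_\gamma)^*$; by part (i), $i_w^+(\varphi)\psi\in(U^+)^\bigstar$. By Lemma \ref{lem:Fw} each of $F_w(i_w^+(\varphi))$, $F_w(\psi)$, $F_w(i_w^+(\varphi)\psi)$ is, up to a power of $\sigma^w$, a single matrix coefficient $\Phi_{v^*\dT_w^{-1}\otimes v_\bullet}$. Multiplying $F_w(i_w^+(\varphi))$ by $F_w(\psi)$ as in the proofs of Lemmas \ref{lem:Fw2} and \ref{lem:commm} — pushing $(\sigma^w_\lambda)^{-1}$ past the second factor by the commutation rule of Lemma \ref{lem:commm} (available because $\psi\in\CJ_w$) and collapsing $V^*(\lambda)\otimes V^*(\mu)$ onto its lowest-weight component $V^*(\nu)$ via the canonical projection $p$ — reduces the assertion to two facts. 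First, the diagonal right $\dT_w^{-1}$-action on $V^*(\lambda)\otimes V^*(\mu)$ sends $v^*(i_w^+(\varphi),\lambda)\otimes v^*(\psi,\mu)$ to $(v^*(i_w^+(\varphi),\lambda)\dT_w^{-1})\otimes(v^*(\psi,\mu)\dT_w^{-1})$: by Corollary \ref{cor:dT} the correction operators in $\Delta(\dT_w^{-1})$ act on $V^*(\lambda)\otimes V^*(\mu)$ through right multiplication of the second slot by $\te_{\Bi,r}\in U^+[\dT_w^{-1}]$ (Proposition \ref{prop:base2}(ii)), and $v^*(\psi,\mu)\,\te_{\Bi,r}=\varepsilon(\te_{\Bi,r})\,v^*(\psi,\mu)=0$ since $\psi\in\CJ_w$, so these operators act as the identity. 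Second, the multiplicativity of matrix coefficients $v^*(i_w^+(\varphi)\psi,\nu)=q^{-(\lambda,\gamma)}\,p\bigl(v^*(i_w^+(\varphi),\lambda)\otimes v^*(\psi,\mu)\bigr)$ — the same computation driving part (i); cf.\ \cite[Lemma 3.5]{T} — which, combined with $p$ being a morphism of right $U$-modules (hence commuting with the $\dT_w^{-1}$-action), matches the two sides, scalar $q^{-(\lambda,\gamma)}$ included.

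The one genuinely delicate point is the displayed identity of part (i): distilling from Lemma \ref{lem:delta-U} exactly the one-sided coproduct control on $A$ and on $B$ that is needed, and combining it with $U^+\cong A\otimes B$ to see precisely which summands of $\Delta(ab)$ survive the pairing. Once this is in hand, part (ii) is bookkeeping of $q$-powers in the style of Lemma \ref{lem:commm}.
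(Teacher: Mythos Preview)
Your proof is correct and follows essentially the same path as the paper's. For (i) both arguments hinge on the identity $\langle i_w^+(\varphi)\psi,\,ab\rangle=\langle\varphi,a\rangle\langle\psi,b\rangle$, obtained from Lemma~\ref{lem:delta-U}; you simply spell out the injectivity/surjectivity step that the paper leaves implicit. For (ii) the paper works directly with the definition of $F_w$ (writing $i_w^+(\varphi)\psi=(i_w^+(\varphi)\chi_\lambda)(\psi'\chi_\mu)\chi_{\lambda+\mu}^{-1}$ and applying $\dT_w^{-1}$), while you use the matrix-coefficient form from Lemma~\ref{lem:Fw}; but the substantive inputs are identical---the vanishing of the correction terms in $\Delta(\dT_w^{-1})$ from Corollary~\ref{cor:dT} because $v^*(\psi,\mu)\,\te_{\Bi,r}=0$ (your fact~1, which is exactly what justifies the paper's unexplained third equality), together with Lemma~\ref{lem:commm} to handle the $\sigma^w$-commutation.
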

\begin{proof}
(i) 
For 
$\varphi\in U^+[\dT_w^{-1}]^\bigstar$, $\psi\in\CJ_w$, $x\in  U^+[\dT_w^{-1}]$, $x'\in U^+\cap \dT_w^{-1}U^{\geqq0}$ we have
\[
\langle i_w^+(\varphi)\psi,xx'\rangle
=\sum_{(x),(x')}
\langle i_w^+(\varphi),x_{(0)}x'_{(0)}\rangle
\langle \psi,x_{(1)}x'_{(1)}\rangle.
\]
Hence by Lemma \ref{lem:delta-U} we obtain
\[
\langle i_w^+(\varphi)\psi,xx'\rangle
=
\langle i_w^+(\varphi),x\rangle
\langle \psi,x'\rangle.
\]

(ii)
Take $\lambda\in P^-$ such that
$i_w^+(\varphi)\chi_\lambda\in\BC_q[G/N^-]$.
Then we have
$\chi_\lambda^{-1}\psi\chi_\lambda=\psi'\in\CJ_w$.
Take $\mu\in P^-$ such that 
$\psi'\chi_\mu\in\BC_q[G/N^-]$.
We may assume that 
$\psi'\chi_\mu=\Phi_{v^*\otimes v_\nu}$ and
\[
v^*z=\varepsilon(z)v^*\qquad(z\in  U^+[\dT_w^{-1}]).
\]
Then we have
\begin{align*}
&F_w(i_w^+(\varphi)\psi)
=
F_w((i_w^+(\varphi)\chi_\lambda)(\psi'\chi_\mu)\chi_{\lambda+\mu}^{-1})
\\
=&
\{\{
(i_w^+(\varphi)\chi_\lambda)(\psi'\chi_\mu)\}\dT_w^{-1}\}(\sigma^w_{\lambda+\mu})^{-1}
\\
=&
\{(i_w^+(\varphi)\chi_\lambda)\dT_w^{-1}\}\{(\psi'\chi_\mu)\dT_w^{-1}\}(\sigma^w_{\lambda+\mu})^{-1}
\\
=&
\{F_w(i_w^+(\varphi))\sigma^w_{\lambda}\}
\{F_w(\psi')\sigma^w_{\mu}\}
(\sigma^w_{\lambda+\mu})^{-1}
=
F_w(i_w^+(\varphi))F_w(\psi').
\end{align*}
Here, the last equality is a consequence of Lemma \ref{lem:commm}.
\end{proof}

\section{Induced modules}
\subsection{}

We fix $w\in W$ in this section.

By Proposition \ref{prop:base2}, Corollary \ref{cor:dT} and
\eqref{eq:mult-right} we have
\begin{equation}
\label{eq:actT}
(\varphi\psi)\dT_w=(\varphi\dT_w)(\psi\dT_w)
\qquad(\varphi\in\BC_q[G], \psi\in\BC_q[N^-_w\backslash G]).
\end{equation}
Define
$\eta'_{w}:\BC_q[N_w^-\backslash G]\to\BC_q[H]$
by
\[
\langle\eta'_{w}(\varphi),t\rangle=\langle\varphi\dT_{w},t\rangle
(=\langle\dT_{w}\varphi,\dT_w(t)\rangle)
\qquad(t\in U^0)
\]
(see Lemma \ref{lem:Tlr1}).
\begin{lemma}
The linear map $\eta'_{w}$ is an algebra homomorphism.
Moreover, for $\lambda\in P^-$ we have
$\eta'_{w}(\sigma^w_\lambda)=\chi_{\lambda}\in\BC_q[H]^\times$.\end{lemma}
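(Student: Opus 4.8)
The plan is to establish the two assertions separately, both reducing to properties of the right $\dT_w$-action on $\BC_q[G]$ and the Drinfeld-type commutation relations already developed in the excerpt.

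First I would prove that $\eta'_w$ is an algebra homomorphism. The map $\varphi\mapsto\varphi\dT_w$ followed by restriction to $U^0$ is certainly linear, so the only issue is multiplicativity. By definition $\langle\eta'_w(\varphi\psi),t\rangle=\langle(\varphi\psi)\dT_w,t\rangle$ for $t\in U^0$, so it suffices to show $\langle(\varphi\psi)\dT_w,t\rangle=\langle(\varphi\dT_w)(\psi\dT_w),t\rangle$ whenever $\varphi,\psi\in\BC_q[N^-_w\backslash G]$. This is exactly \eqref{eq:actT}, which the section opens with: $(\varphi\psi)\dT_w=(\varphi\dT_w)(\psi\dT_w)$ for $\varphi\in\BC_q[G]$, $\psi\in\BC_q[N^-_w\backslash G]$; since $\BC_q[N^-_w\backslash G]$ is a subalgebra of $\BC_q[G]$ (shown earlier), both $\varphi$ and $\psi$ lie in the appropriate spaces, and the product $\varphi\psi$ is again in $\BC_q[N^-_w\backslash G]$, so the identity applies and restricting to $U^0$ gives $\eta'_w(\varphi\psi)=\eta'_w(\varphi)\eta'_w(\psi)$. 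One should also check $\eta'_w$ preserves the unit: $1\in\BC_q[G]$ is $\varepsilon$, and $\langle 1\cdot\dT_w,t\rangle=\langle 1,\dT_w^{-1}(t)\rangle=\varepsilon(\dT_w^{-1}(t))=\varepsilon(t)$ by \eqref{eq:epsT}, so $\eta'_w(1)=\varepsilon|_{U^0}=\chi_0$, the unit of $\BC_q[H]$.

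Next I would compute $\eta'_w(\sigma^w_\lambda)$ for $\lambda\in P^-$. Here the cleanest route is via Lemma~\ref{lem:Tlr1}: $\langle\varphi\dT_w,t\rangle=\langle\dT_w\varphi,\dT_w(t)\rangle$. Taking $\varphi=\sigma^w_\lambda=\Phi_{v^*_{w\lambda}\otimes v_\lambda}$ with $v^*_{w\lambda}=v^*_\lambda\dT_w^{-1}$, one has for $t\in U^0$ that $\langle\sigma^w_\lambda\dT_w,t\rangle=\langle v^*_\lambda\dT_w^{-1},\dT_w(t)v_\lambda\rangle=\langle v^*_\lambda,t\,\dT_w v_\lambda\rangle$. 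Now $\dT_w$ sends the lowest weight vector $v_\lambda$ to a vector in $V(\lambda)_{w\lambda}$, but more to the point, acting by $t\in U^0$ and pairing against $v^*_\lambda$ (which is supported in weight $\lambda$), only the component of $t\,\dT_w v_\lambda$ in weight $\lambda$ contributes; I would instead observe directly that since $v_\lambda$ is the lowest weight vector and $\dT_w(k_\gamma)=k_{w\gamma}$, one can compute $\langle v^*_\lambda\dT_w^{-1},k_\gamma v_\lambda\rangle=q^{(\lambda,\gamma)}\langle v^*_\lambda\dT_w^{-1},v_\lambda\rangle=q^{(\lambda,\gamma)}\langle\sigma^w_\lambda,1\rangle=q^{(\lambda,\gamma)}$, using $\langle v^*_\lambda\dT_w^{-1},v_\lambda\rangle=\langle v^*_{w\lambda},v_\lambda\rangle$; the latter equals $1$ because $\langle\sigma^w_\lambda,1\rangle=\langle v^*_{w\lambda},v_\lambda\rangle$ and $\sigma^w_0=1$ forces the normalization (alternatively $v^*_\lambda\dT_w^{-1}$ pairs to $1$ with $v_\lambda$ by definition in the excerpt, where $v^*_{w\lambda}=v^*_\lambda\dT_w^{-1}$ and $\langle v^*_\lambda,v_\lambda\rangle=1$ together with the fact that $\dT_w^{-1}$ preserves the pairing up to the weight considerations). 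Hence $\langle\eta'_w(\sigma^w_\lambda),k_\gamma\rangle=q^{(\lambda,\gamma)}=\langle\chi_\lambda,k_\gamma\rangle$, so $\eta'_w(\sigma^w_\lambda)=\chi_\lambda$. Finally $\chi_\lambda$ is invertible in $\BC_q[H]=\bigoplus_{\mu\in P}\BF\chi_\mu$ with inverse $\chi_{-\lambda}$ by Lemma~\ref{lem:2c}, so $\chi_\lambda\in\BC_q[H]^\times$.

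The main obstacle I anticipate is pinning down the normalization $\langle v^*_\lambda\dT_w^{-1},v_\lambda\rangle$, i.e.\ verifying that the scalar relating $v^*_\lambda\dT_w^{-1}$ to the lowest weight covector $v^*_{w\lambda}$ is compatible with the convention $\sigma^w_0=1$; this is really a bookkeeping point about Lusztig's operators $\dT_w$ acting on extremal weight spaces, and it is handled in the excerpt by \emph{defining} $v^*_{w\lambda}=v^*_\lambda\dT_w^{-1}$ directly, so the identity $\langle\sigma^w_\lambda,1\rangle=\langle v^*_{w\lambda},v_\lambda\rangle$ and the relation $\sigma^w_\lambda\sigma^w_\mu=\sigma^w_{\lambda+\mu}$ with $\sigma^w_0=1$ already force $\langle v^*_{w\lambda},v_\lambda\rangle=1$ once we know these pairings are nonzero (which follows since $\dT_w$ is an isomorphism and $v_\lambda$ spans a one-dimensional weight space carried to the one-dimensional space $V(\lambda)_{w\lambda}$). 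Everything else is a direct application of \eqref{eq:actT}, Lemma~\ref{lem:Tlr1}, \eqref{eq:epsT}, and Lemma~\ref{lem:2c}.
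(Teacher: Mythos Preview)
Your argument for multiplicativity is correct and matches the paper: both reduce immediately to \eqref{eq:actT}.

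Your computation of $\eta'_w(\sigma^w_\lambda)$, however, contains a genuine error. When you ``observe directly'' that $\langle\sigma^w_\lambda\dT_w,k_\gamma\rangle=\langle v^*_\lambda\dT_w^{-1},k_\gamma v_\lambda\rangle$, you have in fact written down $\langle\sigma^w_\lambda,k_\gamma\rangle$, not $\langle\sigma^w_\lambda\dT_w,k_\gamma\rangle$; the right action of $\dT_w$ has been dropped. The pairing you land on, $\langle v^*_{w\lambda},v_\lambda\rangle$, is between $V^*(\lambda)_{w\lambda}$ and $V(\lambda)_\lambda$, and for $w\lambda\ne\lambda$ these are distinct weight spaces, so the pairing vanishes. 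Your attempt to force it to equal $1$ from $\sigma^w_\lambda\sigma^w_\mu=\sigma^w_{\lambda+\mu}$ and $\sigma^w_0=1$ does not work either: applying $\varepsilon$ to the multiplicativity relation is perfectly consistent with $\varepsilon(\sigma^w_\lambda)=0$ for all $\lambda\ne0$, which is in fact what happens when $w\ne1$.

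The fix is much simpler than the route you attempted. From the proof of Lemma~\ref{lem:Tlr1} one sees that the right $\dT_w$ action on matrix coefficients is $\Phi_{v^*\otimes v}\dT_w=\Phi_{v^*\dT_w\otimes v}$. Since $v^*_{w\lambda}=v^*_\lambda\dT_w^{-1}$ by definition, this gives $\sigma^w_\lambda\dT_w=\Phi_{v^*_\lambda\otimes v_\lambda}$ directly, and then
\[
\langle\eta'_w(\sigma^w_\lambda),t\rangle=\langle\sigma^w_\lambda\dT_w,t\rangle=\langle v^*_\lambda,tv_\lambda\rangle=\chi_\lambda(t),
\]
using only $\langle v^*_\lambda,v_\lambda\rangle=1$. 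No extremal-vector normalizations beyond the original definition are needed.
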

\begin{proof}
For 
$\varphi, \psi\in\BC_q[N_w^-\backslash G], t\in U^0$ we have 
\begin{align*}
\langle\eta'_{w}(\varphi\psi),t\rangle
=&
\langle(\varphi\psi)\dT_w,t\rangle
=
\langle(\varphi\dT_w)(\psi\dT_w),t\rangle
\\
=&
\sum_{(t)}
\langle\varphi\dT_w,t_{(0)}\rangle
\langle\psi\dT_w,t_{(1)}\rangle
=
\langle\eta'_{w}(\varphi)\eta'_{w}(\psi),t\rangle.
\end{align*}
by \eqref{eq:actT}.
For $\lambda\in P^-$ and $t\in U^0$ we have
\[
\langle\eta'_{w}(\sigma^w_\lambda),t\rangle
=
\langle \sigma^w_\lambda\dT_{w},t\rangle
=
\langle v_{\lambda}^*,tv_{\lambda}\rangle
=\langle\chi_{\lambda},t\rangle.
\]
\end{proof}
Hence we obtain an algebra homomorphism
\begin{equation}
\eta_w:\CS_w^{-1}\BC_q[N_w^-\backslash G]\to\BC_q[H]
\end{equation}
by extending $\eta'_{w}$.

\begin{definition}
Define an
$(\CS_w^{-1}\BC_q[G],\BC_q[H])$-bimodule $\CM_w$ by
\begin{equation}
\CM_w=\CS_w^{-1}\BC_q[G]\otimes_{\CS_w^{-1}\BC_q[N_w^-\backslash G]}\BC_q[H],
\end{equation}
where $\CS_w^{-1}\BC_q[N_w^-\backslash G]\to\BC_q[H]$ is given by $\eta_w$.
\end{definition}
By
\[
\CS_w^{-1}\BC_q[G]=
\BC_q[G]\otimes_{\BC_q[N_w^-\backslash G]}\CS_w^{-1}\BC_q[N_w^-\backslash G]
\]
we have
\begin{equation}
\label{eq:des}
\CM_w
\cong
\BC_q[G]\otimes_{\BC_q[N_w^-\backslash G]}\BC_q[H].
\end{equation}

For $\varphi\in\CS_w^{-1}\BC_q[G]$ and $\chi\in\BC_q[H]$ we write
\begin{equation}
\varphi\star\chi
:=
\varphi\otimes\chi\in\CM_w.
\end{equation}
Then we have
\begin{equation}
\varphi\sigma^w_\lambda\star\chi
=
\varphi\star\chi_{\lambda}\chi
\qquad(\varphi\in\CS_w^{-1}\BC_q[G], \lambda\in P, \chi\in\BC_q[H]).
\end{equation}
By \eqref{eq:des} 
$\CM_w$ is generated by 
$\{\varphi\star1\mid\varphi\in\BC_q[G]\}$
as a $\BC_q[H]$-module.

\subsection{}
Set
\[
U^{\geqq0}[\dT_w^{-1}]
=(U^+[\dT_w^{-1}])U^0\subset U^{\geqq0}.
\]
Define an injective linear map
\[
U^+[\dT_w^{-1}]^\bigstar\otimes\BC_q[H]
\to
\Hom_\BF(U^{\geqq0}[\dT_w^{-1}],\BF)
\qquad(f\otimes\chi\mapsto c_{f\otimes\chi})
\]
by
\[
\langle c_{f\otimes\chi},xt\rangle
=
\langle f,x\rangle\langle\chi,t\rangle
\qquad(x\in U^+[\dT_w^{-1}], \;t\in U^0),
\]
and denote its image by $U^{\geqq0}[\dT_w^{-1}]^\bigstar$.
Then we have an identification 
\begin{equation}
\label{eq:identification}
U^{\geqq0}[\dT_w^{-1}]^\bigstar\cong U^+[\dT_w^{-1}]^\bigstar\otimes\BC_q[H]
\qquad
(c_{f\otimes\chi}\leftrightarrow f\otimes\chi)
\end{equation}
of vector spaces.
Since 
$ U^+[\dT_w^{-1}]^\bigstar\otimes\BC_q[H]$ is naturally a right $\BC_q[H]$-module by the multiplication of $\BC_q[H]$, 
$U^{\geqq0}[\dT_w^{-1}]^\bigstar$ is also endowed with a right  $\BC_q[H]$-module structure via the identification \eqref{eq:identification}.
Then we have
\begin{multline}
\langle f\chi,xt\rangle
=\sum_{(t)}\langle f,xt_{(0)}\rangle\langle\chi,t_{(1)}\rangle
\\
(f\in U^{\geqq0}[\dT_w^{-1}]^\bigstar,\;\chi\in\BC_q[H],\;x\in U^+[\dT_w^{-1}], \;t\in U^0).
\end{multline}

\subsection{}
We construct an isomorphism
\[
\Theta_w:\CM_w\to
U^{\geqq0}[\dT_w^{-1}]^\bigstar
\]
of right $\BC_q[H]$-modules.
We first define 
$\Theta_w':\BC_q[G]\to U^{\geqq0}[\dT_w^{-1}]^\bigstar$
by 
\[
\langle \Theta_w'(\varphi),u\rangle
=\langle \varphi\dT_w,u\rangle
\qquad(\varphi\in\BC_q[G],\; u\in U^{\geqq0}[\dT_w^{-1}]).
\]

\begin{lemma}
\label{lem-theta-prime}
$\Theta_w'(\varphi\psi)=\Theta_w'(\varphi)\eta'_{w}(\psi)
\qquad
(\varphi\in\BC_q[G],\; \psi\in\BC_q[N_w^-\backslash G])$.
\end{lemma}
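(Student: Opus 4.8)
The plan is to unwind the definitions and reduce the claim to the multiplicativity behaviour already encoded in \eqref{eq:actT}. Recall that $\Theta_w'(\varphi)$ is, by definition, the restriction of $\varphi\dT_w$ to the subalgebra $U^{\geqq0}[\dT_w^{-1}]=(U^+[\dT_w^{-1}])U^0$, and $\eta'_w(\psi)$ is the restriction of $\psi\dT_w$ to $U^0$, viewed as an element of $\BC_q[H]$. So for $\varphi\in\BC_q[G]$ and $\psi\in\BC_q[N^-_w\backslash G]$, the identity $\Theta_w'(\varphi\psi)=\Theta_w'(\varphi)\eta'_w(\psi)$ is an identity of linear functionals on $U^{\geqq0}[\dT_w^{-1}]$, and by \eqref{eq:identification} it suffices to test both sides against elements of the form $xt$ with $x\in U^+[\dT_w^{-1}]$ and $t\in U^0$.

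First I would compute the left-hand side: $\langle\Theta_w'(\varphi\psi),xt\rangle=\langle(\varphi\psi)\dT_w,xt\rangle$, and by \eqref{eq:actT} this equals $\langle(\varphi\dT_w)(\psi\dT_w),xt\rangle$. Now I apply the formula for the product in $\BC_q[G]\subset U^*$, i.e.\ evaluate the product against $\Delta(xt)=\Delta(x)\Delta(t)$. The key structural input is Lemma \ref{lem:delta-U}, which gives $\Delta(\dT_w^{-1}(U^+))\subset (\dT_w^{-1}(U^+))U^0\otimes U$; equivalently, writing $x\in U^+[\dT_w^{-1}]\subset\dT_w^{-1}(U^{\leqq0})$, one has $\Delta(x)\in U^+[\dT_w^{-1}]U^0\otimes U^{\geqq0}$ up to the appropriate weight bookkeeping. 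Combined with $\Delta(t)=t\otimes t$ for $t\in U^0$, this means that in the Sweedler expansion $\Delta(xt)=\sum x_{(0)}t\otimes x_{(1)}t$ the first tensor factor always lands in $U^{\geqq0}[\dT_w^{-1}]$ (so that pairing with $\varphi\dT_w$ is legitimate and recovers $\Theta_w'(\varphi)$), while the counit forces $x_{(1)}$ to contribute only through its image under $\varepsilon$ on the $U^+$-part, leaving $\psi\dT_w$ paired against a purely $U^0$-element, which is exactly $\eta'_w(\psi)$.

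Thus the computation collapses to $\langle(\varphi\dT_w)(\psi\dT_w),xt\rangle=\sum_{(x),(t)}\langle\varphi\dT_w,x_{(0)}t_{(0)}\rangle\langle\psi\dT_w,x_{(1)}t_{(1)}\rangle$, and the observation from Lemma \ref{lem:delta-U} (together with $\psi\dT_w\in\BC_q[H]$ vanishing off $U^0$, which follows from $\psi\in\BC_q[N^-_w\backslash G]$ and the definition of $\eta'_w$) reduces the right factor to $\varepsilon(x_{(1)})\langle\psi\dT_w,t\rangle=\varepsilon(x_{(1)})\langle\eta'_w(\psi),t\rangle$. Summing over the Sweedler indices with the counit gives $\langle\varphi\dT_w,xt\rangle\langle\eta'_w(\psi),t\rangle=\langle\Theta_w'(\varphi),xt\rangle\langle\eta'_w(\psi),t\rangle=\langle\Theta_w'(\varphi)\eta'_w(\psi),xt\rangle$, using the right $\BC_q[H]$-module structure on $U^{\geqq0}[\dT_w^{-1}]^\bigstar$ recorded just before this subsection. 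Since $xt$ ranges over a spanning set of $U^{\geqq0}[\dT_w^{-1}]$, this proves the lemma.

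The main obstacle, and the point that needs care rather than mere bookkeeping, is justifying that the first Sweedler leg $x_{(0)}t_{(0)}$ genuinely stays inside $U^{\geqq0}[\dT_w^{-1}]$ — so that the pairing $\langle\varphi\dT_w,x_{(0)}t_{(0)}\rangle$ really is a value of $\Theta_w'(\varphi)$ and not just of $\varphi\dT_w$ on a larger domain — and symmetrically that the decomposition $\Delta(x)\in (\text{something in }U^+[\dT_w^{-1}]U^0)\otimes U$ is compatible with the weight grading so that the $\varepsilon$ on the second factor does what I claimed. This is precisely where Lemma \ref{lem:delta-U} (via the inclusion $U^+[\dT_w^{-1}]=U^+\cap\dT_w^{-1}(U^{\leqq0})$ and $\Delta(\dT_w^{-1}(U^+))\subset U\otimes \dT_w^{-1}(U^+)U^0$, or its mirror) is indispensable; everything else is the standard Hopf-algebraic manipulation with Sweedler notation already used repeatedly in Section 3.
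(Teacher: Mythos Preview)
Your proposal has two genuine gaps that prevent the argument from going through as written.

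First, you invoke Lemma~\ref{lem:delta-U} to claim that for $x\in U^+[\dT_w^{-1}]$ the first Sweedler leg of $\Delta(x)$ lands in $U^+[\dT_w^{-1}]U^0$. But Lemma~\ref{lem:delta-U} concerns $\dT_w^{-1}(U^+)$, not $U^+[\dT_w^{-1}]=U^+\cap\dT_w^{-1}(U^{\leqq0})$; these are different subspaces of $U$. The inclusion $U^+[\dT_w^{-1}]\subset\dT_w^{-1}(U^{\leqq0})$ you note does not let you read off anything from that lemma, and no coideal property of $U^+[\dT_w^{-1}]$ has been established in the paper.

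Second, the claim that ``$\psi\dT_w\in\BC_q[H]$ vanishing off $U^0$'' is simply false: $\psi\dT_w$ is an element of $\BC_q[G]$, and $\eta'_w(\psi)$ is merely its \emph{restriction} to $U^0$. For generic $\psi\in\BC_q[N_w^-\backslash G]$ the functional $\psi\dT_w$ does not kill $U^+_{>0}U^0$, so you cannot collapse $\langle\psi\dT_w,x_{(1)}t\rangle$ to $\varepsilon(x_{(1)})\langle\psi\dT_w,t\rangle$ without further argument. What would salvage this step is showing that the second Sweedler legs $x_{(1)}$ themselves lie in $U^+[\dT_w^{-1}]$ and then using the defining property of $\psi$ after applying $\dT_w$; but that is exactly the missing coideal statement from the first gap.

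The paper avoids both difficulties by \emph{not} applying \eqref{eq:actT} first. Instead it evaluates $\langle\Theta_w'(\varphi\psi),xt\rangle=\langle(\varphi\psi)\dT_w(x)\dT_w,t\rangle$ and computes the right $U$-action $(\varphi\psi)\dT_w(x)$ directly: writing $\dT_w(x)=yk_{-w\gamma}$ with $y\in U^-[\dT_w]$ via \eqref{eq:twist2}, Lemma~\ref{lem:delta-U} (now applied correctly to $y\in\dT_w(U^-)$) together with $\psi\in\BC_q[N_w^-\backslash G]$ gives $(\varphi\psi)\dT_w(x)=(\varphi\dT_w(x))\psi$. Only then is \eqref{eq:actT} used to split the product. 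The order of operations matters: the paper uses the right $U$-action before $\dT_w$, where the hypothesis on $\psi$ is directly available, rather than the $\BC_q[G]$-multiplication after $\dT_w$, where it is not.
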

\begin{proof}
Let $\gamma\in Q^+$, 
$x\in  U^+[\dT_w^{-1}]\cap U^+_\gamma$, $t\in U^{0}$.
Then we have
\[
\langle \Theta_w'(\varphi),xt\rangle
=
\langle \varphi\dT_w,xt\rangle
=
\langle \varphi\dT_w(x)\dT_w,t\rangle.
\]
Similarly,
\[
\langle \Theta_w'(\varphi\psi),xt\rangle
=
\langle \{(\varphi\psi)\dT_w(x)\}\dT_w,t\rangle.
\]
By \eqref{eq:twist2} we can write 
$\dT_w(x)=yk_{-w\gamma}\quad(y\in U^-[\dT_w]\cap U_{w\gamma}^-)$.
Thus by Lemma \ref{lem:delta-U} and $\psi\in\BC_q[N_w^-\backslash G]$ we have 
\begin{align*}
&(\varphi\psi)(\dT_w(x))=
\left(\sum_{(y)}(\varphi y_{(0)})(\psi y_{(1)})\right)k_{-w\gamma}
=
\left\{(\varphi y)(\psi k_{w\gamma})\right\}k_{-w\gamma}
\\
=&
(\varphi(\dT_w(x)))\psi.
\end{align*}
Hence by \eqref{eq:actT} 
we have
\begin{align*}
&\langle \Theta_w'(\varphi\psi),xt\rangle
=
\langle \{(\varphi(\dT_w(x)))\psi\}\dT_w,t\rangle
=
\langle (\varphi(\dT_w(x))\dT_w)(\psi\dT_w),t\rangle
\\
=&\sum_{(t)}
\langle \varphi(\dT_w(x))\dT_w,t_{(0)}\rangle
\langle\psi\dT_w,t_{(1)}\rangle
=\sum_{(t)}
\langle \Theta_w'(\varphi),xt_{(0)}\rangle
\langle \eta'_{w}(\psi),t_{(1)}\rangle
\\
=&
\langle \Theta_w'(\varphi)\eta'_{w}(\psi),xt\rangle.
\end{align*}
\end{proof}

Hence regarding $U^{\geqq0}[\dT_w^{-1}]^\bigstar$
as a right $\BC_q[N_w^-\backslash G]$-module via 
$\eta'_{w}:\BC_q[N_w^-\backslash G]\to\BC_q[H]$, 
$\Theta_w'$ turns out to be a homomorphism of right $\BC_q[N_w^-\backslash G]$-modules.
Moreover, the right action of the elements of $\CS_w(\subset\BC_q[N_w^-\backslash G])$ on $U^{\geqq0}[\dT_w^{-1}]^\bigstar$ is invertible.
Hence $\Theta_w'$ induces
\[
\Theta_w'':
\CS_w^{-1}\BC_q[G]=
\BC_q[G]\otimes_{\BC_q[N_w^-\backslash G]}\CS_w^{-1}\BC_q[N_w^-\backslash G]
\to
U^{\geqq0}[\dT_w^{-1}]^\bigstar.
\]
Then we have
\begin{equation}
\Theta_w''(\varphi\psi)=\Theta_w''(\varphi)\eta_{w}(\psi)
\quad
(\varphi\in\CS_w^{-1}\BC_q[G],\; \psi\in\CS_w^{-1}\BC_q[N_w^-\backslash G]).
\end{equation}
Therefore, we obtain a homomorphism
\begin{equation}
\Theta_w:
\CM_w
\to
U^{\geqq0}[\dT_w^{-1}]^\bigstar
\end{equation}
of right $\BC_q[H]$-modules by
\[
\Theta_w(\varphi\star\chi)=\Theta_w''(\varphi)\chi
\qquad(\varphi\in\CS_w^{-1}\BC_q[G], 
\chi\in\BC_q[H]).
\]

\begin{proposition}
\label{prop:Upsilon}
The linear map
\[
\Upsilon_w: U^+[\dT_w^{-1}]^\bigstar
\otimes\BC_q[H]
\to\CM_w
\qquad(
\varphi\otimes\chi\mapsto
{F_w(i_w^+(\varphi))}\star\chi)
\]
is bijective.
\end{proposition}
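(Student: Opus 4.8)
The plan is to prove that $\Upsilon_w$ is surjective by assembling the structural results of the previous section, and injective by composing with the homomorphism $\Theta_w:\CM_w\to U^{\geqq0}[\dT_w^{-1}]^\bigstar$ and recognizing $\Theta_w\circ\Upsilon_w$ as the canonical identification \eqref{eq:identification}.

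For surjectivity I would proceed as follows. By the definition of $\CM_w$ and \eqref{eq:des}, $\CM_w$ is spanned, as a right $\BC_q[H]$-module, by the elements $\varphi\star1$ with $\varphi\in\CS_w^{-1}\BC_q[G]$, and for $\psi\in\CS_w^{-1}\BC_q[N_w^-\backslash G]$ one has the sliding relation $(\varphi\psi)\star1=\varphi\star\eta_w(\psi)$. By Proposition \ref{prop:wisom} together with \eqref{eq:FS}, every element of $\CS_w^{-1}\BC_q[G]$ is a finite sum of products $\big(\sum_\mu F_w(\varphi_\mu)\sigma^w_\mu\big)\,b$ with $\varphi_\mu\in(U^+)^\bigstar$ and $b\in\CS_w^{-1}(\BC_q[N^+\backslash G]\dT_w^{-1})$; since $\BC_q[N^+\backslash G]\dT_w^{-1}\subset\BC_q[N_w^-\backslash G]$ and $\sigma^w_\mu\in\BC_q[N_w^-\backslash G]$ with $\eta_w(\sigma^w_\mu)=\chi_\mu$, sliding $b$ and the $\sigma^w_\mu$ across $\star$ shows that $\CM_w$ is already spanned over $\BC_q[H]$ by the elements $F_w(\varphi)\star1$, $\varphi\in(U^+)^\bigstar$. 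Finally, Proposition \ref{prop:Fw}(i) writes such a $\varphi$ as a finite sum of products $i_w^+(\varphi')\psi'$ with $\varphi'\in U^+[\dT_w^{-1}]^\bigstar$ and $\psi'\in\CJ_w$; by Proposition \ref{prop:Fw}(ii) we have $F_w(i_w^+(\varphi')\psi')=F_w(i_w^+(\varphi'))\,F_w(\psi')$, and by Proposition \ref{prop:intersec} the factor $F_w(\psi')$ lies in $\CS_w^{-1}\BC_q[N_w^-\backslash G]$, so one more slide gives $F_w(i_w^+(\varphi')\psi')\star1=\Upsilon_w\big(\varphi'\otimes\eta_w(F_w(\psi'))\big)\in\Image\Upsilon_w$. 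Since $\Image\Upsilon_w$ is a right $\BC_q[H]$-submodule containing a $\BC_q[H]$-spanning set, it is all of $\CM_w$.

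For injectivity I would compute $\Theta_w\circ\Upsilon_w$ explicitly. Fix $\varphi\in U^+[\dT_w^{-1}]^\bigstar$ and $\chi\in\BC_q[H]$, and choose $\lambda\in P^-$ small enough that Lemma \ref{lem:Fw} applies to $i_w^+(\varphi)$, so that $F_w(i_w^+(\varphi))=\Phi_{v^*\otimes v_\lambda}(\sigma^w_\lambda)^{-1}$ with $v^*=v^*(i_w^+(\varphi),\lambda)\dT_w^{-1}$. Then, using $\eta_w((\sigma^w_\lambda)^{-1})=\chi_{-\lambda}$,
\[
\Theta_w(\Upsilon_w(\varphi\otimes\chi))=\Theta_w''\big(F_w(i_w^+(\varphi))\big)\,\chi=\Theta_w'(\Phi_{v^*\otimes v_\lambda})\,\chi_{-\lambda}\chi.
\]
For $x\in U^+[\dT_w^{-1}]$ and $t\in U^0$, combining Lemma \ref{lem:Tlr1}, the identity $\dT_w\Phi_{m^*\otimes m}=\Phi_{m^*\otimes\dT_w m}$, and the fact that $\dT_w^{-1}$ is an algebra- and module-automorphism, one gets
\begin{align*}
\langle\Phi_{v^*\otimes v_\lambda}\dT_w,\,xt\rangle
&=\big\langle v^*(i_w^+(\varphi),\lambda),\,\dT_w^{-1}\big(\dT_w(xt)\,\dT_w v_\lambda\big)\big\rangle\\
&=\big\langle v^*(i_w^+(\varphi),\lambda),\,xt\,v_\lambda\big\rangle=\chi_\lambda(t)\,\langle\varphi,x\rangle,
\end{align*}
the last step using $t\,v_\lambda=\chi_\lambda(t)v_\lambda$, the defining property of $v^*(\,\cdot\,,\lambda)$, and the definition of $i_w^+$ (with trivial second factor). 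Hence $\Theta_w'(\Phi_{v^*\otimes v_\lambda})=c_{\varphi\otimes\chi_\lambda}$ in the notation of \eqref{eq:identification}, so $\Theta_w(\Upsilon_w(\varphi\otimes\chi))=c_{\varphi\otimes\chi_\lambda}\,\chi_{-\lambda}\chi=c_{\varphi\otimes\chi}$. Thus, under the identification \eqref{eq:identification}, $\Theta_w\circ\Upsilon_w$ is the identity of $U^+[\dT_w^{-1}]^\bigstar\otimes\BC_q[H]$; in particular $\Upsilon_w$ is injective, and together with the surjectivity above it is bijective (and $\Theta_w$ is its inverse after \eqref{eq:identification}).

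The surjectivity step is essentially bookkeeping, although one must check that every element slid across $\star$ genuinely lies in $\CS_w^{-1}\BC_q[N_w^-\backslash G]$ — this is exactly why Proposition \ref{prop:intersec}, and not merely Proposition \ref{prop:Fw}, is needed. I expect the real work to be in the injectivity computation, specifically the evaluation of $\Theta_w'(\Phi_{v^*\otimes v_\lambda})$: one has to choose $\lambda$ uniformly small enough for Lemma \ref{lem:Fw} (and for the localization statements underlying $\Theta_w''$) to apply, and to track precisely the scalars produced when $\dT_w$, the torus elements $k_\gamma$, and $(\sigma^w_\lambda)^{-1}$ are commuted past one another, since it is their cancellation that yields $c_{\varphi\otimes\chi}$ exactly.
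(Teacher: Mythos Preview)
Your proof is correct, and uses the same ingredients as the paper, but organized differently. The paper does not split into surjectivity and injectivity; instead it chains Proposition~\ref{prop:wisom} with \eqref{eq:FS}, and Proposition~\ref{prop:tensor} with Propositions~\ref{prop:XX1} and~\ref{prop:intersec}, to obtain the two decompositions
\[
\CS_w^{-1}\BC_q[G]\cong F_w((U^+)^\bigstar)\otimes\CS_w^{-1}(\BC_q[N^+\backslash G]\dT_w^{-1}),
\qquad
\CS_w^{-1}\BC_q[N_w^-\backslash G]\cong F_w(\CJ_w)\otimes\CS_w^{-1}(\BC_q[N^+\backslash G]\dT_w^{-1}),
\]
and then applies Proposition~\ref{prop:Fw} to conclude
\[
\CS_w^{-1}\BC_q[G]\cong F_w\bigl(i_w^+(U^+[\dT_w^{-1}]^\bigstar)\bigr)\otimes\CS_w^{-1}\BC_q[N_w^-\backslash G],
\]
which gives the bijectivity of $\Upsilon_w$ in one stroke after tensoring with $\BC_q[H]$. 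Your surjectivity argument is exactly this decomposition unpacked into explicit sliding relations. Your injectivity argument --- computing $\Theta_w\circ\Upsilon_w=\id$ under \eqref{eq:identification} --- is precisely the content of the \emph{next} result in the paper, Proposition~\ref{prop:bijection}, which the paper states separately to deduce that $\Theta_w$ is an isomorphism. So you have effectively merged Propositions~\ref{prop:Upsilon} and~\ref{prop:bijection}: this buys you a self-contained proof of bijectivity without needing the full tensor decomposition as a free-module statement, at the cost of doing the $\Theta_w$-computation earlier than the paper does.
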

\begin{proof}
By 
Proposition \ref{prop:wisom}
and
\eqref{eq:FS}
we have
\[
\CS_w^{-1}\BC_q[G]
\cong
F_w((U^+)^\bigstar)\otimes
\CS_w^{-1}(\BC_q[N^+\backslash G]\dT_w^{-1}).
\]
On the other hand by 
Proposition \ref{prop:tensor},
Proposition \ref{prop:XX1}, 
Proposition \ref{prop:intersec}
we have
\[
\CS_w^{-1}\BC_q[N_w^-\backslash G]
\cong
F_w(\CJ_w)\otimes
\CS_w^{-1}(\BC_q[N^+\backslash G]\dT_w^{-1}).
\]
Hence by Proposition \ref{prop:Fw}
we have
\[
\CS_w^{-1}\BC_q[G]
\cong
F_w(i_w^+(U^+[\dT_w^{-1}]^\bigstar))
\otimes\CS_w^{-1}\BC_q[N_w^-\backslash G].
\]
It follows that
$
\CM_w\cong
U^+[\dT_w^{-1}]^\bigstar\otimes\BC_q[H].
$\end{proof}

\begin{proposition}
\label{prop:bijection}
We have $\Theta_w\circ\Upsilon_w=\id$ under the identification \eqref{eq:identification}.
Especially, 
$\Theta_w$ is an isomorphism of right $\BC_q[H]$-modules.
\end{proposition}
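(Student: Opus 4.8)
The plan is to prove $\Theta_w\circ\Upsilon_w=\id$ by a direct computation on generators and then deduce the isomorphism statement from Proposition \ref{prop:Upsilon}. Both $\Upsilon_w$ and $\Theta_w$ are morphisms of right $\BC_q[H]$-modules (the former by inspection of its defining formula together with $(\psi\star\chi')\chi''=\psi\star\chi'\chi''$, the latter by construction), and $U^+[\dT_w^{-1}]^\bigstar\otimes\BC_q[H]$ is generated as a right $\BC_q[H]$-module by the elements $\varphi\otimes1$. Hence it suffices to check that for every $\varphi\in U^+[\dT_w^{-1}]^\bigstar$ the element $\Theta_w\big(\Upsilon_w(\varphi\otimes1)\big)=\Theta_w\big(F_w(i_w^+(\varphi))\star1\big)$ corresponds, under the identification \eqref{eq:identification}, to $\varphi\otimes1$.

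First I would unwind $F_w(i_w^+(\varphi))$ via Lemma \ref{lem:Fw}: fixing $\lambda\in P^-$ small enough (also small enough for Proposition \ref{prop:UV}, which is possible since the ``sufficiently small'' conditions are cofinal), one has $F_w(i_w^+(\varphi))=g(\sigma^w_\lambda)^{-1}$ with $g=\Phi_{v^*(i_w^+(\varphi),\lambda)\dT_w^{-1}\otimes v_\lambda}\in\BC_q[G]$. Since $\Theta_w''$ is right $\CS_w^{-1}\BC_q[N_w^-\backslash G]$-linear through $\eta_w$, with $\eta_w(\sigma^w_\lambda)=\chi_\lambda$, and since the right $\chi_\lambda$-action on $U^{\geqq0}[\dT_w^{-1}]^\bigstar$ is invertible, this yields $\Theta_w''\big(F_w(i_w^+(\varphi))\big)=\Theta_w'(g)\chi_\lambda^{-1}$, and hence $\Theta_w\big(F_w(i_w^+(\varphi))\star1\big)=\Theta_w'(g)\chi_\lambda^{-1}$.

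The core of the argument is the evaluation of $\Theta_w'(g)$ on an arbitrary $xt\in U^{\geqq0}[\dT_w^{-1}]$ with $x\in U^+[\dT_w^{-1}]$ and $t\in U^0$. The key point is that the right $\dT_w$-action on $\BC_q[G]$ twists only the covector factor of a matrix coefficient, $\Phi_{v^*\otimes v}\dT_w=\Phi_{(v^*\dT_w)\otimes v}$, so inside $g\dT_w$ the operators $\dT_w^{-1}$ and $\dT_w$ cancel and
\begin{align*}
\langle\Theta_w'(g),xt\rangle
=\langle g\dT_w,xt\rangle
&=\langle v^*(i_w^+(\varphi),\lambda),\,xt\,v_\lambda\rangle\\
&=\chi_\lambda(t)\,\langle i_w^+(\varphi),x\rangle
=\chi_\lambda(t)\,\langle\varphi,x\rangle,
\end{align*}
where the last equality is the defining property of $i_w^+$ with the second tensor factor taken to be $1$. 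Thus $\Theta_w'(g)$ corresponds to $\varphi\otimes\chi_\lambda$ under \eqref{eq:identification}, and right multiplication by $\chi_\lambda^{-1}$ produces $\varphi\otimes1$. Therefore $\Theta_w\circ\Upsilon_w=\id$, and since $\Upsilon_w$ is bijective by Proposition \ref{prop:Upsilon} we conclude $\Theta_w=\Upsilon_w^{-1}$, an isomorphism of right $\BC_q[H]$-modules.

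I do not expect a serious obstacle here: the proof is a careful unwinding of the definitions of $F_w$, $\Upsilon_w$, $i_w^+$, $\eta_w$, $\Theta_w'$ and of the identification \eqref{eq:identification}. The two points demanding attention are (i) choosing one $\lambda\in P^-$ simultaneously small enough for all invocations of ``sufficiently small'', and (ii) keeping straight the left and right $\dT_w$-actions on $\BC_q[G]$, in particular that the right action affects the $V^*(\lambda)$-factor of a matrix coefficient while the left action affects the $V(\lambda)$-factor.
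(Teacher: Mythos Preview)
Your proof is correct and follows essentially the same route as the paper's: unwind $F_w(i_w^+(\varphi))$ via Lemma~\ref{lem:Fw}, use that $g\dT_w=\Phi_{v^*(i_w^+(\varphi),\lambda)\otimes v_\lambda}$ to cancel the $\dT_w^{-1}$, and evaluate the resulting matrix coefficient. The only cosmetic difference is that you first reduce to $\chi=1$ by right $\BC_q[H]$-linearity, whereas the paper carries a general $\chi$ through the computation; this costs the paper one extra Sweedler-sum step but is otherwise the same argument.
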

\begin{proof}
Let $\varphi\in U^+[\dT_w^{-1}]^\bigstar$, 
$\chi\in\BC_q[H]$, $x\in  U^+[\dT_w^{-1}]$, $t\in U^0$.
Then for $\lambda\in P^-$ which is sufficiently small we have
\begin{align*}
&\langle (\Theta_w\circ \Upsilon_w)(\varphi\otimes\chi),xt\rangle
=
\langle \Theta_w(F_w(i_w^+(\varphi))\star\chi),xt\rangle
\\
=&
\langle \Theta_w(\Phi_{v^*(i_w^+(\varphi),\lambda)\dT_w^{-1}\otimes v_\lambda}(\sigma^w_\lambda)^{-1}\star\chi),xt\rangle
\\
=&
\langle  \Theta_w(\Phi_{v^*(i_w^+(\varphi),\lambda)\dT_w^{-1}\otimes v_\lambda}\star\chi_{-\lambda}\chi),xt\rangle
\\
=&
\langle
\Theta_w'(\Phi_{v^*(i_w^+(\varphi),\lambda)\dT_w^{-1}\otimes v_\lambda})(\chi_{-\lambda}\chi),xt\rangle
\\
=&
\sum_{(t)}
\langle
\Phi_{v^*(i_w^+(\varphi),\lambda)\otimes v_\lambda},xt_{(0)}
\rangle
\langle
\chi_{-\lambda}\chi,t_{(1)}
\rangle
\\
=&
\sum_{(t)_2}
\langle i_w^+(\varphi),x\rangle
\langle
\chi_{\lambda},t_{(0)}
\rangle
\langle
\chi_{-\lambda},t_{(1)}
\rangle
\langle
\chi,t_{(2)}
\rangle
=
\langle \varphi,x\rangle
\langle\chi,t\rangle
\\
=&
\langle \varphi\otimes\chi,x\otimes t\rangle.
\end{align*}
\end{proof}
\subsection{}
\label{subsec:sl2-2}
In this subsection we consider the special case where $\Gg=\Gsl_2$ and $G=SL_2$.
We follow the notation of Example \ref{ex:sl2}.
The Weyl group consists of two elements $1$ and $s$.
We give below an explicit description of the $(\BC_q[G],\BC_q[H])$-bimodule $\CM_s$.
For $n\in\BZ_{\geqq0}$ define $m(n)\in\CM_s$ by
\[
\langle\Theta(m(n)),e^{n'}k^{i}\rangle
=
\delta_{n,n'}
\qquad
(n'\in\BZ_{\geqq0}, i\in\BZ).
\]
Then we have 
\[
\CM_s=\bigoplus_{n=0}^\infty
\BC_q[H]m(n).
\]
\begin{lemma}
\label{lem:sl2-2}
The action of $\BC_q[G]$ on $\CM_s$ is given by 
\begin{align*}
am(n)=&
\chi(q-q^{-1})q^{n-1}
m(n-1),
\qquad
bm(n)=
\chi^{-1} q^{n}
m(n),
\\
cm(n)=&
-\chi q^{n+1}
m(n),
\qquad
dm(n)=
-\chi^{-1} q[n+1]
m(n+1).
\end{align*}
\end{lemma}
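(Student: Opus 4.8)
The plan is to compute the left action of each generator $a,b,c,d \in \BC_q[SL_2]$ on $m(n)$ by transporting the computation to $U^{\geqq0}[\dT_s^{-1}]^\bigstar$ via the isomorphism $\Theta = \Theta_s$ of Proposition~\ref{prop:bijection}. Concretely, for $\varphi \in \{a,b,c,d\}$ and $u = e^{n'}k^i \in U^{\geqq0}[\dT_s^{-1}]$ (note $U^+[\dT_s^{-1}] = \BF[e]$ by \eqref{eq:A2}, so these monomials form a basis) I would evaluate
\[
\langle \Theta(\varphi\, m(n)), u\rangle = \langle \varphi \cdot \Theta^{-1}\text{-preimage stuff}, \dots\rangle,
\]
but more directly: since $\Theta$ intertwines the $\BC_q[G]$-action only through its source $\CM_w$, I would instead use the definition $\Theta_w(\varphi\star\chi) = \Theta_w''(\varphi)\chi$ together with $\langle\Theta_w'(\psi),u\rangle = \langle \psi\dT_w, u\rangle$ for $u \in U^{\geqq0}[\dT_w^{-1}]$. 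So the key identity to exploit is: for $\varphi \in \BC_q[G]$ and $m(n) = \Upsilon_s(e^{(n)*}\otimes 1)$-type element, $\varphi\, m(n) \in \CM_s$ has $\langle\Theta(\varphi\, m(n)), e^{n'}k^i\rangle$ computable from the coproduct of $e^{n'}k^i$ in $U$ and the defining matrix-coefficient property of $\varphi$.

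The concrete steps: First, unwind $\langle\Theta(m(n)), e^{n'}k^i\rangle = \delta_{n,n'}$ to identify $m(n)$ with the functional dual to $e^{n}$ (ignoring the $k$-part, which contributes the $\chi$-twist). Second, for the left $\BC_q[G]$-module structure on $\CM_w \cong U^{\geqq0}[\dT_w^{-1}]^\bigstar$, use \eqref{eq:mult-left}: $\langle\Theta(\varphi\, m(n)), u\rangle$ should equal a sum over the coproduct $\Delta(u) = \sum u_{(0)}\otimes u_{(1)}$ of terms pairing $\varphi$ against $\dT_s(u_{(?)})$ and $\Theta(m(n))$ against the rest, after pushing through Lemma~\ref{lem:delta-U} and the explicit $\dT_s = \dT_1$ formulas ($\dT_s(e) = -fk$, $\dT_s(k) = k^{-1}$, and $\dT_s^{-1}(e) = -k^{-1}f$... actually one needs $\dT_s$ applied to elements of $U^{\geqq0}[\dT_s^{-1}] = \BF[e]U^0$). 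Third, use the explicit coproducts from Example~\ref{ex:sl2}: $\Delta(a) = a\otimes a + b\otimes c$, etc., and the fact that $a,b,c,d$ act on the two-dimensional $V$ with known matrices, to read off $\langle a, \dT_s(e^{(r)}k^j)\rangle$ and similar pairings. Fourth, use the $q$-binomial expansion of $\Delta(e^{n'}) = \sum_{r} \binom{n'}{r}_q e^{n'-r}k^{r}\otimes e^r$ (with the $U_q(\Gsl_2)$ convention $\Delta(e) = e\otimes 1 + k\otimes e$) to get the combinatorial coefficients, and collect terms. The normalization constants $[n+1]$, $q^{n}$, $q^{n-1}$, $(q-q^{-1})$ will emerge from the quantum binomial coefficients and from $\tau(e^r, f^{(r)})$-type constants (cf.\ Proposition~\ref{prop:base2a}).

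Alternatively, and perhaps more cleanly, I would exploit the decomposition $\overline{\CM}_s \cong \CF$ together with the known Vaksman–Soibelman action on $\CF$ with basis $\{m_n\}$, but since the lemma is stated as a direct computation in $\CM_s$, the self-contained route via $\Theta$ is preferable; one can then remark that this recovers the VS formulas.

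The main obstacle I expect is bookkeeping the $\BC_q[H]$-twist correctly: $m(n)$ lives in $\CM_s = \CS_s^{-1}\BC_q[G]\otimes_{\cdots}\BC_q[H]$, and the generic character $\chi \in \BC_q[H]$ appears because the $\dT_s$-twist sends $k \mapsto k^{-1}$, flipping weights, so tracking whether a given generator produces $\chi$ or $\chi^{-1}$ (and which power of $q$) requires care with the sign of weights under $s$. The asymmetry in the formulas — $a$ lowers $n$, $d$ raises it, while $b,c$ are diagonal — reflects exactly which matrix coefficient of $V$ pairs nontrivially with $-fk = \dT_s(e)$ versus with $k^{\pm 1}$, so the computation of $\langle a, \dT_s(e^{n'-n}k^j)\rangle$ and $\langle d,\dots\rangle$ against the relevant weight spaces is where all the content sits; everything else is routine coproduct expansion.
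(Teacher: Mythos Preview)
Your overall architecture matches the paper's: transport the computation to $U^{\geqq0}[\dT_s^{-1}]^\bigstar$ via $\Theta_s$, write $m(n)=\sum_j\varphi_j\star\chi^j$, and reduce $\langle\Theta(\varphi\,m(n)),e^{n'}k^i\rangle$ to evaluations of the form $\langle(\varphi\varphi_j)\dT_s,e^{n'}k^i\rangle$ for each generator $\varphi\in\{a,b,c,d\}$.

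The gap is at the step where you factor $\langle(\varphi\psi)\dT_s,u\rangle$. You invoke \eqref{eq:mult-left} and Lemma~\ref{lem:delta-U}, but neither is the right tool: \eqref{eq:mult-left} governs the left $U$-action on a product in $\BC_q[G]$, not the left $\BC_q[G]$-action on $\CM_s$; and Lemma~\ref{lem:delta-U} only describes the shape of $\Delta(\dT_w(U^\pm))$, which does not produce the needed splitting. What the paper actually uses is Proposition~\ref{prop:dT} (Corollary~\ref{cor:dT}): the right $\dT_s$-action on a product is \emph{not} multiplicative, and one has
\[
(\psi\varphi)\dT_s=\sum_{p\geqq0}\frac{q^{p(p-1)/2}(q-q^{-1})^p}{[p]!}\,(\psi\dT_s f^{p})(\varphi\dT_s e^{p}),
\]
coming from $\Delta(\dT_s)=(\dT_s\otimes\dT_s)\exp_q((q-q^{-1})f\otimes e)$. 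The paper then computes $a\dT_s=c$, $a\dT_s f=a$, $a\dT_s f^{(2)}=0$, so only $p=0,1$ contribute; it is precisely the $p=1$ term that produces the shift $m(n)\mapsto m(n-1)$.

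Without this $q$-exponential correction your scheme fails concretely. If one naively assumes $(\varphi\psi)\dT_s=(\varphi\dT_s)(\psi\dT_s)$ and splits over $\Delta(e^{n'}k^i)$ via the $q$-binomial formula, then for $\varphi=a$ every surviving term pairs $a\dT_s=c$ against a monomial $e^{n'-r}k^{r+i}$, and since $\langle c,e^m k^j\rangle=\langle v_1^*,e^mk^jv_0\rangle=0$ for all $m,j$, the entire sum vanishes --- contradicting $am(n)\ne0$. So Corollary~\ref{cor:dT} is not an optional refinement but the mechanism that makes $a$ and $d$ act off-diagonally; your plan has no substitute for it.
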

\begin{proof}
By Corollary \ref{cor:dT} we have
\begin{align*}
&\langle (\psi \varphi)\dT_s,e^{n}k^{i}\rangle
\\
=&
\sum_{r=0}^n
\sum_{p=0}^\infty
q^{-2ri}
c(p,n,r)
\langle \psi\dT_s f^{(p)}
,k^{n-r+i}e^r\rangle
\langle 
\varphi\dT_s,e^{n+p-r}k^{i}\rangle
\end{align*}
for $\varphi, \psi\in\BC_q[G]$,
where
\[
c(p,n,r)
=
q^{p(p-1)/2-r(n-r)}(q-q^{-1})^p
\begin{bmatrix}n\\r\end{bmatrix}.
\]
By a direct calculation we have
\begin{align*}
a\dT_s=c,\quad
a\dT_sf=a, \quad
a\dT_sf^{(2)}=0.
\end{align*}
Hence for $\varphi\in\BC_q[G]$ we have
\begin{align*}
&\langle (a\varphi)\dT_s,e^nk^{i}\rangle
\\
=&
\sum_{r=0}^n
\sum_{p=0}^\infty
q^{-2ri}
c(p,n,r)
\langle a\dT_s f^{(p)}
,k^{n-r+i}e^r\rangle
\langle 
\varphi\dT_s,e^{n+p-r}k^{i}\rangle
\\
=&
\sum_{r=0}^n
q^{-2ri}
c(0,n,r)
\langle c
,k^{n-r+i}e^r\rangle
\langle 
\varphi\dT_s,e^{n-r}k^{i}\rangle
\\
&\qquad+
\sum_{r=0}^n
q^{-2ri}
c(1,n,r)
\langle a
,k^{n-r+i}e^r\rangle
\langle 
\varphi\dT_s,e^{n+1-r}k^{i}\rangle
\\
=&
c(1,n,0)q^{n+i}
\langle \varphi\dT_s,e^{n+1}k^{i}\rangle
\\
=&
q^{i}
(q-q^{-1})q^{n}
\langle \varphi\dT_s,e^{n+1}k^{i}\rangle.
\end{align*}
Taking $\varphi_j\in\BC_q[SL_2]$ such that $m(n)=\sum_j\varphi_j\star\chi^j$ we have
\begin{align*}
&\langle\Theta(am(n)),e^{n'}k^{i}\rangle
=
\sum_j
\langle\Theta(a\varphi_j\star\chi_j),e^{n'}k^{i}\rangle
\\
=&
\sum_j
\langle (a\varphi_j)\dT_s,e^{n'}k^{i}\rangle q^{ij}
=
q^{i}(q-q^{-1})q^{n'}
\sum_j
\langle \varphi_j\dT_s,e^{n'+1}k^{i}\rangle q^{ij}
\\
=&
(q-q^{-1})q^{n'}
\sum_j
\langle\Theta(\varphi_j\star\chi^{j+1}),e^{n'+1}k^{i}\rangle
\\
=&
(q-q^{-1})q^{n'}
\langle\Theta(m(n))\chi,e^{n'+1}k^{i}\rangle
=
(q-q^{-1})q^{n'}
\delta_{n,n'+1}\langle\chi,k^i\rangle
\\
=&(q-q^{-1})q^{n-1}
\langle\Theta(m(n-1))\chi,e^{n'}k^{i}\rangle.
\end{align*}
Hence $am(n+1)=
\chi
(q-q^{-1})q^{n}
m(n)$.
The proof of other formulas are similar.
\end{proof}

\subsection{}
Let us return to the general situation where $\Gg$ is any simple Lie algebra.
\begin{proposition}
\label{prop:MMM}
We have
\begin{align*}
\CM_w
\cong&
\CS_w^{-1}\BC_q[G/N^-]\otimes_{
\CS_w^{-1}(\BC_q[G/N^-]\cap\BC_q[N_w^-\backslash G])}
\BC_q[H]
\\
\cong&
\BC_q[G/N^-]\otimes_{
\BC_q[G/N^-]\cap\BC_q[N_w^-\backslash G]}
\BC_q[H].
\end{align*}
\end{proposition}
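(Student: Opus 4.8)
The plan is to obtain the first isomorphism by a purely formal manipulation of relative tensor products, using the decompositions already established in Proposition~\ref{prop:wisom} and Proposition~\ref{prop:tensor}, and then to deduce the second by ``de-localizing''. Throughout put
\[
A=\CS_w^{-1}\BC_q[G],\quad B=\CS_w^{-1}\BC_q[N_w^-\backslash G],\quad C_0=\BC_q[G/N^-],\quad C=\CS_w^{-1}C_0,
\]
\[
D_0=\BC_q[G/N^-]\cap\BC_q[N_w^-\backslash G],\quad D=\CS_w^{-1}D_0,\quad E=\CS_w^{-1}(\BC_q[N^+\backslash G]\dT_w^{-1}),\quad R=\BF[\tilde\CS_w].
\]
Since the elements of $\CS_w$ are non-zero-divisors (Proposition~\ref{prop:2gw1}), $C$, $D$, $E$, $R$ are all subalgebras of $A$, with $R\subset D\subset B\cap C$ and $R\subset E\subset B$, and $\eta_w$ restricts to an algebra isomorphism $R\xrightarrow{\sim}\BC_q[H]$ because $\eta_w(\sigma^w_\lambda)=\chi_\lambda$; in particular every element of $\CS_w$ acts invertibly on $\BC_q[H]$ through $\eta_w$. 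Recall that by definition $\CM_w=A\otimes_B\BC_q[H]$, with $\BC_q[H]$ a left $B$-module via $\eta_w$.

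For the first isomorphism, multiplication gives $C\otimes_R E\xrightarrow{\sim}A$ by Proposition~\ref{prop:wisom} and $D\otimes_R E\xrightarrow{\sim}B$ by Proposition~\ref{prop:tensor}, both as bimodules over the relevant subalgebras. Using associativity of the relative tensor product together with $C\otimes_D D\cong C$ (viewing $D$ as a $(D,R)$-bimodule via $R\subset D$),
\[
C\otimes_D B\cong C\otimes_D(D\otimes_R E)\cong(C\otimes_D D)\otimes_R E\cong C\otimes_R E\cong A,
\]
and chasing an element shows this composite is $c\otimes b\mapsto cb$; hence multiplication gives an isomorphism $C\otimes_D B\cong A$ of $(C,B)$-bimodules. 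Tensoring over $B$ with $\BC_q[H]$ and using associativity once more,
\[
\CM_w=A\otimes_B\BC_q[H]\cong(C\otimes_D B)\otimes_B\BC_q[H]\cong C\otimes_D\BC_q[H],
\]
where on the right $\BC_q[H]$ carries the left $D$-module structure obtained by restricting $\eta_w$; this is the first asserted isomorphism.

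For the second isomorphism I would show that the natural map
\[
\Phi\colon C_0\otimes_{D_0}\BC_q[H]\longrightarrow C\otimes_D\BC_q[H]\cong\CM_w,
\]
induced by $C_0\hookrightarrow C$, $D_0\hookrightarrow D$ and compatibility of the $\BC_q[H]$-structures, is bijective — the point being that, once one has tensored with $\BC_q[H]$, inverting $\CS_w$ changes nothing, since $\CS_w$ acts invertibly there. Concretely, the localization map $\BC_q[H]\to\CS_w^{-1}\BC_q[H]\cong D\otimes_{D_0}\BC_q[H]$ is an isomorphism, so $C\otimes_D\BC_q[H]\cong C\otimes_D(D\otimes_{D_0}\BC_q[H])\cong C\otimes_{D_0}\BC_q[H]$; and it remains to check that multiplication gives an isomorphism of $(C_0,D_0)$-bimodules $C_0\otimes_{D_0}D\cong C$, whence
\[
C\otimes_{D_0}\BC_q[H]\cong(C_0\otimes_{D_0}D)\otimes_{D_0}\BC_q[H]\cong C_0\otimes_{D_0}\BC_q[H].
\]
Unwinding this chain shows that $\Phi$ is precisely the inverse of the resulting composite isomorphism, hence bijective. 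Surjectivity of $C_0\otimes_{D_0}D\to C$ is clear from $C=C_0\CS_w^{-1}$, and injectivity follows from the two-sided Ore property of $\CS_w$ in $C_0$ (Proposition~\ref{prop:2gw3}) together with the fact that $\CS_w$ is a commuting family contained in $D_0$, so that in any finite sum a common right denominator can be extracted into $D_0$ and then cancelled.

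The first isomorphism is thus purely formal. The main obstacle is the second step: one must verify that every identification used is the ``obvious'' multiplication or localization map, so that the chains compose correctly, and keep careful track of the two-sided Ore localizations — above all of the auxiliary statement $C_0\otimes_{D_0}D\cong C$, whose proof draws on the facts that $\CS_w$ consists of non-zero-divisors, lies in $D_0$, is two-sided Ore in $C_0$, and acts invertibly on $\BC_q[H]$ via $\eta_w$ (Propositions~\ref{prop:2gw1}, \ref{prop:2gw3}, \ref{prop:leftOre} and the construction of $\eta_w$).
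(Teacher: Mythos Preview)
Your proof is correct, and for the first isomorphism it takes a genuinely different route from the paper's. The paper argues by first establishing that $C$ is \emph{free} as a right $D$-module, with explicit complement $F_w(i_w^+(U^+[\dT_w^{-1}]^\bigstar))$, by combining \eqref{eq:FS} with Propositions~\ref{prop:XX1}, \ref{prop:intersec}, \ref{prop:Fw}; it then matches $C\otimes_D\BC_q[H]\cong U^+[\dT_w^{-1}]^\bigstar\otimes\BC_q[H]$ against the description of $\CM_w$ from Proposition~\ref{prop:Upsilon}. Your argument bypasses all of this finer structure: from the two multiplication isomorphisms $C\otimes_R E\cong A$ (Proposition~\ref{prop:wisom}) and $D\otimes_R E\cong B$ (Proposition~\ref{prop:tensor}) alone you extract $C\otimes_D B\cong A$ by pure bimodule-tensor associativity, and then cancel the $B$. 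This is cleaner and uses strictly less input, though of course Proposition~\ref{prop:tensor} itself is proved in the paper via \ref{prop:XX1} and \ref{prop:intersec}, so the economy is in the packaging rather than the total work. The paper's approach, on the other hand, produces along the way the explicit identification $\CM_w\cong U^+[\dT_w^{-1}]^\bigstar\otimes\BC_q[H]$ that gets reused later.

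For the second isomorphism both you and the paper reduce to the same auxiliary statement $C\cong C_0\otimes_{D_0}D$; the paper simply asserts it, while you sketch the verification. Your sketch is essentially right: the point is that $\CS_w\subset D_0$, that $\CS_w$ consists of commuting non-zero-divisors, and that it is two-sided Ore in $C_0$, so any element of $D$ can be written with a right denominator in $\CS_w$ and a numerator which, being $d_j s_j$ with $d_j\in D$ and $s_j\in\CS_w$, lies in $C_0\cap \CS_w^{-1}\BC_q[N_w^-\backslash G]=D_0$ by Lemma~\ref{lem:equiv}. One small warning: when you write ``a common right denominator can be extracted into $D_0$'', make sure you are using the right Ore property in the correct ring; you need $d_j=d_j' s^{-1}$ with $d_j'\in D_0$, and the cleanest way to get this is the observation just made, rather than appealing directly to Ore in $D_0$ (which is not stated separately in the paper).
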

\begin{proof}
By 
\eqref{eq:FS}, 
Proposition \ref{prop:XX1}, 
Proposition \ref{prop:intersec}, 
Proposition \ref{prop:Fw}
we have
\[
\CS_w^{-1}\BC_q[G/N^-]
\cong
F_w(i_w^+(U^+[\dT_w^{-1}]^\bigstar))
\otimes
\CS_w^{-1}(\BC_q[G/N^-]\cap\BC_q[N_w^-\backslash G]).
\]
Hence we obtain
\begin{align*}
&\CS_w^{-1}\BC_q[G/N^-]\otimes_{
\CS_w^{-1}(\BC_q[G/N^-]\cap\BC_q[N_w^-\backslash G])}
\BC_q[H]
\\
\cong&
U^+[\dT_w^{-1}]^\bigstar\otimes\BC_q[H]
\cong
\CM_w
\end{align*}
by Proposition \ref{prop:Upsilon}.
The second isomorphism is a consequence of 
\begin{align*}
&\CS_w^{-1}\BC_q[G/N^-]
\\
\cong&
\BC_q[G/N^-]\otimes_{\BC_q[G/N^-]\cap\BC_q[N_w^-\backslash G]}
\CS_w^{-1}(\BC_q[G/N^-]\cap\BC_q[N_w^-\backslash G]).
\end{align*}
\end{proof}

We regard $\BC_q[H]$ as a subalgebra of $\BC_q[B^-]$ via the Hopf algebra homomorphism
$U^{\leqq0}\to U^0$ given by $ty\mapsto\varepsilon(y)t$ $(t\in U^0, y\in U^-)$.
Define an action of $W$ on $\BC_q[H]$ by
\[
\langle w\chi,t\rangle=
\langle\chi,\dT_w^{-1}(t)\rangle
\qquad(w\in W, \chi\in \BC_q[H], t\in U^0).
\]
For $w\in W$ 
we define a twisted right $\BC_q[H]$-module structure of 
$\BC_q[B^-]$ by
\begin{equation}
\label{eq:twist-action}
\varphi\bullet_w\chi=(Sw\chi)\varphi
\qquad(\varphi\in\BC_q[B^-],\;\chi\in\BC_q[H]).
\end{equation}
We denote by $\BC_q[B^-]^{\bullet_w}$ the $\BF$-algebra $\BC_q[B^-]$ equipped with the twisted right $\BC_q[H]$-module structure \eqref{eq:twist-action}.

We are going to construct an embedding
\[
\Xi_w:\CM_w\hookrightarrow\BC_q[B^-]^{\bullet_w}.
\]
of right $\BC_q[H]$-module.

We first define 
\[
\Xi_w':\BC_q[G/N^-]\to\BC_q[B^-]
\]
by
\[
\langle \Xi_w'(\varphi),u\rangle=
\langle
\dT_w\varphi,Su
\rangle
\qquad(\varphi\in\BC_q[G/N^-], u\in U^{\leqq0}).
\]
\begin{lemma}
\label{lem:Xi1}
The linear map
$\Xi_w'$ is an algebra anti-homomorphism. Moreover, for $\lambda\in P^-$ we have $\Xi_w'(\sigma^w_\lambda)=\chi_{-w\lambda}\in\BC_q[B^-]^\times$.
\end{lemma}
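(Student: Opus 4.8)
The plan is to verify the two assertions of Lemma \ref{lem:Xi1} by direct computation, using the interplay between the $U$-bimodule structure of $\BC_q[G]$, the left action of $\dT_w$, and the antipode $S$ of $U$. First I would unwind the definition: for $\varphi,\psi\in\BC_q[G/N^-]$ and $u\in U^{\leqq0}$ one has, by \eqref{eq:mult-left},
\[
\langle\dT_w(\varphi\psi),Su\rangle
=\langle\varphi\psi,(Su)\dT_w\rangle
=\langle\varphi\psi,\dT_w\dT_w^{-1}(Su)\rangle,
\]
and since $S$ is an anti-automorphism with $\Delta(Su)=\sum_{(u)}Su_{(1)}\otimes Su_{(0)}$ (reversing the order), expanding the comultiplication gives a sum $\sum_{(u)}\langle\dT_w\varphi,\,?\rangle\langle\dT_w\psi,\,?\rangle$ with the two factors appearing in the opposite order from the product $\varphi\psi$; that order reversal is exactly what makes $\Xi_w'$ an algebra anti-homomorphism. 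The point I must be careful about is that the argument $Su$ need not lie in $U^{\leqq0}$ in general, but $\dT_w$ and $S$ both preserve $\Mod_0(U)$, and the relevant evaluations only involve the image of $u$ under comultiplication restricted to $U^{\leqq0}$; concretely one should use Corollary \ref{cor:dT} together with Lemma \ref{lem:delta-U} to check that all intermediate terms pair correctly with elements of $\BC_q[G/N^-]$ (so that the $U^-$-parts are killed by $\varepsilon$ on the appropriate side).

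More precisely, I would argue as follows. Write $u=ty$ with $t\in U^0$, $y\in U^-$; then $Su=S(y)S(t)$ with $S(t)\in U^0$ and $S(y)\in U^{\leqq0}$. Using $\Delta_U$ and the fact that $\varphi,\psi$ kill $U^-\Ker\varepsilon$ on the right (they lie in $\BC_q[G/N^-]=\BC_q[G]\cap(U^+)^\bigstar\BC_q[H]$), together with $\Delta(\dT_w)$ from Corollary \ref{cor:dT} — whose correction factors $\exp_{q_{i_r}}(Y_r)$ with $Y_r\in U^{\geqq0}\dT_w(U^{\leqq0})\otimes\cdots$ contribute only terms that are absorbed by the counit — one finds
\[
\langle\Xi_w'(\varphi\psi),u\rangle
=\sum_{(u)}\langle\Xi_w'(\psi),u_{(0)}\rangle\langle\Xi_w'(\varphi),u_{(1)}\rangle
=\langle\Xi_w'(\psi)\Xi_w'(\varphi),u\rangle,
\]
which is the claim $\Xi_w'(\varphi\psi)=\Xi_w'(\psi)\Xi_w'(\varphi)$. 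The main obstacle, and the step deserving the most care, is precisely the bookkeeping of which factors in $\Delta(\dT_w(\varphi\psi))$ survive the pairing with $U^{\leqq0}$: one needs Lemma \ref{lem:delta-U} (applied in the form $\Delta(\dT_w(U^+))\subset U\otimes\dT_w(U^+)U^0$, after noting $\BC_q[G/N^-]$ is detected on $\dT_w(U^+)U^0$) to guarantee that the sum collapses to the two-term product with no leftover $\dT_w(U^-)$-contributions.

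Finally, for the second assertion, let $\lambda\in P^-$ and recall $\sigma^w_\lambda$ is the matrix coefficient $u\mapsto\langle v^*_\lambda\dT_w^{-1},uv_\lambda\rangle$. For $u\in U^{\leqq0}$ we compute
\[
\langle\Xi_w'(\sigma^w_\lambda),u\rangle
=\langle\dT_w\sigma^w_\lambda,Su\rangle
=\langle\sigma^w_\lambda,(Su)\dT_w\rangle
=\langle v^*_\lambda\dT_w^{-1},\dT_w\dT_w^{-1}(Su)\,v_\lambda\rangle.
\]
Writing $Su=S(t)$ up to a term in $U^-\Ker\varepsilon$ (with $u=ty$, $y\in U^-$), and using that $v_\lambda$ is a lowest weight vector annihilated by all $f_i$, the $U^-$-part acts by the counit, so only the $U^0$-component $S(t)=t^{-1}$ survives; since $\dT_w v_\lambda$ spans $V(\lambda)_{w\lambda}$ one gets $\langle v^*_\lambda\dT_w^{-1},S(t)v_\lambda\rangle=\chi_{w\lambda}(S(t))=\chi_{w\lambda}(t^{-1})=\chi_{-w\lambda}(t)=\langle\chi_{-w\lambda},u\rangle$. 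Hence $\Xi_w'(\sigma^w_\lambda)=\chi_{-w\lambda}$, which is invertible in $\BC_q[H]\subset\BC_q[B^-]$ with inverse $\chi_{w\lambda}$, so $\chi_{-w\lambda}\in\BC_q[B^-]^\times$ as claimed.
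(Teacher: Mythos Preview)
Your overall strategy matches the paper's, but both parts have concrete errors in the execution.

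For the anti-homomorphism claim, the paper's proof hinges on the identity $\dT_w(\varphi\psi)=(\dT_w\varphi)(\dT_w\psi)$ for $\varphi,\psi\in\BC_q[G/N^-]$, obtained from Corollary~\ref{cor:dT} in the $X$-form $\Delta(\dT_w)=(\dT_w\otimes\dT_w)\exp_{q_{i_1}}(X_1)\cdots\exp_{q_{i_m}}(X_m)$: each $X_r$ has first tensor factor $\tf_{\Bi,r}\in U^-\cap\Ker\varepsilon$, which annihilates $\varphi\in\BC_q[G/N^-]$ under the left action, so the correction factors act trivially. You instead invoke the $Y$-form, but there the correction terms act via $\df_{\Bi,r}k_{\Bi,r}$ on $\dT_w\psi$, and $\dT_w\psi$ need not lie in $\BC_q[G/N^-]$, so those terms do \emph{not} vanish. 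The appeal to Lemma~\ref{lem:delta-U} is also misplaced: that lemma concerns $\Delta$ applied to elements of $U$, not the operator $\Delta(\dT_w)$, and plays no role here. Once the multiplicativity of the left $\dT_w$-action on $\BC_q[G/N^-]$ is in hand, the anti-homomorphism property is immediate from $\Delta(Su)=\sum_{(u)}Su_{(1)}\otimes Su_{(0)}$.

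For the second assertion, your chain of equalities loses track of where $\dT_w$ acts. You end up evaluating $\langle v^*_\lambda\dT_w^{-1},S(t)v_\lambda\rangle$ and claim it equals $\chi_{w\lambda}(S(t))$, but $v^*_\lambda\dT_w^{-1}=v^*_{w\lambda}$ pairs nontrivially only with $V(\lambda)_{w\lambda}$, whereas $S(t)v_\lambda\in V(\lambda)_\lambda$; this pairing is zero unless $w\lambda=\lambda$. The correct object is $\langle v^*_{w\lambda},(Su)\,\dT_wv_\lambda\rangle$: here $S(t)\dT_wv_\lambda=\chi_{-w\lambda}(t)\dT_wv_\lambda$, and $S(y)\dT_wv_\lambda$ has weight $w\lambda-\gamma$ for $y\in U^-_{-\gamma}$, so it pairs to zero with $v^*_{w\lambda}$ unless $\gamma=0$. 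Equivalently, as the paper does, rewrite the pairing as $\langle v^*_\lambda,\dT_w^{-1}(Su)v_\lambda\rangle$ and use that $V(\lambda)_\lambda$ is one-dimensional. Your appeal to $f_iv_\lambda=0$ alone is insufficient, since $\dT_w^{-1}(Sy)$ need not lie in $U^{\leqq0}$; the vanishing is a weight argument, not a lowest-weight-vector argument applied to $Su$ directly.
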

\begin{proof}
For 
$\varphi, \psi\in \BC_q[G/N^-]$, $u\in U^{\leqq0}$ we have
\begin{align*}
&\langle
\Xi_w'(\varphi\psi),u
\rangle
=
\langle
\dT_w(\varphi\psi),Su
\rangle
=
\langle
(\dT_w\varphi)(\dT_w\psi),Su
\rangle
\\
=&\sum_{(u)}
\langle
\dT_w\varphi,Su_{(1)}
\rangle
\langle
\dT_w\psi,Su_{(0)}
\rangle
=
\sum_{(u)}
\langle
\Xi_w'(\varphi),u_{(1)}
\rangle
\langle
\Xi_w'(\psi),u_{(0)}
\rangle
\\
=&\langle
\Xi_w'(\psi)\Xi_w'(\varphi),u
\rangle.
\end{align*}
Here, the second equality is a consequence of Corollary \ref{cor:dT}.
For $\lambda\in P^-$, $t\in U^0$, $y\in U^{\leqq0}$ we have
\begin{align*}
&\langle \Xi_w'(\sigma^w_\lambda),ty\rangle
=
\langle \dT_w\sigma^w_\lambda,(Sy)(St)\rangle
=
\langle v^*_\lambda\dT_w^{-1},((Sy)(St))\dT_wv_\lambda\rangle
\\
=&
\langle v^*_\lambda,\{\dT_w^{-1}(Sy)\}\{\dT_w^{-1}(St)\}v_\lambda\rangle
=
\varepsilon(\dT_w^{-1}(Sy))
\langle\chi_\lambda,\dT_w^{-1}(St)\rangle
\\
=&
\varepsilon(y)\langle\chi_{-w\lambda},t\rangle
=\langle\chi_{-w\lambda},ty\rangle.
\end{align*}
\end{proof}
Hence $\Xi_w'$ induces an algebra anti-homomorphism
\begin{equation}
\Xi_w'':\CS_w^{-1}\BC_q[G/N^-]\to\BC_q[B^-].
\end{equation}

\begin{lemma}
\label{lem:Xi2}
For 
$\varphi\in\CS_w^{-1}(\BC_q[G/N^-]\cap\BC_q[N_w^-\backslash G])$ we have
$\Xi_w''(\varphi)=Sw(\eta_w(\varphi))$.
\end{lemma}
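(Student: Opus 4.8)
The plan is to reduce to the non-localized case and then to compute $\dT_w\varphi$ on $U^{\leqq0}$ using the two defining conditions on $\varphi$. For the reduction I would write a general element of $\CS_w^{-1}(\BC_q[G/N^-]\cap\BC_q[N_w^-\backslash G])$ as $\varphi=(\sigma_\mu^w)^{-1}\varphi_0$ with $\varphi_0\in\BC_q[G/N^-]\cap\BC_q[N_w^-\backslash G]$. Since $\Xi_w''$ is an algebra anti-homomorphism extending $\Xi_w'$ and $\Xi_w'(\sigma_\mu^w)=\chi_{-w\mu}$ (Lemma \ref{lem:Xi1}), this gives $\Xi_w''(\varphi)=\Xi_w'(\varphi_0)\chi_{w\mu}$; since $\eta_w$ is an algebra homomorphism extending $\eta_w'$ with $\eta_w'(\sigma_\mu^w)=\chi_\mu$, while $w$ acts on the commutative algebra $\BC_q[H]$ by a Hopf automorphism with $w\chi_\nu=\chi_{w\nu}$ and $S\chi_\nu=\chi_{-\nu}$, it gives $Sw(\eta_w(\varphi))=S(w(\eta_w'(\varphi_0)))\chi_{w\mu}$. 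Hence it is enough to prove $\Xi_w'(\varphi_0)=Sw(\eta_w'(\varphi_0))$ for $\varphi_0\in\BC_q[G/N^-]\cap\BC_q[N_w^-\backslash G]$.

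For such $\varphi_0$, both sides lie in $\BC_q[B^-]$. Since $S$ maps $U^{\leqq0}$ onto itself, the defining formula for $\Xi_w'$ exhibits $\Xi_w'(\varphi_0)$ as the antipode (in $\BC_q[B^-]$) of the restriction $\dT_w\varphi_0|_{U^{\leqq0}}$ of $\dT_w\varphi_0\in\BC_q[G]$, while the right-hand side is the antipode of $w(\eta_w'(\varphi_0))$. So I would reduce to showing $\dT_w\varphi_0|_{U^{\leqq0}}=w(\eta_w'(\varphi_0))$ in $\BC_q[B^-]$. The element $w(\eta_w'(\varphi_0))$ lies in $\BC_q[H]$, and by Lemma \ref{lem:Tlr1} it agrees with $\dT_w\varphi_0$ on $U^0$, since
\[
\langle w(\eta_w'(\varphi_0)),t\rangle=\langle\eta_w'(\varphi_0),\dT_w^{-1}(t)\rangle=\langle\varphi_0\dT_w,\dT_w^{-1}(t)\rangle=\langle\dT_w\varphi_0,t\rangle\qquad(t\in U^0).
\]
Because $U^{\leqq0}=U^0\oplus(U^-\cap\Ker\varepsilon)U^0$ and $w(\eta_w'(\varphi_0))\in\BC_q[H]$ vanishes on the second summand, everything will come down to showing that $\dT_w\varphi_0$ vanishes on $(U^-\cap\Ker\varepsilon)U^0$ as well.

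To see this I would use the shape of $\varphi_0$. From \eqref{eq:decompCq} together with the (easily checked) fact that $\BC_q[G/N^-]$ meets the $\lambda$-isotypic component of $\BC_q[G]$ in $\{\Phi_{\xi\otimes v_\lambda}\mid\xi\in V^*(\lambda)\}$, one can write $\varphi_0=\sum_\lambda\Phi_{\xi_\lambda\otimes v_\lambda}$ (a finite sum, with $\xi_\lambda\in V^*(\lambda)$ and $v_\lambda$ the lowest weight vector), so that $\dT_w\varphi_0=\sum_\lambda\Phi_{\xi_\lambda\otimes\dT_w v_\lambda}$ with $\dT_w v_\lambda$ of weight $w\lambda$; and $\varphi_0\in\BC_q[N_w^-\backslash G]$ forces $\langle\xi_\lambda,(U^-[\dT_w]\cap\Ker\varepsilon)V(\lambda)\rangle=0$ for each $\lambda$. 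Since $\langle\Phi_{\xi_\lambda\otimes\dT_w v_\lambda},yt\rangle=\chi_{w\lambda}(t)\langle\xi_\lambda,y\,\dT_w v_\lambda\rangle$ for $y\in U^-$, $t\in U^0$, the desired vanishing follows once one proves
\[
(U^-\cap\Ker\varepsilon)\,\dT_w v_\lambda\ \subseteq\ (U^-[\dT_w]\cap\Ker\varepsilon)\,V(\lambda).
\]
For this I would invoke Proposition \ref{prop:decomposition}: the multiplication $U^-[\dT_w]\otimes(U^-\cap\dT_w(U^-))\to U^-$ is bijective by \eqref{eq:Ubunkai-d}, so splitting off the augmentation yields $U^-\cap\Ker\varepsilon=(U^-\cap\dT_w(U^-)\cap\Ker\varepsilon)\oplus(U^-[\dT_w]\cap\Ker\varepsilon)(U^-\cap\dT_w(U^-))$; the second summand carries $\dT_w v_\lambda$ into $(U^-[\dT_w]\cap\Ker\varepsilon)V(\lambda)$, and for $z\in U^-\cap\dT_w(U^-)$ with $\varepsilon(z)=0$ one has $z=\dT_w(z')$ with $z'\in U^-$ and $\varepsilon(z')=0$ by \eqref{eq:epsT}, so $z'$ lies in the augmentation ideal $\sum_iU^-f_i$ of $U^-$, which annihilates the lowest weight vector $v_\lambda$; hence $z\,\dT_w v_\lambda=\dT_w(z'v_\lambda)=0$.

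Combining these, $\dT_w\varphi_0|_{U^{\leqq0}}=w(\eta_w'(\varphi_0))$, and applying the antipode of $\BC_q[B^-]$ gives $\Xi_w'(\varphi_0)=Sw(\eta_w'(\varphi_0))$, which finishes the proof. The main obstacle will be the inclusion $(U^-\cap\Ker\varepsilon)\dT_w v_\lambda\subseteq(U^-[\dT_w]\cap\Ker\varepsilon)V(\lambda)$: this is the only place where structure theory (Proposition \ref{prop:decomposition}) really enters, whereas the rest is a formal unwinding of the definitions of $\Xi_w'$ and $\eta_w'$ and of the $U$-bimodule structure of $\BC_q[G]$, together with elementary facts about the antipode such as $S(U^{\leqq0})=U^{\leqq0}$ and its compatibility with the restriction $\BC_q[G]\to\BC_q[B^-]$.
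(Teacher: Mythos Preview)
Your argument is correct. Both your proof and the paper's rely on the decomposition $U^-\cong U^-[\dT_w]\otimes(U^-\cap\dT_w(U^-))$ from Proposition~\ref{prop:decomposition}, but the computations are organized differently.

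The paper's proof is shorter and avoids writing $\varphi_0$ in terms of matrix coefficients. It tests $\Xi_w'(\varphi)$ directly on elements of the form $(S^{-1}y_2)\,t\,(S^{-1}y_1)$ with $y_1\in U^-[\dT_w]$, $y_2\in U^-\cap\dT_w(U^-)$, $t\in U^0$; after applying $S$ this becomes $y_1(St)y_2$, so that in $\langle\dT_w\varphi,\,y_1(St)y_2\rangle=\langle y_2\,\dT_w\varphi\,y_1,\,St\rangle$ the two factors $y_1,y_2$ land on opposite sides of $\varphi$ under the $U$-bimodule structure. The right action of $y_1\in U^-[\dT_w]$ contributes $\varepsilon(y_1)$ by the $\BC_q[N_w^-\backslash G]$-condition, while $y_2\,\dT_w\varphi=\dT_w(\dT_w^{-1}(y_2))\varphi$ contributes $\varepsilon(y_2)$ by the $\BC_q[G/N^-]$-condition, since $\dT_w^{-1}(y_2)\in U^-$. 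This gives the result in a few lines without ever unpacking $\varphi$ via \eqref{eq:decompCq}.

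Your route---reducing to $\dT_w\varphi_0|_{U^{\leqq0}}=w(\eta_w'(\varphi_0))$ and then proving $(U^-\cap\Ker\varepsilon)\,\dT_wv_\lambda\subseteq(U^-[\dT_w]\cap\Ker\varepsilon)V(\lambda)$ via Proposition~\ref{prop:decomposition}---is more explicit and makes the role of the lowest-weight vector transparent; it also separates cleanly the ``antipode'' bookkeeping from the actual vanishing statement. The paper's approach buys brevity and stays entirely at the level of the bimodule structure of $\BC_q[G]$; yours buys a concrete module-theoretic picture. Either way the essential content is the same use of the decomposition of $U^-$.
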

\begin{proof}
We may assume 
$\varphi\in\BC_q[G/N^-]\cap\BC_q[N_w^-\backslash G]$.
By \eqref{eq:Ubunkai-d} the multiplication of $U^{\leqq0}$ induces an isomorphism 
\[
U^{\leqq0}\cong
S^{-1}(U^-\cap\dT_w(U^-))\otimes U^0\otimes S^{-1}(U^-[\dT_w])
\]
of vector spaces.
Let 
$y_1\in U^-[\dT_w]$, $y_2\in U^-\cap\dT_w(U^-)$, $t\in U^0$.
Then we have
\begin{align*}
&\langle\Xi_w'(\varphi),(S^{-1}y_2)t(S^{-1}y_1)\rangle
=
\langle\dT_w\varphi,y_1(St)y_2\rangle
=
\langle y_2\dT_w\varphi y_1,St\rangle
\\
=&
\langle\dT_w(\dT_w^{-1}(y_2))\varphi y_1,St\rangle
=\varepsilon(y_1)\varepsilon(y_2)
\langle\dT_w\varphi,St\rangle
\\
=&
\varepsilon(y_1)\varepsilon(y_2)
\langle\varphi\dT_w,\dT_w^{-1}S(t)\rangle
=
\langle\eta_w(\varphi),\dT_w^{-1}(St)\rangle
\varepsilon(y_1)\varepsilon(y_2)
\\
=&
\langle Sw(\eta_w(\varphi)),(S^{-1}y_2)t(S^{-1}y_1)\rangle.
\end{align*}
\end{proof}

By Proposition \ref{prop:MMM}, Lemma \ref{lem:Xi1} and Lemma \ref{lem:Xi2} 
we obtain a homomorphism 
\begin{equation}
\label{eq:Xi}
\Xi_w:\CM_w\to\BC_q[B^-]^{\bullet_w}
\end{equation}
of right $\BC_q[H]$-modules given by
\[
\Xi_w(\varphi\star\chi)
=
\Xi_w''(\varphi)\bullet_w\chi
\qquad
(\varphi\in\CS_w^{-1}\BC_q[G/N^-], \chi\in\BC_q[H]).
\]
Since $\Xi_w''$ is an algebra anti-homomorphism, we have
\begin{equation}
\Xi_w(\varphi\psi\star1)=\Xi_w(\psi\star1)\Xi_w(\varphi\star1)\qquad
(\varphi, \psi\in\CS_w^{-1}\BC_q[G/N^-]).
\end{equation}

\subsection{}
By \eqref{eq:Ubunkai-h} and Lemma \ref{lem:transfer} 
we can define an injective linear map
\begin{equation}
\Omega_w:U^{\geqq0}[\dT_w^{-1}]^\bigstar\to\BC_q[B^-]^{\bullet_w}
\end{equation}
by
\begin{multline*}
\langle\Omega_w(f),ty_2y_1\rangle
=\varepsilon(y_2)\langle f,\dT_w^{-1}S(ty_1)\rangle
\\
\qquad(f\in U^{\geqq0}[\dT_w^{-1}]^\bigstar,\; 
y_1\in U^-[\hT_w],\;
y_2\in U^-\cap\hT_wU^-,\;
t\in U^0).
\end{multline*}
\begin{lemma}
\label{lem:jmath}
The linear map $\Omega_w$ is a homomorphism of right $\BC_q[H]$-modules.
\end{lemma}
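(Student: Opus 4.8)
The plan is to reduce the statement to a computation on a basis of $U^{\leqq0}$, and then to match the two twisted actions term by term. Since $\BC_q[H]=\bigoplus_{\mu\in P}\BF\chi_\mu$ and each $\chi_\mu$ is group-like, both in $\BC_q[H]$ and, via the Hopf algebra embedding $\BC_q[H]\hookrightarrow\BC_q[B^-]$, in $\BC_q[B^-]$, it suffices to show $\Omega_w(f\chi_\mu)=(Sw\chi_\mu)\,\Omega_w(f)$ for $\mu\in P$ and $f\in U^{\geqq0}[\dT_w^{-1}]^\bigstar$. By \eqref{eq:Ubunkai-h} together with $U^{\leqq0}\cong U^0\otimes U^-$, the elements $k_\gamma y_2y_1$ with $\gamma\in Q$, $y_2$ ranging over a basis of $U^-\cap\hT_w(U^-)$ and $y_1$ over a basis of $U^-[\hT_w]$, form a basis of $U^{\leqq0}$; hence it is enough to evaluate both sides of the desired identity on these.

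For the left-hand side I would rewrite $S(k_\gamma y_1)=S(y_1)\,k_{-\gamma}$ and use $\dT_w^{-1}(k_{-\gamma})=k_{-w^{-1}\gamma}$ together with Lemma~\ref{lem:transfer}, which gives $\dT_w^{-1}S(y_1)\in U^+[\dT_w^{-1}]$; thus $\dT_w^{-1}S(k_\gamma y_1)=\bigl(\dT_w^{-1}S(y_1)\bigr)k_{-w^{-1}\gamma}\in U^+[\dT_w^{-1}]\,U^0$, so that both the defining formula for $\Omega_w$ and the right $\BC_q[H]$-module structure on $U^{\geqq0}[\dT_w^{-1}]^\bigstar$ are applicable. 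Since $\chi_\mu$ is group-like one has $\langle f\chi_\mu,xk_\beta\rangle=q^{(\mu,\beta)}\langle f,xk_\beta\rangle$ for $x\in U^+[\dT_w^{-1}]$ and $\beta\in Q$; taking $\beta=-w^{-1}\gamma$ and using $W$-invariance of the form leads to
\[
\langle\Omega_w(f\chi_\mu),k_\gamma y_2y_1\rangle=q^{-(w\mu,\gamma)}\,\langle\Omega_w(f),k_\gamma y_2y_1\rangle .
\]

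For the right-hand side, one has $Sw\chi_\mu=\chi_{-w\mu}$ inside $\BC_q[H]\subset\BC_q[B^-]$, and since $\Omega_w(f)\in\BC_q[B^-]$ the product $\chi_{-w\mu}\,\Omega_w(f)$ may be computed in $(U^{\leqq0})^*$. Using $\Delta(U^-)\subset U^-\otimes U^{\leqq0}$ and the fact that $\chi_{-w\mu}$, viewed in $\BC_q[B^-]$, is the pullback of a character of $U^0$ along the projection $U^{\leqq0}\to U^0$, $ty\mapsto\varepsilon(y)t$, a one-line Sweedler computation gives $\langle\chi_{-w\mu}\varphi,k_\gamma y\rangle=\langle\chi_{-w\mu},k_\gamma\rangle\,\langle\varphi,k_\gamma y\rangle=q^{-(w\mu,\gamma)}\langle\varphi,k_\gamma y\rangle$ for every $\varphi\in\BC_q[B^-]$ and $y\in U^-$. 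Applying this with $\varphi=\Omega_w(f)$ and $y=y_2y_1$ reproduces exactly the right-hand side of the displayed equality, so the two sides coincide, which proves the lemma.

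All the individual steps are routine; the part that requires attention is keeping the two bookkeeping devices aligned — the factorization $U^{\leqq0}\cong U^0\otimes(U^-\cap\hT_w(U^-))\otimes U^-[\hT_w]$ that is used to test elements of $(U^{\leqq0})^*$, and the transport $\dT_w^{-1}S\colon U^-[\hT_w]\to U^+[\dT_w^{-1}]$ furnished by Lemma~\ref{lem:transfer} — so that the right $\BC_q[H]$-action on $U^{\geqq0}[\dT_w^{-1}]^\bigstar$ and the twisted action $\bullet_w$ on $\BC_q[B^-]$ genuinely match under $\Omega_w$.
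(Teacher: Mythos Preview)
Your proof is correct and follows essentially the same route as the paper: both arguments test the identity on elements of the form $t\,y_2\,y_1$ with $t\in U^0$, $y_2\in U^-\cap\hT_w(U^-)$, $y_1\in U^-[\hT_w]$, and both reduce to matching the definition of $\Omega_w$ against the twisted action $\bullet_w$. The only difference is cosmetic: the paper works with a general $\chi\in\BC_q[H]$ and general $t\in U^0$ using Sweedler notation, whereas you specialize to the basis elements $\chi_\mu$ and $k_\gamma$ and carry the computation out explicitly with the scalar $q^{-(w\mu,\gamma)}$.
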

\begin{proof}
Let $f\in U^{\geqq0}[\dT_w^{-1}]^\bigstar$ and $\chi\in\BC_q[H]$.
For $y_1\in U^-[\hT_w]$, \;$y_2\in U^-\cap\hT_wU^-$,\;
$t\in U^0$ we have
\begin{align*}
&
\langle
\Omega_w(f)\bullet_w\chi,
ty_2y_1
\rangle
=
\langle
(Sw\chi)\Omega_w(f),
ty_2y_1
\rangle
\\
=&
\sum_{(y_1), (y_2), (t)}
\langle
Sw\chi,
t_{(0)}y_{2(0)}y_{1(0)}
\rangle
\langle
\Omega_w(f),
t_{(1)}y_{2(1)}y_{1(1)}
\rangle
\\
=&
\sum_{(t)}
\langle
Sw\chi,
t_{(0)}
\rangle
\langle
\Omega_w(f),
t_{(1)}y_{2}y_{1}
\rangle
\\
=&
\sum_{(t)}
\varepsilon(y_2)
\langle
Sw\chi,
t_{(0)}
\rangle
\langle
f,
\dT_w^{-1}S(t_{(1)}y_{1})
\rangle
\\
=&
\sum_{(t)}
\varepsilon(y_2)
\langle
\chi,
\dT_w^{-1}S(t_{(0)})
\rangle
\langle
f,
\{\dT_w^{-1}S(y_{1})\}
\{\dT_w^{-1}S(t_{(1)})\}
\rangle
\\
=&
\varepsilon(y_2)
\langle
f\chi,
\{\dT_w^{-1}S(y_{1})\}
\{\dT_w^{-1}S(t)\}
\rangle
\\
=&
\varepsilon(y_2)
\langle
f\chi,
\dT_w^{-1}S(ty_{1})
\rangle
\\
=&
\langle
\Omega_w(f\chi),ty_{2}ty_{1}
\rangle.
\end{align*}
\end{proof}
\begin{lemma}
\label{lem:XiTheta}
$\Omega_w\circ\Theta_w=\Xi_w$.
\end{lemma}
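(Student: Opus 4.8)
The plan is to reduce the statement to a direct computation on a convenient spanning set of generators. First, note that both $\Omega_w\circ\Theta_w$ and $\Xi_w$ are homomorphisms of right $\BC_q[H]$-modules $\CM_w\to\BC_q[B^-]^{\bullet_w}$; for $\Xi_w$ this is \eqref{eq:Xi}, for $\Theta_w$ it is built into its construction, and for $\Omega_w$ it is Lemma \ref{lem:jmath}. By Proposition \ref{prop:MMM}, $\CM_w$ is generated as a right $\BC_q[H]$-module by the elements $\varphi\star1$ with $\varphi\in\BC_q[G/N^-]$, and by \eqref{eq:decompCq} together with the fact that $\BF v_\lambda$ is exactly the set of $v\in V(\lambda)$ with $yv=\varepsilon(y)v$ for all $y\in U^-$, the space $\BC_q[G/N^-]$ is spanned by the matrix coefficients $\Phi_{v^*\otimes v_\lambda}$ for $\lambda\in P^-$, $v^*\in V^*(\lambda)$. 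Hence it is enough to check $\Omega_w(\Theta_w(\Phi_{v^*\otimes v_\lambda}\star1))=\Xi_w(\Phi_{v^*\otimes v_\lambda}\star1)$ for all such $\lambda,v^*$.

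Next I would fix $\lambda\in P^-$ and $v^*\in V^*(\lambda)$, write $\varphi=\Phi_{v^*\otimes v_\lambda}$, and set $v_{w\lambda}:=\dT_wv_\lambda\in V(\lambda)_{w\lambda}$, so that $\dT_w\varphi=\Phi_{v^*\otimes v_{w\lambda}}$. Then I evaluate both sides at an element $ty_2y_1\in U^{\leqq0}$ decomposed as in the definition of $\Omega_w$: $t\in U^0$, $y_2\in U^-\cap\hT_w(U^-)$, $y_1\in U^-[\hT_w]$. Since $\dT_w^{-1}S(ty_1)=(\dT_w^{-1}S(y_1))(\dT_w^{-1}S(t))\in U^+[\dT_w^{-1}]\,U^0=U^{\geqq0}[\dT_w^{-1}]$ by Lemma \ref{lem:transfer}, the definition of $\Omega_w$, the identity $\Theta_w(\varphi\star1)=\Theta_w'(\varphi)$, and Lemma \ref{lem:Tlr1} give
\[
\langle\Omega_w(\Theta_w(\varphi\star1)),ty_2y_1\rangle=\varepsilon(y_2)\,\langle\varphi\dT_w,\dT_w^{-1}S(ty_1)\rangle=\varepsilon(y_2)\,\langle\dT_w\varphi,S(ty_1)\rangle=\varepsilon(y_2)\,\langle v^*,S(y_1)S(t)v_{w\lambda}\rangle .
\]
On the other hand the definitions of $\Xi_w$ and $\Xi_w'$ give directly
\[
\langle\Xi_w(\varphi\star1),ty_2y_1\rangle=\langle\dT_w\varphi,S(ty_2y_1)\rangle=\langle v^*,S(y_1)S(y_2)S(t)v_{w\lambda}\rangle .
\]
So everything reduces to the single identity $S(y_2)\bigl(S(t)v_{w\lambda}\bigr)=\varepsilon(y_2)\bigl(S(t)v_{w\lambda}\bigr)$.

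Finally, since $S(t)\in U^0$ the vector $S(t)v_{w\lambda}$ is again of weight $w\lambda$, hence a scalar multiple of $v_{w\lambda}$, so it suffices to prove $S(y_2)v_{w\lambda}=\varepsilon(y_2)v_{w\lambda}$. As $y_2\in\hT_w(U^-)$ I write $y_2=\hT_w(v)$ with $v\in U^-$; by Lemma \ref{lem:dThT} one has $S\hT_w=\dT_wS$, whence $S(y_2)=\dT_w(S(v))$ and
\[
S(y_2)v_{w\lambda}=\dT_w(S(v))\,\dT_w(v_\lambda)=\dT_w\bigl(S(v)v_\lambda\bigr).
\]
Because $\lambda\in P^-$, the line $\BF v_\lambda$ is a $U^{\leqq0}$-submodule of $V(\lambda)$ on which $U^-$ acts through $\varepsilon$; since $S$ carries $U^-_{-\gamma}$ into $U^-_{-\gamma}k_\gamma$ and $U^-_{-\gamma}v_\lambda=0$ for $\gamma\in Q^+\setminus\{0\}$, a short computation gives $S(v)v_\lambda=\varepsilon(v)v_\lambda$. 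Hence $S(y_2)v_{w\lambda}=\varepsilon(v)v_{w\lambda}=\varepsilon(y_2)v_{w\lambda}$, the last equality by \eqref{eq:epsT}, and the lemma follows.

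The hard part will be the bookkeeping in the middle step: matching the defining decomposition of $\Omega_w$ against $S(ty_2y_1)=S(y_1)S(y_2)S(t)$ and applying Lemma \ref{lem:Tlr1} in the correct direction so that both computations end in the common normal form $\langle v^*,(\,\cdot\,)v_{w\lambda}\rangle$. Once that normal form is reached, the residual identity $S(y_2)v_{w\lambda}=\varepsilon(y_2)v_{w\lambda}$ — which is the conceptual core — follows at once from Lemmas \ref{lem:dThT} and \ref{lem:transfer} and the antidominance of $\lambda$.
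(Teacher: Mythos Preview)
Your proof is correct and follows essentially the same approach as the paper's. Both reduce via right $\BC_q[H]$-linearity and Proposition~\ref{prop:MMM} to $\varphi\in\BC_q[G/N^-]$, evaluate at $ty_2y_1$ with $t\in U^0$, $y_2\in U^-\cap\hT_w(U^-)$, $y_1\in U^-[\hT_w]$, and use Lemma~\ref{lem:dThT} (equivalently $S\hT_w=\dT_wS$) to show that the $S(y_2)$-factor acts via $\varepsilon$. The only cosmetic difference is that you specialize $\varphi=\Phi_{v^*\otimes v_\lambda}$ and carry out the final step inside $V(\lambda)$ (showing $S(y_2)v_{w\lambda}=\varepsilon(y_2)v_{w\lambda}$), whereas the paper keeps $\varphi\in\BC_q[G/N^-]$ general and uses the identity $(Sy_2)\dT_w\varphi=\dT_w\bigl((S\hT_w^{-1}(y_2))\varphi\bigr)$ together with $y\varphi=\varepsilon(y)\varphi$ for $y\in U^-$; the content is the same.
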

\begin{proof}
By Proposition \ref{prop:MMM} we have only to show 
\[
(\Omega_w\circ\Theta_w)(\varphi\star\chi)=\Xi_w(\varphi\star\chi)
\qquad
(\varphi\in\BC_q[G/N^-],\;\chi\in\BC_q[H]).
\]
By the definitions of 
$\Theta_w$, $\Xi_w$ and Lemma \ref{lem:jmath}
it is sufficient to show
\[
(\Omega_w\circ\Theta_w')(\varphi)=\Xi_w'(\varphi)
\qquad
(\varphi\in\BC_q[G/N^-]).
\]
Let
$y_1\in U^-[\hT_w]$,\;
$y_2\in U^-_{-\gamma}\cap\hT_wU^-$,\;
$\delta\in Q$.
The we have
\begin{align*}
&\langle
(\Omega_w\circ\Theta_w')(\varphi),
k_\delta y_2y_1
\rangle
=
\varepsilon(y_2)\langle \Theta_w'(\varphi),\dT_w^{-1}S(k_\delta y_1)\rangle
\\
=&
\varepsilon(y_2)\langle \varphi\dT_w,\dT_w^{-1}S(k_\delta y_1)\rangle
=
\varepsilon(y_2)\langle \dT_w\varphi,
S(k_\delta y_1)\rangle,
\end{align*}
\begin{align*}
&
\langle
\Xi_w'(\varphi),
k_\delta y_2y_1
\rangle
=
\langle
\dT_w\varphi,
(Sy_1)(Sy_2)k_{-\delta}
\rangle
\\
=&
q^{-(\gamma,\delta)}
\langle
\dT_w\varphi,
(Sy_1)k_{-\delta}(Sy_2)
\rangle
=
q^{-(\gamma,\delta)}
\langle
(Sy_2)\dT_w\varphi,
(Sy_1)k_{-\delta}
\rangle
\\
=&
q^{-(\gamma,\delta)}
\langle
\dT_w(\dT_w^{-1}S(y_2))\varphi,
S(k_\delta y_1)
\rangle
\\
=&
q^{-(\gamma,\delta)}
\langle
\dT_w(S\hT_w^{-1}(y_2))\varphi,
S(k_\delta y_1)
\rangle
=
\varepsilon(y_2)
\langle
\dT_w\varphi,
S(k_\delta y_1)
\rangle.
\end{align*}
\end{proof}
We define a $\BC_q[H]$-submodule $\CA_w$ of $\BC_q[B^-]^{\bullet_w}$
by
\[
\CA_w=
\{\varphi\in\BC_q[B^-]\mid \varphi y=0\quad(y\in U^-\cap\hT_w(U^-))\}.
\]

Note that we have an isomorphism
\begin{equation}
\label{eq:isomd}
(U^-)^\bigstar\otimes\BC_q[H]\cong\BC_q[B^-]^{\bullet_w}
\qquad(f\otimes\chi\leftrightarrow d_{f\otimes\chi})
\end{equation}
of right $\BC_q[H]$-modules 
given by
\[
\langle d_{f\otimes\chi},ty\rangle
=
\langle Sw\chi,t\rangle
\langle f,y\rangle
\qquad
(t\in U^0, y\in U^-)).
\]
Set
\begin{equation}
U^-[\hT_w]^\bigstar
=
\bigoplus_{\gamma\in Q^+}
(U^-[\hT_w]\cap U^-_{-\gamma})^*
\subset (U^-[\hT_w])^*.
\end{equation}
By \eqref{eq:Ubunkai-h} we have an injective linear map
\begin{equation}
i_w^-:U^-[\hT_w]^\bigstar\to (U^-)^\bigstar
\end{equation}
given by
\[
\langle i_w^-(f),y'y\rangle
=\varepsilon(y')\langle f,y\rangle
\qquad(y\in U^-[\hT_w], y'\in U^-\cap\hT_w(U^-)
\]
Under the identification \eqref{eq:isomd} we have
\[
i_w^-(U^-[\hT_w]^\bigstar)\otimes\BC_q[H]\cong\CA_w.
\]
\begin{proposition}
The linear map 
$\Xi_w$ is injective and its image coincides with $\CA_w$.
\end{proposition}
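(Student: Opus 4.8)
The plan is to reduce the statement entirely to the map $\Omega_w$. By Lemma \ref{lem:XiTheta} we have $\Xi_w=\Omega_w\circ\Theta_w$, and $\Theta_w$ is bijective by Proposition \ref{prop:bijection}; since $\Omega_w$ is injective, $\Xi_w$ is injective and $\Image(\Xi_w)=\Omega_w\bigl(\Theta_w(\CM_w)\bigr)=\Image(\Omega_w)$. It therefore suffices to identify $\Image(\Omega_w)$ with $\CA_w$.

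For this I would write $\Omega_w$ out through the identifications already available: \eqref{eq:identification} gives $U^{\geqq0}[\dT_w^{-1}]^\bigstar\cong U^+[\dT_w^{-1}]^\bigstar\otimes\BC_q[H]$ on the source, and \eqref{eq:isomd} gives $\BC_q[B^-]^{\bullet_w}\cong(U^-)^\bigstar\otimes\BC_q[H]$ on the target. Both are isomorphisms of right $\BC_q[H]$-modules with $\BC_q[H]$ acting on the tensor factor, and $\Omega_w$ is a right $\BC_q[H]$-module homomorphism by Lemma \ref{lem:jmath}, so the induced map $U^+[\dT_w^{-1}]^\bigstar\otimes\BC_q[H]\to(U^-)^\bigstar\otimes\BC_q[H]$ has the form $\psi\otimes\chi\mapsto\Lambda(\psi)\otimes\chi$ for a single linear map $\Lambda$. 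I would then evaluate the defining formula of $\Omega_w$ on the elements $k_\delta y_2y_1$ with $y_1\in U^-[\hT_w]$, $y_2\in U^-\cap\hT_w(U^-)$, $\delta\in Q$, using the decomposition \eqref{eq:Ubunkai-h} of $U^-$. Since $\dT_w^{-1}S(y_1)\in U^+$ by Lemma \ref{lem:transfer}, since $S$ preserves each weight space $U_\mu$, and since $\dT_w^{-1}$ carries $U_\mu$ onto $U_{w^{-1}\mu}$, the $U^0$-factor contributes the matching scalar on both sides and one is left with $\langle\Lambda(\psi),y_2y_1\rangle=\varepsilon(y_2)\,\langle\psi,\dT_w^{-1}S(y_1)\rangle$; that is, $\Lambda(\psi)=i_w^-\bigl(\psi\circ(\dT_w^{-1}S)|_{U^-[\hT_w]}\bigr)$.

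Once $\Lambda$ is known, the proof finishes quickly. Lemma \ref{lem:transfer} says $\dT_w^{-1}S$ restricts to a linear isomorphism $U^-[\hT_w]\xrightarrow{\sim}U^+[\dT_w^{-1}]$; by the weight remarks above it carries $U^-[\hT_w]\cap U^-_{-\gamma}$ onto $U^+[\dT_w^{-1}]\cap U^+_{-w^{-1}\gamma}$, so its transpose $\psi\mapsto\psi\circ(\dT_w^{-1}S)$ is a linear bijection $U^+[\dT_w^{-1}]^\bigstar\xrightarrow{\sim}U^-[\hT_w]^\bigstar$. Hence $\Image(\Lambda)=i_w^-(U^-[\hT_w]^\bigstar)$, so $\Image(\Omega_w)$ corresponds under \eqref{eq:isomd} to $i_w^-(U^-[\hT_w]^\bigstar)\otimes\BC_q[H]$, which is exactly $\CA_w$ by the identification recorded just before the statement. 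This gives $\Image(\Xi_w)=\Image(\Omega_w)=\CA_w$.

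The only real work is in the middle step: untangling $\Omega_w$ through the two $\BC_q[H]$-module identifications and the antipode, and in particular checking that the twist built into $\BC_q[B^-]^{\bullet_w}$ is precisely the one absorbed by \eqref{eq:isomd}, so that $\Lambda$ carries no extra twist. The auxiliary facts that $S$ fixes the weight spaces $U_\mu$ and that $\dT_w^{\pm1}$ act on them as $w^{\pm1}$ are immediate from the defining formulas for $S$ and $\dT_i$ and will be used without further comment.
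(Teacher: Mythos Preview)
Your proposal is correct and follows essentially the same route as the paper: reduce to $\Image(\Omega_w)$ via $\Xi_w=\Omega_w\circ\Theta_w$ (Lemma~\ref{lem:XiTheta}) together with the bijectivity of $\Theta_w$ and injectivity of $\Omega_w$, then identify $\Image(\Omega_w)$ with $\CA_w$. The paper dispatches the last step in a single sentence (``by the definition of $\Omega_w$''), relying on the identification $\CA_w\cong i_w^-(U^-[\hT_w]^\bigstar)\otimes\BC_q[H]$ recorded just before the statement; you simply unpack that sentence by exhibiting the map $\Lambda$ explicitly and checking the twist in \eqref{eq:isomd} matches the one in the definition of $\Omega_w$.
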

\begin{proof}
Note that 
$\Theta_w$ is bijective and $\Omega_w$ is injective.
Hence by Lemma \ref{lem:XiTheta} we see that 
$\Xi_w$ is injective and its image coincides with $\Image(\Omega_w)$.
Moreover, by the definition of $\Omega_w$ the image of $\Omega_w$ coincides with $\CA_w$.
\end{proof}

\section{The decomposition into tensor product}
\subsection{}
For $i\in I$ define a Hopf subalgebra $U(i)$ of $U$ by
\[
U(i)
=\langle k_i^{\pm1},e_i,f_i\rangle\cong \BF\otimes_{\BQ(q_i)}U_{q_i}(\Gsl_2)\subset U.
\]
Define subalgebras $U(i)^\flat$ ($\flat=0, \pm, \geqq0, \leqq0$) by
\begin{gather*}
U(i)^0=\langle k_i^{\pm1}\rangle,
\qquad
U(i)^+=\langle e_i\rangle,
\qquad
U(i)^-=\langle f_i\rangle,
\\
U(i)^{\geqq0}=\langle k_i^{\pm1}, e_i\rangle,
\qquad
U(i)^{\leqq0}=\langle k_i^{\pm1}, f_i\rangle.
\end{gather*}

We denote the quantized coordinate algebra of $U(i)$ by 
$\BC_{q}[G(i)]\;(\cong\BF\otimes_{\BQ(q_i)}\BC_{q_i}[SL_2])$.
As an algebra it is generated by elements $a_i, b_i, c_i, d_i$
satisfying the fundamental relations 
\begin{gather*}
a_ib_i=q_ib_ia_i,\quad 
c_id_i=q_id_ic_i,\quad
a_ic_i=q_ic_ia_i, \quad
b_id_i=q_id_ib_i, \\
b_ic_i=c_ib_i,\quad
a_id_i-d_ia_i=(q_i-q_i^{-1})b_ic_i,\quad
a_id_i-q_ib_ic_i=1
\end{gather*}
(see Example \ref{ex:sl2}).

We have a quotient Hopf algebra
$\BC_{q}[H(i)]$ of $\BC_q[G(i)]$ corresponding to
$U(i)^0$.
Then we have
\[
\BC_{q}[H(i)]
=\BF[\chi_i^{\pm1}],\qquad
\chi_i(k_i)=q_i.
\]

Denote by 
\begin{equation}
r^G_{G(i)}:\BC_q[G]\to\BC_{q}[G(i)]
\end{equation}
the Hopf algebra homomorphism corresponding to $U(i)\subset U$.

\subsection{}
Consider the $(\BC_q[G(i)],\BC_q[H(i)])$-bimodule
\begin{equation}
\CM_i=\BF\otimes_{\BQ(q_i)}\CM^{SL_2}_{s_i},
\end{equation}
where $s_i$ is the generator of the Weyl group of $U(i)$, 
and $\CM^{SL_2}_{s_i}$ is the $\BC_q[G]$-module $\CM_w$ for $G=SL_2$, $q=q_i$,   $w=s_i$.
We have an isomorphism
\[
\Theta_i:\CM_i\to(U(i)^{\geqq0})^\bigstar
\]
of right $\BC_q[H(i)]$-modules given by
\begin{multline*}
\langle\Theta_i({\varphi}\star\chi),xt\rangle
=\sum_{(t)}\langle\varphi\dT_i,xt_{(0)}\rangle
\langle\chi,t_{(1)}\rangle
\\
(\varphi\in\BC_{q}[G(i)], \chi\in\BC_q[H(i)], x\in U(i)^+, t\in U(i)^0).
\end{multline*}
Define $p_i(n)\in\CM_i$ by
\[
\langle\Theta_i(p_i(n)),e_i^{n'}k_i^j\rangle
=\delta_{nn'}(-1)^{n}q_{i}^{n}[n]_{q_i}!.
\]
By Lemma \ref{lem:sl2-2} we obtain the following.
\begin{proposition}
The set $\{p_i(n)\mid n\in\BZ_{\geqq0}\}$ forms a basis of the $\BC_q[H(i)]$-module $\CM_i$.
Moreover, we have
\begin{align*}
a_ip_i(n)=&(1-q_i^{2n})\chi_ip_i(n-1),
\qquad
b_ip_i(n)=\chi_i^{-1}q_i^{n}p_i(n),
\\
c_ip_i(n)=&-\chi_iq_i^{n+1}p_i(n),
\qquad
d_ip_i(n)=\chi_i^{-1}p_i(n+1).
\end{align*}
\end{proposition}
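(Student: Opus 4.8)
The plan is to deduce everything from the already-settled case $G=SL_2$. By construction $\CM_i=\BF\otimes_{\BQ(q_i)}\CM^{SL_2}_{s_i}$, and $\Theta_i$, the generators $a_i,b_i,c_i,d_i$ of $\BC_q[G(i)]$, and $\chi_i\in\BC_q[H(i)]$ are obtained by the base change $\BF\otimes_{\BQ(q_i)}(-)$ from $\Theta$, the generators $a,b,c,d$ of $\BC_{q_i}[SL_2]$, and $\chi\in\BC_{q_i}[H]$ (with the parameter specialized to $q=q_i$); this is immediate from the definitions. I would write $m_i(n)\in\CM_i$ for the image of $m(n)\in\CM^{SL_2}_{s_i}$, so that $\langle\Theta_i(m_i(n)),e_i^{n'}k_i^{j}\rangle=\delta_{n,n'}$. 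Comparing with the defining property of $p_i(n)$ and using the injectivity of $\Theta_i$, one gets
\[
p_i(n)=(-1)^{n}q_i^{n}[n]_{q_i}!\,m_i(n)\qquad(n\in\BZ_{\geqq0}).
\]

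First I would settle the basis claim. Since $\Theta_i\colon\CM_i\to(U(i)^{\geqq0})^\bigstar$ is an isomorphism of right $\BC_q[H(i)]$-modules and $(U(i)^{\geqq0})^\bigstar\cong(U(i)^+)^\bigstar\otimes\BC_q[H(i)]$ (cf.\ \eqref{eq:identification}), it suffices to note that the functionals $\varphi_n\in(U(i)^{\geqq0})^\bigstar$ $(n\in\BZ_{\geqq0})$ determined by $\langle\varphi_n,e_i^{n'}k_i^{j}\rangle=\delta_{n,n'}$ --- i.e.\ the basis of $(U(i)^+)^\bigstar$ dual to the monomial basis $\{e_i^{n}\}$ of $U(i)^+$, extended by the counit on $U(i)^0$ --- form a $\BC_q[H(i)]$-basis of $(U(i)^{\geqq0})^\bigstar$. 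These are precisely the $\Theta_i(m_i(n))$, so $\{m_i(n)\}_{n\geqq0}$, and hence $\{p_i(n)\}_{n\geqq0}$ (each a nonzero $\BF$-multiple of the corresponding $m_i(n)$), is a $\BC_q[H(i)]$-basis of $\CM_i$.

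Next I would verify the four action formulas. By base change it is enough to check them in $\CM^{SL_2}_{s_i}$ with $q=q_i$, where Lemma \ref{lem:sl2-2} computes the action of $a,b,c,d$ on $m(n)$. Applying $\BF\otimes_{\BQ(q_i)}(-)$ gives, for example, $a_im_i(n)=\chi_i(q_i-q_i^{-1})q_i^{n-1}m_i(n-1)$, whence
\[
a_ip_i(n)=(-1)^{n}q_i^{n}[n]_{q_i}!\,\chi_i(q_i-q_i^{-1})q_i^{n-1}m_i(n-1);
\]
substituting $m_i(n-1)=\dfrac{(-1)^{n-1}q_i^{-(n-1)}}{[n-1]_{q_i}!}\,p_i(n-1)$ and using $(q_i-q_i^{-1})[n]_{q_i}=q_i^{n}-q_i^{-n}$ yields $a_ip_i(n)=(1-q_i^{2n})\chi_ip_i(n-1)$. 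The $b_i$ and $c_i$ formulas are immediate, since $b$ and $c$ act on $m(n)$ by the scalars $\chi^{-1}q_i^{n}$ and $-\chi q_i^{n+1}$, which are unaffected by the rescaling; the $d_i$ formula is obtained exactly as that for $a_i$, using $[n]_{q_i}!\,[n+1]_{q_i}=[n+1]_{q_i}!$.

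There is no genuine obstacle: all representation-theoretic content is already packaged in Lemma \ref{lem:sl2-2}, and the only thing needing care is bookkeeping the normalizing scalar $(-1)^{n}q_i^{n}[n]_{q_i}!$ between $p_i(n)$ and $m_i(n)$, together with the routine verification that the base change identifies $\Theta_i$, $a_i,b_i,c_i,d_i$ and $\chi_i$ with their $SL_2$ counterparts.
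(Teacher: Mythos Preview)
Your proposal is correct and follows exactly the approach the paper takes: the paper's entire proof is the one-line remark ``By Lemma \ref{lem:sl2-2} we obtain the following,'' and you have simply spelled out the rescaling $p_i(n)=(-1)^n q_i^n[n]_{q_i}!\,m_i(n)$ and the straightforward verifications that this entails. There is nothing to add.
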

We will regard $\CM_i$ as a $(\BC_q[G],\BC_q[H(i)])$-module via  $r^G_{G(i)}$.

\subsection{}
Let $w\in W$ with $\ell(w)=m$.
We set
\begin{equation}
\label{eq:zr}
z_{\Bi,r}=s_{i_{r+1}}s_{i_{r+2}}\cdots s_{i_{m}}
\qquad
(r=0,\cdots, m),
\end{equation}
\begin{equation}
\BC_q[H(\Bi)]=
\BC_q[H(i_1)]\otimes\cdots\otimes\BC_q[H(i_t)].
\end{equation}

For $\Bi\in \CI_w$
consider the 
$(\BC_q[G]^{\otimes t},
\BC_q[H(\Bi)]
)$-bimodule 
$\CM_{i_1}\otimes\cdots\otimes\CM_{i_m}$.
Via the iterated comultiplication 
$\Delta_{t-1}:\BC_q[G]\to\BC_q[G]^{\otimes t}$
and the algebra homomorphism
\begin{equation}
\label{eq:base-change}
\Delta_\Bi:\BC_q[H]\to \BC_q[H(\Bi)]
\end{equation}
given by 
\[
\Delta_{\Bi}(\chi)=
\sum_{(\chi)_{m-1}}
z_{\Bi,1}\chi_{(0)}|_{U(i_1)^0}
\otimes\cdots\otimes
z_{\Bi,m}\chi_{(m-1)}|_{U(i_m)^0},
\]
we can regard $\CM_{i_1}\otimes\cdots\otimes\CM_{i_m}$ as a $(\BC_q[G],\BC_q[H(\Bi)])$-bimodule or a $(\BC_q[G],\BC_q[H])$-bimodule.

Define a linear map
\[
F'_\Bi:\CM_w\to\CM_{i_1}\otimes\cdots\otimes\CM_{i_m}
\]
by
\begin{multline*}
\langle
(\Theta_{i_1}\otimes\cdots\otimes\Theta_{i_m})(F'_\Bi(m)),
u_1\otimes\cdots \otimes u_m
\rangle
\\
=
\langle\Theta_w(m),
(\dT_{z_{\Bi,1}}^{-1}(u_1))\cdots(\dT_{z_{\Bi,m}}^{-1}(u_m))
\rangle
\\
(m\in\CM_w, u_j\in U(i_j)^{\geqq0}).
\end{multline*}

In this subsection we will show the following.
\begin{theorem}
\label{thm:isomorphismmm}
The linear map 
$F'_\Bi$ is a homomorphism of $(\BC_q[G],\BC_q[H])$-bimodules, and 
it induces an isomorphism
\begin{equation}
\label{eq:main-isom}
{F_\Bi}:\CM_w\otimes_{\BC_q[H]}\BC_q[H(\Bi)]
\to
\CM_{i_1}\otimes\cdots\otimes\CM_{i_m}
\end{equation}
of $(\BC_q[G],\BC_q[H(\Bi)])$-bimodules, where $\BC_q[H]\to \BC_q[H(\Bi)]$ is given by $\Delta_\Bi$.
\end{theorem}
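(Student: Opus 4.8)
The plan is to identify $F'_\Bi$, via the isomorphisms $\Theta_w$ and $\Theta_{i_1}\otimes\cdots\otimes\Theta_{i_m}$, with the transpose of the linear map
\[
\mu_\Bi:U(i_1)^{\geqq0}\otimes\cdots\otimes U(i_m)^{\geqq0}\to U^{\geqq0}[\dT_w^{-1}],\qquad
u_1\otimes\cdots\otimes u_m\longmapsto \dT_{z_{\Bi,1}}^{-1}(u_1)\cdots\dT_{z_{\Bi,m}}^{-1}(u_m),
\]
and to verify three things: (a) $F'_\Bi$ is well defined and right $\BC_q[H]$-linear for the target module structure coming from $\Delta_\Bi$; (b) $F'_\Bi$ is a homomorphism of left $\BC_q[G]$-modules; (c) the induced map $F_\Bi$ is bijective. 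Granting (a) and the fact that the $\BC_q[G]$-action commutes with the right $\BC_q[H]$-action, $F'_\Bi$ factors through $\CM_w\otimes_{\BC_q[H]}\BC_q[H(\Bi)]$, and (b), (c) then give the assertion.

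For well-definedness in (a), note that $\Theta_w(m)\in U^{\geqq0}[\dT_w^{-1}]^\bigstar$ is supported on finitely many weights $\gamma\in Q^+$; since $\dT_{z_{\Bi,r}}^{-1}(e_{i_r})=\te_{\Bi,r}$ has a fixed positive-root weight, in the pairing $\langle\Theta_w(m),\mu_\Bi(u_1\otimes\cdots\otimes u_m)\rangle$ the total $U^+$-weight $\sum_r n_r\wt(\te_{\Bi,r})$ of the arguments must lie in that finite set, which bounds each $n_r$; hence the functional belongs to $(U(i_1)^{\geqq0})^\bigstar\otimes\cdots\otimes(U(i_m)^{\geqq0})^\bigstar$. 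Right $\BC_q[H]$-linearity is then a direct unwinding of the definitions of $\Theta_w$, $\Theta_{i_j}$ and $\Delta_\Bi$: one checks that $\langle(\Theta_{i_1}\otimes\cdots\otimes\Theta_{i_m})(F'_\Bi(m\chi)),u_1\otimes\cdots\otimes u_m\rangle$ agrees with the same quantity for $F'_\Bi(m)\,\Delta_\Bi(\chi)$, the $z_{\Bi,r}$-twists in $\Delta_\Bi$ being exactly those appearing in $\mu_\Bi$.

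For (c): since each $\dT_{z_{\Bi,r}}^{-1}$ is an algebra automorphism and $\dT_{z_{\Bi,r}}^{-1}(e_{i_r})=\te_{\Bi,r}$, the restriction of $\mu_\Bi$ to $U(i_1)^+\otimes\cdots\otimes U(i_m)^+$ sends $e_{i_1}^{n_1}\otimes\cdots\otimes e_{i_m}^{n_m}$ to $\te_{\Bi,1}^{n_1}\cdots\te_{\Bi,m}^{n_m}=\te_\Bi^\Bn$, which by Proposition \ref{prop:base2}(ii) is a linear isomorphism onto $U^+[\dT_w^{-1}]$ taking the $\BZ_{\geqq0}^m$-indexed monomials to a basis, compatibly with the $Q^+$-grading (whose weight spaces are finite dimensional). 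By Proposition \ref{prop:Upsilon} and \eqref{eq:identification}, $\CM_w\cong U^+[\dT_w^{-1}]^\bigstar\otimes\BC_q[H]$ as a right $\BC_q[H]$-module, so $\CM_w\otimes_{\BC_q[H]}\BC_q[H(\Bi)]$ is a free $\BC_q[H(\Bi)]$-module with basis dual to $\{\te_\Bi^\Bn\}$, while $\CM_{i_1}\otimes\cdots\otimes\CM_{i_m}$ is free over $\BC_q[H(\Bi)]$ on $\{p_{i_1}(n_1)\otimes\cdots\otimes p_{i_m}(n_m)\}$. Given (a), $F_\Bi$ is $\BC_q[H(\Bi)]$-linear, and the computation just made, together with the normalization $\langle\Theta_{i_j}(p_{i_j}(n)),e_{i_j}^{n'}k_{i_j}^\ell\rangle=\delta_{nn'}(-1)^nq_{i_j}^n[n]_{q_{i_j}}!$, shows that it carries the first basis to the second up to nonzero scalars; hence $F_\Bi$ is bijective.

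The main obstacle is (b). Since $\CM_w$ is generated over $\BC_q[H]$ by $\{\varphi\star1\mid\varphi\in\BC_q[G]\}$ and $F'_\Bi$ is right $\BC_q[H]$-linear with the two actions commuting, it suffices to treat $m=\varphi\star1$ and $\xi\in\BC_q[G]$. Pairing both sides against $u_1\otimes\cdots\otimes u_m$, the left side is $\langle(\xi\varphi)\dT_w,\ \dT_{z_{\Bi,1}}^{-1}(u_1)\cdots\dT_{z_{\Bi,m}}^{-1}(u_m)\rangle$, which by Lemma \ref{lem:Tlr1} equals $\langle\dT_w(\xi\varphi),\ \dT_w(\dT_{z_{\Bi,1}}^{-1}(u_1))\cdots\dT_w(\dT_{z_{\Bi,m}}^{-1}(u_m))\rangle$ because $\dT_w$ is an algebra automorphism of $U$; and $\dT_w\dT_{z_{\Bi,r}}^{-1}=\dT_{i_1}\cdots\dT_{i_r}$ together with $\dT_{i_r}(e_{i_r})=-f_{i_r}k_{i_r}$ shows $\dT_w(\dT_{z_{\Bi,r}}^{-1}(u_r))\in(U^-[\dT_w])U^0$, indeed a monomial in the $\df_{\Bi,r}$. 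On the right side, the $\BC_q[G]$-action on $\CM_{i_1}\otimes\cdots\otimes\CM_{i_m}$ (through $\Delta_{m-1}$ and the $r^G_{G(i_j)}$), combined with the rank-one description of the $\Theta_{i_j}$ (i.e. Lemma \ref{lem:sl2-2} and Proposition \ref{prop:dT}), unwinds to an expression involving $\Delta_{m-1}(\xi)$, the operators $\dT_{i_j}$, and the $\exp_{q_{i_j}}$-corrections. One reconciles the two descriptions by expanding $\dT_w$ acting on the product $\xi\varphi$ through the module--algebra identity $\dT_w\circ m_{\BC_q[G]}=m_{\BC_q[G]}\circ\Delta(\dT_w)$ and its iterate to the $m$-fold comultiplication, using Corollary \ref{cor:dT}; the correction terms $X_r=(q_{i_r}-q_{i_r}^{-1})\tf_{\Bi,r}\otimes\te_{\Bi,r}$ and the analogous ones produced by the iteration contribute nothing, since $\tf_{\Bi,r}\in U^-$ and, after the above reductions, one pairs only against elements of $U^{\geqq0}$ --- the same cancellation as in the proofs of Lemma \ref{lem-theta-prime} and Lemma \ref{lem:XiTheta}. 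The delicate point is precisely this combinatorial bookkeeping: showing that the single twist $\dT_w$ on $\CM_w$, processed through the iterated $\Delta(\dT_w)$, distributes over the $m$ tensor slots exactly as the twists $\dT_{i_1},\dots,\dT_{i_m}$ prescribed by the definitions of $F'_\Bi$ and $\Delta_\Bi$, with all $\exp$-corrections dropping out; once this alignment is set up, the rank-one formulas close each slot.
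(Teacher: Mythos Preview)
Your treatment of (a) and (c) is correct and essentially matches the paper's: well-definedness and $\BC_q[H]$-linearity are direct unwinding, and bijectivity comes from the linear isomorphism $U(i_1)^+\otimes\cdots\otimes U(i_m)^+\cong U^+[\dT_w^{-1}]$ sending the monomial basis to $\{\te_\Bi^\Bn\}$, together with the freeness over $\BC_q[H(\Bi)]$.

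For (b), however, there is a genuine gap. Your sketch invokes the identity $\dT_w\circ m_{\BC_q[G]}=m_{\BC_q[G]}\circ\Delta(\dT_w)$, iterates it to $m$ tensor slots, and asserts that all $\exp$-corrections vanish ``since $\tf_{\Bi,r}\in U^-$ and one pairs only against elements of $U^{\geqq0}$''. But this claim is not justified: after your transformation of the left-hand side via Lemma~\ref{lem:Tlr1}, the arguments $\dT_w(\dT_{z_{\Bi,r}}^{-1}(u_r))$ lie in $(U^-[\dT_w])U^0$, not in $U^{\geqq0}$, so the pairing is \emph{not} against $U^{\geqq0}$ as you claim; and on the right-hand side, expanding the action of $\Delta_{m-1}(\xi)$ slot by slot and relating it back to a single element of $\BC_q[G]$ requires exactly the bookkeeping you leave open. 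As written, the argument does not close.

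The paper's approach to (b) avoids these corrections entirely. The key point is the elementary Lemma~\ref{lem:iteration-comult}: for any choice of $w_1,\dots,w_k\in W$,
\[
\Delta_k(\varphi\dT_w)=\sum_{(\varphi)_k}(\dT_{w_1}^{-1}\varphi_{(0)}\dT_w)\otimes(\dT_{w_2}^{-1}\varphi_{(1)}\dT_{w_1})\otimes\cdots\otimes(\varphi_{(k)}\dT_{w_k}),
\]
proved by inserting the resolution of the identity with respect to the dual bases $\{\dT_x^{-1}v_j\}$, $\{v_j^*\dT_x\}$ --- no $\exp_{q}$-factors appear. Taking $w_r=z_{\Bi,r}$ and using $\dT_{z_{\Bi,r-1}}\dT_{z_{\Bi,r}}^{-1}=\dT_{i_r}$ yields the explicit formula
\[
F'_\Bi(\psi\star1)=\sum_{(\psi)_{m-1}}\bigl(r^G_{G(i_1)}(\psi_{(0)})\star1\bigr)\otimes\cdots\otimes\bigl(r^G_{G(i_m)}(\psi_{(m-1)})\star1\bigr),
\]
from which $F'_\Bi(\xi\psi\star1)=\xi\cdot F'_\Bi(\psi\star1)$ is immediate by multiplicativity of $\Delta_{m-1}$. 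This is the missing idea in your (b): rather than propagating $\Delta(\dT_w)$-corrections and arguing they cancel, one writes $F'_\Bi$ directly as (a restriction of) $\Delta_{m-1}$ on generators, which is automatically $\BC_q[G]$-linear.
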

We first note the following.
\begin{lemma}
\label{lem:iteration-comult}
Let $\varphi\in\BC_q[G]$, $w, w_1,\dots,w_{k}\in W$.
Then we have
\begin{align*}
&\Delta_k(\varphi\dT_{w})
\\
=&
\sum_{(\varphi)_{k}}
(\dT_{w_{1}}^{-1}\varphi_{(0)}\dT_w)\otimes
(\dT_{w_{2}}^{-1}\varphi_{(1)}\dT_{w_{1}})\otimes
\cdots
\otimes
(\dT_{w_{k}}^{-1}\varphi_{(k-1)}\dT_{w_{k-1}})\otimes
(\varphi_{(k)}\dT_{w_{k}}).
\end{align*}
\end{lemma}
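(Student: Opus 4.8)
\textbf{Proof proposal for Lemma \ref{lem:iteration-comult}.}

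The plan is to prove the identity by induction on $k$. For $k=1$ the statement reads
\[
\Delta(\varphi\dT_w)=\sum_{(\varphi)}(\dT_{w_1}^{-1}\varphi_{(0)}\dT_w)\otimes(\varphi_{(1)}\dT_{w_1}),
\]
which is exactly the compatibility of the left/right $\dT$-actions with the comultiplication of $\BC_q[G]$. Concretely I would reduce to matrix coefficients $\varphi=\Phi_{v^*\otimes v}$ and use $\Delta_{\BC_q[G]}={}^tm_U$: writing $\Delta(\Phi_{v^*\otimes v})=\sum_k\Phi_{v^*\otimes v_k}\otimes\Phi_{v_k^*\otimes v}$ for dual bases $\{v_k\}$, $\{v_k^*\}$ of the relevant module, the claim follows by inserting $\dT_w$ on the right of $v$ and $\dT_{w_1}^{\pm1}\dT_{w_1}^{\mp1}$ between the two factors, using Lemma \ref{lem:Tlr1} to move $\dT_{w_1}^{-1}$ onto the left argument. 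This is a routine bookkeeping computation with the definitions of the $U$-bimodule structure on $\BC_q[G]$ and of the left/right actions of $\dT_w$.

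For the inductive step, suppose the formula holds for $k-1$ applied to $\varphi\dT_{w_1}$ in place of $\varphi\dT_w$ and with the Weyl group elements $w_2,\dots,w_k$. I would write $\Delta_k=(\id\otimes\Delta_{k-1})\circ\Delta$, apply the $k=1$ case to $\Delta(\varphi\dT_w)$ to get
\[
\Delta(\varphi\dT_w)=\sum_{(\varphi)}(\dT_{w_1}^{-1}\varphi_{(0)}\dT_w)\otimes(\varphi_{(1)}\dT_{w_1}),
\]
and then apply the induction hypothesis to the second tensor factor $\varphi_{(1)}\dT_{w_1}$. Coassociativity of $\Delta_{\BC_q[G]}$ identifies the iterated coproduct of $\varphi_{(1)}$ with the remaining legs of $\Delta_k(\varphi)$, and the telescoping pattern $\dT_{w_{j+1}}^{-1}(\cdots)\dT_{w_j}$ is reproduced exactly, giving the stated formula.

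I do not expect a genuine obstacle here; the lemma is essentially a formal consequence of coassociativity together with the single-step compatibility of the $\dT$-actions with $\Delta_{\BC_q[G]}$ (Lemma \ref{lem:Tlr1}). The only point requiring a little care is the precise placement of the inverses $\dT_{w_j}^{-1}$ versus $\dT_{w_{j-1}}$ in consecutive slots and the fact that the outermost-right slot carries only $\dT_{w_k}$ (no inverse) while the outermost-left slot carries $\dT_{w_1}^{-1}(\cdot)\dT_w$; I would double-check this by tracking the $k=2$ case explicitly before running the general induction, since all the subsequent applications of this lemma (in the proof of Theorem \ref{thm:isomorphismmm}) depend on getting these twists right.
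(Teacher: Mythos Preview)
Your proposal is correct and follows essentially the same approach as the paper: reduce by induction to the case $k=1$, take $\varphi=\Phi_{v^*\otimes v}$, expand $\Delta(\varphi)$ via a dual basis $\{v_j\},\{v_j^*\}$, and insert $\dT_{w_1}^{-1}\dT_{w_1}=\id$ by passing to the dual basis $\{\dT_{w_1}^{-1}v_j\},\{v_j^*\dT_{w_1}\}$. The paper actually carries this out directly at the level of pairings rather than invoking Lemma~\ref{lem:Tlr1}, but the content is identical; one minor slip in your description is that $\dT_w$ sits on $v^*$ (the right action), not ``on the right of $v$''.
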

\begin{proof}
By induction we may assume that $k=1$.
Set $x=w_1$.
We may also assume that
$\varphi=\Phi_{v^*\otimes v}\;(v\in V, v^*\in V^*)$ for some $V\in\Mod_0(U)$.
Let $\{v_j\}$ be a basis of $V$ and let $\{v_j^*\}$ be its dual basis.
Then we have
\[
\Delta(\Phi_{v^*\otimes v})
=\sum_j\Phi_{v^*\otimes v_j}\otimes \Phi_{v_j^*\otimes v}.
\]
Since the dual basis of $\{\dT_x^{-1}v_j\}$ is $\{v_j^*\dT_x\}$, we have for
$u_0, u_1\in U$ that
\begin{align*}
&\langle
\Delta(\Phi_{v^*\otimes v}\dT_w),u_0\otimes u_1\rangle
=
\langle v^*\dT_w,u_0u_1v\rangle
\\
=&
\sum_j
\langle v_j^*\dT_x,u_1v\rangle
\langle v^*\dT_w,u_0\dT_x^{-1}v_j\rangle
=
\sum_j\langle\Phi_{v^*\dT_w\otimes \dT_x^{-1}v_j}\otimes\Phi_{v_j^*\dT_x\otimes v},
u_0\otimes u_1\rangle
\\
=&
\sum_j\langle(\dT_x^{-1}\Phi_{v^*\otimes v_j}\dT_w)\otimes
(\Phi_{v_j^*\otimes v}\dT_x),
u_0\otimes u_1\rangle
\end{align*}
\end{proof}

\begin{lemma}
For $\varphi\in\BC_q[G]$ we have
\begin{multline*}
\langle\Theta_w(\varphi\star1),
(\dT_{z_{\Bi,1}}^{-1}(u_1))\cdots(\dT_{z_{\Bi,m}}^{-1}(u_m))
\rangle
\\
=
\sum_{(\varphi)_{m-1}}
\prod_{r=1}^m
\langle
\Theta_{i_r}(
r^G_{G(i_r)}(\varphi_{(r-1)})\star1),
u_r
\rangle
\\
(u_r\in U(i_r)^{\geqq0}).
\end{multline*}
\end{lemma}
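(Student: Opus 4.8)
The plan is to unwind both sides to pairings of elements of $\BC_q[G]\subset U^*$ against elements of $U$, and then match them factor by factor through the iterated comultiplication of $\varphi\dT_w$.

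First I would reduce the left--hand side to an ordinary pairing in $U^*$. Put $v_r=\dT_{z_{\Bi,r}}^{-1}(u_r)$, where $\dT_{z_{\Bi,r}}^{-1}$ denotes the algebra automorphism of $U$. Since $z_{\Bi,m}=1$ and $\dT_{z_{\Bi,r}}=\dT_{i_{r+1}}\cdots\dT_{i_m}$ for $r<m$, we have $\dT_{z_{\Bi,r}}^{-1}(e_{i_r})=\te_{\Bi,r}$ and $\dT_{z_{\Bi,r}}^{-1}(k_{i_r})\in U^0$; by Proposition \ref{prop:base2}(ii) each $\te_{\Bi,r}$ lies in $U^+[\dT_w^{-1}]$, and $U^{\geqq0}[\dT_w^{-1}]=U^+[\dT_w^{-1}]U^0$ is a subalgebra of $U^{\geqq0}$ (it is normalised by $U^0$ because $U^+[\dT_w^{-1}]$ is spanned by weight vectors), so every $v_r$, hence the product $v_1v_2\cdots v_m$, lies in $U^{\geqq0}[\dT_w^{-1}]$. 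Since $\Theta_w(\varphi\star1)=\Theta_w'(\varphi)$ and $\langle\Theta_w'(\varphi),u\rangle=\langle\varphi\dT_w,u\rangle$ for $u\in U^{\geqq0}[\dT_w^{-1}]$, the left--hand side of the lemma equals $\langle\varphi\dT_w,\,v_1v_2\cdots v_m\rangle$, with $\varphi\dT_w\in\BC_q[G]\subset U^*$.

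Next I would expand by comultiplication in $U^*$: $\langle\varphi\dT_w,\,v_1\cdots v_m\rangle=\langle\Delta_{m-1}(\varphi\dT_w),\,v_1\otimes\cdots\otimes v_m\rangle$, and then apply Lemma \ref{lem:iteration-comult} with $k=m-1$ and $w_r=z_{\Bi,r}$ $(1\le r\le m-1)$. Because $z_{\Bi,0}=w$ and $z_{\Bi,m}=1$, the $r$-th tensor factor there is uniformly $\dT_{z_{\Bi,r}}^{-1}\varphi_{(r-1)}\dT_{z_{\Bi,r-1}}$ (left and right $\dT$-actions on $\BC_q[G]$ commuting, so no ambiguity in this symbol), and therefore
\[
\text{LHS}=\sum_{(\varphi)_{m-1}}\ \prod_{r=1}^m\big\langle\,\dT_{z_{\Bi,r}}^{-1}\varphi_{(r-1)}\dT_{z_{\Bi,r-1}},\ \dT_{z_{\Bi,r}}^{-1}(u_r)\,\big\rangle .
\]
It then suffices to show each factor equals $\langle\Theta_{i_r}(r^G_{G(i_r)}(\varphi_{(r-1)})\star1),u_r\rangle$. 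Set $g=z_{\Bi,r}$, so $z_{\Bi,r-1}=s_{i_r}g$ and $\dT_{z_{\Bi,r-1}}=\dT_{i_r}\dT_g$. Using that the left and right actions of $\dT_g$ on $\BC_q[G]$ commute (clear from $\dT_g\Phi_{v^*\otimes v}=\Phi_{v^*\otimes\dT_g v}$ and $\Phi_{v^*\otimes v}\dT_g=\Phi_{v^*\dT_g\otimes v}$), Lemma \ref{lem:Tlr1} gives $\langle\dT_g^{-1}(\psi\dT_g),u\rangle=\langle\psi,\dT_g(u)\rangle$ for all $\psi\in\BC_q[G]$ and $u\in U$; applying this with $\psi=\varphi_{(r-1)}\dT_{i_r}$ and $u=\dT_g^{-1}(u_r)$ collapses the factor to $\langle\varphi_{(r-1)}\dT_{i_r},u_r\rangle$. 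Finally, since $r^G_{G(i_r)}$ is the restriction map dual to $U(i_r)\hookrightarrow U$ and is compatible with the right $\dT_{i_r}$-action, $\langle\varphi_{(r-1)}\dT_{i_r},u_r\rangle=\langle r^G_{G(i_r)}(\varphi_{(r-1)})\dT_{i_r},u_r\rangle=\langle\Theta_{i_r}(r^G_{G(i_r)}(\varphi_{(r-1)})\star1),u_r\rangle$ by the defining formula for $\Theta_{i_r}$ with $\chi=1$. Substituting back into the displayed sum gives the right--hand side.

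The only genuinely delicate point is the bookkeeping forced by the three roles of the braid symbols: the algebra automorphisms $\dT_w^{\pm1}$ of $U$ appearing inside the pairings, and the two mutually commuting, $\Phi$-adjoint actions of $\dT_w$ on $\BC_q[G]$ (left and right). Once one keeps the relations $z_{\Bi,r-1}=s_{i_r}z_{\Bi,r}$ and $\dT_w=\dT_{i_1}\cdots\dT_{i_m}$ straight, the argument is entirely a matter of feeding the right data into Lemma \ref{lem:iteration-comult} and Lemma \ref{lem:Tlr1}; the membership $v_1\cdots v_m\in U^{\geqq0}[\dT_w^{-1}]$ in the first step is the only other thing requiring a (short) justification, and it is exactly what makes the definition of $F'_\Bi$ legitimate.
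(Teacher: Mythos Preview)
Your proof is correct and follows essentially the same route as the paper's: expand $\langle\varphi\dT_w,v_1\cdots v_m\rangle$ via $\Delta_{m-1}$, apply Lemma~\ref{lem:iteration-comult} with $w_r=z_{\Bi,r}$, and then use Lemma~\ref{lem:Tlr1} together with $\dT_{z_{\Bi,r-1}}=\dT_{i_r}\dT_{z_{\Bi,r}}$ to reduce each factor to $\langle\varphi_{(r-1)}\dT_{i_r},u_r\rangle$. Your added justifications---that $v_1\cdots v_m\in U^{\geqq0}[\dT_w^{-1}]$ and that $r^G_{G(i_r)}$ intertwines the right $\dT_{i_r}$-actions---make explicit two points the paper passes over in silence, but the structure of the argument is the same.
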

\begin{proof}
By Lemma \ref{lem:iteration-comult} we have
\begin{align*}
&\Delta_{m-1}(\varphi\dT_{w})
\\
=&
\sum_{(\varphi)_{m}}
(\dT_{z_{\Bi,1}}^{-1}\varphi_{(0)}\dT_{z_{\Bi,0}})\otimes
(\dT_{z_{\Bi,2}}^{-1}\varphi_{(1)}\dT_{z_{\Bi,1}})\otimes
\cdots
\otimes
(\dT_{z_{\Bi,m}}^{-1}\varphi_{(m-1)}\dT_{z_{\Bi,m-1}})
.
\end{align*}
Hence
\begin{align*}
&
\langle
\Theta_w(\varphi\star1),
(\dT_{z_{\Bi,1}}^{-1}(u_1))\cdots(\dT_{z_{\Bi,m}}^{-1}(u_m))
\rangle
\\
=&
\langle\varphi\dT_w,
(\dT_{z_{\Bi,1}}^{-1}(u_1))\cdots(\dT_{z_{\Bi,m}}^{-1}(u_m))
\rangle
\\
=&
\langle
\Delta_{m-1}(\varphi\dT_w),
(\dT_{z_{\Bi,1}}^{-1}(u_1))\otimes\cdots\otimes(\dT_{z_{\Bi,m}}^{-1}(u_m))
\rangle
\\
=&
\sum_{(\varphi)_{m-1}}
\prod_{r=1}^m
\langle\dT_{z_{\Bi,r}}^{-1}\varphi_{(r-1)}\dT_{z_{\Bi,r-1}},
\dT_{z_{\Bi,r}}^{-1}(u_r)
\rangle
\\
=&
\sum_{(\varphi)_{m-1}}
\langle
\prod_{r=1}^m
\varphi_{(r-1)}\dT_{i_r},
u_r
\rangle
\\
=&
\sum_{(\varphi)_{m-1}}
\prod_{r=1}^m
\langle
\Theta_{i_r}(
r^G_{G(i_r)}(\varphi_{(r-1)})\star1),
u_r
\rangle
\end{align*}
by Lemma \ref{lem:Tlr1}.
\end{proof}

Now we give a proof of Theorem \ref{thm:isomorphismmm}.

We first show that $F'_\Bi$ is a homomorphism of right $\BC_q[H]$-modules.
For $m\in\CM_w$, 
$x_j\in U(i_j)_{c_j\alpha_{i_j}}^{+}$, 
$\chi\in\BC_q[H]$ we have
\begin{align*}
&
\langle
(\Theta_{i_1}\otimes\cdots\otimes\Theta_{i_m})(F'_\Bi(m\chi)),
x_1k_{i_1}^{p_1}\otimes\cdots \otimes x_mk_{i_m}^{p_m}
\rangle
\\
=&
\langle\Theta_w(m\chi),
(\dT_{z_{\Bi,1}}^{-1}(x_1k_{i_1}^{p_1}))\cdots(\dT_{z_{\Bi,m}}^{-1}(x_mk_{i_m}^{p_m}))
\rangle
\\
=&
q^{A}
\langle\Theta_w(m\chi),
(\dT_{z_{\Bi,1}}^{-1}(x_1))\cdots(\dT_{z_{\Bi,m}}^{-1}(x_m))
k_{z_{\Bi,1}^{-1}\alpha_{i_1}}^{p_1}\cdots k_{z_{\Bi,m}^{-1}\alpha_{i_m}}^{p_m}
\rangle
\\
=&
q^{A}
\langle\Theta_w(m),
(\dT_{z_{\Bi,1}}^{-1}(x_1))\cdots(\dT_{z_{\Bi,m}}^{-1}(x_m))
k_{z_{\Bi,1}^{-1}\alpha_{i_1}}^{p_1}\cdots k_{z_{\Bi,m}^{-1}\alpha_{i_m}}^{p_m}
\rangle
\\
&\hspace{7cm}\times
\langle\chi, k_{z_{\Bi,1}^{-1}\alpha_{i_1}}^{p_1}\cdots k_{z_{\Bi,m}^{-1}\alpha_{i_m}}^{p_m}\rangle
\\
=&
\langle\Theta_w(m),
(\dT_{z_{\Bi,1}}^{-1}(x_1k_{i_1}^{p_1}))\cdots(\dT_{z_{\Bi,m}}^{-1}(x_mk_{i_m}^{p_m}))
\rangle
\langle\chi, k_{z_{\Bi,1}^{-1}\alpha_{i_1}}^{p_1}\cdots k_{z_{\Bi,m}^{-1}\alpha_{i_m}}^{p_m}\rangle
\\
=&
\langle
(\Theta_{i_1}\otimes\cdots\otimes\Theta_{i_m})(F'_\Bi(m)),
x_1k_{i_1}^{p_1}\otimes\cdots \otimes x_mk_{i_m}^{p_m}
\rangle
\\
&\hspace{7cm}\times
\langle\chi, k_{z_{\Bi,1}^{-1}\alpha_{i_1}}^{p_1}\cdots k_{z_{\Bi,m}^{-1}\alpha_{i_m}}^{p_m}\rangle
\\
=&
\langle
\{(\Theta_{i_1}\otimes\cdots\otimes\Theta_{i_m})(F'_\Bi(m))\}\Delta_\Bi(\chi),
x_1k_{i_1}^{p_1}\otimes\cdots \otimes x_mk_{i_m}^{p_m}
\rangle
\\
=&
\langle
(\Theta_{i_1}\otimes\cdots\otimes\Theta_{i_m})(F'_\Bi(m)\Delta_\Bi(\chi)),
x_1k_{i_1}^{p_1}\otimes\cdots \otimes x_mk_{i_m}^{p_m}
\rangle,
\end{align*}
where 
\[
A=
\sum_{r=1}^{m-1}p_r(z_{\Bi,r}^{-1}\alpha_{i_r},c_{r+1}z_{\Bi,r+1}^{-1}\alpha_{i_{r+1}}+\cdots+c_{m}z_{\Bi,m}^{-1}\alpha_{i_{m}}).
\]
Hence 
$F'_\Bi$ is a homomorphism of right $\BC_q[H]$-modules.

We next show that 
$F'_\Bi$ is a homomorphism of left $\BC_q[G]$-modules.
It is sufficient to show $F'_\Bi(\varphi m)=\varphi F'_\Bi(m)$ for
$\varphi\in\BC_q[G]$, $m\in\CM_w$.
Since $F'_\Bi$  is a homomorphism of right $\BC_q[H]$-module,
we may assume that 
$m=\psi\star1$\;($\psi\in\BC_q[G]$).
Then we have
\begin{align*}
&
\langle
(\Theta_{i_1}\otimes\cdots\otimes\Theta_{i_m})(F'_\Bi(\psi\star1)),
u_1\otimes\cdots \otimes u_m
\rangle
\\
=&
\langle\Theta_w(\psi\star1),
(\dT_{z_{\Bi,1}}^{-1}(u_1))\cdots(\dT_{z_{\Bi,m}}^{-1}(u_m))
\rangle
\\
=&
\sum_{(\psi)_{m-1}}
\prod_{r=1}^m
\langle
\Theta_{i_r}(
r^G_{G(i_r)}(\psi_{(r-1)})\star1),
u_r
\rangle
\\
=&
\sum_{(\psi)_{m-1}}
\langle
(\Theta_{i_1}\otimes\cdots\otimes\Theta_{i_m})
(
(r^G_{G(i_1)}(\psi_{(0)})\star1)\otimes\cdots\otimes
(r^G_{G(i_{m-1})}(\psi_{(m-1)})\star1)
)
\\
&\hspace{9cm}
,u_1\otimes\cdots \otimes u_m
\rangle.
\end{align*}
Hence 
\[
F'_\Bi(\psi\star1)=\sum_{(\psi)_{m-1}}
(r^G_{G(i_1)}(\psi_{(0)})\star1)\otimes\cdots\otimes
(r^G_{G(i_m)}(\psi_{(m-1)})\star1).
\]
It follows that
\begin{align*}
&F'_\Bi(\varphi m)=F'_\Bi(\varphi\psi\star1)
\\
=&
\sum_{(\varphi)_{m-1}, (\psi)_{m-1}}
(r^G_{G(i_1)}(\varphi_{(0)}\psi_{(0)})\star1)\otimes\cdots\otimes
(r^G_{G(i_m)}(\varphi_{(m-1)}\psi_{(m-1)})\star1)
\\
=&
\varphi\sum_{(\psi)_{m-1}}
(r^G_{G(i_1)}(\psi_{(0)})\star1)\otimes\cdots\otimes
(r^G_{G(i_m)}(\psi_{(m-1)})\star1)
=\varphi F'_\Bi(m).
\end{align*}
Therefore,
$F'_\Bi$ is a homomorphism of left $\BC_q[G]$-modules.

Since $F'_\Bi$ is a homomorphism of $(\BC_q[G],\BC_q[H])$-bimodules, it induces a homomorphism
\[
{F}_\Bi:\CM_w\otimes_{\BC_q[H]}\BC_q[H(\Bi)]
\to
\CM_{i_1}\otimes\cdots\otimes\CM_{i_m}
\qquad(a\otimes\chi\mapsto F'_\Bi(a)\chi)
\]
of $(\BC_q[G],\BC_q[H(\Bi)])$-bimodules.
It remains to show that ${F}_\Bi$ is bijective.
Via 
$\Theta_w$ we have
\begin{align*}
\CM_w\otimes_{\BC_q[H]}\BC_q[H(\Bi)]
\cong& ( U^+[\dT_w^{-1}]^\bigstar\otimes\BC_q[H])\otimes_{\BC_q[H]}\BC_q[H(\Bi)]
\\
\cong&  U^+[\dT_w^{-1}]^\bigstar\otimes \BC_q[H(\Bi)],
\end{align*}
and via
$\Theta_{i_1}\otimes\cdots\otimes\Theta_{i_m}$ we have
\begin{align*}
&\CM_{i_1}\otimes\cdots\otimes\CM_{i_m}
\\
\cong&
\{(U(i_1)^+)^\bigstar\otimes\BC_q[H(i_1)]\}
\otimes\cdots\otimes
\{(U(i_m)^+)^\bigstar\otimes\BC_q[H(i_m)]\}
\\
\cong&
\{(U(i_1)^+)^\bigstar
\otimes\cdots\otimes
(U(i_m)^+)^\bigstar\}
\otimes \BC_q[H(\Bi)].
\end{align*}
Hence the assertion follows from
\begin{multline*}
(U(i_1)^+)^\bigstar
\otimes\cdots\otimes
(U(i_m)^+)^\bigstar
\cong
U^+[\dT_w^{-1}]^\bigstar
\\
(x_1\otimes\cdots x_m\leftrightarrow
\dT_{z_{\Bi,1}}^{-1}(x_1)\otimes\cdots\otimes
\dT_{z_{\Bi,m}}^{-1}(x_m)).
\end{multline*}

The proof of Theorem \ref{thm:isomorphismmm} is complete

\section{Basis elements}
\subsection{}
Let $w\in W$ with $\ell(w)=m$, and fix $\Bi=(i_1,\cdots, i_m)\in\CI_w$.

For $\Bn=(n_1,\dots, n_m)\in(\BZ_{\geqq0})^m$ we denote by 
$p_\Bi(\Bn)$ the element of $\CM_w\otimes_{\BC_q[H]}\BC_q[H(\Bi)]$ corresponding to 
\[
p_{i_1}(n_1)\otimes\cdots\otimes p_{i_m}(n_m)
\in
\CM_{i_1}\otimes\cdots\otimes\CM_{i_m}
\]
under the isomorphism \eqref{eq:main-isom}.

By \eqref{eq:identification} we have
\begin{align*}
U^{\geqq0}[\dT_w^{-1}]^\bigstar\otimes_{\BC_q[H]}\BC_q[H(\Bi)]
\cong &U^{+}[\dT_w^{-1}]^\bigstar\otimes \BC_q[H(\Bi)].
\end{align*}
Hence $\Theta_w$ induces an isomorphism 
\begin{equation}
\Theta_{w,{\Bi}}:\CM_w\otimes_{\BC_q[H]}\BC_q[H(\Bi)]\to
U^{+}[\dT_w^{-1}]^\bigstar\otimes \BC_q[H(\Bi)]
\end{equation}
of right $\BC_q[H(\Bi)]$-modules.
We will regard $U^{+}[\dT_w^{-1}]^\bigstar\otimes \BC_q[H(\Bi)]$ as a subset of $\Hom_\BF(U^{+}[\dT_w^{-1}],\BC_q[H(\Bi)])$ in the following.

For $r=1,\cdots, m$ and $\Bn\in(\BZ_{\geqq0})^m$ set
\begin{equation}
\beta_{\Bi,r}=z_{\Bi,r}^{-1}\alpha_{i_r}, \qquad
\gamma_{\Bi,\Bn,r}=n_{r+1}\beta_{\Bi,r+1}+\cdots +n_m\beta_{\Bi,m}.
\end{equation}

\begin{proposition}
\label{prop:isomorphismmm2}
\begin{align*}
&\langle \Theta_{w, \Bi}(p_\Bi(\Bn)),\te_\Bi^{\Bn'}\rangle
\\
=&\delta_{\Bn,\Bn'}
\left\{
\prod_{r=1}^m
(-1)^{n_r}q_{i_r}^{n_r}[n_r]_{q_{i_r}}!
\right\}
\chi_{i_1}^{\langle\beta^\vee_{\Bi,1},\gamma_{\Bi,\Bn,1}\rangle}
\otimes\cdots\otimes
\chi_{i_m}^{\langle\beta^\vee_{\Bi,m},\gamma_{\Bi,\Bn,m}\rangle}.
\end{align*}
\end{proposition}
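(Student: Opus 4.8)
The plan is to compute the pairing by transporting it through the isomorphism $F_\Bi$ of \eqref{eq:main-isom}. The starting point is that, since $\te_{\Bi,r}=\dT_{z_{\Bi,r}}^{-1}(e_{i_r})$ (by \eqref{eq:zr} and the braid relations) and each $\dT_{z_{\Bi,r}}$ is an algebra automorphism, one has
\[
\te_\Bi^{\Bn'}=\bigl(\dT_{z_{\Bi,1}}^{-1}(e_{i_1}^{n_1'})\bigr)\cdots\bigl(\dT_{z_{\Bi,m}}^{-1}(e_{i_m}^{n_m'})\bigr),
\]
which is exactly the shape of the arguments in the defining formula of $F'_\Bi$ in Theorem \ref{thm:isomorphismmm}. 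Evaluating that formula at $u_r=e_{i_r}^{n_r'}$ gives $\langle(\Theta_{i_1}\otimes\cdots\otimes\Theta_{i_m})(F'_\Bi(m)),e_{i_1}^{n_1'}\otimes\cdots\otimes e_{i_m}^{n_m'}\rangle=\langle\Theta_w(m),\te_\Bi^{\Bn'}\rangle$ for $m\in\CM_w$. Since $\Theta_{w,\Bi}$, $F_\Bi$ and the $\Theta_{i_r}$ are all right $\BC_q[H(\Bi)]$-module maps, it suffices to treat the basis vector $p_\Bi(\Bn)$, for which $F_\Bi(p_\Bi(\Bn))=p_{i_1}(n_1)\otimes\cdots\otimes p_{i_m}(n_m)$ by definition.

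Extending the above identity $\BC_q[H(\Bi)]$-linearly to the tensored module and inserting $p_\Bi(\Bn)$, the left-hand side factors over the $m$ tensor slots, the $r$-th being governed by the defining relation $\langle\Theta_{i_r}(p_{i_r}(n_r)),e_{i_r}^{n_r'}k_{i_r}^{j_r}\rangle=\delta_{n_r,n_r'}(-1)^{n_r}q_{i_r}^{n_r}[n_r]_{q_{i_r}}!$ of $p_{i_r}(n_r)$; note this is independent of $j_r$. The product of the Kronecker deltas is $\delta_{\Bn,\Bn'}$ (which in particular disposes of the case where $\Bn'\neq\Bn$ has the same weight as $\Bn$), and the product of the remaining scalars is $\prod_{r=1}^m(-1)^{n_r}q_{i_r}^{n_r}[n_r]_{q_{i_r}}!$. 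To identify the $\BC_q[H(\Bi)]$-valued part I would rerun the computation with $u_r=e_{i_r}^{n_r'}k_{i_r}^{j_r}$ and track the dependence on the $j_r$. On the $\Theta_w$-side one reorders the torus factors,
\[
\prod_{r=1}^m\bigl(\te_{\Bi,r}^{n_r'}k_{\beta_{\Bi,r}}^{j_r}\bigr)=q^{\sum_{r<s}j_r n_s'(\beta_{\Bi,r},\beta_{\Bi,s})}\;\te_\Bi^{\Bn'}\;k_{\sum_s j_s\beta_{\Bi,s}},
\]
so the coefficient of $j_r$ in the resulting power of $q$ is $(\beta_{\Bi,r},\gamma_{\Bi,\Bn',r})$; on the $\CM_{i_r}$-side the variable $j_r$ enters through the $z_{\Bi,r}$-twisted restriction map $\Delta_\Bi$ of \eqref{eq:base-change} together with the identification \eqref{eq:identification}.

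Matching the two expressions as functions of $(j_1,\dots,j_m)$ pins down the $r$-th component of the answer. Using that $z_{\Bi,r}^{-1}$ preserves the bilinear form, so $(\beta_{\Bi,r},\beta_{\Bi,r})=(\alpha_{i_r},\alpha_{i_r})$ and hence $2(\beta_{\Bi,r},-)/(\alpha_{i_r},\alpha_{i_r})=\langle\beta_{\Bi,r}^\vee,-\rangle$, the exponent $(\beta_{\Bi,r},\gamma_{\Bi,\Bn,r})$ translates, on passing from $q$ to $q_{i_r}=\chi_{i_r}(k_{i_r})$, into the character $\chi_{i_r}^{\langle\beta_{\Bi,r}^\vee,\gamma_{\Bi,\Bn,r}\rangle}$. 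Combined with the scalar and the $\delta_{\Bn,\Bn'}$ extracted above, this is the stated formula.

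The main obstacle is precisely this torus bookkeeping: one has to check that the powers of $q$ coming from pushing the $k_{\beta_{\Bi,r}}$ past the higher PBW root vectors combine with the $z_{\Bi,r}$-twists built into $\Delta_\Bi$ (and into the identification $U^{\geqq0}[\dT_w^{-1}]^\bigstar\cong U^+[\dT_w^{-1}]^\bigstar\otimes\BC_q[H]$ defining $\Theta_{w,\Bi}$) so as to leave exactly $\chi_{i_1}^{\langle\beta_{\Bi,1}^\vee,\gamma_{\Bi,\Bn,1}\rangle}\otimes\cdots\otimes\chi_{i_m}^{\langle\beta_{\Bi,m}^\vee,\gamma_{\Bi,\Bn,m}\rangle}$ and nothing more; in particular the character appearing in $\Theta_w$ must cancel against the $\BC_q[H(\Bi)]$-component carried by $p_\Bi(\Bn)$, since $p_\Bi(\Bn)$ is not of the form $a\otimes1$ with $a\in\CM_w$. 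A perhaps cleaner alternative is induction on $m=\ell(w)$, peeling off $s_{i_1}$: the case $m=1$ reduces to the definition of $p_i(n)$ together with $\gamma_{\Bi,\Bn,1}=0$, and the inductive step uses the compatibility of $F_\Bi$ with its analogues for $(i_1)$ and for $(i_2,\dots,i_m)$ and the coassociativity of $\Delta_\Bi$, the same $q$-powers reappearing through the braiding of Corollary \ref{cor:dT}.
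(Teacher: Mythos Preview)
Your approach is essentially the paper's approach, and your key computation---evaluating the defining formula of $F'_\Bi$ at $u_r=e_{i_r}^{n_r'}k_{i_r}^{j_r}$ and reordering the torus factors to pick up the exponent $\sum_r j_r(\beta_{\Bi,r},\gamma_{\Bi,\Bn',r})$---is exactly what the paper does. The difference is organizational. The paper does not start from $p_\Bi(\Bn)$ and try to push it back through $\Theta_{w,\Bi}$; instead it introduces an auxiliary element $a\in\CM_w$ characterized by $\langle\Theta_w(a),\te_\Bi^{\Bn'}t\rangle=\delta_{\Bn,\Bn'}\varepsilon(t)$, so that $\Theta_w(a)$ has \emph{trivial} $\BC_q[H]$-component. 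One then computes $\langle(\Theta_{i_1}\otimes\cdots\otimes\Theta_{i_m})(F_\Bi(a\otimes1)),\,e_{i_1}^{n_1'}k_{i_1}^{j_1}\otimes\cdots\rangle$ by your reordering argument, getting $\delta_{\Bn,\Bn'}\prod_r q_{i_r}^{j_r\langle\beta_{\Bi,r}^\vee,\gamma_{\Bi,\Bn,r}\rangle}$, and compares this with the corresponding value for $p_{i_1}(n_1)\otimes\cdots\otimes p_{i_m}(n_m)$, which is $\delta_{\Bn,\Bn'}\prod_r(-1)^{n_r}q_{i_r}^{n_r}[n_r]_{q_{i_r}}!$ independently of the $j_r$. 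Since both the scalars and the $\BC_q[H(\Bi)]$-parts are now explicit, the relation between $p_\Bi(\Bn)$ and $a\otimes1$ (and hence $\Theta_{w,\Bi}(p_\Bi(\Bn))$) drops out immediately.

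This sidesteps precisely the obstacle you flag: because $a$ has trivial torus part, there is no cancellation to track between ``the character appearing in $\Theta_w$'' and ``the $\BC_q[H(\Bi)]$-component carried by $p_\Bi(\Bn)$''. Your direct route would work, but one has to be careful: the naive $\BC_q[H(\Bi)]$-linear extension of the scalar identity $\langle(\Theta_{i_1}\otimes\cdots)(F'_\Bi(m)),e_{i_1}^{n_1'}\otimes\cdots\rangle=\langle\Theta_w(m),\te_\Bi^{\Bn'}\rangle$ does \emph{not} yield $\langle\Theta_{w,\Bi}(p),\te_\Bi^{\Bn'}\rangle$ on the right; the reordering factor $q^{\sum j_r(\beta_{\Bi,r},\gamma_{\Bi,\Bn',r})}$ is exactly the discrepancy, and it has to be absorbed correctly. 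The auxiliary-element trick makes this automatic. Your alternative induction on $m$ would also work but is not needed.
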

\begin{proof}
Define $a\in\CM_w$ by
\[
\langle\Theta_w(a),\te_\Bi^{\Bn'}t\rangle
=\delta_{\Bn\Bn'}\varepsilon(t)\qquad(\Bn'\in(\BZ_{\geqq0})^m, t\in U^0).
\]
Then we have
\begin{align*}
&\langle
(\Theta_{i_1}\otimes\cdots\otimes\Theta_{i_m})({F}_\Bi(a\otimes1)),
e_{i_1}^{n_1'}k_{i_1}^{j_1}\otimes\cdots\otimes
e_{i_m}^{n_m'}k_{i_m}^{j_m}
\rangle
\\
=&
\langle
(\Theta_{i_1}\otimes\cdots\otimes\Theta_{i_m})(F'_\Bi(a))),
e_{i_1}^{n_1'}k_{i_1}^{j_1}\otimes\cdots\otimes
e_{i_m}^{n_m'}k_{i_m}^{j_m}
\rangle
\\
=&
\langle
\Theta_w(a),
T_{z_{\Bi,1}}^{-1}(e_{i_1}^{n_1'}k_{i_1}^{j_1})\cdots
T_{z_{\Bi,m}}^{-1}(e_{i_m}^{n_m'}k_{i_m}^{j_m})
\rangle
\\
=&
q^{A'}
\langle
\Theta_w(a),
\te^{\Bn'}_\Bi
k_{z_{\Bi,1}^{-1}\alpha_{i_1}}^{j_1}\cdots
k_{z_{\Bi,m}^{-1}\alpha_{i_m}}^{j_m}
\rangle
\\
=&\delta_{\Bn\Bn'}q^{A}
\\
=&\delta_{\Bn\Bn'}\prod_{r=1}^m
 (q_{i_r}^{j_r})^{\langle\beta^\vee_{\Bi,r},\gamma_{\Bi,\Bn,r}\rangle},
\end{align*}
where
\begin{align*}
A'=\sum_{r=1}^{m-1}
\langle j_r\beta_{\Bi,r},\gamma_{\Bi,\Bn,r+1}
\rangle,
\qquad
A=\sum_{r=1}^{m-1}
\langle j_r\beta_{\Bi,r},\gamma_{\Bi,\Bn',r+1}
\rangle.
\end{align*}
On the other hand we have
\begin{align*}
&\langle
(\Theta_{i_1}\otimes\cdots\otimes\Theta_{i_m})(
p_{i_1}(n_1)\otimes\cdots\otimes p_{i_m}(n_m)
),
e_{i_1}^{n_1'}k_{i_1}^{j_1}\otimes\cdots\otimes
e_{i_m}^{n_m'}k_{i_m}^{j_m}
\rangle
\\
=&
\prod_{r=1}^m
\delta_{n_rn'_r}(-1)^{n_r}q_{i_r}^{n_r}[n_r]_{q_{i_r}}!
=
\delta_{\Bn\Bn'}
\prod_{r=1}^m
(-1)^{n_r}q_{i_r}^{n_r}[n_r]_{q_{i_r}}!.
\end{align*}
\end{proof}
\subsection{}
We rewrite Proposition \ref{prop:isomorphismmm2} using $\Xi_w$ instead of $\Theta_w$.
Note that the isomorphism \eqref{eq:isomd}
induces
\[
\BC_q[B^-]^{\bullet_w}\otimes_{\BC_q[H]}\BC_q[H(\Bi)]
\cong (U^-)^\bigstar\otimes \BC_q[H(\Bi)]\;
\left(
\subset\Hom_\BF(U^-,\BC_q[H(\Bi)])
\right).
\]
Hence $\Xi_w$ induces an injection
\[
\Xi_{w,\Bi}:\CM_w\otimes_{\BC_q[H]}\BC_q[H(\Bi)]\to
(U^{-})^\bigstar\otimes \BC_q[H(\Bi)]\;
\left(
\subset\Hom_\BF(U^-,\BC_q[H(\Bi)])
\right).
\]
Recall that
$\{\hf_\Bi^{\Bn}\}_{\Bn}$ forms a basis $U^-[\hT_w]$ and the multiplication induces an isomorphism
$
(U^-\cap\hT_wU^-)\otimes U^-[\hT_w]\cong U^-.
$ (see Proposition \ref{prop:base2}, \eqref{eq:Ubunkai-h}).

\begin{proposition}
\label{prop:isomorphismmm3}
For $y\in U^-\cap\hT_wU^-$ we have
\begin{align*}
&\langle \Xi_{w,\Bi}(p_\Bi(\Bn)),y\hf_\Bi^{\Bn'}\rangle
\\
=&\varepsilon(y)\delta_{\Bn,\Bn'}
\left\{
\prod_{r=1}^m
(-1)^{n_r}q_{i_r}^{n_r}[n_r]_{q_{i_r}}!
\right\}
\chi_{i_1}^{\langle\beta^\vee_{\Bi,1},\gamma_1\rangle}
\otimes\cdots\otimes
\chi_{i_m}^{\langle\beta^\vee_{\Bi,m},\gamma_m\rangle}.
\end{align*}
\end{proposition}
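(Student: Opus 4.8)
The plan is to deduce Proposition \ref{prop:isomorphismmm3} directly from Proposition \ref{prop:isomorphismmm2} by transporting the computation along the identity $\Omega_w\circ\Theta_w=\Xi_w$ established in Lemma \ref{lem:XiTheta}. Passing to the base change $-\otimes_{\BC_q[H]}\BC_q[H(\Bi)]$, Lemma \ref{lem:XiTheta} gives $\Xi_{w,\Bi}=\Omega_{w,\Bi}\circ\Theta_{w,\Bi}$, where $\Omega_{w,\Bi}$ is the map induced by $\Omega_w$ on the base-changed spaces. So the task reduces to understanding how $\Omega_w$ turns a functional in $U^{\geqq0}[\dT_w^{-1}]^\bigstar$ into one in $\BC_q[B^-]^{\bullet_w}$, evaluated on elements of the form $y\hf_\Bi^{\Bn'}$ with $y\in U^-\cap\hT_w(U^-)$.

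First I would unwind the definition of $\Omega_w$. By construction $\langle\Omega_w(f),ty_2y_1\rangle=\varepsilon(y_2)\langle f,\dT_w^{-1}S(ty_1)\rangle$ for $y_1\in U^-[\hT_w]$, $y_2\in U^-\cap\hT_w(U^-)$, $t\in U^0$; taking $t=1$, $y_2=y$, $y_1=\hf_\Bi^{\Bn'}$ gives $\langle\Omega_w(f),y\hf_\Bi^{\Bn'}\rangle=\varepsilon(y)\langle f,\dT_w^{-1}S(\hf_\Bi^{\Bn'})\rangle$. The key computational input is then Lemma \ref{lem:transfer}, which says $S^{-1}\dT_w(\te_\Bi^{\Bn})=\hf_\Bi^{\Bn}$, equivalently $\dT_w^{-1}S(\hf_\Bi^{\Bn})=\te_\Bi^{\Bn}$ (applying $\dT_w^{-1}S$ to both sides and using that $S,\dT_w$ commute appropriately, or rather rearranging $\hf_\Bi^{\Bn}=S^{-1}\dT_w\te_\Bi^{\Bn}$ to $S\hf_\Bi^{\Bn}=\dT_w\te_\Bi^{\Bn}$, hence $\dT_w^{-1}S\hf_\Bi^{\Bn}=\te_\Bi^{\Bn}$). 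Therefore $\langle\Omega_w(f),y\hf_\Bi^{\Bn'}\rangle=\varepsilon(y)\langle f,\te_\Bi^{\Bn'}\rangle$ — the extra $t$-factors and the $\dT_w^{-1}S$ twist collapse exactly onto the pairing computed in Proposition \ref{prop:isomorphismmm2}.

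Next I would track the base-change carefully. Under the identification $U^{\geqq0}[\dT_w^{-1}]^\bigstar\otimes_{\BC_q[H]}\BC_q[H(\Bi)]\cong U^+[\dT_w^{-1}]^\bigstar\otimes\BC_q[H(\Bi)]$, the element $\Theta_{w,\Bi}(p_\Bi(\Bn))$ is the functional described in Proposition \ref{prop:isomorphismmm2}, i.e. $\langle\Theta_{w,\Bi}(p_\Bi(\Bn)),\te_\Bi^{\Bn'}\rangle=\delta_{\Bn,\Bn'}\{\prod_r(-1)^{n_r}q_{i_r}^{n_r}[n_r]_{q_{i_r}}!\}\cdot\chi_{i_1}^{\langle\beta^\vee_{\Bi,1},\gamma_{\Bi,\Bn,1}\rangle}\otimes\cdots\otimes\chi_{i_m}^{\langle\beta^\vee_{\Bi,m},\gamma_{\Bi,\Bn,m}\rangle}$. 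Applying the formula $\langle\Omega_{w,\Bi}(f),y\hf_\Bi^{\Bn'}\rangle=\varepsilon(y)\langle f,\te_\Bi^{\Bn'}\rangle$ — which holds coefficientwise over $\BC_q[H(\Bi)]$ since $\Omega_w$ is a homomorphism of right $\BC_q[H]$-modules by Lemma \ref{lem:jmath} — yields exactly the claimed formula, with $\gamma_r=\gamma_{\Bi,\Bn,r}$. I would be slightly careful that the ``$t=1$'' specialization is legitimate: the decomposition $U^-\cong(U^-\cap\hT_wU^-)\otimes U^-[\hT_w]$ from \eqref{eq:Ubunkai-h} shows that evaluating $\Omega_w(f)$ on pure products $y\hf_\Bi^{\Bn'}$ (with $t=k_0=1$) determines it on that graded piece, so the stated identity is the complete restriction of $\Xi_{w,\Bi}(p_\Bi(\Bn))$ to $(U^-\cap\hT_wU^-)\cdot U^-[\hT_w]=U^-$.

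The only mild obstacle is bookkeeping: making sure the $\dT_w^{-1}S$ twist in the definition of $\Omega_w$ interacts correctly with Lemma \ref{lem:transfer} and with the $\bullet_w$-twisted $\BC_q[H]$-action, and that the $\BC_q[H(\Bi)]$-valued coefficients are carried through the base-change without a spurious Weyl-group twist. Since $\Omega_w$ is manifestly a right $\BC_q[H]$-module map and Proposition \ref{prop:isomorphismmm2} already records the coefficients in the correct normalization, no new identity is needed; the proof is essentially ``apply Lemma \ref{lem:XiTheta}, then Lemma \ref{lem:transfer}, then Proposition \ref{prop:isomorphismmm2}.''
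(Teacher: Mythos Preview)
Your proposal is correct and follows essentially the same route as the paper: both arguments base-change $\Omega_w\circ\Theta_w=\Xi_w$ to $\BC_q[H(\Bi)]$, unwind $\Omega_w$ on $y\hf_\Bi^{\Bn'}$ using Lemma \ref{lem:transfer} to land on $\te_\Bi^{\Bn'}$, and then invoke Proposition \ref{prop:isomorphismmm2}. The paper's proof spells out the base-change bookkeeping (tracking $\tilde{f}$ and $\hat{f}$) a bit more explicitly, while you are more explicit about naming Lemma \ref{lem:transfer}, but these are purely presentational differences.
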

\begin{proof}
Let
\[
\Omega_{w,\Bi}: U^+[\dT_w^{-1}]^\bigstar\otimes \BC_q[H(\Bi)]\to(U^-)^\bigstar\otimes \BC_q[H(\Bi)]
\]
be the homomorphism of right $\BC_q[H(\Bi)]$-modules induced by $\Omega_w$.
For 
$f\in U^+[\dT_w^{-1}]^\bigstar$
the element of $U^{\geqq0}[\dT_w^{-1}]^\bigstar\otimes_{\BC_q[H]} \BC_q[H(\Bi)]$ corresponding to 
$f\otimes1\in U^+[\dT_w^{-1}]^\bigstar\otimes \BC_q[H(\Bi)]$
is written as 
$\tilde{f}\otimes1$, where 
$\tilde{f}\in U^{\geqq0}[\dT_w^{-1}]^\bigstar$ is given by
\[
\langle\tilde{f},xt\rangle
=
\langle f,x\rangle\varepsilon(t)
\qquad(x\in U^+[\dT_w^{-1}], t\in U^0).
\]
Then for 
$y_1\in U^-[\hT_w]$, $y_2\in U^-\cap\hT_wU^-$, $t\in U^0$ we have
\begin{multline*}
\langle\Omega_w(\tilde{f}),ty_2y_1\rangle
=
\varepsilon(y_2)\langle\tilde{f},\dT_w^{-1}S(ty_1)\rangle
=
\varepsilon(y_2t)\langle{f},\dT_w^{-1}S(y_1)\rangle
\\
(\dT_w^{-1}S(y_1)\in U^+[\dT_w^{-1}]).
\end{multline*}
Namely, the element of $(U^-)^\bigstar\otimes \BC_q[H(\Bi)]$ corresponding to $f\otimes1$ is written as $\hat{f}\otimes1$, where $\hat{f}\in(U^-)^\bigstar$ is given by 
\[
\langle\hat{f},y_2y_1\rangle=
\varepsilon(y_2)\langle{f},\dT_w^{-1}S(y_1)\rangle
\qquad(
y_1\in U^-[\hT_w], y_2\in U^-\cap\hT_wU^-).
\]
Hence for $y\in U^-\cap\hT_wU^-$ we have
\[
\langle
\Xi_{w,\Bi}(p_\Bi(\Bn)),y\hf_\Bi^{\Bn'}
\rangle
=
\varepsilon(y)
\langle
\Theta_{w,\Bi}(p_\Bi(\Bn)),\te_\Bi^{\Bn'}
\rangle.
\]
\end{proof}
\subsection{}
Set 
\begin{equation}
U^{\geqq0}[\hT_w]=
U^+[\hT_w]U^0
\subset U^{\geqq0}
\end{equation}
and define 
\begin{equation}
\Psi_w:U^{\geqq0}[\hT_w]\to\BC_q[B^-]
\end{equation}
by
\[
\langle\Psi_w(x),u\rangle
=
\tau(x,u)
\qquad(x\in U^{\geqq0}[\hT_w], u\in U^{\leqq0}).
\]
By Proposition \ref{prop:base2a} $\Psi_w$ is an injective algebra homomorphism and its image is contained in $\CA_w$.
Hence there exists a unique injective linear map
\begin{equation}
\Gamma_w:U^{\geqq0}[\hT_w]\to\CM_w
\end{equation}
such that 
$\Xi_w\circ\Gamma_w=\Psi_w$.

\begin{theorem}
\label{thm:pd}
We have
\[
p_\Bi(\Bn)
=d_\Bi(\Bn)
\Gamma_w(\he_\Bi^{(\Bn)})\otimes
\left\{
\chi_{i_1}^{\langle\beta^\vee_{\Bi,1},\gamma_1\rangle}
\otimes\cdots\otimes
\chi_{i_m}^{\langle\beta^\vee_{\Bi,m},\gamma_m\rangle}
\right\},
\]
where
\[
d_\Bi(\Bn)=\prod_{r=1}^md_{i_r}(n_r),\qquad
d_i(n)=q^{n(n+1)/2}(q^{-1}-q)^n.
\]
\end{theorem}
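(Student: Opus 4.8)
The plan is to establish the identity after applying the injective linear map $\Xi_{w,\Bi}\colon\CM_w\otimes_{\BC_q[H]}\BC_q[H(\Bi)]\to(U^-)^\bigstar\otimes\BC_q[H(\Bi)]$ and pairing the two sides against all elements $y\hf_\Bi^{\Bn'}$ with $y\in U^-\cap\hT_w(U^-)$ and $\Bn'\in(\BZ_{\geqq0})^m$. By Proposition \ref{prop:base2} together with \eqref{eq:Ubunkai-h} these elements span $U^-$, so an element of $(U^-)^\bigstar\otimes\BC_q[H(\Bi)]$ is determined by its pairings with them; hence agreement of the two pairings for all such $y\hf_\Bi^{\Bn'}$, together with injectivity of $\Xi_{w,\Bi}$, yields the theorem. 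For the left-hand side, $\langle\Xi_{w,\Bi}(p_\Bi(\Bn)),y\hf_\Bi^{\Bn'}\rangle$ is exactly the formula of Proposition \ref{prop:isomorphismmm3}.

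For the right-hand side, right $\BC_q[H(\Bi)]$-linearity of $\Xi_{w,\Bi}$ reduces the task to evaluating $\langle\Xi_{w,\Bi}(\Gamma_w(\he_\Bi^{(\Bn)})\otimes1),y\hf_\Bi^{\Bn'}\rangle$. I would use $\Xi_w\circ\Gamma_w=\Psi_w$ and $\langle\Psi_w(x),u\rangle=\tau(x,u)$ for $u\in U^{\leqq0}$. Since $\he_\Bi^{(\Bn)}$ is homogeneous of some weight $\gamma\in Q^+$, relations \eqref{eq:Dr3} and \eqref{eq:Dr8} show that $\langle\Psi_w(\he_\Bi^{(\Bn)}),k_\mu u\rangle$ is independent of $\mu\in Q$ for $u\in U^-$ (only the term $\he_\Bi^{(\Bn)}\otimes1$ of $\Delta(\he_\Bi^{(\Bn)})$ pairs nontrivially with $k_\mu$ on its second leg); hence, under \eqref{eq:isomd}, the image of $\Psi_w(\he_\Bi^{(\Bn)})$ has trivial $\BC_q[H]$-component, so that $\langle\Xi_{w,\Bi}(\Gamma_w(\he_\Bi^{(\Bn)})\otimes1),y\hf_\Bi^{\Bn'}\rangle=\tau(\he_\Bi^{(\Bn)},y\hf_\Bi^{\Bn'})\,1_{\BC_q[H(\Bi)]}$. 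Finally, as $\hf_\Bi^{\Bn'}\in U^-[\hT_w]$, $y\in U^-\cap\hT_w(U^-)$, and the image of $\Psi_w$ lies in $\CA_w$ (Proposition \ref{prop:base2a}), the off-diagonal terms in $(\tau\otimes\tau)(\Delta(\he_\Bi^{(\Bn)}),\hf_\Bi^{\Bn'}\otimes y)$ vanish, leaving $\varepsilon(y)\,\tau(\he_\Bi^{(\Bn)},\hf_\Bi^{\Bn'})=\varepsilon(y)\,\delta_{\Bn,\Bn'}\prod_{r=1}^m c_{q_{i_r}}(n_r)$ by Proposition \ref{prop:base2a}. (Equivalently, one may run this computation through $\Theta_{w,\Bi}$ and Proposition \ref{prop:isomorphismmm2}, using Lemma \ref{lem:transfer} to identify $\dT_w^{-1}S(\hf_\Bi^{\Bn'})=\te_\Bi^{\Bn'}$ and Lemma \ref{lem:XiTheta}.)

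Multiplying by $d_\Bi(\Bn)=\prod_r d_{i_r}(n_r)$ and invoking the elementary identity $d_i(n)\,c_{q_i}(n)=(-1)^n q_i^{\,n}[n]_{q_i}!$ (immediate from the definitions of $c_q$ and $d_i$, since $q_i^{-1}-q_i=-(q_i-q_i^{-1})$) turns $d_\Bi(\Bn)\prod_r c_{q_{i_r}}(n_r)$ into precisely $\prod_r(-1)^{n_r}q_{i_r}^{n_r}[n_r]_{q_{i_r}}!$, the constant of Proposition \ref{prop:isomorphismmm3}, while the character factor $\chi_{i_1}^{\langle\beta^\vee_{\Bi,1},\gamma_1\rangle}\otimes\cdots\otimes\chi_{i_m}^{\langle\beta^\vee_{\Bi,m},\gamma_m\rangle}$ occurs verbatim on both sides (with $\gamma_r=\gamma_{\Bi,\Bn,r}$). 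Thus $\Xi_{w,\Bi}(p_\Bi(\Bn))$ and $\Xi_{w,\Bi}$ of the claimed right-hand side have equal pairing with every $y\hf_\Bi^{\Bn'}$, and injectivity of $\Xi_{w,\Bi}$ yields the asserted formula. The step I expect to be the main obstacle is the computation of $\Xi_{w,\Bi}(\Gamma_w(\he_\Bi^{(\Bn)})\otimes1)$: it requires carefully unwinding the identifications \eqref{eq:isomd}, \eqref{eq:identification} and the descriptions of $\Psi_w$, $\Omega_w$, $i_w^-$, and applying the Drinfeld-pairing orthogonality relations (the ones responsible for $\Psi_w(U^{\geqq0}[\hT_w])\subset\CA_w$) to discard all off-diagonal contributions; the remaining scalar comparison through Propositions \ref{prop:base2a}, \ref{prop:isomorphismmm2} and \ref{prop:isomorphismmm3} is routine bookkeeping.
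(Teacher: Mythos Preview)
Your proposal is correct and follows essentially the same route as the paper: apply the injective map $\Xi_{w,\Bi}$, use Proposition~\ref{prop:isomorphismmm3} for the left-hand side, use $\Xi_w\circ\Gamma_w=\Psi_w$ together with $\Image(\Psi_w)\subset\CA_w$ and Proposition~\ref{prop:base2a} for the right-hand side, and finish with the scalar identity $d_i(n)c_{q_i}(n)=(-1)^n q_i^{\,n}[n]_{q_i}!$. The paper's proof is terser---it writes the key computation $\langle\Xi_w(\Gamma_w(\he_\Bi^{(\Bn)}))\otimes1,y\hf_\Bi^{\Bn'}\rangle=\varepsilon(y)\tau(\he_\Bi^{(\Bn)},\hf_\Bi^{\Bn'})$ in one line---but your more explicit unpacking of why the $\BC_q[H]$-component of $\Psi_w(\he_\Bi^{(\Bn)})$ under \eqref{eq:isomd} is trivial (via \eqref{eq:Dr7}) and why the $y$-factor contributes only $\varepsilon(y)$ is exactly what underlies that line.
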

\begin{proof}
For $y\in U^-\cap\hT_wU^-$ we have
\begin{align*}
\langle\Xi_w(\Gamma_w(\he_\Bi^{(\Bn)}))
\otimes1,y\hf_\Bi^{\Bn'}\rangle
=&\varepsilon(y)
\langle\Psi_w(\he_\Bi^{(\Bn)}),\hf_\Bi^{\Bn'}\rangle
=\varepsilon(y)\tau(\he_\Bi^{(\Bn)},\hf_\Bi^{\Bn'})
\\
=&\varepsilon(y)\delta_{\Bn\Bn'}
\prod_{t=1}^mc_{q_{i_t}}(n_t),
\end{align*}
where
\[
c_q(n)=[n]!q^{-n(n-1)/2}(q-q^{-1})^{-n}.
\]
\end{proof}
\subsection{}
Set $m_0=\ell(w_0)$.
In this subsection we consider the case $w=w_0$.
\begin{lemma}
\label{lem:jmath-e}
Let $i\in I$ and define $i'\in I$ by $w_0\alpha_i=-\alpha_{i'}$.
Then we have
\[
\Gamma_{w_0}(e_i)=
\frac1{1-q_i^2}
(\sigma^{w_0}_{-\varpi_{i'}}e_i)(\sigma^{w_0}_{-\varpi_{i'}})^{-1}\star1.
\]
\end{lemma}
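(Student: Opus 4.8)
The plan is to verify the claimed equality after applying the injective linear map $\Xi_{w_0}\colon\CM_{w_0}\to\BC_q[B^-]^{\bullet_{w_0}}$; abbreviate $\sigma=\sigma^{w_0}_{-\varpi_{i'}}$. On the left side one has $\Xi_{w_0}(\Gamma_{w_0}(e_i))=\Psi_{w_0}(e_i)$ by the definition of $\Gamma_{w_0}$, and $\Psi_{w_0}(e_i)$ is the functional $u\mapsto\tau(e_i,u)$ on $U^{\leqq0}$; by \eqref{eq:Dr3}, \eqref{eq:Dr5}, \eqref{eq:Dr8} together with $\Delta(e_i)=e_i\otimes1+k_i\otimes e_i$, this functional is supported on $U^-_{-\alpha_i}U^0$ and satisfies $\langle\Psi_{w_0}(e_i),bf_ik_\mu\rangle=b\,q^{(\mu,\alpha_i)}/(q_i-q_i^{-1})$ for $b\in\BF$, $\mu\in Q$. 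On the right side, since $-\varpi_{i'}\in P^-$ we have $\sigma\in\CS_{w_0}\cap\BC_q[G/N^-]$, and since the right $U^+$-action on $\BC_q[G]$ commutes with the left $U^-$-action, also $\sigma e_i\in\BC_q[G/N^-]$. Using $\varphi\sigma^{w_0}_\lambda\star\chi=\varphi\star\chi_\lambda\chi$ ($\lambda\in P$), the definition of $\Xi_{w_0}$, and $w_0\chi_{\varpi_{i'}}=\chi_{-\varpi_i}$, $S\chi_{-\varpi_i}=\chi_{\varpi_i}$, I would rewrite
\[
\Xi_{w_0}((\sigma e_i)\sigma^{-1}\star1)=\Xi_{w_0}((\sigma e_i)\star\chi_{\varpi_{i'}})=\Xi_{w_0}'(\sigma e_i)\bullet_{w_0}\chi_{\varpi_{i'}}=\chi_{\varpi_i}\,\Xi_{w_0}'(\sigma e_i),
\]
so it remains to prove $\chi_{\varpi_i}\,\Xi_{w_0}'(\sigma e_i)=(1-q_i^2)\,\Psi_{w_0}(e_i)$ in $\BC_q[B^-]$.

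To evaluate $\Xi_{w_0}'(\sigma e_i)$ I would use $\sigma=\Phi_{v^*_{w_0(-\varpi_{i'})}\otimes v_{-\varpi_{i'}}}$, hence $\sigma e_i=\Phi_{(v^*_{w_0(-\varpi_{i'})}e_i)\otimes v_{-\varpi_{i'}}}$, and the fact that the left braid operator acts on the $V(-\varpi_{i'})$-factor, which gives for $u\in U^{\leqq0}$
\[
\langle\Xi_{w_0}'(\sigma e_i),u\rangle=\langle v^*_{w_0(-\varpi_{i'})},\,e_i(Su)\,\dT_{w_0}v_{-\varpi_{i'}}\rangle.
\]
Now $\dT_{w_0}v_{-\varpi_{i'}}$ lies in the one-dimensional extreme weight space $V(-\varpi_{i'})_{w_0(-\varpi_{i'})}=V(-\varpi_{i'})_{\varpi_i}$, so $\dT_{w_0}v_{-\varpi_{i'}}=\kappa v^+$ for some $\kappa\in\BF^\times$ and a highest weight vector $v^+$, and $\langle v^*_{w_0(-\varpi_{i'})},v^+\rangle=\kappa^{-1}$ because $v^*_{w_0(-\varpi_{i'})}=v^*_{-\varpi_{i'}}\dT_{w_0}^{-1}$ and $\dT_{w_0}^{-1}v^+=\kappa^{-1}v_{-\varpi_{i'}}$. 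The key point is that $\varpi_i-\alpha_j$ is a weight of $V(-\varpi_{i'})$ only for $j=i$, and then with multiplicity one, so for $u$ of $U^-$-weight $-\gamma$ the vector $e_i(Su)v^+$ has a $v^+$-component only when $\gamma=\alpha_i$; for $u=bf_ik_\mu$, using $S(f_ik_\mu)=-k_{-\mu}f_ik_i$, $k_iv^+=q_iv^+$ and $e_if_iv^+=v^+$, one finds $e_i(Su)v^+=-b\,q_i\,q^{-(\mu,\varpi_i-\alpha_i)}v^+$, whence
\[
\langle\Xi_{w_0}'(\sigma e_i),bf_ik_\mu\rangle=-b\,q_i\,q^{-(\mu,\varpi_i-\alpha_i)},
\]
the scalar $\kappa$ cancelling, while $\Xi_{w_0}'(\sigma e_i)$ vanishes on all other weight components. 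Multiplying by $\chi_{\varpi_i}$ inside $(U^{\leqq0})^*$ via $\Delta(f_ik_\mu)=f_ik_\mu\otimes k_i^{-1}k_\mu+k_\mu\otimes f_ik_\mu$ and $\langle\chi_{\varpi_i},f_ik_\mu\rangle=0$ gives $\langle\chi_{\varpi_i}\Xi_{w_0}'(\sigma e_i),bf_ik_\mu\rangle=-b\,q_i\,q^{(\mu,\alpha_i)}$; since $(1-q_i^2)/(q_i-q_i^{-1})=-q_i$, this equals $(1-q_i^2)\langle\Psi_{w_0}(e_i),bf_ik_\mu\rangle$, and both functionals vanish on the remaining weight components of $U^{\leqq0}$. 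By injectivity of $\Xi_{w_0}$ this proves the lemma.

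The hard part is the evaluation of $\Xi_{w_0}'(\sigma e_i)$ carried out in the second paragraph: it rests on recognising $\sigma e_i$ as the matrix coefficient attached to the extreme weight line of $V(-\varpi_{i'})$, on the fact that $\dT_{w_0}$ carries the lowest weight line onto the highest weight line (so the undetermined scalar $\kappa$ appears and then cancels), and on the ``minuscule at the top'' property that $\varpi_i-\alpha_j$ is a weight of $V(-\varpi_{i'})$ only for $j=i$, which collapses the computation to a rank one $\Gsl_2$ calculation; some care is also needed with the antipode and with the meaning of $\chi$-multiplication in $\BC_q[B^-]\subset(U^{\leqq0})^*$.
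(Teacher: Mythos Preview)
Your proof is correct and follows essentially the same route as the paper's: both reduce via injectivity of $\Xi_{w_0}$ to the identity $\chi_{\varpi_i}\,\Xi_{w_0}'(\sigma e_i)=(1-q_i^2)\Psi_{w_0}(e_i)$ in $\BC_q[B^-]$, express $\Xi_{w_0}'(\sigma e_i)$ as a matrix coefficient against the highest weight line $V(-\varpi_{i'})_{\varpi_i}$, and collapse the evaluation to a rank-one $\Gsl_2$ computation by a weight argument. The only cosmetic difference is that the paper parametrizes $U^{\leqq0}$ via the decomposition $U^0(U^-\cap\hT_{s_i}U^-)\BF[f_i]$ and evaluates on $tyf_i^p$, whereas you use the weight grading and $U^-_{-\alpha_i}=\BF f_i$ directly (so your introduction of the scalar $\kappa$ is unnecessary, since $\langle v^*_{-\varpi_{i'}}\dT_{w_0}^{-1},\dT_{w_0}v_{-\varpi_{i'}}\rangle=1$ immediately).
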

\begin{proof}
It is sufficient to show 
\[
\Psi_{w_0}(e_i)=
\frac1{1-q_i^2}
\Xi_{w_0}
((\sigma^{w_0}_{-\varpi_{i'}}e_i)(\sigma^{w_0}_{-\varpi_{i'}})^{-1})\star1).
\]
Set $v^*=v^*_{-\varpi_{i'}}\dT_{w_0}^{-1}$,
$v=\dT_{w_0}v_{-\varpi_{i'}}$,
so that 
$v^*\in V^*(-\varpi_{i'})_{\varpi_i}$,
$v\in V(-\varpi_{i'})_{\varpi_i}$ with $\langle v^*,v\rangle=1$.
For $t\in U^0$, $y\in U^-\cap \hT_{s_i}U^-$, $p\geqq0$
we have
\begin{align*}
&\langle
\Xi_{w_0}'(\sigma^{w_0}_{-\varpi_{i'}}e_i),tyf_i^p
\rangle
=\langle
\dT_{w_0}(\sigma^{w_0}_{-\varpi_{i'}}e_i),S(tyf_i^p)
\rangle
\\
=&
\langle
v^*_{-\varpi_{i'}}\dT_{w_0}^{-1}e_i,
S(tyf_i^p)\dT_{w_0}v_{-\varpi_{i'}}
\rangle
=
\langle
v^*e_i,
S(tyf_i^p)v
\rangle
\\
=&
\chi_{-\varpi_i}(t)
\langle
v^*e_i,
S(f_i^p)S(y)v
\rangle
=
-\chi_{-\varpi_i}(t)
\varepsilon(y)\delta_{p1}
\langle
v^*e_i,
f_ik_iv
\rangle
\\
=&
-\chi_{-\varpi_i}(t)
\varepsilon(y)\delta_{p1}q_i
\langle
v^*e_i,
f_iv
\rangle
\\
=&
-\chi_{-\varpi_i}(t)
\varepsilon(y)\delta_{p1}q_i
\langle
v^*,
\frac{k_i-k_i^{-1}}{q_i-q_i^{-1}}
v
\rangle
=
-\chi_{-\varpi_i}(t)
\varepsilon(y)\delta_{p1}q_i.
\end{align*}
Hence
\begin{align*}
&\langle
\Xi_{w_0}
((\sigma^{w_0}_{-\varpi_{i'}}e_i)(\sigma^{w_0}_{-\varpi_{i'}})^{-1})\star1)
,tyf_i^p
\rangle
=\langle\chi_{\varpi_i}\Xi_{w_0}'(\sigma^{w_0}_{-\varpi_{i'}}e_i),tyf_i^p
\rangle
\\
=&\sum_{(t)}
\chi_{\varpi_i}(t_{(0)})(-\chi_{-\varpi_i}(t_{(1)})
\varepsilon(y)\delta_{p1}q_i)
=-\varepsilon(t)
\varepsilon(y)\delta_{p1}q_i.
\end{align*}
On the other hand by \eqref{eq:Dr7} and Proposition \ref{prop:base2a} we have
\[
\langle\Psi_{w_0}(e_i), ,tyf_i^p\rangle
=\tau(e_i,tyf_i^p)
=\frac1{q_i-q_i^{-1}}\varepsilon(t)\varepsilon(y)\delta_{p1}
\]
for $t\in U^0$, $y\in U^-\cap \hT_{s_i}U^-$, $p\geqq0$.
\end{proof}

\begin{proposition}
\label{prop:KOY-conj}
For $\Bi\in\CI_{w_0}$,  $i\in I$, $\Bn\in(\BZ_{\geqq0})^{m_0}$ write 
\[
\he_\Bi^{(\Bn)}e_i=\sum_{\Bn'}c_{\Bn\Bn'}\he_\Bi^{(\Bn')}.
\]
Then we have
\begin{align*}
&\left\{
\frac1{1-q_i^2}
(\sigma^{w_0}_{-\varpi_{i'}}e_i)(\sigma^{w_0}_{-\varpi_{i'}})^{-1}
\right\}
p_\Bi(\Bn)
\\
=&
\sum_{\Bn'}c_{\Bn\Bn'}
\frac{d_\Bi(\Bn)}{d_\Bi(\Bn')}
p_{\Bi}(\Bn')
\left(
\chi_{i_1}^{\langle\beta^\vee_{\Bi,1},\gamma_{\Bi,\Bn,1}-\gamma_{\Bi,\Bn',1}\rangle}
\otimes\cdots\otimes
\chi_{i_{m_0}}^{\langle\beta^\vee_{\Bi,m_0},\gamma_{\Bi,\Bn,m_0}-\gamma_{\Bi,\Bn',m_0}\rangle}
\right),
\end{align*}
where $i'$ is as in Lemma \ref{lem:jmath-e}.
\end{proposition}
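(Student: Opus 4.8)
The plan is to reduce the assertion to the tautological identity $\Psi_{w_0}(\he_\Bi^{(\Bn)})\Psi_{w_0}(e_i)=\Psi_{w_0}(\he_\Bi^{(\Bn)}e_i)$ in $\BC_q[B^-]$, valid because $\Psi_{w_0}$ is an algebra homomorphism (Proposition \ref{prop:base2a}). Write $\Phi=\frac1{1-q_i^2}(\sigma^{w_0}_{-\varpi_{i'}}e_i)(\sigma^{w_0}_{-\varpi_{i'}})^{-1}\in\CS_{w_0}^{-1}\BC_q[G]$ for the multiplier occurring in the statement. By Theorem \ref{thm:pd}, $p_\Bi(\Bn)=d_\Bi(\Bn)\,\Gamma_{w_0}(\he_\Bi^{(\Bn)})\otimes\chi_{\Bi,\Bn}$, where I abbreviate $\chi_{\Bi,\Bn}=\chi_{i_1}^{\langle\beta^\vee_{\Bi,1},\gamma_{\Bi,\Bn,1}\rangle}\otimes\cdots\otimes\chi_{i_{m_0}}^{\langle\beta^\vee_{\Bi,m_0},\gamma_{\Bi,\Bn,m_0}\rangle}$. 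Since the left $\BC_q[G]$-action on $\CM_{w_0}\otimes_{\BC_q[H]}\BC_q[H(\Bi)]$ commutes with the base change, we get $\Phi\,p_\Bi(\Bn)=d_\Bi(\Bn)\bigl(\Phi\cdot\Gamma_{w_0}(\he_\Bi^{(\Bn)})\bigr)\otimes\chi_{\Bi,\Bn}$, so everything reduces to computing $\Phi\cdot\Gamma_{w_0}(\he_\Bi^{(\Bn)})$ inside $\CM_{w_0}$.

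The first observation is that $\Phi$ actually lies in $\CS_{w_0}^{-1}\BC_q[G/N^-]$: from $\langle\sigma^{w_0}_{-\varpi_{i'}}e_i,u\rangle=\langle v^*_{-\varpi_{i'}}\dT_{w_0}^{-1}e_i,uv_{-\varpi_{i'}}\rangle$ and the fact that $v_{-\varpi_{i'}}$ is a lowest weight vector ($yv_{-\varpi_{i'}}=\varepsilon(y)v_{-\varpi_{i'}}$ for $y\in U^-$), the matrix coefficient $\sigma^{w_0}_{-\varpi_{i'}}e_i$ is annihilated by $U^-$ on the left, hence lies in $\BC_q[G/N^-]$; and $\sigma^{w_0}_{-\varpi_{i'}}\in\CS_{w_0}$. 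The second and key point is the following intertwining property: for $\phi\in\CS_{w_0}^{-1}\BC_q[G/N^-]$ and $m\in\CM_{w_0}$ one has $\Xi_{w_0}(\phi\cdot m)=\Xi_{w_0}(m)\,\Xi_{w_0}''(\phi)$, the product on the right taken in $\BC_q[B^-]$. To prove it, use Proposition \ref{prop:MMM} to write $m$ as a sum of terms $\psi\star\chi$ with $\psi\in\CS_{w_0}^{-1}\BC_q[G/N^-]$ and $\chi\in\BC_q[H]$; then $\phi\cdot(\psi\star\chi)=(\phi\psi)\star\chi$, and by \eqref{eq:Xi}, the anti-homomorphism property of $\Xi_{w_0}''$ (Lemma \ref{lem:Xi1}), and the definition $\varphi\bullet_{w_0}\chi=(Sw_0\chi)\varphi$ of \eqref{eq:twist-action}, both $\Xi_{w_0}((\phi\psi)\star\chi)$ and $\Xi_{w_0}(\psi\star\chi)\,\Xi_{w_0}''(\phi)$ come out equal to $(Sw_0\chi)\,\Xi_{w_0}''(\psi)\,\Xi_{w_0}''(\phi)$.

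Next I would apply the intertwining property with $\phi=\Phi$ and $m=\Gamma_{w_0}(\he_\Bi^{(\Bn)})$. Combining $\Xi_{w_0}\circ\Gamma_{w_0}=\Psi_{w_0}$, the identity $\Gamma_{w_0}(e_i)=\Phi\star1$ of Lemma \ref{lem:jmath-e}, and \eqref{eq:Xi}, we get $\Xi_{w_0}''(\Phi)=\Xi_{w_0}(\Phi\star1)=\Xi_{w_0}(\Gamma_{w_0}(e_i))=\Psi_{w_0}(e_i)$, whence $\Xi_{w_0}\bigl(\Phi\cdot\Gamma_{w_0}(\he_\Bi^{(\Bn)})\bigr)=\Psi_{w_0}(\he_\Bi^{(\Bn)})\Psi_{w_0}(e_i)=\Psi_{w_0}(\he_\Bi^{(\Bn)}e_i)$, the last step since $\Psi_{w_0}$ is an algebra homomorphism and, for $\Bi\in\CI_{w_0}$, $U^+[\hT_{w_0}]=U^+$ by Proposition \ref{prop:base2}, so all of $e_i,\he_\Bi^{(\Bn)},\he_\Bi^{(\Bn)}e_i$ lie in the domain of $\Psi_{w_0}$. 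Expanding $\he_\Bi^{(\Bn)}e_i=\sum_{\Bn'}c_{\Bn\Bn'}\he_\Bi^{(\Bn')}$, applying $\Psi_{w_0}$ linearly, using $\Psi_{w_0}(\he_\Bi^{(\Bn')})=\Xi_{w_0}(\Gamma_{w_0}(\he_\Bi^{(\Bn')}))$ and the injectivity of $\Xi_{w_0}$, I obtain $\Phi\cdot\Gamma_{w_0}(\he_\Bi^{(\Bn)})=\sum_{\Bn'}c_{\Bn\Bn'}\Gamma_{w_0}(\he_\Bi^{(\Bn')})$ in $\CM_{w_0}$.

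Finally I would substitute this back: $\Phi\,p_\Bi(\Bn)=d_\Bi(\Bn)\sum_{\Bn'}c_{\Bn\Bn'}\Gamma_{w_0}(\he_\Bi^{(\Bn')})\otimes\chi_{\Bi,\Bn}$. Writing Theorem \ref{thm:pd} for each $\Bn'$ as $\Gamma_{w_0}(\he_\Bi^{(\Bn')})\otimes\chi_{\Bi,\Bn'}=d_\Bi(\Bn')^{-1}p_\Bi(\Bn')$ and correcting the torus factor by the right action of $\chi_{\Bi,\Bn'}^{-1}\chi_{\Bi,\Bn}=\chi_{i_1}^{\langle\beta^\vee_{\Bi,1},\gamma_{\Bi,\Bn,1}-\gamma_{\Bi,\Bn',1}\rangle}\otimes\cdots\otimes\chi_{i_{m_0}}^{\langle\beta^\vee_{\Bi,m_0},\gamma_{\Bi,\Bn,m_0}-\gamma_{\Bi,\Bn',m_0}\rangle}$ yields precisely the asserted formula, with the coefficient $c_{\Bn\Bn'}\,d_\Bi(\Bn)/d_\Bi(\Bn')$. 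The only genuinely non-formal step is the intertwining property of the second paragraph — namely that $\Xi_{w_0}$ turns left multiplication by $\CS_{w_0}^{-1}\BC_q[G/N^-]$ on $\CM_{w_0}$ into right multiplication in $\BC_q[B^-]$, with the $\bullet_{w_0}$-twist — together with the small check that $\Phi$ really lies in the $N^-$-invariant subalgebra so that this applies; everything else is bookkeeping with Theorem \ref{thm:pd}.
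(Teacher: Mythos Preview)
Your proof is correct and follows essentially the same approach as the paper. The paper sets $\varphi=\Phi$, picks $\varphi_{\Bn'}\in\CS_{w_0}^{-1}\BC_q[G/N^-]$ with $\Gamma_{w_0}(\he_\Bi^{(\Bn')})=\varphi_{\Bn'}\star1$, and then invokes the displayed identity $\Xi_w(\varphi\psi\star1)=\Xi_w(\psi\star1)\Xi_w(\varphi\star1)$ (the anti-homomorphism property of $\Xi_{w_0}''$) directly rather than stating your more general intertwining property; otherwise the chain $\Psi_{w_0}(\he_\Bi^{(\Bn)})\Psi_{w_0}(e_i)=\Psi_{w_0}(\he_\Bi^{(\Bn)}e_i)$, injectivity of $\Xi_{w_0}$, and the appeal to Theorem~\ref{thm:pd} are identical.
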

\begin{proof}
Set $\varphi=\frac1{1-q_i^2}
(\sigma^{w_0}_{-\varpi_{i'}}e_i)(\sigma^{w_0}_{-\varpi_{i'}})^{-1}
\in\CS_{w_0}^{-1}\BC_q[G/N^-]$
so that $\Gamma_{w_0}(e_i)=\varphi\star1$.
For $\Bn'\in(\BZ_{\geqq0})^{m_0}$ take
$\varphi_{\Bn'}\in\CS_{w_0}^{-1}\BC_q[G/N^-]$ such that
$
\Gamma_{w_0}(\he_\Bi^{(\Bn')})=\varphi_{\Bn'}\star1$.
Then we have
\begin{align*}
&\Xi_{w_0}(\varphi\varphi_\Bn\star1)
=\Xi_{w_0}(\varphi_n\star1)\Xi_{w_0}(\varphi\star1)
=\Xi_{w_0}(\Gamma_{w_0}(\he_\Bi^{(\Bn)}))\Xi_{w_0}(\Gamma_{w_0}(e_i))
\\
=&\Psi_{w_0}(\he_\Bi^{(\Bn)})\Psi_{w_0}(e_i)
=\Psi_{w_0}(\he_\Bi^{(\Bn)}e_i)
=
\sum_{\Bn'}c_{\Bn\Bn'}\Psi_{w_0}(\he_\Bi^{(\Bn')})
\\
=&
\sum_{\Bn'}c_{\Bn\Bn'}\Xi_{w_0}(\Gamma_{w_0}(\he_\Bi^{(\Bn')}))
=
\sum_{\Bn'}c_{\Bn\Bn'}\Xi_{w_0}(\varphi_{\Bn'}\star1).
\end{align*}
Hence
\[
\varphi\varphi_\Bn\star1=\sum_{\Bn'}c_{\Bn\Bn'}\varphi_{\Bn'}\star1.
\]
It follows that 
\[
\varphi\Gamma_{w_0}(\he_\Bi^{(\Bn)})
=
\sum_{\Bn'}c_{\Bn\Bn'}\Gamma_{w_0}(\he_\Bi^{(\Bn')}).
\]
Therefore, the assertion is a consequence of Theorem \ref{thm:pd}.
\end{proof}

\section{Specialization}
\subsection{}
We denote by $\Hom_{\alg}(\BC_q[H],\BF)$ the set of algebra homomorphisms from $\BC_q[H]$ to $\BF$.
It is endowed with a structure of commutative group via the multiplication
\begin{multline*}
(\theta_1\theta_2)(\chi)=\sum_{(\chi)}
\theta_1(\chi_{(0)})\theta_2(\chi_{(1)})
\\
(\theta_1, \theta_2\in\Hom_{\alg}(\BC_q[H],\BF),\quad
\chi\in\BC_q[H]).
\end{multline*}
The identity element is given by $\varepsilon$, and the inverse of $\theta$ is given by $\theta\circ S$.

For $\theta\in \Hom_{\alg}(\BC_q[H],\BF)$ 
we
denote by $\BF_\theta=\BF1_\theta$ the corresponding left $\BC_q[H]$-module.
For $\theta\in \Hom_{\alg}(\BC_q[H],\BF)$ 
and $w\in W$
we define an $\CS_w^{-1}\BC_q[G]$-module $\CM_w^\theta$ by
\[
\CM_w^\theta=\CM_w\otimes_{\BC_q[H]}\BF_\theta.
\]
Set $1^\theta_w=(1\star1)\otimes 1_\theta\in\CM_w^\theta$.
We have
\[
\CM_w^\theta
\cong
\CS_w^{-1}\BC_q[G]\otimes_{\CS_w^{-1}\BC_q[N_w^-\backslash G]}\BF
\cong
\BC_q[G]\otimes_{\BC_q[N_w^-\backslash G]}\BF,
\]
where $\CS_w^{-1}\BC_q[N_w^-\backslash G]\to\BF$ is given by $\theta\circ{\eta}_w$.

Note that we have a decomposition
\[
\BC_q[N_w^-\backslash G]
=
\bigoplus_{\lambda\in P}\BC_q[N_w^-\backslash G]_\lambda
\]
with
\[
\BC_q[N_w^-\backslash G]_\lambda
=
\{\varphi\in\BC_q[N_w^-\backslash G]
\mid
t\varphi=\chi_\lambda(t)\varphi\quad(t\in U^0)\}.
\]
We have
\begin{equation}
(\theta\circ{\eta}_w)(\varphi)=\varepsilon(\varphi\dT_w)\theta(\chi_\lambda)
\qquad(\lambda\in P, \varphi\in\BC_q[N_w^-\backslash G]_\lambda).
\end{equation}
Indeed, for
$t\in U^0$ we have
\[
\langle{\eta}_w(\varphi),t\rangle=
\langle\varphi\dT_w,t\rangle=
\langle(t\varphi)\dT_w,1\rangle=
\chi_\lambda(t)\varepsilon(\varphi\dT_w),
\]
and hence ${\eta}_w(\varphi)=\varepsilon(\varphi\dT_w)\chi_\lambda$.
Therefore, $(\theta\circ{\eta}_w)(\varphi)=\varepsilon(\varphi\dT_w)\theta(\chi_\lambda)$.

The $\BC_q[H]$-module $\BF_\theta$ can also be regarded as a  $\BC_q[G]$-module
via the canonical Hopf algebra homomorphism $r^G_H:\BC_q[G]\to\BC_q[H]$. 
We denote this $\BC_q[G]$-module by $\BF_\theta^G=\BF1_\theta^G$.

\begin{proposition}
\label{prop:parameter-shift}
Assume that we are given two algebra homomorphisms $\theta_i:\BC_q[H]\to\BF$ $(i=1, 2)$.
Then as a $\BC_q[G]$-module we have
\[
\CM_w^{\theta_1}\cong{\CM}_w^{\theta_2}\otimes_\BF\BF^G_{\theta_1(\theta_2\circ S)}.
\]
Here, the right side is regarded as a $\BC_q[G]$-module via the comultiplication $\Delta:\BC_q[G]\to\BC_q[G]\otimes\BC_q[G]$.
\end{proposition}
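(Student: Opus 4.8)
The plan is to produce an explicit $\BC_q[G]$-module isomorphism $\CM_w^{\theta_1}\to{\CM}_w^{\theta_2}\otimes_\BF\BF^G_{\theta_1(\theta_2\circ S)}$, and to do so it suffices to work with the realization $\CM_w^\theta\cong\BC_q[G]\otimes_{\BC_q[N_w^-\backslash G]}\BF$, where the right $\BC_q[N_w^-\backslash G]$-action on $\BF$ is via $\theta\circ\eta_w$. First I would fix the character $\theta:=\theta_1(\theta_2\circ S)\in\Hom_{\alg}(\BC_q[H],\BF)$, so that $\theta_1=\theta\cdot\theta_2$ in the group $\Hom_{\alg}(\BC_q[H],\BF)$, i.e. $\theta_1(\chi)=\sum_{(\chi)}\theta(\chi_{(0)})\theta_2(\chi_{(1)})$. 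The candidate map is then
\[
\Lambda:\BC_q[G]\otimes_{\BC_q[N_w^-\backslash G]}\BF_{\theta_1}\to
\left(\BC_q[G]\otimes_{\BC_q[N_w^-\backslash G]}\BF_{\theta_2}\right)\otimes_\BF\BF^G_{\theta},
\qquad
\varphi\otimes1\ \longmapsto\ \sum_{(\varphi)}(\varphi_{(0)}\otimes1)\otimes (r^G_H(\varphi_{(1)})\cdot 1^G_\theta),
\]
that is, $\Lambda$ is induced from the comultiplication of $\BC_q[G]$ composed with $\id\otimes r^G_H$ and the identifications $\BC_q[G]\otimes\BF^G_\theta$. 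One checks that $\Lambda$ intertwines the left $\BC_q[G]$-actions because both sides get their $\BC_q[G]$-structure through $\Delta$ (on the target) and left multiplication (on the source), and coassociativity of $\Delta$ does the rest.

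The key verification is that $\Lambda$ is well defined on the balanced tensor product, i.e. that for $\psi\in\BC_q[N_w^-\backslash G]$ the elements $\Lambda(\varphi\psi\otimes1)$ and $\Lambda(\varphi\otimes(\theta_1\circ\eta_w)(\psi)\,1)$ agree. Here I would use the description $(\theta_i\circ\eta_w)(\psi)=\varepsilon(\psi\dT_w)\,\theta_i(\chi_\lambda)$ for $\psi\in\BC_q[N_w^-\backslash G]_\lambda$, established just above the statement, together with the fact that $\BC_q[N_w^-\backslash G]_\lambda$ is a weight space for the left $U^0$-action, so that $\Delta(\psi)$ for such $\psi$ has the form $\sum\psi_{(0)}\otimes\psi_{(1)}$ with $\psi_{(0)}\in\BC_q[N_w^-\backslash G]$ and the $\chi_\lambda$-weight shared appropriately; applying $\varepsilon\otimes r^G_H$ and using $r^G_H(\psi_{(1)})$ lands in $\BC_q[H]$, where one then expands $\theta_1=\theta\cdot\theta_2$ by its very definition. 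Concretely, $\Lambda(\varphi\psi\otimes1)=\sum(\varphi_{(0)}\psi_{(0)}\otimes1)\otimes(r^G_H(\varphi_{(1)}\psi_{(1)})1^G_\theta)$, and one moves $\psi_{(0)}$ across the balanced tensor using $(\theta_2\circ\eta_w)(\psi_{(0)})$; the remaining factor $r^G_H(\psi_{(1)})$ supplies, after applying $\theta$, precisely the discrepancy between $\theta_2\circ\eta_w$ and $\theta_1\circ\eta_w$ by the product formula for $\theta_1$. This is the step I expect to be the main obstacle: bookkeeping with Sweedler notation and making sure the weight decomposition of $\BC_q[N_w^-\backslash G]$ is used in a way that correctly matches $\eta_w$ with the $\BC_q[H]$-quotient map $r^G_H$, since $\eta_w$ is defined via $\dT_w$ while $r^G_H$ is not twisted.

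Finally I would exhibit the inverse: the same construction with the roles of $\theta_1,\theta_2$ interchanged and $\theta$ replaced by $\theta\circ S=\theta_2(\theta_1\circ S)$ gives a map back, and the two composites are the identity by the antipode axiom $\sum S(\chi_{(0)})\chi_{(1)}=\varepsilon(\chi)$ applied in $\BF^G$, i.e. $\theta\cdot(\theta\circ S)=\varepsilon$. Hence $\Lambda$ is a $\BC_q[G]$-module isomorphism, which is the assertion. (One may alternatively phrase the whole argument in terms of $\Theta_w$ and the identification $\CM_w\cong U^+[\dT_w^{-1}]^\bigstar\otimes\BC_q[H]$ of Proposition~\ref{prop:Upsilon}, under which the statement becomes the elementary fact that tensoring the $\BC_q[H]$-factor by a one-dimensional character and restricting along $r^G_H$ are compatible; but the direct argument above seems cleanest.)
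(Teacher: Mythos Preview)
Your proposal is correct and takes essentially the same approach as the paper: your map $\Lambda$ is exactly the paper's homomorphism $F^{\theta_1}_{\theta_2}$, since both are the unique $\BC_q[G]$-linear map sending the generator $1\otimes 1$ to $(1\otimes 1)\otimes 1^G_\theta$, and both proofs construct the inverse by swapping $\theta_1,\theta_2$ and invoking $\theta\cdot(\theta\circ S)=\varepsilon$. The only difference is packaging: the paper verifies well-definedness by checking directly that $\psi\cdot(1^{\theta_2}_w\otimes 1^G_\theta)=(\theta_1\circ\eta_w)(\psi)\,(1^{\theta_2}_w\otimes 1^G_\theta)$ for $\psi\in\BC_q[N_w^-\backslash G]_\lambda$, whereas you propose to verify the balanced-tensor relation via Sweedler bookkeeping; your anticipated ``main obstacle'' of matching $\eta_w$ with $r^G_H$ dissolves via the identity $(\id\otimes r^G_H)\Delta(\psi)=\psi\otimes\chi_\lambda$ for $\psi\in\BC_q[N_w^-\backslash G]_\lambda$, which is precisely the computation the paper carries out, after which the $\varepsilon(\psi\dT_w)$ factor is common to both $\theta_i\circ\eta_w$ and cancels.
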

\begin{proof}
Let $\lambda\in P$, $\varphi\in\BC_q[N_w^-\backslash G]_\lambda$.
For 
$u\in U$, $t\in U^0$ we have
\[
\langle\Delta(\varphi), u\otimes t\rangle=\langle\varphi,ut\rangle
=
\langle t\varphi,u\rangle
=
\chi_\lambda(t)\langle\varphi,u\rangle,
\]
and hence 
$(\id\otimes r^G_H)(\varphi)=\varphi\otimes\chi_\lambda$.
It follows that
\begin{align*}
&\varphi(1^{\theta_2}_w\otimes1^G_{\theta_1(\theta_2\circ S)})
=
(\varphi1^{\theta_2}_w)\otimes(\chi_\lambda1^G_{\theta_1(\theta_2\circ S)})
\\
=&
\varepsilon(\varphi\dT_w)
\theta_2(\chi_\lambda)
(\theta_1(\theta_2\circ S))(\chi_\lambda)1^{\theta_2}_w\otimes1^G_{\theta_1(\theta_2\circ S)}
\\
=&
\varepsilon(\varphi\dT_w)
(\theta_2\theta_1(\theta_2\circ S))(\chi_\lambda)
1^{\theta_2}_w\otimes1^G_{\theta_1(\theta_2\circ S)}
\\
=&
\varepsilon(\varphi\dT_w)\theta_1(\chi_\lambda)
1^{\theta_2}_w\otimes1^G_{\theta_1(\theta_2\circ S)}
=(\theta_1\circ\eta_w)(\varphi)
1^{\theta_2}_w\otimes1^G_{\theta_1(\theta_2\circ S)}.
\end{align*}
Hence there exists uniquely a homomorphism $F^{\theta_1}_{\theta_2}:\CM_w^{\theta_1}
\to
{\CM}_w^{\theta_2}\otimes_\BF\BF^G_{\theta_1(\theta_2\circ S)}$
of $\BC_q[G]$-modules sending $1^{\theta_1}_w$
to
$1^{\theta_2}_w\otimes1^G_{\theta_1(\theta_2\circ S)}$.
Similarly, we have 
a homomorphism $F^{\theta_2}_{\theta_1}:\CM_w^{\theta_2}
\to
{\CM}_w^{\theta_1}\otimes_\BF\BF^G_{\theta_2(\theta_1\circ S)}$
of $\BC_q[G]$-modules sending $1^{\theta_2}_w$
to
$1^{\theta_1}_w\otimes1^G_{\theta_2(\theta_1\circ S)}$.
Applying $(\bullet)\otimes_\BF\BF^G_{\theta_1(\theta_2\circ S)}$ to
$F^{\theta_2}_{\theta_1}$ we obtain a homomorphism
\[
\tilde{F}^{\theta_2}_{\theta_1}:=F^{\theta_2}_{\theta_1}\otimes_\BF\BF^G_{\theta_1(\theta_2\circ S)}:\CM_w^{\theta_2}\otimes_\BF\BF^G_{\theta_1(\theta_2\circ S)}
\to
{\CM}_w^{\theta_1}
\]
of $\BC_q[G]$-modules sending $1^{\theta_2}_w\otimes 1^G_{\theta_1(\theta_2\circ S)}$
to
$1^{\theta_1}_w$.
It remains to show $\tilde{F}^{\theta_2}_{\theta_1}\circ F^{\theta_1}_{\theta_2}=\id$ and $F^{\theta_1}_{\theta_2}\circ\tilde{F}^{\theta_2}_{\theta_1}=\id$.
The first identity is a consequence of $(\tilde{F}^{\theta_2}_{\theta_1}\circ F^{\theta_1}_{\theta_2})(1^{\theta_1}_w)=1^{\theta_1}_w$.
The second one follows by applying 
$(\bullet)\otimes_\BF\BF^G_{\theta_1(\theta_2\circ S)}$
to
$\tilde{F}^{\theta_1}_{\theta_2}\circ F^{\theta_2}_{\theta_1}=\id$.
\end{proof}

\subsection{}
In view of Proposition \ref{prop:parameter-shift} we only consider the $\CS_w^{-1}\BC_q[G]$-module
\begin{equation}
\overline{\CM}_w=\CM_w^\varepsilon
\end{equation}
in the following.
For $\varphi\in\CS_w^{-1}\BC_q[G]$ we denote by $\overline{\varphi}\in\overline{\CM}_w$ the image of $\varphi\star1$ under  ${\CM}_w\to \overline{\CM}_w$.
Define $\overline{\eta}_w:\CS_w^{-1}\BC_q[N_w^-\backslash G]\to\BF$ as the composite $\varepsilon\circ\eta_w$.
Then we have
\[
\overline{\CM}_w
\cong
\CS_w^{-1}\BC_q[G]\otimes_{\CS_w^{-1}\BC_q[N_w^-\backslash G]}\BF
\cong
\BC_q[G]\otimes_{\BC_q[N_w^-\backslash G]}\BF,
\]
where $\CS_w^{-1}\BC_q[N_w^-\backslash G]\to\BF$ is given by $\overline{\eta}_w$.
Moreover, $\Theta_w$ induces  a linear isomorphism
\[
\overline{\Theta}_w:\overline{\CM}_w\to U^+[\dT_w^{-1}]^\bigstar
\]
given by
\[
\langle\overline{\Theta}_w(\overline{\varphi}),x\rangle
=\langle\varphi\dT_w,x\rangle
\qquad(\varphi\in\BC_q[G], x\in U^+[\dT_w^{-1}]).
\]
By the direct sum decomposition
\[
U^+[\dT_w^{-1}]^\bigstar
=\bigoplus_{\gamma\in Q^+\cap (-w^{-1}Q^+)}
(U^+[\dT_w^{-1}]\cap U^+_\gamma)^*
\]
we have a direct sum decomposition
\begin{equation}
\label{eq:ddecomp}
\overline{\CM}_w
=
\bigoplus_{\gamma\in Q^+\cap (-w^{-1}Q^+)}
\overline{\CM}_{w,\gamma},
\end{equation}
where
\[
\overline{\CM}_{w,\gamma}
=(\overline{\Theta}_w)^{-1}((U^+[\dT_w^{-1}]\cap U^+_\gamma)^*)
\qquad(\gamma\in Q^+\cap (-w^{-1}Q^+)).
\]
Note that 
\[
\overline{\CM}_{w,0}=\BF\overline{1}.
\]
\begin{lemma}
\label{weight-decomp}
For $m\in\overline{\CM}_{w,\gamma}$ and $\lambda\in P$
we have
$
\sigma^w_\lambda m=q^{-(\lambda,\gamma)}m.
$
\end{lemma}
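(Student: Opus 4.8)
The plan is to transport the problem to $U^+[\dT_w^{-1}]^\bigstar$ via the linear isomorphism $\overline{\Theta}_w$ and compute the action of $\sigma^w_\lambda$ there, after reducing to a single dominant weight and a single matrix coefficient. First I would reduce to $\lambda\in P^-$: writing a general $\lambda\in P$ as $\lambda=\lambda_1-\lambda_2$ with $\lambda_1,\lambda_2\in P^-$, we have $\sigma^w_\lambda=(\sigma^w_{\lambda_2})^{-1}\sigma^w_{\lambda_1}$ in $\CS_w^{-1}\BC_q[G]$, so the case $\lambda\in P^-$ gives $\sigma^w_\lambda m=(\sigma^w_{\lambda_2})^{-1}q^{-(\lambda_1,\gamma)}m=q^{-(\lambda_1,\gamma)+(\lambda_2,\gamma)}m=q^{-(\lambda,\gamma)}m$. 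Hence assume $\lambda\in P^-$, so that $\sigma^w_\lambda\in\BC_q[G]$.

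Since $\overline{\CM}_w\cong\BC_q[G]\otimes_{\BC_q[N_w^-\backslash G]}\BF$ is spanned over $\BF$ by the elements $\overline{\varphi}$ with $\varphi\in\BC_q[G]$, and each such $\varphi$ is a sum of matrix coefficients, it suffices (by linearity of $\overline{\Theta}_w$ and of $m\mapsto\sigma^w_\lambda m$) to prove
\[
\langle\overline{\Theta}_w(\sigma^w_\lambda\overline{\varphi}),x\rangle=q^{-(\lambda,\gamma')}\langle\overline{\Theta}_w(\overline{\varphi}),x\rangle
\]
for $\varphi=\Phi_{v^*\otimes v}$ (with $\mu\in P^-$, $v\in V(\mu)$, $v^*\in V^*(\mu)$), for every $\gamma'\in Q^+\cap(-w^{-1}Q^+)$ and every homogeneous $x\in U^+[\dT_w^{-1}]\cap U^+_{\gamma'}$. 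Granting this, for $m\in\overline{\CM}_{w,\gamma}$ we have $\langle\overline{\Theta}_w(m),x\rangle=0$ whenever $x$ is homogeneous of weight $\gamma'\ne\gamma$, so $\langle\overline{\Theta}_w(\sigma^w_\lambda m),x\rangle=q^{-(\lambda,\gamma)}\langle\overline{\Theta}_w(m),x\rangle$ for all homogeneous $x\in U^+[\dT_w^{-1}]$; since $\overline{\Theta}_w$ is bijective this yields $\sigma^w_\lambda m=q^{-(\lambda,\gamma)}m$.

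To prove the displayed identity I would start from $\sigma^w_\lambda\Phi_{v^*\otimes v}=\Phi_{(v^*_{w\lambda}\otimes v^*)\otimes(v_\lambda\otimes v)}$, apply the definition of $\overline{\Theta}_w$ together with Lemma \ref{lem:Tlr1}, and rewrite the left-hand side as $\langle v^*_{w\lambda}\otimes v^*,\Delta(\dT_w(x))(\dT_w v_\lambda\otimes\dT_w v)\rangle$. By Corollary \ref{cor:dT} each exponential factor $\exp_{q_{i_r}}(X_r)$ occurring in $\Delta(\dT_w)$ annihilates $v_\lambda\otimes v$ in the first slot because $\tf_{\Bi,r}v_\lambda=0$ (the root vectors kill the lowest weight vector), so $\Delta(\dT_w)(v_\lambda\otimes v)=\dT_w v_\lambda\otimes\dT_w v$. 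By \eqref{eq:twist2} we may write $\dT_w(x)=k_{-w\gamma'}y$ with $y\in U^-$ of weight $w\gamma'$; since $\Delta(y)=1\otimes y+(\text{terms whose first tensor factor has strictly negative weight})$ and $v^*_{w\lambda}\in V^*(\lambda)_{w\lambda}$ is extremal, only the term $1\otimes y$ survives the pairing of the first slot, and it contributes the factor $q^{-(w\lambda,w\gamma')}\langle v^*_{w\lambda},\dT_w v_\lambda\rangle=q^{-(\lambda,\gamma')}$ (using $W$-invariance of $(\,,\,)$ and $\langle v^*_\lambda\dT_w^{-1},\dT_w v_\lambda\rangle=\langle v^*_\lambda,v_\lambda\rangle=1$). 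What remains is $q^{-(\lambda,\gamma')}\langle v^*,\dT_w(x)\dT_w v\rangle=q^{-(\lambda,\gamma')}\langle\Phi_{v^*\otimes v}\dT_w,x\rangle=q^{-(\lambda,\gamma')}\langle\overline{\Theta}_w(\overline{\varphi}),x\rangle$, as desired. I expect the main obstacle to be precisely this weight bookkeeping in the last step: isolating the unique surviving coproduct term relies both on $x\in U^+[\dT_w^{-1}]$ (so that $\dT_w(x)$ has the precise shape $k_{-w\gamma'}U^-_{w\gamma'}$ dictated by \eqref{eq:twist2}) and on the fact that the tensor factor $v_\lambda$ produced by $\sigma^w_\lambda$ is a lowest weight vector.
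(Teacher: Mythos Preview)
Your proof is correct and follows essentially the same route as the paper: transport via $\overline{\Theta}_w$, invoke Lemma~\ref{lem:Tlr1} and Corollary~\ref{cor:dT}, rewrite $\dT_w(x)$ using \eqref{eq:twist2}, and isolate the surviving coproduct term by a weight argument against the extremal vector. The only cosmetic difference is that the paper avoids the reduction to a single matrix coefficient $\Phi_{v^*\otimes v}$ by using directly the multiplicativity $\dT_w(\sigma^w_\lambda\varphi)=(\dT_w\sigma^w_\lambda)(\dT_w\varphi)$ of the left $\dT_w$-action on $\BC_q[G]$, then pairing with $\Delta(\dT_w(x))$ at the level of $\BC_q[G]$ rather than inside a tensor product of modules.
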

\begin{proof}
We may assume $\lambda\in P^-$.
Take $\varphi\in\BC_q[G]$ such that $\overline{\varphi}=m$.
By Corollary \ref{cor:dT} we have
$\dT_w(\sigma^w_\lambda\varphi)=
(\dT_w\sigma^w_\lambda)(\dT_w\varphi)
$, and hence for $x\in U^+[\dT_w^{-1}]^\bigstar$ we have
\[
\langle\overline{\Theta}_w(\sigma^w_\lambda m),x\rangle
=\langle\dT_w(\sigma^w_\lambda\varphi),\dT_w(x)\rangle
=\langle(\dT_w\sigma^w_\lambda)(\dT_w\varphi),\dT_w(x)\rangle.
\]
Assume $x\in U^+[\dT_w^{-1}]\cap U^+_\delta$ with $\delta\in Q^+\cap(-w^{-1}Q^+)$, and set $y=\dT_w(x)$.
Then we have 
$y\in(U^-_{w\delta})k_{-w\delta}$
by \eqref{eq:twist2}.
Hence
\begin{align*}
\langle\overline{\Theta}_w(\sigma^w_\lambda m),x\rangle
=&
\sum_{(y)}
\langle\dT_w\sigma^w_\lambda, y_{(0)}\rangle
\langle\dT_w\varphi,y_{(1)}\rangle
=
\langle\dT_w\sigma^w_\lambda, k_{-w\delta}\rangle
\langle\dT_w\varphi,y\rangle
\\
=&
q^{-(\lambda,\delta)}
\langle\overline{\Theta}_w(m),x\rangle.
\end{align*}
\end{proof}
\begin{theorem}
\label{thm:irred}
The $\BC_q[G]$-module $\overline{\CM}_w$ is irreducible.
\end{theorem}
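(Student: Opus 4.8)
The strategy is to show that $\overline 1$ is at once a cyclic vector and a vector contained in every nonzero submodule. That $\overline{\CM}_w=\BC_q[G]\,\overline 1$ is immediate from \eqref{eq:des} specialized at $\varepsilon$, which identifies $\overline{\CM}_w$ with $\BC_q[G]\otimes_{\BC_q[N_w^-\backslash G]}\BF$; so every element is $\varphi\cdot\overline 1$ for some $\varphi\in\BC_q[G]$, and it remains to prove that any nonzero $\BC_q[G]$-submodule $N$ contains $\overline 1$. Such an $N$ is necessarily graded for \eqref{eq:ddecomp}: for $\lambda\in P^-$ the element $\sigma^w_\lambda$ lies in $\BC_q[G]$ and, by Lemma \ref{weight-decomp}, acts on $\overline{\CM}_{w,\gamma}$ by the scalar $q^{-(\lambda,\gamma)}$; since $(\,,\,)$ is non-degenerate on $Q$, the commuting semisimple family $\{\sigma^w_\lambda\}_{\lambda\in P^-}$ has \eqref{eq:ddecomp} as its joint eigenspace decomposition, whence $N=\bigoplus_{\gamma}\bigl(N\cap\overline{\CM}_{w,\gamma}\bigr)$ (apply Lagrange interpolation in one suitable $\sigma^w_\lambda$ to the finitely many $\gamma$ occurring in a given element).

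Now choose $\gamma\in Q^+\cap(-w^{-1}Q^+)$ of minimal height with $N\cap\overline{\CM}_{w,\gamma}\ne\{0\}$; since $\overline{\CM}_{w,0}=\BF\overline 1$, it suffices to exclude $\gamma\ne 0$. This reduces the theorem to a single lowering assertion: for every $\gamma\ne 0$ and every $0\ne m\in\overline{\CM}_{w,\gamma}$ there exists $\varphi\in\BC_q[G]$ with $\varphi m\ne 0$ and $\varphi\,\overline{\CM}_{w,\delta}\subseteq\bigoplus_{\Ht(\delta')<\Ht(\delta)}\overline{\CM}_{w,\delta'}$ for all $\delta$. Granting this, $\varphi m$ is a nonzero element of $N$ concentrated in strictly smaller heights, so one of its homogeneous components lies in $N\cap\overline{\CM}_{w,\delta}$ with $\Ht(\delta)<\Ht(\gamma)$, contradicting minimality; hence $\gamma=0$.

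The lowering assertion is the heart of the matter. I would prove it by transporting the module via $\overline\Theta_w$ to $U^+[\dT_w^{-1}]^\bigstar$, in which $\overline{\CM}_{w,\gamma}$ becomes $(U^+[\dT_w^{-1}]\cap U^+_\gamma)^*$. Fixing $\Bi\in\CI_w$, the PBW basis $\{\te_\Bi^{\Bn}\}$ of $U^+[\dT_w^{-1}]$ (Proposition \ref{prop:base2}) gives $U^+[\dT_w^{-1}]=\BF\oplus\sum_{r=1}^{m}\te_{\Bi,r}\,U^+[\dT_w^{-1}]$, so the ``quantum partial derivatives'' $\partial_r\colon f\mapsto\bigl(x\mapsto f(\te_{\Bi,r}x)\bigr)$ strictly lower the $Q^+$-degree and jointly kill only the degree-zero part; it then suffices to realize each $\partial_r$, modulo operators lowering the height still further, by the action of some element of $\BC_q[G]$. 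For $w=w_0$ this identification is essentially contained in Lemma \ref{lem:jmath-e} and Proposition \ref{prop:KOY-conj}; for general $w$ one can instead use the tensor decomposition $\overline{\CM}_w\cong\CN_{i_1}\otimes\cdots\otimes\CN_{i_m}$ obtained from Theorem \ref{thm:isomorphismmm} by specialization at $\theta=\varepsilon$ (noting $\varepsilon\circ\Delta_\Bi=\varepsilon$), where $\CN_{i_r}$ is the infinite-dimensional $\BC_q[G(i_r)]$-module described in Lemma \ref{lem:sl2-2} with $\chi$ set to $1$ (irreducible, since its $a$-operator strictly lowers and its $d$-operator strictly raises), viewed over $\BC_q[G]$ through $r^G_{G(i_r)}$ with $\BC_q[G]$ acting via $\Delta_{m-1}$; lifting the $SL_2$-generators to $\hat a_j,\hat b_j,\hat c_j,\hat d_j\in\BC_q[G]$ (possible as $r^G_{G(j)}$ is surjective) and computing their action through $\bigl(r^G_{G(i_1)}\otimes\cdots\otimes r^G_{G(i_m)}\bigr)\circ\Delta_{m-1}$ and Lemma \ref{lem:sl2-2}, one obtains --- exactly as in Soibelman's original argument --- that a suitable composite strictly lowers the height of any prescribed nonzero homogeneous vector. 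The main obstacle is precisely this last step: the operators are not literal lowering operators on the individual tensor factors (already in $\CN_j\otimes\CN_j$ the summand $b_j\otimes c_j$ of $\Delta(a_j)$ is diagonal), so one must fix a total order on $(\BZ_{\geqq0})^m$ refining the height order, verify the resulting triangularity, and keep track of the leading coefficients.
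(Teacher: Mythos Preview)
Your opening moves --- cyclicity of $\overline 1$, the $Q^+$-grading of any submodule via Lemma \ref{weight-decomp}, and the reduction to showing $\overline 1$ lies in every nonzero submodule --- coincide with the paper's. The divergence is in the key step.

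Your plan is to manufacture, for each nonzero homogeneous $m$, an element of $\BC_q[G]$ which strictly lowers height on \emph{all} of $\overline{\CM}_w$ and does not kill $m$, then iterate. This can be made to work through the tensor decomposition (it is essentially Soibelman's original argument), but two remarks are in order. First, your appeal to Lemma \ref{lem:jmath-e} and Proposition \ref{prop:KOY-conj} for $w=w_0$ is misplaced: the operator $(1-q_i^2)^{-1}(\sigma^{w_0}_{-\varpi_{i'}}e_i)(\sigma^{w_0}_{-\varpi_{i'}})^{-1}$ corresponds via $\Gamma_{w_0}$ to right multiplication by $e_i$ on $U^+$, hence \emph{raises} the $\gamma$-grading by $-w_0^{-1}\alpha_i=\alpha_{i'}$; it does not realize your $\partial_r$. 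Second, once you fall back on the tensor picture and lifted $SL_2$-generators, you are redoing the triangularity analysis that the present paper was written to avoid, and you yourself flag this as the unfinished step.

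The paper's argument is shorter and bypasses all of this. Rather than lowering step by step, it shows directly that the projection to degree zero, $m\mapsto\langle\overline\Theta_w(m),1\rangle\,\overline 1$, does not annihilate $\BC_q[G]\cdot m$. Acting by $\psi=z\sigma^w_\lambda$ with $z\in U^+$, $\lambda\in P^-$, and expanding $\Delta(\dT_w)=(\dT_w\otimes\dT_w)\sum_j u_j^-\otimes u_j^+$ from Corollary \ref{cor:dT}, one finds
\[
\langle\overline\Theta_w((z\sigma^w_\lambda)m),1\rangle=\langle v^*_\lambda y z,v_\lambda\rangle,\qquad y=\sum_j\langle\overline\Theta_w(m),u_j^+\rangle\,u_j^-\in U^-\setminus\{0\}.
\]
For $\lambda$ sufficiently small $v^*_\lambda y\ne 0$ (Proposition \ref{prop:UV}), and then irreducibility of the right $U$-module $V^*(\lambda)$ supplies a $z\in U^+$ with $\langle v^*_\lambda yz,v_\lambda\rangle\ne 0$. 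No triangularity bookkeeping, no lifted $SL_2$ generators, no induction on height.
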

\begin{proof}
For any $\BC_q[G]$-submodule $N$ of $\overline{\CM}_w$ we have
\[
N=
\bigoplus_{\gamma\in Q^+\cap (-w^{-1}Q^+)}
(N\cap\overline{\CM}_{w,\gamma})
\]
by Lemma \ref{weight-decomp}.
Since $\overline{\CM}_{w,0}$ is one-dimensional and generates the $\BC_q[G]$-module $\overline{\CM}_{w}$, it is sufficient to show
$N\cap\overline{\CM}_{w,0}\ne\{0\}$ for any non-zero $\BC_q[G]$-submodule $N$ of $\overline{\CM}_w$.
By definition the projection 
$\overline{\CM}_{w}\to \overline{\CM}_{w,0}$ 
with respect to \eqref{eq:ddecomp} is given by
$m\mapsto\langle\overline{\Theta}_w(m),1\rangle\overline{1}$,
and hence it is sufficient to show that for any $m\in\overline{\CM}_{w}\setminus\{0\}$ there exists some $\psi\in\BC_q[G]$ such that 
$\langle\overline{\Theta}_w(\psi m),1\rangle\ne0$.
Take $\varphi\in\BC_q[G]$ such that $\overline{\varphi}=m$.
For $z\in U^+$ and $\lambda\in P^-$ we have
\[
\langle\overline{\Theta}_w((z\sigma^w_\lambda) m),1\rangle
=\langle\{(z\sigma^w_\lambda)\varphi\}\dT_w,1\rangle.
\]
Write
\[
\Delta\dT_w=(\dT_w\otimes \dT_w)\sum_ju^-_j\otimes u^+_j
\]
(see Corollary \ref{cor:dT}).
Then we have
\begin{align*}
&\langle\overline{\Theta}_w((z\sigma^w_\lambda) m),1\rangle
=
\sum_j
\langle z\sigma^w_\lambda\dT_wu_j^-,1\rangle
\langle\varphi\dT_wu_j^+,1\rangle
\\
=&
\sum_j
\langle v^*_\lambda u_j^-,zv_\lambda\rangle
\langle\overline{\Theta}_w(m),u_j^+\rangle
=
\langle v^*_\lambda yz,v_\lambda\rangle
\end{align*}
with $y=\sum_j
\langle\overline{\Theta}_w(m),u_j^+\rangle u_j^-\in U^-\setminus\{0\}$.
Hence it is sufficient to show that for any $y\in U^-$ there exists some $\lambda\in P^-$ and $z\in U^+$ such that $\langle v^*_\lambda yz,v_\lambda\rangle\ne0$.
If $\lambda\in P^-$ is sufficiently small, then we have $v^*_\lambda y\ne0$.
Then the assertion is a consequence of the irreducibility of $V^*(\lambda)$ as a right $U$-module.
\end{proof}
\subsection{}
For $i\in I$ we define a $\BC_q[G]$-module $\overline{\CM}_i$ by
\begin{equation}
\overline{\CM}_i=\CM_i\otimes_{\BC_q[H(i)]}\BF_\varepsilon.
\end{equation}
It is an irreducible $\BC_q[G]$-module with basis 
$\{\overline{p}_i(n)\mid n\in\BZ_{\geqq0}\}$
satisfying
\begin{align*}
a_i\overline{p}_i(n)=&(1-q_i^{2n})\overline{p}_i(n-1),
\qquad
b_i\overline{p}_i(n)=q_i^{n}\overline{p}_i(n),
\\
c_i\overline{p}_i(n)=&-q_i^{n+1}\overline{p}_i(n)i,
\qquad
d_i\overline{p}_i(n)=\overline{p}_i(n+1).
\end{align*}

Fix $w\in W$, and set $\ell(w)=m$.
For $\Bi=(i_1,\cdots, i_m)\in \CI_w$ the isomorphism
\eqref{eq:main-isom} induces an isomorphism
\begin{equation}
\label{eq:main-isom2}
\overline{\CM}_w
\cong
\overline{\CM}_{i_1}\otimes\cdots\otimes\overline{\CM}_{i_m}
\end{equation}
of $\BC_q[G]$-modules.
For $\Bn=(n_1,\dots, n_m)\in(\BZ_{\geqq0})^m$ we denote by 
$\overline{p}_\Bi(\Bn)$ the element of $\overline{\CM}_w$ corresponding to 
\[
\overline{p}_{i_1}(n_1)\otimes\cdots\otimes \overline{p}_{i_m}(n_m)
\in
\overline{\CM}_{i_1}\otimes\cdots\otimes\overline{\CM}_{i_m}
\]
via the isomorphism \eqref{eq:main-isom2}.

By Theorem \ref{thm:pd} we have the following.
\begin{theorem}
\label{thm:KOY}
For  $\Bi, \Bj\in\CI_w$ we have
\begin{align*}
&\he_\Bj^{(\Bn)}=\sum_{\Bn'}a_{\Bn'}\he_\Bi^{(\Bn')}
\Longrightarrow\;
\overline{p}_\Bj(\Bn)
=
\sum_{\Bn'}a_{\Bn'}\frac{d_{\Bj,\Bn}}{d_{\Bi,\Bn'}}
\overline{p}_\Bi(\Bn'),
\end{align*}
where $d_{\Bi,\Bn}$ is as in Theorem \ref{thm:pd}.
\end{theorem}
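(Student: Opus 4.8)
The plan is to deduce everything from Theorem 6.4 (labelled \texttt{thm:pd} in the excerpt), which already identifies $\overline{p}_\Bi(\Bn)$ — after passing back from $\CM_w$ to $\overline{\CM}_w = \CM_w^\varepsilon$ — with a normalized image of the PBW basis element $\he_\Bi^{(\Bn)}$ under $\Gamma_w$. The point is that the map $\Gamma_w \colon U^{\geqq0}[\hT_w] \to \CM_w$ (and hence its specialization to $\overline{\CM}_w$) is \emph{independent of the chosen reduced word} $\Bi$: it is characterized purely by $\Xi_w \circ \Gamma_w = \Psi_w$, where $\Psi_w$ is the Drinfeld-pairing map $x \mapsto \tau(x, -)$ on $U^{\leqq0}$, and $\Xi_w$, $\Psi_w$ make no reference to any reduced expression. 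So the only $\Bi$-dependence in Theorem 6.4 sits in the two places: the basis element $\he_\Bi^{(\Bn)}$ and the scalar/character factors $d_\Bi(\Bn)$ and $\chi_{i_1}^{\langle\beta^\vee_{\Bi,1},\gamma_1\rangle}\otimes\cdots$.

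First I would specialize Theorem 6.4 along $\BC_q[H(\Bi)] \to \BF$ (via $\varepsilon$ on each factor), which kills all the $\chi_{i_r}$-characters and leaves the clean formula
\[
\overline{p}_\Bi(\Bn) = d_\Bi(\Bn)\, \overline{\Gamma}_w(\he_\Bi^{(\Bn)}),
\]
where $\overline{\Gamma}_w \colon U^{\geqq0}[\hT_w] \to \overline{\CM}_w$ is the induced map and $d_\Bi(\Bn) = \prod_r d_{i_r}(n_r)$ with $d_i(n) = q^{n(n+1)/2}(q^{-1}-q)^n$. (One should check that passing from the $(\BC_q[G],\BC_q[H])$-bimodule statement to the specialized $\BC_q[G]$-module statement is compatible with the isomorphism \eqref{eq:main-isom2}; this is routine from the constructions in Section 7, and I would state it as a one-line remark rather than belabor it.) Here I am writing $d_{\Bi,\Bn}$ for $d_\Bi(\Bn)$ to match the theorem's notation.

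Next, given $\Bi, \Bj \in \CI_w$ and the PBW transition relation $\he_\Bj^{(\Bn)} = \sum_{\Bn'} a_{\Bn'} \he_\Bi^{(\Bn')}$ in $U^{\geqq0}[\hT_w]$, I apply the \emph{linear} map $\overline{\Gamma}_w$ to both sides:
\[
\overline{\Gamma}_w(\he_\Bj^{(\Bn)}) = \sum_{\Bn'} a_{\Bn'}\, \overline{\Gamma}_w(\he_\Bi^{(\Bn')}).
\]
Now substitute the specialized Theorem 6.4 formula on each side: the left side becomes $d_{\Bj,\Bn}^{-1}\,\overline{p}_\Bj(\Bn)$ and each term on the right becomes $a_{\Bn'}\, d_{\Bi,\Bn'}^{-1}\,\overline{p}_\Bi(\Bn')$. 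Multiplying through by $d_{\Bj,\Bn}$ gives exactly
\[
\overline{p}_\Bj(\Bn) = \sum_{\Bn'} a_{\Bn'}\,\frac{d_{\Bj,\Bn}}{d_{\Bi,\Bn'}}\,\overline{p}_\Bi(\Bn'),
\]
which is the claim.

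The main thing to be careful about — and the only real content beyond bookkeeping — is that $\overline{\Gamma}_w$ genuinely does not depend on $\Bi$, i.e. that the left-hand sides of the two instances of Theorem 6.4 (for $\Bj$ and for $\Bi$) refer to the \emph{same} linear map. This is immediate from the definition of $\Gamma_w$ via $\Xi_w \circ \Gamma_w = \Psi_w$ together with injectivity of $\Xi_w$ (and the fact, from Proposition 5.5 / \texttt{prop:base2a}, that $\Psi_w$ lands in $\CA_w = \Image(\Xi_w)$), none of which mentions a reduced word. A secondary point is to make sure the elements $\he_\Bi^{(\Bn)}$ and $\he_\Bj^{(\Bn)}$ are being compared inside the \emph{same} subalgebra $U^{\geqq0}[\hT_w]$ of $U$, so that the transition matrix $(a_{\Bn'})$ is well-defined; this is exactly Proposition 2.17(iii) (\texttt{prop:base2}), which says both $\{\he_\Bi^{(\Bn)}\}$ and $\{\he_\Bj^{(\Bn)}\}$ are $\BF$-bases of $U^+[\hT_w]$. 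With those two observations in place the proof is a three-line substitution, so I would keep the write-up short and simply cite Theorem 6.4, the linearity of $\Gamma_w$, and Proposition 2.17(iii).
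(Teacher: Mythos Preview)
Your proposal is correct and is essentially the same argument the paper has in mind: the paper's proof is the single line ``By Theorem \ref{thm:pd} we have the following,'' and what you have written is precisely the unpacking of that line --- specialize Theorem \ref{thm:pd} along $\varepsilon$ to get $\overline{p}_\Bi(\Bn)=d_\Bi(\Bn)\,\overline{\Gamma}_w(\he_\Bi^{(\Bn)})$, note that $\Gamma_w$ (hence $\overline{\Gamma}_w$) is defined via $\Xi_w\circ\Gamma_w=\Psi_w$ without reference to any reduced word, and apply linearity. Your two side remarks (that $\varepsilon\circ\Delta_\Bi=\varepsilon$ so the specialization is compatible with \eqref{eq:main-isom2}, and that both PBW families are bases of the same space $U^+[\hT_w]$ by Proposition \ref{prop:base2}) are exactly the checks needed and are routine.
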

\begin{remark}
Theorem \ref{thm:KOY} for $w=w_0$ is the main result of 
Kuniba, Okado, Yamada (\cite[Theorem 5]{KOY})．
\end{remark}

By Proposition \ref{prop:KOY-conj} we have the following.
\begin{proposition}
\label{prop:KOY-conj2}
For $\Bi\in\CI_{w_0}$,  $i\in I$, $\Bn\in(\BZ_{\geqq0})^{m_0}$ write 
\[
\he_\Bi^{(\Bn)}e_i=\sum_{\Bn'}c_{\Bn\Bn'}\he_\Bi^{(\Bn')}.
\]
Then we have
\begin{align*}
&\left\{
\frac1{1-q_i^2}
(\sigma^{w_0}_{-\varpi_{i'}}e_i)(\sigma^{w_0}_{-\varpi_{i'}})^{-1}
\right\}
\overline{p}_\Bi(\Bn)
=
\sum_{\Bn'}c_{\Bn\Bn'}
\frac{d_\Bi(\Bn)}{d_\Bi(\Bn')}
\overline{p}_{\Bi}(\Bn'),
\end{align*}
where $i'$ is as in Lemma \ref{lem:jmath-e}.
\end{proposition}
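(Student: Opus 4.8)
The plan is to deduce Proposition \ref{prop:KOY-conj2} directly from Proposition \ref{prop:KOY-conj} by passing to the specialization $\overline{\CM}_{w_0}=\CM_{w_0}^\varepsilon$. Recall that $\overline{\CM}_{w_0}=\CM_{w_0}\otimes_{\BC_q[H]}\BF_\varepsilon$ and that, under this base change, the element $p_\Bi(\Bn)\in\CM_{w_0}\otimes_{\BC_q[H]}\BC_q[H(\Bi)]$ maps to $\overline{p}_\Bi(\Bn)$ (this is the definition of $\overline{p}_\Bi(\Bn)$ given just before Theorem \ref{thm:KOY}, via the isomorphism \eqref{eq:main-isom2}). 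The only subtlety is the appearance of the factor $\sigma^{w_0}_{-\varpi_{i'}}\in\CS_{w_0}\subset\CS_{w_0}^{-1}\BC_q[G/N^-]$ and its inverse on the left-hand side, together with the explicit characters $\chi_{i_1}^{\langle\beta^\vee_{\Bi,1},\gamma_{\Bi,\Bn,1}-\gamma_{\Bi,\Bn',1}\rangle}\otimes\cdots$ appearing on the right-hand side of Proposition \ref{prop:KOY-conj}.

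First I would observe that the operator $\varphi=\frac1{1-q_i^2}(\sigma^{w_0}_{-\varpi_{i'}}e_i)(\sigma^{w_0}_{-\varpi_{i'}})^{-1}$ lies in $\CS_{w_0}^{-1}\BC_q[G]$, so it acts on the localized module $\CM_{w_0}$ and hence descends to an operator on $\overline{\CM}_{w_0}$; in fact by Lemma \ref{lem:jmath-e} this operator equals the image of $\Gamma_{w_0}(e_i)$, and $\Gamma_{w_0}(e_i)=\varphi\star1$, so the action of $\varphi$ on $\overline{\CM}_{w_0}$ is nothing but the action induced by $e_i\in U^{\geqq0}[\hT_{w_0}]$ through $\Gamma_{w_0}$. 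Next I would apply the functor $(\bullet)\otimes_{\BC_q[H(\Bi)]}\BF_\varepsilon$ to the identity of Proposition \ref{prop:KOY-conj}, which holds in $\CM_{w_0}\otimes_{\BC_q[H]}\BC_q[H(\Bi)]$. The point is that under $\varepsilon:\BC_q[H(\Bi)]\to\BF$ every character $\chi_{i_1}^{a_1}\otimes\cdots\otimes\chi_{i_{m_0}}^{a_{m_0}}$ maps to $1$, so the twisting factors
\[
\chi_{i_1}^{\langle\beta^\vee_{\Bi,1},\gamma_{\Bi,\Bn,1}-\gamma_{\Bi,\Bn',1}\rangle}
\otimes\cdots\otimes
\chi_{i_{m_0}}^{\langle\beta^\vee_{\Bi,m_0},\gamma_{\Bi,\Bn,m_0}-\gamma_{\Bi,\Bn',m_0}\rangle}
\]
all become $1$ in $\overline{\CM}_{w_0}$. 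Hence the right-hand side of Proposition \ref{prop:KOY-conj} collapses to $\sum_{\Bn'}c_{\Bn\Bn'}\frac{d_\Bi(\Bn)}{d_\Bi(\Bn')}\,\overline{p}_\Bi(\Bn')$, which is exactly the desired formula.

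The main thing to check carefully — and the step I expect to be the only real obstacle, though it is minor — is that the operator $\varphi$ on $\CM_{w_0}$ is compatible with the base change to $\overline{\CM}_{w_0}$, i.e. that $\varphi\cdot(p_\Bi(\Bn)\otimes 1_\varepsilon)=(\varphi\cdot p_\Bi(\Bn))\otimes 1_\varepsilon$ under $\CM_{w_0}\otimes_{\BC_q[H]}\BF_\varepsilon$. This is immediate because $\varphi\in\CS_{w_0}^{-1}\BC_q[G]$ acts on the first tensor factor and the base change is over $\BC_q[H]$ (equivalently over $\BC_q[H(\Bi)]$ after the isomorphism \eqref{eq:main-isom}), which commutes with the left $\CS_{w_0}^{-1}\BC_q[G]$-action. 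One also needs that $\Gamma_{w_0}(\he_\Bi^{(\Bn)})\otimes 1$ specializes to $\Gamma_{w_0}(\he_\Bi^{(\Bn)})$'s image, which is built into the compatibility of $\Theta_w$ with $\overline{\Theta}_w$ recorded in the previous subsection. Assembling these observations, Proposition \ref{prop:KOY-conj2} follows at once from Proposition \ref{prop:KOY-conj} by applying $\varepsilon$, with no further computation needed.
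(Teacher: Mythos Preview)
Your proposal is correct and follows essentially the same approach as the paper: the paper simply states that Proposition~\ref{prop:KOY-conj2} follows from Proposition~\ref{prop:KOY-conj}, and your argument spells out exactly this specialization---applying $(\bullet)\otimes_{\BC_q[H(\Bi)]}\BF_\varepsilon$ to the identity in Proposition~\ref{prop:KOY-conj}, so that the character factors $\chi_{i_r}^{\cdots}$ become $1$ and the left $\CS_{w_0}^{-1}\BC_q[G]$-action descends unchanged.
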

\begin{remark}
Proposition \ref{prop:KOY-conj2} is a conjecture of Kuniba, Okado, Yamada (\cite[Conjecture 1]{KOY}).
\end{remark}

\section{Comments}
\label{sec:comment}

\subsection{}
In this paper we worked over the base field $\BF=\BQ(q)$; however, almost all of the arguments work equally well after minor modifications even when $\BF$ is an arbitrary field of characteristic zero and $q_i^2\ne1$ for any $i\in I$.
The only exception is Theorem \ref{thm:irred}, which states that $\overline{\CM}_w$ is irreducible.
For this result we need to assume that $q$ is not a root of 1.
\subsection{}
Let us consider generalization of our results to the case where $\Gg$ is a symmetrizable Kac-Moody Lie algebra.
We take $\BC_q[G]$ to be the subspace of $U_q(\Gg)^*$ spanned by the matrix coefficients of integrable lowest weight modules (see \cite{Kas}).
Then $\BC_q[G]$ is naturally endowed with an algebra structure.
A problem is that the comultiplication $\Delta:\BC_q[G]\to\BC_q[G]\otimes\BC_q[G]$ is not defined.
Indeed $\Delta(\varphi)$ for $\varphi\in\BC_q[G]$ turns out to be an infinite sum which belongs to a completion of 
$\BC_q[G]\otimes\BC_q[G]$.
However, since we only consider the tensor product modules of type 
$\CM_{i_1}\otimes\cdots\otimes\CM_{i_m}$, what we actually need is the homomorphism of of the form
\begin{equation}
\label{eq:KM}
(r^G_{G(i_1)}\otimes\cdots \otimes r^G_{G(i_m)})\circ\Delta_{m-1}:\BC_q[G]\to
\BC_q[G(i_1)]\otimes\cdots\otimes\BC_q[G(i_{m})].
\end{equation}
We can easily check that \eqref{eq:KM} is well-defined even in the Kac-Moody setting by showing that $(r^G_{G(i_1)}\otimes\cdots \otimes r^G_{G(i_m)})\circ\Delta_{m-1}$ sends any element of $\BC_q[G]$ to a finite sum inside  $\BC_q[G(i_1)]\otimes\cdots\otimes\BC_q[G(i_{m})]$.
It is easily seen that  all of the arguments in this paper also work in the setting where $\Gg$ is a symmetrizable Kac-Moody Lie algebra.

\bibliographystyle{unsrt}

\end{document}